\newcommand\cyr
\renewcommand\rmdefault{wncyr} \renewcommand\sfdefault{wncyss} \renewcommand\encodingdefault{OT2} \normalfont
\DeclareTextFontCommand{\textcyr}{\cyr} 
\newcommand*\wbar[1]{%        % widebar 
  \hbox{ \kern-0.2em%
    \vbox{%
      \hrule height 0.5pt  % The actual bar
      \kern0.25ex%         % Distance between bar and symbol
      \hbox{%
        \kern-0.15em%       % Shortening on the left side
        \ensuremath{#1}%
        \kern-0.05em%       % Shortening on the right side
      }%
    }%
  \kern0.05em}%
} 
\newcommand*\wbarnew[1]{%        % widebar 
  \hbox{ \kern-0.2em%
    \vbox{%
      \hrule height 0.5pt  % The actual bar
      \kern0.25ex%         % Distance between bar and symbol
      \hbox{%
        \kern-0.35em%       % Shortening on the left side
        \ensuremath{#1}%
        \kern-0.05em%       % Shortening on the right side
      }%
    }%
  \kern0.05em}%
}
\newcommand{\Zh}{\hbox{\hspace{-5.8mm} \textcyr{Zh}}} %Use -6.4mm for 11 pt font and -5.8mm for 10pt font
\newcommand{\zh}{\hbox{\hspace{-5.8mm} \textcyr{zh}}} %Use -6.0mm for 11 pt font and -5.8mm for 10pt font
\let\textcyr\relax}
\DeclareRobustCommand\widecheck[1]{{\mathpalette\@widecheck{#1}}}
\def\@widecheck#1#2{%
    \setbox\z@\hbox{\m@th$#1#2$}%
    \setbox\tw@\hbox{\m@th$#1%
       \widehat{%
          \vrule\@width\z@\@height\ht\z@
          \vrule\@height\z@\@width\wd\z@}$}%
    \dp\tw@-\ht\z@
    \@tempdima\ht\z@ \advance\@tempdima2\ht\tw@ \divide\@tempdima\thr@@
    \setbox\tw@\hbox{%
       \raise\@tempdima\hbox{\scalebox{1}[-1]{\lower\@tempdima\box
\tw@}}}%
    {\ooalign{\box\tw@ \cr \box\z@}}}
\newtheorem{theorem}{Theorem}[subsection]
\newtheorem{lemma}[theorem]{Lemma}
\newtheorem{proposition}[theorem]{Proposition}
\theoremstyle{definition}
\newtheorem{definition}[theorem]{Definition}
\newtheorem{example}[theorem]{Example}
\newtheorem*{theorem*}{Theorem}
\theoremstyle{remark}
\newtheorem{remark}[theorem]{Remark}
\newcommand{\vlk}{\operatorname{{\it v}\ell{\it k}}}
\begin{document}

%\title[Virtual genus lower bounds from Lie Superalgebras ]{Virtual genus lower bounds from Lie superalgebras}

\title[Lie superalgebras and the minimal genus]{$U_q(\mathfrak{gl}(m|n))$ bounds on the minimal genus of virtual links}

\author{Micah Chrisman}
\address{Department of Mathematics, The Ohio State University at Marion, Marion, Ohio, USA}
\email{chrisman.76@osu.edu}
\thanks{}

\author{Killian Davis}
\address{Department of Mathematics, The Ohio State University, Columbus, Ohio, USA}
\email{davis.7030@buckeyemail.osu.edu}

\author{Anup Poudel}
\address{Department of Mathematics, The Ohio State University, Columbus, Ohio, USA}
\email{poudel.33@osu.edu}
\thanks{}

\subjclass[2020]{Primary: 57K12, 57K14 Secondary: 17B10}

\maketitle

\begin{abstract} For links $L \subset \Sigma \times [0,1]$, where $\Sigma$ is a closed orientable surface, we define a $U_q(\mathfrak{gl}(1|1))$ Reshetikhin-Turaev invariant with coefficients in $\mathbb{Z}[H_1(\Sigma)]$. This invariant turns out to be equivalent to an infinite cyclic version of the Carter-Silver-Williams (CSW) polynomial. The importance of the CSW polynomial is that half its symplectic rank gives strong lower bounds on the virtual genus. Recall that the virtual genus of a virtual link $J$ is the smallest genus of all closed orientable surfaces $\Sigma$ on which $J$ can be represented by a link diagram  on $\Sigma$. Here we generalize the CSW lower bound to all quantum supergroups $U_q(\mathfrak{gl}(m|n))$ with $m,n>0$. For $(m,n)=(1,1)$, the $U_q(\mathfrak{gl}(m|n))$ bound is the same as the CSW bound. However, changing the value of the pair $(m,n)$ can give lower bounds better than those available from other known methods. We compare the $U_q(\mathfrak{gl}(m|n))$ lower bounds to those coming from the CSW polynomial, the surface bracket, the arrow polynomial, hyperbolicity, and the Gordon-Litherland determinant test. As a first application, we show that the Seifert genus of homologically trivial knots in thickened surfaces is not additive under the connected sum operation of virtual knots. As a second application, we prove that the Jaeger-Kauffman-Saleur invariant of a virtual knot is always realizable as the Alexander polynomial of an infinite cyclic cover of a knot complement in some $\Sigma \times [0,1]$, but is not always so on a surface of minimal genus. This is accomplished with a generalization of the $\Zh$-construction, called the homotopy $\Zh$-construction. 

\end{abstract}

\section{Introduction}
\subsection{Motivation} Lie superalgebra quantum invariants of links $L$ in $S^3$ have recently been shown to have useful topological applications. Consider, for example, the generalized Links-Gould invariant $LG_L^{m,1}(t_0,t_1)$, which is constructed from representations of $U_q(\mathfrak{gl}(m|1))$. This invariant is closely related to the minimal genus problem of Seifert surfaces for links in $S^3$. A lower bound on the 3-genus of $L$ can always be obtained from $LG_L^{m,1}(t_0,t_1)$, since various specializations of $LG_L^{m,1}(t_0,t_1)$ recover the Alexander polynomial $\Delta_L(t)$ (De Wit--Ishii--Links \cite{de_wit_ishii_links}, Kohli--Patureau-Mirand \cite{kohli_patureau_mirand}).  Remarkably, however, the degree of $LG^{2,1}_L(t_0,t_1)$ yields an even better lower bound on the $3$-genus of knots than the Alexander polynomial alone (Kohli--Tahar \cite{kohli_tahar}). Further applications of $LG_L^{2,1}(t_0,t_1)$ to fibers of fibered links were found by Lopez-Neumann and van der Veen \cite{lopez_neumann_van_der_veen,lopez_neumann_van_der_veen_fibred}. 

Here we will apply Lie superalgebras to a minimal genus problem that arises in virtual knot theory. Virtual knot theory, instituted by Kauffman \cite{kauffman_vkt}, gives a diagrammatic way of   studying links in thickened surfaces $\Sigma \times [0,1]$, where $\Sigma$ is closed and oriented. Two such links are \emph{virtually isotopic} if they are related by a finite sequence of ambient isotopies, diffeomorphisms of $\Sigma$, and stabilizations/destabilizations. These last two terms mean the addition/removal of $1$-handles to/from $\Sigma$ that are disjoint from a diagram of the link. An equivalence class is called a \emph{virtual link type}. Since different representatives of a virtual link type $L$ can lie on surfaces $\Sigma$ of different genus, the fundamental geometric problem is to determine the smallest allowable genus (Kuperberg \cite{kuperberg}). By analogy with ``$n$-genus'' nomenclature for classical links, we call this the \emph{virtual 2-genus}, $\widecheck{g}_2(L)$.  

In practice, one attempts to determine $\widecheck{g}_2(L)$ as follows. If the minimal classical crossing number of $L$ is known, it is easy to calculate the $\widecheck{g}_2(L)$ from a Gauss diagram of a minimal crossing representative (Manturov \cite{manturov_projection}, Boden--Rushworth \cite{boden_rushworth}). Otherwise, there is a complete invariant for computing $\widecheck{g}_2(L)$.  Carter, Silver, and Williams (CSW) showed that the virtual 2-genus is exactly half the symplectic rank of the operator group of a link in $\Sigma \times [0,1]$ \cite{CSW}. Unfortunately, this is not computable in general. Useful lower bounds on $\widecheck{g}_2(L)$, however, can be obtained from an Alexander-like polynomial which can be calculated from any operator group presentation. If the classical crossing number is unknown and the CSW bound is too low, one can then resort to a grab bag of possible techniques. There are methods available from skein theory, such as the surface bracket \cite{dye_kauffman_surf} and the Dye--Kauffman--Miyazawa (i.e.\! arrow) polynomial \cite{dye_kauffman_arrow,miyazawa}. Specialized tools can be effective for special classes of knots. The Gordon--Litherland determinant test can be applied to checkerboard-colorable knots (Boden--Chrisman--Karimi \cite{BCK}).  If a link in $\Sigma \times [0,1]$ is tg-hyperbolic, then it necessarily minimizes the virtual $2$-genus (Adams et al. \cite{adams_tg}). For each of the known computable methods, examples exist where the virtual $2$-genus is not detected.     

In this paper, we derive lower bounds on $\widecheck{g}_2(L)$ from $U_q(\mathfrak{gl}(m|n))$ Reshetikin-Turaev invariants, where $m,n>0$. The motivation behind this arises from the following observations. The computable methods listed above can be loosely categorized as ``Jones-type'' (surface bracket, arrow polynomial) or ``Alexander-type'' (CSW polynomial, Gordon-Litherland determinants). On the other hand, for classical links and the vector representation of $U_q(\mathfrak{gl}(m|n))$, the $U_q(\mathfrak{gl}(n+2|n))$ invariant is the Jones polynomial and the $U_q(\mathfrak{gl}(n|n))$ invariant is the Alexander polynomial (see e.g. Queffelec \cite{queffelec_19}). Then, in some loose sense, there are $U_q(\mathfrak{gl}(m|n))$ bounds on $\widecheck{g}_2(L)$ if $m-n=0$ or $|m-n|=2$. Interpolating and extrapolating from this, it is reasonable to expect that lower bounds on $\widecheck{g}_2(L)$ can be also be obtained from $U_q(\mathfrak{gl}(m|n))$ invariants when $|m-n|=1$ or $|m-n|>2$. The strategy of the paper, then, is to find improved bounds on $\widecheck{g}_2(L)$ by filling in all range values for pairs $(m,n)$.    

When $m=n=1$, we show our $U_q(\mathfrak{gl}(m|n))$ bound coincides with that of the CSW polynomial. When $n \ge 1$ and $m \ne 1$, the $U_q(\mathfrak{gl}(m|n))$ lower bound can differ from the CSW bound. Examples will be given where $\widecheck{g}_2(L)$ is not realized by the $U_q(\mathfrak{gl}(1|1))$ lower bound but is realized for other pairs $(m,n)$. In particular, we will show that there are knots whose virtual $2$-genus is not determined by the arrow polynomial, the Gordon--Litherland determinants, nor the CSW polynomial, but whose virtual 2-genus is determined by the $U_q(\mathfrak{gl}(2|1))$ lower bound. Relations between the $U_q(\mathfrak{gl}(m|n))$ polynomials and the surface bracket are also discussed. Furthermore, we use the $U_q(\mathfrak{gl}(1|1))$ bound to show $\widecheck{g}_2(K_r)=r+1$ for the infinite family $K_r$ of generalized $r$-Kishino knots, recovering a result previously proved by Adams et al. \cite{adams_tg} using tg-hyperbolicity. As a first topological application, we use the $U_q(\mathfrak{gl}(2|1))$ lower bound to demonstrate that the Seifert genus of homologically trivial knots in a thickened surface is not additive under the connected sum operation of virtual knots. As a second topological application, we prove that the Jaeger--Kauffman--Saleur (JKS) invariant of a virtual knot is always realizable as the Alexander polynomial of an infinite cyclic cover of a knot complement in some $\Sigma \times [0,1]$, but that this cannot always be done on a surface of minimal genus.

\subsection{Main idea} \label{sec_main_idea} As the CSW polynomial is already an Alexander polynomial, this is the natural starting point for the development of quantum superalgebra invariants. We first recall some basic facts about its construction. A detailed review is given in Section \ref{sec_csw}. Let $D$ be diagram of a link $L \subset \Sigma \times [0,1]$, where $\Sigma$ is a closed orientable surface $\Sigma$ of genus $g>0$. Choose a symplectic basis $\Omega=\{x_1,y_1,\ldots,x_g,y_g\}$ of $H_1(\Sigma;\mathbb{Z})$. From the universal cover of $\Sigma$, Carter, Silver, and Williams define a polynomial $\Delta_D^0(t,x_1,y_1,\ldots,x_g,y_g)$. Relative to a fixed symplectic basis, this is an invariant of links in the thickened surface $\Sigma \times [0,1]$. The polynomial itself is dependent on the choice of symplectic basis. A lower bound on $\widecheck{g}_2(L)$ can be extracted from the symplectic rank of this polynomial, denoted $\text{rk}_s(\Delta_D^0)$. This is the rank of the submodule of $H_1(\Sigma;\mathbb{R})$ generated by the nonzero coefficients of powers of $t$ in $\Delta_D^0(t,x_1,y_1,\ldots,x_g,y_g)$, modulo its submodule of self-orthogonal elements with respect to the intersection pairing on $\Sigma$. The main result of \cite{CSW} is that:
\begin{equation}\label{eqn_csw_bound}
\text{rk}_s(\Delta_D^0) \le 2 \cdot \widecheck{g}_2(D).
\end{equation}

The difficulty in generalizing (\ref{eqn_csw_bound}) to quantum invariants is the typical local-global problem. Quantum knot invariants are locally defined whereas homology is a global property. Homology, however, can be localized using the intersection form on $\Sigma$ and the intersection form can be calculated with respect to the basis $\Omega$. Thus, one only needs to account for the interaction between $D$ and $\Omega$. Now, the action of $\Gamma=\pi_1(\Sigma)$ on the universal cover of $\Sigma$ is by conjugation.  But the Wirtinger relations which appear in $\pi_1(\Sigma \times [0,1] \smallsetminus L)$ are also defined by conjugation. The main idea of the paper is to model the action of $\Gamma$ on $\Sigma$ by adding some components to the virtual link type of $L$, with a supplemental ``colored'' component for each element of $\Omega$. The advantage of these bulked-up virtual link diagrams is that they can be decomposed into elementary tangles, so that the $U_q(\mathfrak{gl}(m|n))$ invariants can be defined combinatorially with the vector representation. In essence, the local-global problem is resolved with careful bookkeeping. As will be seen, the bookkeeping method is a straightforward generalization of the well-known Bar-Natan $\Zh$-construction \cite{bar_natan_talk}. Lower bounds on the virtual $2$-genus are then be obtained by following the method of Carter-Silver-Williams \cite{CSW}.

\subsection{Main result} Suppose that $D$ is a diagram of a link on a closed orientable surface $\Sigma$. Let $\Omega=\{x_1,y_1,\ldots,x_g,y_g\}$ be a choice of symplectic basis for $\Sigma$. To $D$ we associate an $l+2g$ component virtual link diagram $\Zh_{\Omega}(D)$, where $l$ is the number of components of $D$. Such diagrams are called \emph{prismatic links} and are considered equivalent up to a set of moves that includes virtual link equivalence and some additional moves that model $\pi_1(\Sigma)$. The map $D \to \Zh_{\Omega}(D)$ is called the \emph{homotopy\! $\Zh$-construction}. This is the first part of the bookkeeping method described in the preceding paragraph. The second part of the bookkeeping method consists of a $U_q(\mathfrak{gl}(m|n))$ Reshetikhin-Turaev functor. Its domain of definition is a certain category of prismatic tangles, so that a prismatic link is a morphism $\varnothing \to \varnothing$. Composing the \emph{prismatic $U_q(\mathfrak{gl}(m|n))$ Reshetikhin-Turaev functor} with the homotopy $\Zh$-construction for $D$ then gives a polynomial $\widetilde{f}^{\,m|n}_{(\Sigma,\Omega,D)}(q,x_1,y_1,\ldots,x_g,y_g)$. As in the case of the CSW polynomial, this depends on the choice of symplectic basis. However, as in \cite{CSW}, the symplectic rank $\text{rk}_s(\widetilde{f}^{\,\,m|n}_{(\Sigma,\Omega,D)})$ gives a lower bound on the virtual 2-genus. More precisely:

\begin{theorem*} If $D$ is a diagram on $\Sigma$ of a non-split link in $\Sigma \times [0,1]$ and $L$ is its virtual link type,
\begin{equation} \label{eqn_main_theorem}
\text{rk}_s\left(\widetilde{f}^{\,\,m|n}_{(\Sigma,\Omega,D)}\right) \le 2 \cdot \widecheck{g}_2(L).
\end{equation}
\end{theorem*}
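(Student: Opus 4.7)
The plan is to follow the Carter--Silver--Williams strategy, adapted to the quantum super Reshetikhin--Turaev setting. The proof naturally splits into an invariance step and a geometric rank-bound step. For invariance, we need $\widetilde{f}^{\,m|n}_{(\Sigma,\Omega,D)}$ to depend only on the virtual link type of $L$ and on the symplectic basis $\Omega$. This reduces to checking that the composition of the prismatic $U_q(\mathfrak{gl}(m|n))$ Reshetikhin--Turaev functor with the homotopy $\Zh$-construction descends through all of the prismatic moves: the virtual and classical Reidemeister moves on the core component, and the additional moves that encode the $\pi_1(\Sigma)$-action relating Wirtinger conjugates on the colored components. These verifications are local and amount to the functoriality of the prismatic RT construction on its tangle category, which is set up precisely for this purpose.

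For the rank bound, set $g_0 = \widecheck{g}_2(L)$ and let $D_0$ be a diagram of $L$ on a surface $\Sigma_0$ of genus $g_0$. By the definition of virtual link type, $D$ is related to $D_0$ by a sequence of stabilizations, destabilizations, and diffeomorphisms, so by invariance we may compute $\widetilde{f}$ on any convenient representative. Take the representative obtained by attaching $g-g_0$ $1$-handles to $\Sigma_0$ in the complement of a regular neighborhood of $D_0$. Then a symplectic basis $\Omega' = \{x_1,y_1,\ldots,x_{g_0},y_{g_0}\} \cup \{\alpha_1,\beta_1,\ldots,\alpha_{g-g_0},\beta_{g-g_0}\}$ of $H_1(\Sigma;\mathbb{Z})$ may be chosen so that the first $g_0$ pairs form a symplectic basis of $H_1(\Sigma_0;\mathbb{Z})$ pushed forward along the stabilization inclusion, while each pair $\alpha_j,\beta_j$ is realized by a meridian-longitude pair on the $j$th added handle, both disjoint from $D_0$.

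With respect to $\Omega'$, the colored components of $\Zh_{\Omega'}(D_0)$ indexed by $\alpha_j$ and $\beta_j$ can be isotoped into a small ball in $\Sigma\times[0,1]$ disjoint from $D_0$, since their underlying curves on $\Sigma$ are disjoint from $D_0$. Applying the prismatic RT functor to such a split configuration factors the invariant as a contribution from $D_0$ together with the $\{x_i,y_i\}$-colored components, times a prismatic-unknot factor that depends only on $\alpha_j,\beta_j$-variables and carries no $H_1$-content coupling to $D_0$. Consequently, the submodule $M \subset H_1(\Sigma;\mathbb{R})$ generated by the nonzero $H_1$-parts of the coefficients of powers of $q$ in $\widetilde{f}^{\,m|n}_{(\Sigma,\Omega',D_0)}$ sits inside the image of $H_1(\Sigma_0;\mathbb{R})$, a subspace of dimension $2g_0$ on which the intersection form is non-degenerate. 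Hence $M/(M\cap M^\perp)$ has rank at most $2g_0$, giving $\text{rk}_s(\widetilde{f}^{\,m|n}_{(\Sigma,\Omega',D_0)}) \le 2\widecheck{g}_2(L)$. Since the symplectic rank is invariant under symplectic change of basis, this bound transfers back to the original $(\Sigma,\Omega,D)$.

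The main obstacle is the split-factorization claim in the previous paragraph. The homotopy $\Zh$-construction weaves every colored component through the core link via bookkeeping crossings that encode the $\pi_1(\Sigma)$-conjugation data, so ``geometrically disjoint on $\Sigma$'' must be upgraded to ``split in the prismatic tangle category''. Carrying this out requires showing that a colored pair $\alpha_j,\beta_j$ coming from a handle disjoint from $D_0$ can be isotoped, using only the prismatic moves, into a ball that contains no strand of $D_0$ nor of the other $\Omega'$-colored components; and then verifying that the prismatic RT functor evaluated on such a ball contributes a scalar (or a factor depending only on $\alpha_j,\beta_j$) that does not produce new $x_i,y_i$-coefficients. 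This is where the bulk of the geometric argument, and the careful bookkeeping advertised in Section~\ref{sec_main_idea}, is concentrated.
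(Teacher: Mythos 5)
Your route is genuinely different from the paper's, and it contains a gap. The paper never destabilizes: it works with the given $(\Sigma,\Omega,D)$, observes via Theorem~\ref{thm_prismatic_link_group} that the submodule $W_{(\Sigma,\Omega,D)}$ spanned by the $q$-coefficients of $\widetilde{f}$ sits inside the submodule $W_{\widetilde{P}}$ of the corresponding operator group presentation, so $\text{rk}_s(\widetilde{f}) \le \text{rk}_s(\widetilde{P})$, and then invokes CSW's Theorem~6.1 (Theorem~\ref{thm_csw}) that the minimum of $\text{rk}_s(\widetilde{P})$ over presentations equals $2\widecheck{g}_2(L)$. All the stabilization/destabilization work is hidden in the CSW black box; Lemmas~\ref{lemma_symp_rank_reidemeister} and~\ref{lemma_symp_rank_cob} are used only to move between symplectic bases on the \emph{fixed} $\Sigma$.

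Your plan instead destabilizes to a minimal-genus $D_0$ on $\Sigma_0$, restabilizes with handles disjoint from $D_0$, and argues that $\widetilde{f}$ cannot see the new handles. The part you flag as the ``main obstacle'' --- the split-factorization --- is actually easy: if $\alpha_j,\beta_j$ are disjoint from $D_0$, the corresponding colored components of $\Zh_{\Omega'}(D_0)$ are unknots colored by the trivial one-dimensional representation $\mathbb{F}$, and all their cups, caps, and virtual crossings are assigned identities, so they contribute nothing to the $q$-coefficients. The genuine gap is the transfer step. Having bounded the rank for the convenient representative, you claim ``by invariance'' and ``under symplectic change of basis'' that the bound passes to the original $(\Sigma,\Omega,D)$. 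But the only invariance established is under Reidemeister moves on a fixed $\Sigma$ and diffeomorphisms of $\Sigma$; nothing covers stabilization or destabilization, and $\widetilde{f}$ itself lives in different variable sets for different genera, so it cannot be ``computed on any convenient representative.'' You would need to know that $D$ and the restabilized $D_0$, viewed on the same genus-$g$ surface, are joined by a sequence of Reidemeister moves and diffeomorphisms that never exceeds genus $g$ --- a Markov-type statement for links in thickened surfaces that is neither established here nor obvious. The paper's argument sidesteps this issue entirely by letting CSW's theorem absorb it.
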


This theorem, which is the main result of the paper, is proved below as Theorem \ref{thm_main}.

\subsection{Application: the CSW and JKS polynomials} When $m=n=1$, we show that the CSW polynomial and the prismatic $U_q(\mathfrak{gl}(1|1))$ Reshetikhin-Turaev polynomial $\widetilde{f}^{\,\,1|1}_{(\Sigma,\Omega,D)}$ are equivalent after a change of variables, a change of basis, and up to multiplication by units. One advantage of the quantum model is computational. The rank of the vector representation of  $U_q(\mathfrak{gl}(1|1))$ is only two, so that computer calculations are typically fast. Our \emph{Mathematica} code can be found \href{https://github.com/micah-chrisman/virtual-knots/blob/main/prismatic_zh_final_switch.nb}{here}. 

A second advantage of the quantum model is that it gives a relation with the first and most influential of all virtual knot invariants: the two-variable JKS invariant $G_L(s,t)$. This was introduced by Kauffman and Saleur in 1992 \cite{kauffman_saleur_92}, and further studied in a paper with Jaeger \cite{jaeger_kauffman_saleur_94} in 1994. Although its original motivation came from statistical mechanics, alternative definitions due to Sawollek \cite{sawollek_01} and Silver--Williams \cite{silver_williams_03} have unlocked an unexpected topological significance. For example, $G_L(s,t)$ obstructs the existence of a homologically trivial representative of $D$ of $L$ in a thickened surface (Silver-Williams \cite{silver_williams_06_II}), it is a virtual slice obstruction (Boden--Chrisman \cite{boden_chrisman_21}), and it unifies several combinatorial invariants (Mellor \cite{mellor_16}). Many have contributed to the further development of the JKS invariant (see e.g. Boden et al. \cite{BDGGHN_15}, Crans--Henrich--Nelson \cite{crans_henrich_nelson}, Manturov \cite{manturov_02}).  In the literature, it is often called the \emph{generalized Alexander polynomial} (GAP). 

In Theorem \ref{thm_gap_from_prism}, we prove that for any virtual link type $L$, there is a diagram $D$ on a surface $\Sigma$ and a choice of symplectic basis $\Omega$, such that $G_L(s,t)$ is a specialization of the CSW polynomial $\Delta_D^0(t,x_1,y_1,\ldots,x_g,y_g)$, after a change of variables and up to multiplication by units. The GAP is thus realizable as a honest-to-goodness Alexander polynomial, that is, as the Alexander polynomial associated to an infinite cyclic cover of the link complement in $\Sigma \times [0,1]$. Furthermore, we show that the realizing surface $\Sigma$ cannot always be chosen so that its genus equals $\widecheck{g}_2(L)$ (Section \ref{sec_v2g_GAP}). 

%This follows from the quantum model for the CSW polynomial, together with a theorem relating the homotopy $\Zh$-construction and Bar-Natan's original recipe $\Zh$-construction.

%The proof follows along the same lines as Kauffman and Saleur's argument relating the Burau representation to the $U_q(\mathfrak{gl}(1|1))$ quantum invariant of classical links in $S^3$ \cite{kauffman_saleur_91}. The advantage of use of this result is that it allows for easy computer calculation of the CSW polynomial 

\subsection{Application: the Dye--Kauffman surface bracket} In Section \ref{sec_homotopy_and_surface}, we  briefly consider skein-theoretic methods for determining the virtual $2$-genus. The prototypical such method is the surface bracket, due to Dye and Kauffman \cite{dye_kauffman_surf}. There are many skein-theoretic polynomials for virtual knots and link diagrams on surfaces (see e.g. Boden--Karimi--Sikora \cite{boden_karimi_sikora}, Boninger \cite{boninger}, Dye--Kauffman \cite{dye_kauffman_arrow}, Manturov \cite{manturov_surf}, Miller \cite{miller}). As our intention is only to establish a natural connection between two seemingly disparate methods, here we will focus solely on the surface bracket itself. In Section \ref{sec_surf}, we define a notion of symplectic rank for the surface bracket and prove that the Dye--Kauffman condition for minimality is equivalent to the maximality of this symplectic rank. It is then proved in Section \ref{sec_surf_factors} that the surface bracket with $\mathbb{Z}_2$ homological coefficients factors through the homotopy $\Zh$-construction. Thus, the homotopy $\Zh$-construction and the symplectic rank unify the ``Alexander-type'' and ``Jones-type'' methods for determining $\widecheck{g}_2(L)$.

\subsection{Application: non-additivity of the virtual 3-genus} Recall that the \emph{3-genus} $g_3(K)$ of a classical knot $K$ in $S^3$ is the smallest genus among all Seifert surfaces which are bounded by the knot. As is well-known, the 3-genus is additive under connected sum: $g_3(K_1\# K_2)=g_3(K_1)+g_3(K_2)$. A virtual knot is said to be \emph{almost classical} if it has a homologically trivial (with $\mathbb{Z}$ coefficients) representative in some thickened surface. The \emph{virtual 3-genus} $\widecheck{g}_3(K)$ of an almost classical knot $K$ is the smallest genus among all Seifert surfaces bounded by homologically trivial representatives of $K$ in thickened surfaces. Here we will show that the virtual $3$-genus is not additive under connected sum of virtual knots. In Section \ref{sec_v3g}, we give an example of a connect sum of unknots $K_1$, $K_2$ such that $K_1 \# K_2$ is almost classical and $\widecheck{g}_3(K_1 \# K_2)=1$. Since  the only knot with virtual $3$-genus $0$ is the unknot, it suffices to show that $\widecheck{g}_2(K_1 \#K_2) \ne 0$. For our example, the CSW polynomial of $K_1\#K_2$ is identically 0 and the Gordon-Litherland determinant test fails. Furthermore, since the diagram of $K_1 \# K_2$ has twenty classical crossings, computation of skein theoretic invariants such as the surface bracket are not feasible. The $U_q (\mathfrak{gl}(2|1))$ lower bound, however, gives the correct value of $\widecheck{g}_2(K_1 \#K_2)=2$. Hence, $1=\widecheck{g}_3(K_1 \# K_2) > \widecheck{g}_3(K_1)+\widecheck{g}_3(K_2)=0+0=0$.

\subsection{Examples $\&$ Calculations} To compare the $U_q(\mathfrak{gl}(m|n))$ bounds to other methods, further calculations are given in Section 9. There is an example of an almost classical satellite knot having CSW bound $0$ and arrow polynomial bound $0$, but whose $U_q(\mathfrak{gl}(2|1))$ correctly determines the virtual $2$-genus. There is also an example of a non-alternating checkerboard colorable knot such that the Gordon--Litherland determinant test fails, but whose $U_q(\mathfrak{gl}(1|1))$ and $U_q(\mathfrak{gl}(2|1))$ both recover the virtual $2$-genus. It is furthermore shown that the $U_q(\mathfrak{gl}(1|1))$ bound is sharp for an arbitrarily large virtual $2$-genus. This is done by showing that the $U_q(\mathfrak{gl}(1|1))$ bound is maximal for the infinite hyperbolic family of generalized $r$-Kishino knots. All together, these examples demonstrate the the $U_q(\mathfrak{gl}(m|n))$ bounds are generally effective at determining the minimal genus, even in cases where the minimal crossing number is either unknown or not easily inferred.

%As these were previously shown to be tg-hyperbolic by Adams et al. \cite{adams_tg}, this shows that the family of $U_q(\mathfrak{gl}(m|n))$ lower bounds can be useful at determining $\widecheck{g}_2(L)$ in both the hyperbolic and non-hyperbolic cases.  

\subsection{Organization}  We begin in Section \ref{sec_csw} with a detailed account of the CSW polynomial. This will provide the necessary tools to relate the prismatic $U_q(\mathfrak{gl}(1|1))$ polynomial to the CSW polynomial. In Section \ref{sec_homotopy_zh}, we define prismatic links and the homotopy $\Zh$-construction and relate it to the operator group of Carter--Silver--Williams \cite{CSW}. In Section \ref{sec_uqglmn_rt}, we generalize to prismatic tangles and introduce the prismatic $U_q(\mathfrak{gl}(m|n))$ functor $\widetilde{Q}^{m|n}_{\Omega}$. The prismatic $U_q(\mathfrak{gl}(1|1))$ polynomial and the CSW polynomial are identified in Section \ref{sec_uqgl11_csw}. The proof of the main theorem appears in Section \ref{sec_gen_bounds}. Relations to the GAP are discussed in Section \ref{sec_GAP_and_homology_zh}. The surface bracket is similarly factored through the homotopy $\Zh$-construction in Section \ref{sec_homotopy_and_surface}.  Examples and applications, such as the non-additivity of the virtual 3-genus, are collected together in Section \ref{sec_applications}. 

\section{The infinite cyclic CSW polynomial} \label{sec_csw}

%Need to check spelling below here.

In this paper, we will use an infinite cyclic version of CSW polynomial. This section gives both a topological and computational perspective on its construction. This is needed to connect it to the quantum model ahead. Originally, the CSW polynomial was defined using operator groups. This terminology is reviewed in Section \ref{sec_operator}. A standard reference is Robinson \cite{robinson}. In Section \ref{sec_covering}, we define the covering group $\widetilde{\pi}_L$ of a link $L$ in a thickened surface as a finitely generated and presented operator group. The infinite cyclic cover which identifies the CSW polynomial as an Alexander polynomial is given explicitly in Section \ref{sec_infinite_cyclic}. Using this, we define the infinite cyclic CSW polynomial in Section \ref{sec_csw_poly}. The symplectic rank is introduced in Section \ref{sec_symp_rank}. A detailed example is worked out in Section \ref{sec_csw_example}.

\subsection{Review: Operator groups} \label{sec_operator} An \emph{operator group} is a triple $(\pi,\Gamma,\alpha)$, where $\pi$ is a group, $\Gamma$ is a set, and $\alpha: \pi \times \Gamma \to \pi$, $\alpha(a,\gamma)=a^\gamma$, is a function such that $\alpha(-,\gamma)$ is a group homomorphism for all $\gamma \in \Gamma$. When $\Gamma$ is a group, we assume that: 
\[
(\alpha^{\gamma})^{\eta}=\alpha^{\gamma \eta}.
\]
Observe that this is a \emph{right action}. A \emph{$\Gamma$-subgroup} of $(\pi,\Gamma,\alpha)$ is a subgroup $H$ of $\pi$ such that $h^{\gamma} \in H$ for all $h \in H$ and $\gamma \in \Gamma$. Given a normal $\Gamma$-subgroup of $\pi$, the quotient $\pi/N$ is also an operator group $(\pi/N,\Gamma,\bar{\alpha})$ where $\bar{\alpha}(Ng,\gamma)=Ng^{\gamma} $. A operator group will be called \emph{finitely generated (as an operator group)} if there are elements $g_1,\ldots,g_r$ of $\pi$ such that for every $g \in \pi$, $g=g_{i_1}^{\gamma_1}\cdot \ldots \cdot g_{i_n}^{\gamma_n}$ for $g_{i_j} \in \{g_1,\ldots,g_r \}$ and $\gamma_i \in \Gamma$. Likewise, an operator group is \emph{finitely presented} if there is a finite subset $R$ of its set of relations $S$ such that the orbit of the action of $\Gamma$ on $R$ is all of $S$. 

A \emph{homomorphism} of operator groups $(\pi_a,\Gamma_a,\alpha_a)$, $(\pi_b,\Gamma_b,\alpha_b)$, where $\Gamma_a,\Gamma_b$ are groups, is a pair of group homomorphisms $(f,\phi):(\pi_a,\Gamma_a) \to (\pi_b,\Gamma_b)$ such that $f(a^{\gamma})=f(a)^{\phi(\gamma)}$ for all $a \in \pi,\gamma \in \Gamma$. If both $f$ and $\phi$ are isomorphisms, then $(f,\phi)$ is an \emph{isomorphism of operator groups}. 

\subsection{The covering group} \label{sec_covering} Let $L \subset\Sigma \times [0,1]$ be a link where $\Sigma$ is a closed, connected, oriented surface of genus $g(\Sigma)=g>0$. Let $\widetilde{\Sigma}$ be the universal cover of $\Sigma$ and let $p:\widetilde{\Sigma} \to \Sigma$ denote the covering projection. Then $p \times \text{id}: \widetilde{\Sigma} \times [0,1] \to \Sigma \times [0,1]$ is also a covering. Set $\widetilde{L}=(p \times \text{id})^{-1}(L)$. Then there is a covering $\widetilde{X}=\widetilde{\Sigma} \times [0,1] \smallsetminus \widetilde{L}$ of $X=\Sigma \times [0,1] \smallsetminus L$. Abusing notation we will write the covering projection as $p:\widetilde{X} \to X$. Now, choose a base point $\infty$ on $\Sigma$ and a lift $\widetilde{\infty} \in p^{-1}(\Sigma)$. The \emph{covering group} of $L$ is defined to be $\widetilde{\pi}_L=\pi_1(\widetilde{\Sigma} \times [0,1] \smallsetminus \widetilde{L},(\widetilde{\infty},1))$. The covering group is neither finitely generated nor presented as a group. However, if we set $\Gamma\cong\pi_1(\Sigma,\infty)$ to be the group of covering transformations of $p$, $\widetilde{\pi}_L$ is both finitely generated and presented as a $\Gamma$-operator group. 

\begin{figure}[htb]
\[
\xymatrix{
\begin{array}{c}
\includegraphics[scale=.5]{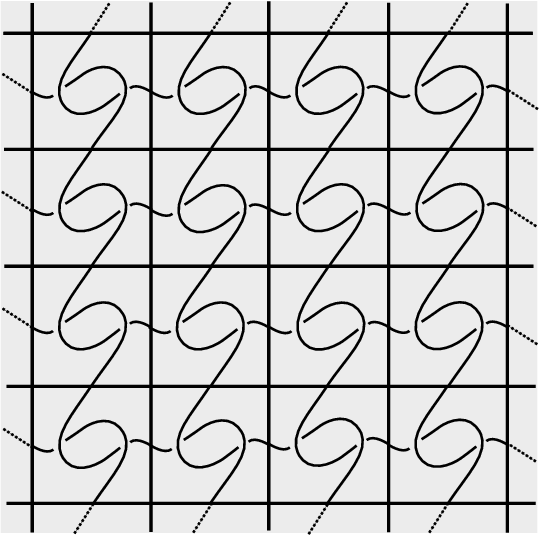}
\end{array}
\ar@/^5pc/[r]^-p & 
\begin{array}{c}
\includegraphics[scale=.6]{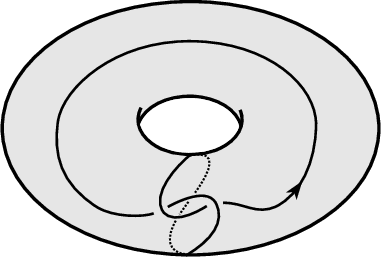}
\end{array}}
\]
\caption{Lifting a knot in $\Sigma \times [0,1]$ to the universal cover $\mathbb{R}^2 \times [0,1]$.} \label{fig_lifting}
\end{figure}

To write out this presentation, first choose a collection $x_1,y_1,\ldots,x_g,y_g$ of simple closed curves on $\Sigma$, intersecting only at $\infty$, which represent a symplectic basis of $H_1(\Sigma;\mathbb{Z})$. These are assumed to be oriented and ordered so that $x_i \cdot y_j=\delta_{ij}$ and $x_i \cdot x_j=y_i \cdot y_j=0$, where $-\cdot-$ to denotes the intersection pairing. Note that we will abuse notation throughout by confusing $x_i,y_i$ with their homology and homotopy classes. Then:
\begin{align}\label{eqn_sigma_pres}
\pi_1(\Sigma,\infty)\cong \left\langle x_1,y_1,\ldots,x_g,y_g| \prod_{i=1}^g[x_i,y_i] \right \rangle,
\end{align}
where $[a,b]=a^{-1}b^{-1}ab$.  Let $D$ be a link diagram on $\Sigma$ that intersects the symplectic basis transversely, but not at $\infty$. Choose a fundamental region $G$ of the action of $\Gamma$ on $\widetilde{\Sigma}$ such that $\infty \in \partial G$. Label the arcs of $p^{-1}(D) \subset \widetilde{\Sigma}$ according to the following rule:  if an arc $b$ is the image of an arc $a$ in $G$ by a covering transformation $\gamma \in \Gamma$, write $b=a^{\gamma}$. It follows that there are a finite number of arcs $a_1,\ldots,a_s$ lying in $G$ such that the set of orbits $a_1^{\gamma},\ldots,a_s^{\gamma}$ for $\gamma \in \Gamma$ generate $\widetilde{\pi}_{L}$ as an operator group. The Wirtinger relations for $\widetilde{\pi}_{L}$ can likewise all be written in terms of the generators $a_i^{\gamma}$. As these can be read off the fundamental domain $G$, only a finite number of them suffice to enumerate them all. Denote these by $r_1,\ldots,r_s$. We assume that the number of relators and generators coincide by \cite{CSW}, Lemma 2.3. Thus, we have an operator group presentation of $\widetilde{\pi}_L$:
\begin{align} \label{eqn_op_grp_pres}
\widetilde{P}=\langle a_1,\ldots,a_s| r_1,\ldots, r_s \rangle_{\Gamma}.
\end{align}

Note that $\widetilde{P}$ is both finitely generated and presented as an operator group. The covering group can be recovered from $\widetilde{P}$ by taking the group on the set of letters $A=\{a_1,\ldots,a_s\}$, freely generated as an operator group, and then taking the quotient by the normal $\Gamma$-subgroup $N$ generated by the set of relators $\{r_1,\ldots,r_s\}$. The presentation $\widetilde{P}$ represents the operator group $(A/N,\Gamma,\bar{\alpha})$, where $\bar{\alpha}$ is the action of $\Gamma$ induced on the quotient. This is summarized in the following theorem.

\begin{theorem} $\widetilde{\pi}_L \cong \langle a_1,\ldots,a_s| r_1,\ldots, r_s \rangle_{\Gamma}$ as operator groups. 
\end{theorem}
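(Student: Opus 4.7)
The plan is to derive the operator group presentation from the Wirtinger presentation of $\pi_1(X,(\infty,1))$, where $X=\Sigma\times[0,1]\setminus L$, combined with covering space theory applied to $p\times\mathrm{id}:\widetilde{X}\to X$. First I would write the standard Wirtinger presentation of $\pi_1(X,(\infty,1))$ associated to the diagram $D$: a meridional generator per arc of $D$ and a Wirtinger relation per classical crossing. Next I would identify $\widetilde{X}\to X$ as the regular cover pulled back from the universal cover $\widetilde{\Sigma}\to\Sigma$ along the composition $X\hookrightarrow\Sigma\times[0,1]\simeq\Sigma$. Since the inclusion $X\hookrightarrow\Sigma\times[0,1]$ induces a surjection on $\pi_1$ (a general position argument shows any loop in $\Sigma\times[0,1]$ can be pushed off the link), the deck group of this cover is $\Gamma=\pi_1(\Sigma,\infty)$ and $\widetilde{\pi}_L\triangleleft\pi_1(X,(\infty,1))$ is the kernel of the induced surjection $\pi_1(X)\twoheadrightarrow\Gamma$.

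With this set-up I would obtain the presentation by either Reidemeister--Schreier rewriting of the Wirtinger presentation using the Schreier transversal indexed by $\Gamma$, or by a direct van Kampen argument on a CW decomposition of $\widetilde{X}$ adapted to the $\Gamma$-tessellation by translates of $G\times[0,1]$. Either approach produces a presentation whose generators correspond bijectively to the arcs of $p^{-1}(D)$ and whose relations correspond bijectively to the crossings of $p^{-1}(D)$. Since the arcs and crossings inside the fundamental domain $G$ are $a_1,\ldots,a_s$ and the crossings giving $r_1,\ldots,r_s$, the full generating set is $\{a_i^\gamma:1\le i\le s,\,\gamma\in\Gamma\}$ and the full set of relations is the $\Gamma$-orbit of $\{r_1,\ldots,r_s\}$. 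The equality of the counts of generators and relations is exactly \cite{CSW}, Lemma 2.3.

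To conclude, I would reinterpret this as an operator group presentation: the free operator group on the symbols $\{a_1,\ldots,a_s\}$ has, as underlying group, the free group on $\{a_i^\gamma\}$, and the normal $\Gamma$-subgroup it generates from $r_1,\ldots,r_s$ cuts out exactly the above rewritten Wirtinger relations. The induced $\Gamma$-action on the quotient must then be identified with the deck-transformation action on $\widetilde{\pi}_L$. Via the short exact sequence $1\to\widetilde{\pi}_L\to\pi_1(X)\to\Gamma\to 1$, the deck action is realized by conjugation in $\pi_1(X)$ by a chosen set-theoretic lift of $\gamma$; on meridional generators this conjugation implements exactly the formal symbol substitution $a_i\mapsto a_i^\gamma$ that defines the operator structure.

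The main obstacle will be that last identification: verifying that the abstract $\Gamma$-action produced by Reidemeister--Schreier really coincides with the geometric deck action on $\widetilde{\pi}_L$, so that the isomorphism holds as operator groups rather than merely as abstract groups. This requires carefully coordinating the basepoint lift $\widetilde{\infty}$, the $\gamma$-translates of the symplectic basis curves, and a system of connecting paths in $G$ so that the symbol $a_i^\gamma$ simultaneously denotes the geometric $\gamma$-translate of the arc $a_i$ in $\widetilde{\Sigma}$ and the algebraic conjugate of the meridian $\mu_{a_i}$ by (a lift of) $\gamma$ in $\pi_1(X)$.
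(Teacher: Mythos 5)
Your proposal offers two routes: (i) Reidemeister--Schreier rewriting applied to a Wirtinger presentation of $\pi_1(X,(\infty,1))$, and (ii) a van Kampen argument on $\widetilde{X}$ using the $\Gamma$-tessellation by translates of $G\times[0,1]$. Route (ii) is essentially the paper's (and CSW's) argument and works cleanly; the key point it exploits, which you should state explicitly, is that $\widetilde{\Sigma}\times[0,1]\cong\mathbb{R}^2\times[0,1]$ is simply connected, so the \emph{classical} Wirtinger machinery applies directly to the infinite lifted diagram $p^{-1}(D)$, giving one generator per lifted arc and one relation per lifted crossing. Organizing these by $\Gamma$-orbits via the fundamental domain then \emph{is} the finite operator presentation; no subsequent rewriting is required.

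Route (i), as you have written it, has a gap. When $g(\Sigma)>0$ there is no ``standard Wirtinger presentation of $\pi_1(X,(\infty,1))$ with a meridional generator per arc and a Wirtinger relation per classical crossing'': the surface group $\pi_1(\Sigma,\infty)$ must appear in any presentation of $\pi_1(X)$, together with the relation $\prod_i[x_i,y_i]$ and additional relations recording how meridians change upon crossing the curves $x_i,y_i$. The presentation of $\pi_1(X)$ that does exist in the paper is the one in Lemma \ref{lemma_pres_from_op_pres} (CSW Prop.\ 2.4), but that is \emph{derived from} the operator group presentation; taking it as the input to Reidemeister--Schreier and then re-deriving the operator presentation would be circular. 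To salvage route (i), one would first need an independent van Kampen derivation of a presentation of $\pi_1(X)$ incorporating $\pi_1(\Sigma)$, which is essentially as much work as route (ii). Your final paragraph correctly identifies the bookkeeping needed to match the Schreier/conjugation $\Gamma$-action with the geometric deck action; route (ii) makes that identification tautological, since the labels $a_i^\gamma$ are defined geometrically as the $\gamma$-translates of the arcs $a_i$ from the outset.
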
  

Two links $L_1,L_2 \subset \Sigma \times [0,1]$ are said to be \emph{equivalent} if there is an orientation preserving diffeomorphism $h:(\Sigma \times [0,1],\Sigma \times \{0\}) \to (\Sigma \times [0,1],\Sigma \times \{0\})$ that maps $L_1$ to $L_2$ and preserves both the orientation and order of the components. In this case, $\widetilde{\pi}_{L_1} \cong \widetilde{\pi}_{L_2}$ as operator groups (\cite{CSW}, Theorem 3.4). To derive computable invariants from $\widetilde{\pi}_L$, the following lemma will be needed.

\begin{lemma} [Carter-Silver-Williams \cite{CSW}, Proposition 2.4]  \label{lemma_pres_from_op_pres} A presentation for $\pi_1(X, (\infty,1))$ can be obtained from $\widetilde{P}$ by inserting generators $x_1,y_1,\ldots,x_g,y_g$, inserting the relation $\prod_{i=1}^g[x_i,y_i]$, and replacing every symbol of the from $a^{\gamma}$ appearing in the relators with $\gamma a \gamma^{-1}$.   
\end{lemma}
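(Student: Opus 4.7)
The plan is to identify $\pi_1(X,(\infty,1))$ with a split extension of $\Gamma$ by $\widetilde{\pi}_L$ and then invoke the standard presentation of a semidirect product. First, I would observe that $p\times\id: \widetilde{\Sigma}\times[0,1]\to\Sigma\times[0,1]$ is the universal cover and has deck group $\pi_1(\Sigma\times[0,1])\cong\Gamma$. Because $\widetilde{L}=(p\times\id)^{-1}(L)$ is $\Gamma$-invariant, the restriction $\widetilde{X}\to X$ remains a regular covering with deck group $\Gamma$, producing the short exact sequence
\begin{equation*}
1\longrightarrow\widetilde{\pi}_L\longrightarrow\pi_1(X,(\infty,1))\longrightarrow\Gamma\longrightarrow 1.
\end{equation*}
The classifying homomorphism $\pi_1(X)\to\Gamma$ is induced by the inclusion $X\hookrightarrow\Sigma\times[0,1]$ followed by the canonical identification $\pi_1(\Sigma\times[0,1])\cong\Gamma$, since the covering $\widetilde{X}\to X$ is the pullback of the universal cover along $X\hookrightarrow\Sigma\times[0,1]$.

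Next I would split this sequence. We may assume $L\subset\Sigma\times(0,1)$, so that $\Sigma\times\{1\}\subset X$. The inclusion $j:\Sigma\times\{1\}\hookrightarrow X$ gives a homomorphism $j_\ast:\Gamma=\pi_1(\Sigma\times\{1\},(\infty,1))\to\pi_1(X,(\infty,1))$, and composing with the classifying map $\pi_1(X)\to\pi_1(\Sigma\times[0,1])\cong\Gamma$ recovers the isomorphism induced by the homotopy equivalence $\Sigma\times\{1\}\hookrightarrow\Sigma\times[0,1]$, hence the identity. Therefore $j_\ast$ is a section and
\begin{equation*}
\pi_1(X,(\infty,1))\;\cong\;\widetilde{\pi}_L\rtimes\Gamma,
\end{equation*}
in which the action of $\gamma\in\Gamma$ on $\widetilde{\pi}_L$ is by conjugation $a\mapsto\gamma a\gamma^{-1}$. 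This conjugation action is precisely the action of deck transformations on $\pi_1(\widetilde{X},(\widetilde{\infty},1))$ which defines the operator structure of $\widetilde{P}$, so $a^\gamma=\gamma a\gamma^{-1}$ in $\pi_1(X)$.

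From here the presentation is obtained by the standard rule for semidirect products. Take the generators $a_1,\ldots,a_s$ of $\widetilde{P}$ together with the generators $x_1,y_1,\ldots,x_g,y_g$ of $\Gamma$ from (\ref{eqn_sigma_pres}); take the relations $r_1,\ldots,r_s$ and the surface relation $\prod_{i=1}^g[x_i,y_i]$; and for each symbol $a^\gamma$ occurring in some $r_i$ substitute the expression $\gamma a\gamma^{-1}$ using the conjugation identity just established. Since $a^{\gamma\eta}=(a^\gamma)^\eta$ is automatically respected by iterated conjugation, no further action relations need be added, and the (infinite) operator relators $r_i^\gamma$ become consequences of the finitely many rewritten relators. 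The main obstacle is the splitting, namely verifying carefully that the section $j_\ast$ lands inside $\pi_1(X,(\infty,1))$ and that the conjugation action induced on $\widetilde{\pi}_L$ coincides with the operator action; once this identification of actions is in hand, the Tietze rewriting $a^\gamma\rightsquigarrow\gamma a\gamma^{-1}$ yields the asserted presentation.
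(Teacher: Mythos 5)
The paper cites this lemma directly from Carter--Silver--Williams and does not reproduce a proof, so there is no in-paper argument to benchmark your write-up against. Your strategy via the regular cover, the split short exact sequence, and the standard semidirect-product presentation is the natural one, and the reductions you invoke (the section along $\Sigma\times\{1\}$, the Tietze elimination of the redundant generators $a_i^\gamma$ for $\gamma\neq 1$) are the right ones.

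The gap is precisely the point you defer as ``the main obstacle'': verifying that the conjugation $a\mapsto j_*(\gamma)\,a\,j_*(\gamma)^{-1}$ on the normal subgroup agrees with the operator action $a\mapsto a^\gamma$ used to define $\widetilde{P}$. This is not a formality. The operator action on $\widetilde{\pi}_L=\pi_1(\widetilde{X},(\widetilde{\infty},1))$ is induced by a deck transformation of $\widetilde{X}$ \emph{together with a choice of base-path} from $\gamma\cdot\widetilde{\infty}$ back to $\widetilde{\infty}$, and only with the correct choice does one land on $\gamma a\gamma^{-1}$ rather than $\gamma^{-1}a\gamma$. The distinction is live in this very paper: Theorem~\ref{thm_prismatic_link_group} derives the prismatic link group from $\widetilde{\pi}_L$ by the \emph{opposite} substitution $a^\gamma\rightsquigarrow\gamma^{-1}a\gamma$, and the contrast between the two is what makes the homotopy $\Zh$-construction and the operator-group picture fit together. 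Relatedly, your assertion that the infinite family of relators $r_i^\gamma$ collapses ``automatically'' requires that the substitution turn $r_i^\gamma$ into an honest conjugate of the rewritten $r_i$; this is exactly where the right-action identity $(\alpha^\gamma)^\eta=\alpha^{\gamma\eta}$ stated in Section~\ref{sec_operator} has to be reconciled against iterated conjugation, and carrying out that reconciliation is what forces the sign convention you have left unresolved. Until that computation is done, the write-up is an outline rather than a proof.
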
 

%\begin{remark} In \cite{CSW}, the rule is $a^{\gamma} \to \gamma a \gamma^{-1}$. However, this does not agree with the action of $\Gamma$ on $\widetilde{\Sigma}$ \emph{on the the right}: $(a^{\gamma_1})^{\gamma_2}=a^{\gamma_1 \gamma_2}$. The correction above follows immediately from the proof in \cite{CSW}. 
%\end{remark}

\subsection{The infinite cyclic cover} \label{sec_infinite_cyclic} For a link $L=K_1 \cup \cdots \cup K_d \subset \Sigma \times [0,1]$ with $g(\Sigma)=g$, the CSW polynomial $\Delta_L^0$ is an element of the group ring $\mathbb{Z}[\mathbb{Z}^d \times H_1(\Sigma)]$, which is well-defined up to multiplication by units. Thus, $\Delta_L^0$ is a  polynomial in the variables $t_1,\ldots,t_d,x_1,y_1,\ldots,x_g,y_g$. Our simplified infinite-cyclic version of the CSW polynomial has only a single ``$t$'' variable, but retains the variables $x_1,y_1,\ldots,x_g,y_g$. For knots ($d=1$), this is identical to $\Delta_L^0$. As in the classical case, Alexander polynomials with a single ``$t$'' variable can be defined via infinite cyclic covers.  

Let $\varepsilon: \widetilde{\pi}_L \to \langle t \rangle$ be the map which sends all positively oriented meridians of $\widetilde{L}$ to $t$. Then $\text{ker}(\varepsilon)$ corresponds to the infinite cyclic covering $p':\widetilde{X}' \to \widetilde{X}$. The space $\widetilde{X}'$ can be constructed explicitly in analogy with the classical case (see e.g. Rolfsen \cite{rolfsen}, Chapter 5.C). First note that $H_1(\Sigma \times [0,1], \Sigma \times \{1\}) \cong \{0\}$. Hence, all links bound in relative homology. Then there is a connected orientable surface $F \subset \Sigma \times [0,1]$ such that $\partial F=L\sqcup -C$, where $C$ is a set of oriented simple closed curves in $\Sigma \times \{1\}$. Let $\widetilde{F}=p^{-1}(F)$ and let $N:(\widetilde{F} \smallsetminus \widetilde{L}) \times (-1,1) \to \widetilde{\Sigma} \times [0,1]$ be a bicollar of $\widetilde{F} \smallsetminus \widetilde{L}$ such that $N((\widetilde{F} \smallsetminus \widetilde{L}) \times \{0\})=\widetilde{F} \smallsetminus \widetilde{L}$. Define $N^-=N((\widetilde{F} \smallsetminus \widetilde{L}) \times (-1,0))$, $N^+=N((\widetilde{F} \smallsetminus \widetilde{L}) \times (0,1))$, and $N=N((\widetilde{F} \smallsetminus \widetilde{L}) \times (-1,1))$. Take infinitely many copies $N_i$ of $N$ and $M_i$ of $\widetilde{\Sigma} \times [0,1] \smallsetminus \widetilde{F}$, $- \infty < i < \infty$. Then $\widetilde{X}'$ is the quotient space obtained from the disjoint union of the copies by identifying $N^+ \subset N_i$ with $N^+ \subset M_i$ and $N^- \subset N_i$ with $N^- \subset M_{i+1}$. The identification here is via the identity map.

\begin{figure}[htb]
\begin{tabular}{|c|c|} \hline
\xymatrix{\begin{tabular}{c}\includegraphics[scale=.65]{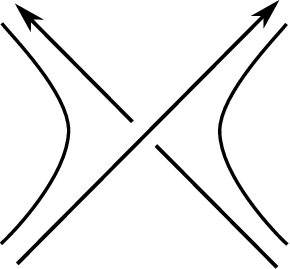} \end{tabular} \ar[r] & \begin{tabular}{c} \includegraphics[scale=.65]{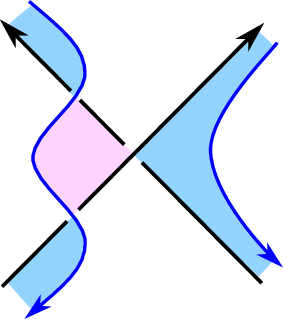} \end{tabular}} & \begin{tabular}{c} \\ \includegraphics[scale=.4]{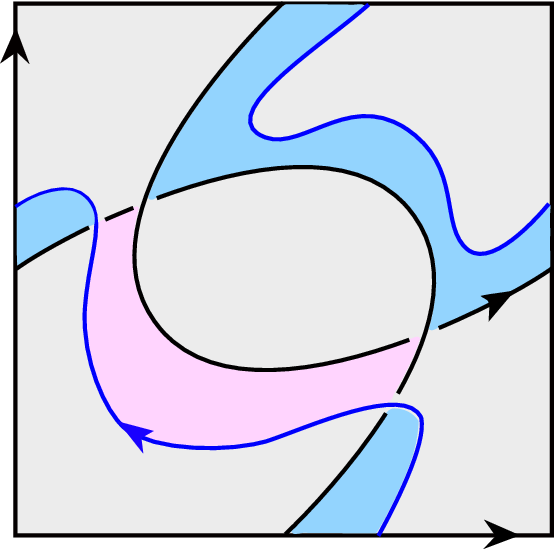} \\ \underline{The virtual trefoil:}  \end{tabular} \\  & \\ \hline
\end{tabular} 
\caption{Constructing a relative Seifert surface for a link $L \subset \Sigma \times [0,1]$.} \label{fig_rel_seif}
\end{figure}

We conclude this subsection with some remarks that will prove useful for understanding the $\Zh$-construction in Section \ref{sec_zh_review}. A surface $F \subset \Sigma \times [0,1]$, as above, such that $\partial F=L \sqcup-C$ for some family of disjoint curves $C \subset \Sigma \times \{1\}$ is called a \emph{relative Seifert surface for $L$}. A relative Seifert surface for $L$ can be constructed from a link diagram on $\Sigma$ as follows. First, perform the Seifert smoothing at the crossings of $L$ (see Figure \ref{fig_rel_seif}, left). Let $C$ denote the set of Seifert cycles. Then $F$ consists of the annuli $C \times [\tfrac{1}{2},1]$ together with half-twisted bands at the crossings $L$, attached along the annuli boundary $C \times \{ \tfrac{1}{2}\}$. This is illustrated at a crossing of $L$ in Figure \ref{fig_rel_seif}. A portion of the curve $-C$ is drawn in \textcolor{blue}{\textbf{blue}}.  For the knot in Figure \ref{fig_lifting}, the surface $F$ resembles Figure \ref{fig_rel_seif}, right. 

\subsection{The CSW polynomial} \label{sec_csw_poly} 
Our infinite cyclic CSW polynomial for links in thickened surfaces is defined following Kreinbihl \cite{kreinbihl}, who studied the case of knots. Some additional details are provided here to aid in calculation. The definition is given first, followed by an explanation.

\begin{definition}[Alexander module, CSW polynomial]\label{defn_csw} The \emph{Alexander module} $A(L)$ of a link $L \subset \Sigma \times [0,1]$ will be the group $H_1(\widetilde{X}',(p')^{-1}(\widetilde{\Sigma} \times \{1\});\mathbb{Z})$, considered as a $\mathbb{Z}[\mathbb{Z} \times H_1(\Sigma)]$-module. The \emph{Carter-Silver-Williams (CSW) polynomial} $\Delta_L^0$ is any generator of the zeroth-order ideal of $A(L)$, which is well-defined up to units of $\mathbb{Z}[\mathbb{Z} \times H_1(\Sigma)]$. 
\end{definition} 

To understand Definition \ref{defn_csw}, we must first describe the $\mathbb{Z}[\mathbb{Z} \times H_1(\Sigma)]$-module structure on $A(L)$. This is inherited from the operator group presentation of $\widetilde{\pi}_L$ as follows. Let $Y$ denote the $2$-complex associated to the presentation of $\pi_1(X,(\infty,1))$ from Lemma \ref{lemma_pres_from_op_pres}. This induces a cellular structure on $\widetilde{X}$, $\widetilde{X}'$, denoted $\widetilde{Y},\widetilde{Y}'$, respectively. Equation \ref{eqn_sigma_pres} defines a compatible cellular structure on $\Sigma \times \{1\}$, $\widetilde{\Sigma} \times \{1\}$, and $(p')^{-1}(\widetilde{\Sigma} \times \{1\})$, denoted $S, \widetilde{S}$, and $\widetilde{S}'$, respectively. Let $C_i(Z)$ denote the free abelian group on the set of $i$-cells in a cellular complex $Z$.  Then there is a commutative diagram:
\begin{equation} \label{eqn_cell_complex}
\xymatrix{C_2(\widetilde{S}') \ar[r]^-{\partial_2} \ar[d]_-{\iota_2} & C_1(\widetilde{S}') \ar[r]^-{\partial_1}  \ar[r] \ar[d]_-{\iota_1} & C_0(\widetilde{S}') \ar[d]_-{\iota_o} \ar[r]  & 0 \\ C_2(\widetilde{Y}') \ar[d]_-{\rho_2} \ar[r]^-{\partial_2} & C_1(\widetilde{Y}') \ar[r]^-{\partial_1} \ar[d]_-{\rho_1} \ar[r] & C_0(\widetilde{Y}') \ar[d]_-{\rho_0} \ar[r]  & 0 \\ \frac{C_2(\widetilde{Y}')}{C_2(\widetilde{S}')} \ar[r]^-{\partial_2} & \frac{C_1(\widetilde{Y}')}{C_1(\widetilde{S}')} \ar[r]^-{\partial_1}  \ar[r] & \frac{C_0(\widetilde{Y}')}{C_0(\widetilde{S}')} \ar[r] & 0 }
\end{equation}
In the above, $\rho_i$ denotes the natural surjection and $\iota_i$ the inclusion map. To describe the action of $\mathbb{Z}[\mathbb{Z} \times H_1(\Sigma)]$ on $C_i(\widetilde{Y}')/C_i(\widetilde{S}')$, we define a cellular complex from the operator group presentation (\ref{eqn_op_grp_pres}). This has one $0$-cell. For every $\gamma \in \Gamma$, and every $1 \le i \le s$, there is a 1-cell $a_i^{\gamma}$ and a $2$-cell $r_i^{\gamma}$. Clearly, $\mathbb{Z}[\Gamma]$ acts on the right of according to $a^{\gamma_1} \cdot \gamma_2=a^{\gamma_1 \gamma_2}$. This cellular complex can be obtained from $\widetilde{Y}'$ be collapsing all the $1$-cells corresponding to lifts of $x_1,y_1,\ldots,x_g,y_g$ to the $0$-cell. Thus, $C_i(\widetilde{Y}')/C_i(\widetilde{S}')$ inherits the $\mathbb{Z}[\Gamma]$-module structure from the operator group presentation. Furthermore, $C_i(\widetilde{Y}')/C_i(\widetilde{S}')$ has the structure of a $\mathbb{Z}[\mathbb{Z} \times \Gamma]$-module, where the action of $\mathbb{Z}=\langle t \rangle$ is that of the infinite cyclic cover. Lastly, observe that $\mathbb{Z}[\mathbb{Z} \times H_1(\Sigma)]$ has an obvious structure as a $\mathbb{Z}[\mathbb{Z} \times \Gamma]$-bimodule. Then it follows that for $i=0,1,2$, $C_i(\widetilde{Y}')/C_i(\widetilde{S}') \otimes_{\mathbb{Z}[\mathbb{Z} \times \Gamma]} \mathbb{Z}[\mathbb{Z} \times H_1(\Sigma)]$ is finitely-generated free $\mathbb{Z}[\mathbb{Z} \times H_1(\Sigma)]$-module. Using this, we can now calculate $A(L)$ and $\Delta_L^0$.

\begin{lemma} \label{lemma_alex_module} For a link $L \subset \Sigma \times [0,1]$, its Alexander module is given by:
\[
A(L) \cong \frac{C_1(\widetilde{Y}')/C_1(\widetilde{S}') \otimes_{\mathbb{Z}[\mathbb{Z} \times \Gamma]} \mathbb{Z}[\mathbb{Z} \times H_1(\Sigma)]}{ \partial_2\left(C_1(\widetilde{Y}')/C_1(\widetilde{S}') \otimes_{\mathbb{Z}[\mathbb{Z} \times \Gamma]} \mathbb{Z}[\mathbb{Z} \times H_1(\Sigma)] \right)}  
\]
\end{lemma}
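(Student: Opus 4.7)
The plan is to compute the relative homology cellularly and then install the correct module structure via extension of scalars. Starting from diagram (\ref{eqn_cell_complex}), the bottom row is the cellular chain complex of the pair $(\widetilde{Y}',\widetilde{S}')$, and since this pair is a CW model for $(\widetilde{X}', (p')^{-1}(\widetilde{\Sigma}\times\{1\}))$, its first homology computes $H_1(\widetilde{X}', (p')^{-1}(\widetilde{\Sigma}\times\{1\});\mathbb{Z})$ as an abelian group. The strategy is thus first to identify this $H_1$ with a cokernel of $\partial_2$, then install the $\mathbb{Z}[\mathbb{Z}\times H_1(\Sigma)]$-structure via a tensor product, and finally pass the cokernel through tensoring using right-exactness.

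The key simplification is that $\partial_1$ in the bottom row of (\ref{eqn_cell_complex}) is trivially zero. Indeed, the presentation of $\pi_1(X,(\infty,1))$ obtained in Lemma \ref{lemma_pres_from_op_pres} is built on one wedge point, so $Y$ has a single $0$-cell based at $(\infty,1)$. Since $(\infty,1)\in \Sigma\times\{1\}$, this $0$-cell lies in $S$, whence every $0$-cell of $\widetilde{Y}'$ lies in $\widetilde{S}'=(p')^{-1}(\widetilde{\Sigma}\times\{1\})$. Therefore $C_0(\widetilde{Y}')/C_0(\widetilde{S}')=0$, forcing $\partial_1=0$ in the relative complex, and so
\[
H_1\!\left(\widetilde{X}', (p')^{-1}(\widetilde{\Sigma}\times\{1\});\mathbb{Z}\right) \;\cong\; \frac{C_1(\widetilde{Y}')/C_1(\widetilde{S}')}{\partial_2\!\left(C_2(\widetilde{Y}')/C_2(\widetilde{S}')\right)}.
\]

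To impose the $\mathbb{Z}[\mathbb{Z}\times H_1(\Sigma)]$-module structure, recall from the discussion preceding the lemma that each $C_i(\widetilde{Y}')/C_i(\widetilde{S}')$ is a finitely generated free $\mathbb{Z}[\mathbb{Z}\times\Gamma]$-module, with the $\mathbb{Z}$-factor tracking the infinite cyclic deck action and the $\Gamma$-factor coming from the operator-group presentation. Passing from $\Gamma$ to $H_1(\Sigma)=\Gamma^{\mathrm{ab}}$ is realized by the extension of scalars along the ring homomorphism $\mathbb{Z}[\mathbb{Z}\times\Gamma]\to \mathbb{Z}[\mathbb{Z}\times H_1(\Sigma)]$, i.e. by applying $-\otimes_{\mathbb{Z}[\mathbb{Z}\times\Gamma]}\mathbb{Z}[\mathbb{Z}\times H_1(\Sigma)]$. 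Since this tensor product is right-exact, it commutes with the cokernel above, and applying it to both numerator and denominator yields the asserted formula.

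The main obstacle is the last step: one must carefully verify that the abstract module $A(L)$ defined as $H_1$ of the relative chain complex of the abelian cover really is obtained, at the chain level, by tensoring the $\mathbb{Z}[\mathbb{Z}\times\Gamma]$-chains of $(\widetilde{Y}',\widetilde{S}')$ with $\mathbb{Z}[\mathbb{Z}\times H_1(\Sigma)]$. Equivalently, one must check that the $\Gamma$-action on the operator-group chains descends compatibly along $\Gamma\to H_1(\Sigma)$ and that no extra $\mathrm{Tor}$ terms appear when transferring structure. This is the standard Fox-calculus bookkeeping, but it is the only nontrivial verification; once it is in place, everything else is formal manipulation of cokernels and tensor products.
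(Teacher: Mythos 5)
Your argument follows the paper's proof exactly: the single observation that every $0$-cell lies in $\widetilde{S}'$, so $C_0(\widetilde{Y}')/C_0(\widetilde{S}')=0$ and $\partial_1$ is trivially zero, reduces the lemma to reading off the cokernel of $\partial_2$ from diagram (\ref{eqn_cell_complex}), with the $\mathbb{Z}[\mathbb{Z}\times H_1(\Sigma)]$-module structure supplied by the extension of scalars already set up in the preceding discussion. (You have also correctly written $\partial_2\bigl(C_2(\widetilde{Y}')/C_2(\widetilde{S}')\otimes_{\mathbb{Z}[\mathbb{Z}\times\Gamma]}\mathbb{Z}[\mathbb{Z}\times H_1(\Sigma)]\bigr)$ in the denominator, whereas the lemma as printed has $C_1$ there, which is a typo since $\partial_2$ has domain the degree-$2$ chains.) The ``obstacle'' you flag at the end is not actually a gap in this lemma so much as a point of phrasing in Definition \ref{defn_csw}: the $\mathbb{Z}[\mathbb{Z}\times H_1(\Sigma)]$-structure on $A(L)$ is introduced precisely by tensoring the chain complex as in the text preceding the lemma, so there is no separate homology-then-tensor computation to reconcile and no $\mathrm{Tor}$ correction to rule out; once that is understood, the lemma is exactly the formal cokernel manipulation you describe.
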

\begin{proof} Since all $0$-cells are in $\widetilde{S}'$, $C_0(\widetilde{Y}')/C_0(\widetilde{S}') \cong \{0\}$. Then the result follows from the definition of $A(L)$ and the commutative diagram (\ref{eqn_cell_complex}).
\end{proof}

Next, we calculate $\Delta_L^0$ from the presentation of $\pi_1(X,(\infty,1))$ given in Lemma \ref{lemma_pres_from_op_pres}. First some notation is needed. Let $\varphi:\mathbb{Z}[\pi_1(X,(\infty,1))] \to \mathbb{Z}[\mathbb{Z} \times H_1(\Sigma)]$ be the map defined on generators by $\varphi(a_i)=t$, $\varphi(x_i)=x_i$, and $\varphi(y_i)=y_i$. Let $\psi:\mathbb{Z}[a_1,\ldots,a_s,x_1,y_1,\ldots,x_g,y_g] \to \mathbb{Z}[\pi_1(X,(\infty,1))]$ be the map induced on the group rings from the natural projection. For each $i$, $1 \le i \le s$, let $\frac{\partial}{\partial a_i}: \mathbb{Z}[a_1,\ldots,a_s,x_1,y_1,\ldots,x_g,y_g] \to \mathbb{Z}[a_1,\ldots,a_s,x_1,y_1,\ldots,x_g,y_g] $ be the Fox derivative: 
\begin{align} \label{eqn_fox}
\frac{\partial}{\partial a_i}a_j=\delta_{ij}, \quad \quad \frac{\partial}{\partial a_i}x_j=\frac{\partial}{\partial a_i}y_j=0, \quad \quad \frac{\partial}{\partial a_i} w_1 w_2=\frac{\partial }{\partial a_i} w_1+w_1\frac{\partial }{\partial a_i}w_2, \quad \quad \frac{\partial}{\partial a_i} a_i^{-1}=-a_i^{-1}.
\end{align}

%, which we will again denote for the sake of simplicity by $a_1,a_2,\ldots,a_r$.  Let $\varphi:\mathbb{Z}[a_1,\ldots,a_r,x_1,y_1,\ldots,x_g,y_g] \to \mathbb{Z}[\langle t \rangle \times H_1(\Sigma)]$ be the map defined by $\varphi(a_i)=t$, $\varphi(x_i)=x_i$, and $\varphi(y_i)=y_i$. 

\begin{theorem} \label{thm_csw_poly_jacobian} Let $P=\langle a_1,\ldots, a_s, x_1,y_1,\ldots, x_g,y_g| r_1,\ldots, r_s, \prod_{i=1}^g[x_i,y_i] \rangle$ be the presentation of $\pi_1(X,(\infty,1))$ from Lemma \ref{lemma_pres_from_op_pres}. Then up to multiplication by units of $\mathbb{Z}[\mathbb{Z} \times H_1(\Sigma)]$:
\[
\Delta_L^0(t,x_1,y_1,\ldots,x_g,y_g) \doteq \det\left( \varphi \circ \psi \left(\frac{\partial r_i}{\partial a_j} \right)\right)_{i,j=1,\ldots,s}.
\]
\end{theorem}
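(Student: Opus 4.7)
The plan is to combine Lemma \ref{lemma_alex_module} with a Fox-calculus computation of the boundary $\partial_2$ in the quotient chain complex, and then invoke the elementary-ideal description of the Alexander polynomial. Specifically, once I exhibit an $s \times s$ presentation matrix $M$ for the $\mathbb{Z}[\mathbb{Z}\times H_1(\Sigma)]$-module $A(L)$, its zeroth Fitting ideal is the principal ideal $(\det M)$, and so $\Delta_L^0 \doteq \det M$ by Definition \ref{defn_csw}.

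For the setup, the $2$-complex $Y$ associated to the presentation $P$ has $s+2g$ one-cells indexed by $a_1,\ldots,a_s,x_1,y_1,\ldots,x_g,y_g$ and $s+1$ two-cells, namely $r_1,\ldots,r_s$ together with the surface relator $\rho = \prod_i[x_i,y_i]$. The cellular chain groups $C_\bullet(\widetilde{Y})$ are free $\mathbb{Z}[\pi_1(X,(\infty,1))]$-modules on these cells, and after passing to the infinite cyclic cover and tensoring down to $\mathbb{Z}[\mathbb{Z}\times H_1(\Sigma)]$ via the map induced by $\varphi$, they become free of the same ranks over $\mathbb{Z}[\mathbb{Z}\times H_1(\Sigma)]$. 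The subcomplex $\widetilde{S}'$ consists precisely of the cells coming from the $x_k, y_k$ and $\rho$, so after the quotient of diagram (\ref{eqn_cell_complex}) and the subsequent base change, $C_1(\widetilde{Y}')/C_1(\widetilde{S}')$ is free of rank $s$ on $\{a_j\}$ and $C_2(\widetilde{Y}')/C_2(\widetilde{S}')$ is free of rank $s$ on $\{r_i\}$.

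For the computation, the Crowell-Fox lifting formula expresses the universal-cover boundary of each $2$-cell as
\[
\partial_2 r_i = \sum_j \psi\!\Big(\frac{\partial r_i}{\partial a_j}\Big)\, a_j + \sum_k \psi\!\Big(\frac{\partial r_i}{\partial x_k}\Big)\, x_k + \sum_k \psi\!\Big(\frac{\partial r_i}{\partial y_k}\Big)\, y_k.
\]
Projecting to the quotient by $C_\bullet(\widetilde{S}')$ kills the $x_k$- and $y_k$-summands, and the base change applies $\varphi$ to each remaining coefficient by naturality of Fox calculus under the ring maps $\psi$ and $\varphi$. Hence the matrix of $\partial_2$ in the chosen bases is exactly $M=(\varphi\circ\psi(\partial r_i/\partial a_j))_{i,j}$, and the theorem follows from the standard identity $\operatorname{Fitt}_0(\operatorname{coker} M) = (\det M)$ for square $M$, together with Lemma \ref{lemma_alex_module}.

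The main obstacle will be justifying the Fox-calculus boundary formula for the relators $r_i$ as they actually appear in the unfolded presentation of Lemma \ref{lemma_pres_from_op_pres}: every symbol of the form $a^{\gamma}$ has been replaced by a conjugate $\gamma a \gamma^{-1}$, which pollutes the Fox derivatives with conjugation prefactors and produces \emph{nonzero} derivatives against the $x_k, y_k$. Verifying that those extra terms land in $C_\bullet(\widetilde{S}')$ and hence die in the quotient, while the $\gamma$-prefactors on the surviving $a_j$-terms transform under $\varphi$ to the correct $H_1(\Sigma)$-monomials, is the bookkeeping crux of the argument. A secondary, easier check is that the base-change along $\mathbb{Z}[\pi_1(X)] \to \mathbb{Z}[\mathbb{Z}\times H_1(\Sigma)]$ commutes with $\partial_2$, which is routine once the quotient of (\ref{eqn_cell_complex}) is identified with the chain complex of the operator-group presentation.
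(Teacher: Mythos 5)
Your proposal is correct and takes essentially the same route as the paper: identify the Alexander module via Lemma \ref{lemma_alex_module} and the commutative diagram (\ref{eqn_cell_complex}), observe that the Fox derivatives against $x_k,y_k$ and the surface relator die in the quotient by $C_\bullet(\widetilde{S}')$, read off the matrix of $\partial_2$ in the remaining $a_j$-basis as $\left(\varphi\circ\psi(\partial r_i/\partial a_j)\right)$, and invoke the zeroth-order-ideal definition of $\Delta_L^0$. The only difference is that you state the Crowell--Fox lifting formula and the Fitting-ideal identity explicitly, and flag the conjugation bookkeeping as a point to verify, whereas the paper's proof leaves these steps implicit.
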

\begin{proof} By the preceding discussion, $C_1(\widetilde{Y}')/C_1(\widetilde{S}')$ is finitely-generated as a $\mathbb{Z}[\langle t \rangle \times H_1(\Sigma)]$-module by lifts of the elements $a_i$ and $C_2(\widetilde{Y}')/C_2(\widetilde{S}')$ is generated by lifts of the relators $r_j$. The relator $\prod_{i=1}^g[x_i,y_i]$ vanishes in the quotient, as do all generators corresponding to lifts of $x_i,y_i$. Then by the commutative diagram (\ref{eqn_cell_complex}) and Lemma \ref{lemma_alex_module}, $(\varphi \circ \psi (\partial r_i/ \partial a_j))_{i,j=1,\ldots,r}$ is a presentation matrix for $A(L)$. The result then follows by the definition of the CSW polynomial.
\end{proof}

The Jacobian in Theorem \ref{thm_csw_poly_jacobian} can be calculated directly from the operator group presentation (\ref{eqn_op_grp_pres}). This will be needed ahead in our $U_q(\mathfrak{gl}(1|1))$ model of the CSW polynomial $\Delta_L^0$. Below and in the sequel, we will write $\overline{z}$ for $z^{-1}$.

\begin{proposition} \label{prop_extend_fox} For all $a_i$, $1 \le i \le s$, and $\gamma \in \Gamma$, we have:
\begin{enumerate}
\item $\displaystyle{\frac{\partial }{\partial a_i} a_j^{\gamma}=\delta_{i j} \gamma}$.
\item $\displaystyle{\frac{\partial}{\partial a_i} \overline{a_i^{\gamma}}=-\gamma \overline{a_i^{\gamma}}}$
\end{enumerate}
\end{proposition}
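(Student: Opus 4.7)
The plan is to invoke Lemma~\ref{lemma_pres_from_op_pres} to rewrite each operator-group expression $a_j^\gamma$ as the free-group word $\gamma\,a_j\,\overline{\gamma}$, and then apply the Fox calculus rules of~(\ref{eqn_fox}) directly. The essential observation throughout is that every $\gamma\in\Gamma$ is represented by a word purely in the generators $x_k,y_k$, so $\partial\gamma/\partial a_i = \partial\overline{\gamma}/\partial a_i = 0$ for every $i$.

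For~(1), I would apply the Leibniz rule twice to $\gamma\,a_j\,\overline{\gamma}$. Both outer factors contribute zero, leaving only $\gamma\cdot(\partial a_j/\partial a_i) = \delta_{ij}\gamma$; this is already an equality in the free group ring, and since $\delta_{ij}$ is a scalar it commutes with $\gamma$.

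For~(2), the analogous computation applied to $\overline{a_i^\gamma} = \gamma\,\overline{a_i}\,\overline{\gamma}$, together with the inverse rule $\partial\overline{a_i}/\partial a_i = -\overline{a_i}$, produces $-\gamma\,\overline{a_i}$ in the free group ring. To recognize this as the stated $-\gamma\,\overline{a_i^\gamma}$ one must pass to the ring $\mathbb{Z}[\mathbb{Z}\times H_1(\Sigma)]$ in which Proposition~\ref{prop_extend_fox} is actually used (see Theorem~\ref{thm_csw_poly_jacobian}); there $H_1(\Sigma)$ is abelian, so $\gamma\,\overline{\gamma}=1$ and hence $\overline{a_i^\gamma}=\gamma\,\overline{a_i}\,\overline{\gamma}$ collapses to $\overline{a_i}$, giving the desired identity. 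There is no substantive obstacle: the content is simply the product rule applied twice in each case, and the only moment of care is noting that the form $-\gamma\,\overline{a_i^\gamma}$ in~(2) is the abelianized expression, deliberately chosen to mirror the operator-group notation that will appear downstream in the prismatic $U_q(\mathfrak{gl}(1|1))$ model.
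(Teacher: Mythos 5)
Your proof is correct and follows the paper's approach: rewrite $a_j^{\gamma}$ as $\gamma a_j \overline{\gamma}$ via Lemma~\ref{lemma_pres_from_op_pres} and apply the Fox product rule from~(\ref{eqn_fox}). You are also right to flag that the identity in part~(2) as stated only holds after passing to $\mathbb{Z}[\mathbb{Z}\times H_1(\Sigma)]$: in the free group ring the Leibniz computation gives $-\gamma\overline{a_i}$ (equivalently $-\overline{a_i^{\gamma}}\gamma$), which does not literally equal $-\gamma\overline{a_i^{\gamma}}$ until the $\gamma$-letters are abelianized by $\varphi$ — a subtlety the paper glosses over by remarking only that the second rule ``follows easily from the first.''
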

\begin{proof} The first rule follows from the equations \ref{eqn_fox}  and Lemma \ref{lemma_pres_from_op_pres}:
\[
\frac{\partial}{\partial a_i}a_i^{\gamma}=\frac{\partial }{\partial a_i}\gamma a_i \overline{\gamma}=\frac{\partial }{\partial a_i}\gamma+\gamma \frac{\partial}{\partial a_i}a_i \overline{\gamma}=\gamma(1+a_i \cdot 0)=\gamma.
\]
The second rule then follows easily from the first.
\end{proof}

Lastly, it is necessary to discuss the effect of a symplectic change of basis on $\Delta_L^0$. Any orientation preserving diffeomorphism $h:\Sigma \to \Sigma$ induces on homology an element of the integral symplectic group $\text{Sp}(2g;\mathbb{Z})$. Starting with some symplectic basis of $H_1(\Sigma;\mathbb{Z})$, we may change to another one using the induced map $h_*:H_1(\Sigma) \to H_1(\Sigma;\mathbb{Z})$ of some $h:\Sigma \to \Sigma$. The map $h_*$ induces a map $h_{\sharp}:\mathbb{Z}[H_1(\Sigma;\mathbb{Z})] \to \mathbb{Z}[H_1(\Sigma)]$, which extends to a map $ h_{\sharp}:\mathbb{Z}[H_1(\Sigma;\mathbb{Z})][t^{\pm 1}] \to \mathbb{Z}[H_1(\Sigma)][t^{\pm 1}]$. The effect on $\Delta_K^0$ is to apply $h_{\sharp}$ to the coefficients of $t$. Polynomials $\Delta,\Delta' \in \mathbb{Z}[\langle t \rangle  \times H_1(\Sigma;\mathbb{Z})]$ are called \emph{equivalent}, if $\Delta=u \cdot \varphi_{\sharp}(\Delta')$ for some orientation preserving diffeomorphism $\varphi$ of $\Sigma$ and unit $u$ \cite{CSW}.

\subsection{The symplectic rank} \label{sec_symp_rank} Now consider the $\mathbb{Z}$-module $H_1(\Sigma)\cong H_1(\Gamma)$ and let $-\cdot-$ denote its intersection form. If $V$ is a submodule of $H_1(\Sigma)$, denote by $W$ the vector space $W=V \otimes \mathbb{R}$. Let $W^{\perp}$ denote the orthogonal complement of $W$:
\[
W^{\perp}=\{v \in H_1(\Sigma)|(\forall w \in W) v \cdot w=0 \}
\]
The following definition, due to Carter-Silver-Williams \cite{CSW}, is the means by which the virtual $2$-genus can be extracted from the covering group. 
\begin{definition}[Symplectic rank] In general, the \emph{symplectic rank} of a submodule $W$ is the rank of $W/(W \cap W^{\perp})$. This specializes to the following definitions for submodules of $H_1(\Sigma)$.
\begin{enumerate}
\item The \emph{symplectic rank} of an operator group presentation $\widetilde{P}=\langle A|R \rangle_{\Gamma}$ of $\widetilde{\pi}_L$ is the symplectic rank of the submodule $W_{\widetilde{P}}$ generated by elements $[\gamma] \in H_1(\Sigma;\mathbb{R})$ for which $\gamma \in \Gamma$ appears in the set of relators $R$. 
\item The \emph{symplectic rank} $\text{rk}_s(\widetilde{\pi}_L)$ of $\widetilde{\pi}_L$ is the minimum symplectic rank taken over all its operator group presentations $\widetilde{P}$. 
\item The \emph{symplectic rank} $\text{rk}_s(\Delta^0_L)$ of $\Delta_L^0=\Delta^0_L(t,x_1,y_1,\ldots,x_g,y_g)$ is the symplectic rank of the submodule $W_{\Delta}$ generated by the quotients of the nonzero coefficients of $t$ in $\Delta_L^0$.  
\end{enumerate}
\end{definition}

\begin{remark} In practice, $\text{rk}_s(\Delta^0_L)$ is calculated by writing the coefficients of $\Delta_L^0$ additively, determining a basis for this submodule, finding the dimension of $W_{\Delta} \cap W_{\Delta}^{\perp}$, and then subtracting this value from the dimension of $W_{\Delta}$. It is oftentimes the case that one can calculate the symplectic rank using this method just by a quick inspection of the coefficients.
\end{remark}

The main theorem of \cite{CSW} can now be stated as follows.

\begin{theorem}[Carter-Silver-Williams \cite{CSW}, Theorem 6.1] \label{thm_csw} If $L \subset \Sigma \times [0,1]$ is any representative of a non-split virtual link, then $\text{rk}_s(\widetilde{\pi}_L)=2 \cdot \check{g}_2(L)$. 
\end{theorem}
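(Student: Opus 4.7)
The plan is to prove both inequalities after first establishing a key invariance lemma: $\text{rk}_s(\widetilde{\pi}_L)$ depends only on the virtual link type of $L$. Invariance under ambient isotopy and orientation-preserving diffeomorphism is automatic, since diffeomorphisms act symplectically on $H_1(\Sigma;\mathbb{Z})$. The substantive check is invariance under stabilization. When $\Sigma' = \Sigma \# T^2$ with the new handle disjoint from $D$, extend the symplectic basis by $\{x_{g+1}, y_{g+1}\}$. The Wirtinger-type operator presentation from Section \ref{sec_covering} has the same generators and relators, now interpreted over $\Gamma' = \pi_1(\Sigma')$ via the canonical injection $\pi_1(\Sigma) \hookrightarrow \pi_1(\Sigma')$. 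Hence $W_{\widetilde{P}}$ is unchanged as a submodule of $H_1(\Sigma;\mathbb{R}) \subseteq H_1(\Sigma';\mathbb{R})$, while $W_{\widetilde{P}}^\perp$ gains only the summand $\langle x_{g+1},y_{g+1}\rangle_{\mathbb{R}}$ (which meets $W_{\widetilde{P}}$ trivially), because $x_{g+1},y_{g+1}$ pair trivially under intersection with all of $H_1(\Sigma;\mathbb{R})$. So $W_{\widetilde{P}}/(W_{\widetilde{P}} \cap W_{\widetilde{P}}^\perp)$ has the same rank before and after stabilization.

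Given invariance, the upper bound $\text{rk}_s(\widetilde{\pi}_L) \le 2\widecheck{g}_2(L)$ is immediate. Choose a virtual representative $L_0 \subset \Sigma_0 \times [0,1]$ with $g(\Sigma_0) = \widecheck{g}_2(L)$ and take the operator group presentation from Section \ref{sec_covering}. All elements of $\Gamma_0 = \pi_1(\Sigma_0)$ appearing in relators generate a submodule $W_{\widetilde{P}} \subseteq H_1(\Sigma_0;\mathbb{R})$ of rank at most $2g(\Sigma_0) = 2\widecheck{g}_2(L)$, so the symplectic rank is bounded accordingly.

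For the lower bound $\text{rk}_s(\widetilde{\pi}_L) \ge 2\widecheck{g}_2(L)$, take an operator presentation $\widetilde{P}$ realizing the infimum, with symplectic rank $r$. Find a symplectic decomposition $H_1(\Sigma;\mathbb{R}) = V \oplus V^\perp$ in which $V$ is a rank-$r$ symplectic subspace absorbing $W_{\widetilde{P}}$ modulo its radical, and $V^\perp$ is a symplectic complement of rank $2g - r$. Realize a symplectic basis of $V^\perp$ by $g - r/2$ pairs of disjoint oriented simple closed curves on $\Sigma$ (via standard surface topology: primitive classes with pairwise zero algebraic intersection admit disjoint embedded representatives). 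Then argue that these curves can be chosen disjoint from $D$, using the algebraic fact that elements of $\Gamma$ projecting to $V^\perp$ do not appear in any relator of $\widetilde{P}$. Finally, destabilize along each of the $g - r/2$ handle pairs to obtain a representative of $L$ on a surface of genus $r/2$, yielding $\widecheck{g}_2(L) \le r/2$.

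The main obstacle is the step ``not appearing in any relator implies disjointness from $D$.'' Algebraic non-appearance in $W_{\widetilde{P}}$ is a statement about how generators of $\Gamma$ act on arcs in the universal cover, whereas producing an actual embedded destabilizing handle requires geometric control. My plan here is a change-of-symplectic-basis argument: replace the original basis $\Omega$ with one adapted to the splitting $V \oplus V^\perp$, and track the induced transformation of $\widetilde{P}$ via the map $h_\sharp$ from Section \ref{sec_csw_poly}. In the new basis, the $V^\perp$ cycles correspond literally to handles of $\Sigma$, and the non-appearance of their generators in the transformed relators means that no arc of the diagram threads these handles nontrivially. The delicate step is verifying that this basis change genuinely produces a destabilization (rather than an abstract manipulation of the presentation), which in turn requires showing that the diffeomorphism realizing the change of basis can be chosen isotopic to one that fixes $D$ setwise and carries the $V^\perp$ curves onto standardly embedded handles of $\Sigma$.
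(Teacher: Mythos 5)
The paper cites Carter--Silver--Williams Theorem 6.1 without reproducing its proof, so I evaluate your proposal on its own terms. Your three-step structure (stabilization invariance, easy upper bound on a minimal-genus representative, destabilization for the lower bound) is the right shape of argument, but two steps fail as written. The first problem is in the destabilization step: the claim that ``elements of $\Gamma$ projecting to $V^\perp$ do not appear in any relator of $\widetilde{P}$'' is false whenever $W_{\widetilde{P}}$ has a nonzero radical. Writing $W = V_0 \oplus \operatorname{rad}(W)$ with $V_0$ a rank-$r$ symplectic subspace and taking $V = V_0$, the radical $\operatorname{rad}(W) = W \cap W^\perp$ is a nonzero isotropic subspace of $V^\perp$, so operator elements whose homology classes land in $V^\perp$ \emph{do} appear in the relators. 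Both the count of $g - r/2$ destabilizations and the disjointness argument rest on this misstatement, and any repaired version has to say explicitly how to handle the radical (e.g.\! choose a Lagrangian complement in $V^\perp$ that avoids $\operatorname{rad}(W)$, and track the effect of each destabilization on the remaining radical).

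The second problem you acknowledge yourself: the passage from ``a homology class does not appear in the operator presentation'' to ``an embedded simple closed curve on $\Sigma$ disjoint from the diagram $D$'' is the crux of the whole theorem and is not proved. The map $h_\sharp$ is a purely algebraic change of symplectic basis; asserting that the realizing diffeomorphism can be isotoped to fix $D$ setwise and carry the $V^\perp$-curves to standardly embedded handles is exactly the geometric reduction lemma one needs, not a side remark. In CSW this step is a dedicated piece of surface topology, so the proof cannot be completed without supplying it. There is also a smaller imprecision worth repairing: there is no group homomorphism $\pi_1(\Sigma) \hookrightarrow \pi_1(\Sigma')$ for a stabilization $\Sigma' = \Sigma \# T^2$, since $\prod_{i\le g}[x_i,y_i]$ is a relation in $\pi_1(\Sigma)$ but not in $\pi_1(\Sigma')$; the operator elements appearing in $\widetilde{P}$ should instead be lifted through $\pi_1(\Sigma \smallsetminus \{\mathrm{pt}\}) \hookrightarrow \pi_1(\Sigma')$, which is injective. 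Once that is repaired, your homology-level conclusion that stabilization does not change the symplectic rank of the transferred presentation is correct, and the one-sided inequality it yields is indeed enough for the upper bound as you deploy it.
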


It is difficult in general to compute $\text{rk}_s(\widetilde{\pi}_L)$, as this would require minimizing over all operator group presentations of $\widetilde{\pi}_L$. However, $\Delta_L^0$ is independent of any presentation and $\text{rk}_s(\Delta_K^0)$ is preserved by any symplectic change of basis. Hence, it follows that (see \cite{CSW}, Corollary 6.2):
\begin{equation} \label{eqn_csw_poly_bound}
\frac{\text{rk}_s(\Delta_L^0)}{2} \le \check{g}_2(L).
\end{equation}
An example calculation of $\widetilde{\pi}_L$, $\Delta_L^0$, and $\text{rk}_s(\Delta_L^0)$ is given in the following subsection. Later, it will be shown how to calculate $\Delta_L^0$ using the prismatic $U_q(\mathfrak{gl}(1|1))$ polynomial.

\begin{figure}[htb]
\begin{tabular}{cc}
\begin{tabular}{c}
\includegraphics[scale=.5]{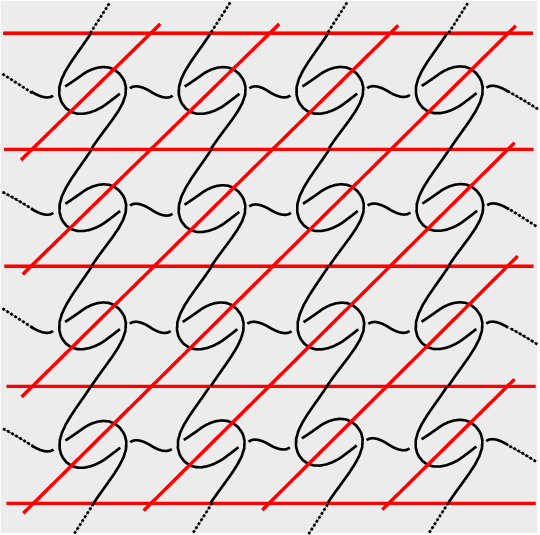}
\end{tabular} 
&
\begin{tabular}{c}
\includegraphics[scale=.5]{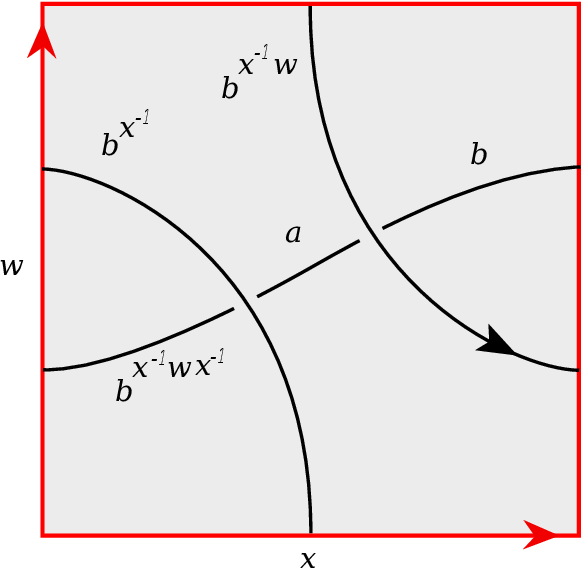}
\end{tabular}
\end{tabular}
\caption{A first choice of symplectic basis and labeling of the fundamental domain.} \label{fig_first_choice}
\end{figure}

\subsection{Example calculation} \label{sec_csw_example} Two choices of symplectic basis for the right-handed virtual trefoil are shown in Figure \ref{fig_first_choice} and Figure \ref{fig_second_choice}.  We will calculate the CSW polynomial for each and relate them by a symplectic change of basis. This example also appears in \cite{CSW} (see Example 4.3).

\begin{figure}[htb]
\begin{tabular}{cc}
\begin{tabular}{c}
\includegraphics[scale=.5]{virt_tref_lift.eps}
\end{tabular} 
&
\begin{tabular}{c}
\includegraphics[scale=.5]{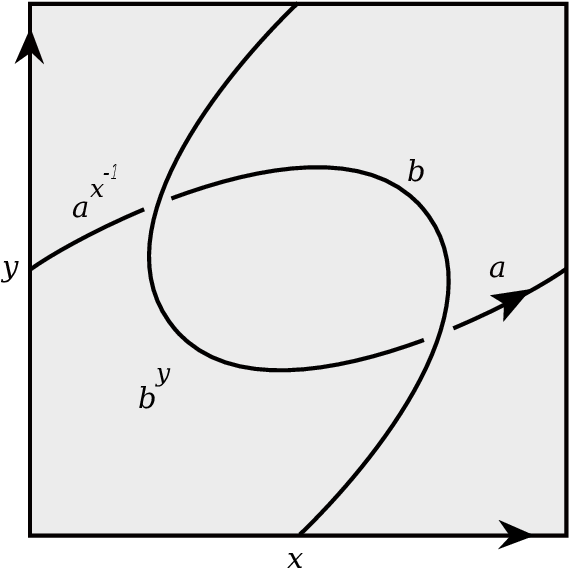}
\end{tabular}
\end{tabular}
\caption{A second choice of the symplectic basis and labeling of the fundamental domain.} \label{fig_second_choice}
\end{figure}

For the choice in Figure \ref{fig_first_choice}, the two crossing relations are:
\[
b^{x^{-1}}a\overline{b^{x^{-1}}}\,\overline{b^{x^{-1}wx^{-1}}}, \quad \quad b^{x^{-1}w}b\overline{b^{x^{-1}w}} \overline{a}
\]
The Jacobian matrix and its determinant work out to be:
\[
\begin{pmatrix}
t & x^{-1}-x^{-1}t-w x^{-2} \\
-1 & x^{-1}w+t-tx^{-1}w
\end{pmatrix} \implies \Delta_K^0(t,x,w)=(1-x^{-1}w) t^2+(x^{-1}w-x^{-1}) t+(x^{-1}-wx^{-2}).
\]
Written additively, $W_{\Delta}=\langle -x+w,-x,-2x+w \rangle=\langle x,w\rangle$. Hence, $\text{rk}_s(\Delta_K^0(t,x,w))=2$ and $\check{g}_2(K)=1$. For the right hand side of Figure \ref{fig_second_choice}, the crossing relations are:
\[
ba\overline{b} \,\overline{b^{y}}, \quad \quad b^{y}b \overline{b^{y}} \overline{a^{x^{-1}}},
\] 
In this case, the Jacobian matrix and its determinant are given by:
\[
\begin{pmatrix}
t & 1-t-y \\
-x^{-1} & y+t-yt
\end{pmatrix} \implies \Delta_K^0(t,x,y)=(1-y) t^2+(y-x^{-1}) t+(x^{-1}-x^{-1} y).
\]
Here we get $W_{\Delta}=\langle y,-x,y-x\rangle=\langle x,y \rangle$. Again it follows that $\text{rk}_s(\Delta_K^0(t,x,y))=2$. Observe that setting $w=xy$ in the first equation gives the second equation:
\[
(1-x^{-1}(xy)) t^2+(x^{-1}(xy)-x^{-1}) t+(x^{-1}-(xy)x^{-2})=(1-y) t^2+(y-x^{-1}) t+(x^{-1}-x^{-1} y).
\]

\section{The homotopy\!\! $\Zh$-construction} \label{sec_homotopy_zh}
The homotopy $\Zh$-construction is the initial step of the bookkeeping method for quantum invariants (see Section \ref{sec_main_idea}). This will be shown in Section \ref{sec_GAP_and_homology_zh} to be a generalization of the Bar-Natan $\Zh$-construction. We begin with a review of the original $\Zh$-construction in Section \ref{sec_zh_review}. Prismatic links are defined in Section \ref{sec_prismatic_links}. The homotopy $\Zh$-construction is defined in Section \ref{sec_homotopy_zh_defn}. The relation between prismatic links and the covering group is given in Section \ref{sec_prismatic_link_group}. It will be assumed throughout that the reader is familiar with virtual link diagrams, extended Reidemeister moves, and Gauss diagrams. For details, see Kauffman \cite{kauffman_vkt}. For a minimalist review, see \cite{CP}.

\subsection{The Bar-Natan\!\! $\Zh$-construction}\label{sec_zh_review} First, we define a codomain for the $\Zh$-construction. A \emph{semi-welded link diagram} is an $(r+1)$-component virtual link diagram with a distinguished last component. This is called the $\omega$-component and will be colored \textcolor{blue}{\textbf{blue}} in figures. The other components are called the $\alpha$-part, which are drawn in \textbf{black}. Semi-welded links are considered equivalent up to extended Reidemeister moves and the \emph{semi-welded move} (see Figure \ref{fig_sw_move_and_zh}, left). Note that in the semi-welded move, the over-crossing arcs are both in the $\omega$-component and the under-crossing arcs are in the $\alpha$-part. 

\begin{figure}[htb]
\[
\begin{tabular}{|c||cc|} \hline & & \\
$\begin{array}{c} \def\svgwidth{1.1in} \tiny
%% Creator: Inkscape 1.0.2-2 (e86c870879, 2021-01-15), www.inkscape.org
%% PDF/EPS/PS + LaTeX output extension by Johan Engelen, 2010
%% Accompanies image file '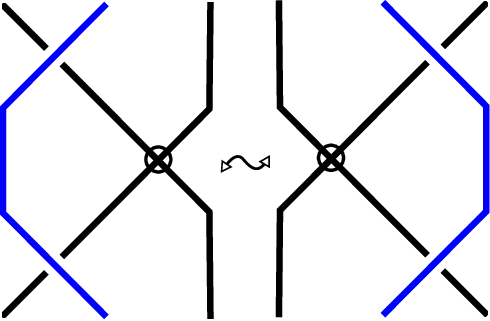' (pdf, eps, ps)
%%
%% To include the image in your LaTeX document, write
%%   \input{<filename>.pdf_tex}
%%  instead of
%%   \includegraphics{<filename>.pdf}
%% To scale the image, write
%%   \def\svgwidth{<desired width>}
%%   \input{<filename>.pdf_tex}
%%  instead of
%%   \includegraphics[width=<desired width>]{<filename>.pdf}
%%
%% Images with a different path to the parent latex file can
%% be accessed with the `import' package (which may need to be
%% installed) using
%%   \usepackage{import}
%% in the preamble, and then including the image with
%%   \import{<path to file>}{<filename>.pdf_tex}
%% Alternatively, one can specify
%%   \graphicspath{{<path to file>/}}
%% 
%% For more information, please see info/svg-inkscape on CTAN:
%%   http://tug.ctan.org/tex-archive/info/svg-inkscape
%%
\begingroup%
  \makeatletter%
  \providecommand\color[2][]{%
    \errmessage{(Inkscape) Color is used for the text in Inkscape, but the package 'color.sty' is not loaded}%
    \renewcommand\color[2][]{}%
  }%
  \providecommand\transparent[1]{%
    \errmessage{(Inkscape) Transparency is used (non-zero) for the text in Inkscape, but the package 'transparent.sty' is not loaded}%
    \renewcommand\transparent[1]{}%
  }%
  \providecommand\rotatebox[2]{#2}%
  \newcommand*\fsize{\dimexpr\f@size pt\relax}%
  \newcommand*\lineheight[1]{\fontsize{\fsize}{#1\fsize}\selectfont}%
  \ifx\svgwidth\undefined%
    \setlength{\unitlength}{234.89887566bp}%
    \ifx\svgscale\undefined%
      \relax%
    \else%
      \setlength{\unitlength}{\unitlength * \real{\svgscale}}%
    \fi%
  \else%
    \setlength{\unitlength}{\svgwidth}%
  \fi%
  \global\let\svgwidth\undefined%
  \global\let\svgscale\undefined%
  \makeatother%
  \begin{picture}(1,0.65155205)%
    \lineheight{1}%
    \setlength\tabcolsep{0pt}%
    \put(0,0){\includegraphics[width=\unitlength]{semi_welded_move.eps}}%
    \put(0.44323135,0.34193985){\color[rgb]{0,0,0}\makebox(0,0)[lt]{\lineheight{40.54999924}\smash{\begin{tabular}[t]{l}$sw$\end{tabular}}}}%
  \end{picture}%
\endgroup%
 \end{array}$ & \xymatrix{
\begin{array}{c}
\def\svgwidth{.3in} \tiny 
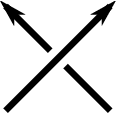
\end{array} \ar[r]^-{\Zh} & \begin{array}{c}\def\svgwidth{1.1in} \tiny 
%% Creator: Inkscape 1.0.2-2 (e86c870879, 2021-01-15), www.inkscape.org
%% PDF/EPS/PS + LaTeX output extension by Johan Engelen, 2010
%% Accompanies image file '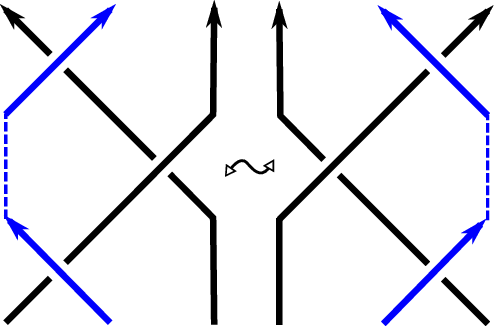' (pdf, eps, ps)
%%
%% To include the image in your LaTeX document, write
%%   \input{<filename>.pdf_tex}
%%  instead of
%%   \includegraphics{<filename>.pdf}
%% To scale the image, write
%%   \def\svgwidth{<desired width>}
%%   \input{<filename>.pdf_tex}
%%  instead of
%%   \includegraphics[width=<desired width>]{<filename>.pdf}
%%
%% Images with a different path to the parent latex file can
%% be accessed with the `import' package (which may need to be
%% installed) using
%%   \usepackage{import}
%% in the preamble, and then including the image with
%%   \import{<path to file>}{<filename>.pdf_tex}
%% Alternatively, one can specify
%%   \graphicspath{{<path to file>/}}
%% 
%% For more information, please see info/svg-inkscape on CTAN:
%%   http://tug.ctan.org/tex-archive/info/svg-inkscape
%%
\begingroup%
  \makeatletter%
  \providecommand\color[2][]{%
    \errmessage{(Inkscape) Color is used for the text in Inkscape, but the package 'color.sty' is not loaded}%
    \renewcommand\color[2][]{}%
  }%
  \providecommand\transparent[1]{%
    \errmessage{(Inkscape) Transparency is used (non-zero) for the text in Inkscape, but the package 'transparent.sty' is not loaded}%
    \renewcommand\transparent[1]{}%
  }%
  \providecommand\rotatebox[2]{#2}%
  \newcommand*\fsize{\dimexpr\f@size pt\relax}%
  \newcommand*\lineheight[1]{\fontsize{\fsize}{#1\fsize}\selectfont}%
  \ifx\svgwidth\undefined%
    \setlength{\unitlength}{236.86404916bp}%
    \ifx\svgscale\undefined%
      \relax%
    \else%
      \setlength{\unitlength}{\unitlength * \real{\svgscale}}%
    \fi%
  \else%
    \setlength{\unitlength}{\svgwidth}%
  \fi%
  \global\let\svgwidth\undefined%
  \global\let\svgscale\undefined%
  \makeatother%
  \begin{picture}(1,0.6607376)%
    \lineheight{1}%
    \setlength\tabcolsep{0pt}%
    \put(0,0){\includegraphics[width=\unitlength]{zh_pos.eps}}%
    \put(0.44820787,0.34479521){\color[rgb]{0,0,0}\makebox(0,0)[lt]{\lineheight{40.54999924}\smash{\begin{tabular}[t]{l}$sw$\end{tabular}}}}%
  \end{picture}%
\endgroup%
 \end{array}} & \xymatrix{ \begin{array}{c}
\def\svgwidth{.3in} \tiny 
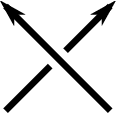
\end{array} \ar[r]^-{\Zh} & \begin{array}{c}\def\svgwidth{1.1in} \tiny 
%% Creator: Inkscape 1.0.2-2 (e86c870879, 2021-01-15), www.inkscape.org
%% PDF/EPS/PS + LaTeX output extension by Johan Engelen, 2010
%% Accompanies image file '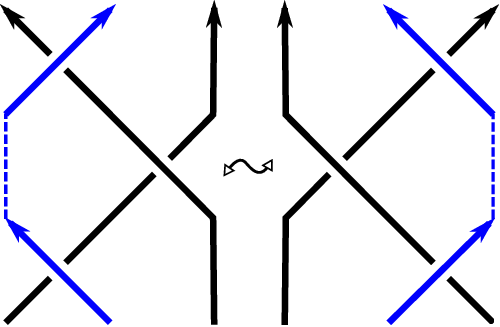' (pdf, eps, ps)
%%
%% To include the image in your LaTeX document, write
%%   \input{<filename>.pdf_tex}
%%  instead of
%%   \includegraphics{<filename>.pdf}
%% To scale the image, write
%%   \def\svgwidth{<desired width>}
%%   \input{<filename>.pdf_tex}
%%  instead of
%%   \includegraphics[width=<desired width>]{<filename>.pdf}
%%
%% Images with a different path to the parent latex file can
%% be accessed with the `import' package (which may need to be
%% installed) using
%%   \usepackage{import}
%% in the preamble, and then including the image with
%%   \import{<path to file>}{<filename>.pdf_tex}
%% Alternatively, one can specify
%%   \graphicspath{{<path to file>/}}
%% 
%% For more information, please see info/svg-inkscape on CTAN:
%%   http://tug.ctan.org/tex-archive/info/svg-inkscape
%%
\begingroup%
  \makeatletter%
  \providecommand\color[2][]{%
    \errmessage{(Inkscape) Color is used for the text in Inkscape, but the package 'color.sty' is not loaded}%
    \renewcommand\color[2][]{}%
  }%
  \providecommand\transparent[1]{%
    \errmessage{(Inkscape) Transparency is used (non-zero) for the text in Inkscape, but the package 'transparent.sty' is not loaded}%
    \renewcommand\transparent[1]{}%
  }%
  \providecommand\rotatebox[2]{#2}%
  \newcommand*\fsize{\dimexpr\f@size pt\relax}%
  \newcommand*\lineheight[1]{\fontsize{\fsize}{#1\fsize}\selectfont}%
  \ifx\svgwidth\undefined%
    \setlength{\unitlength}{239.48080902bp}%
    \ifx\svgscale\undefined%
      \relax%
    \else%
      \setlength{\unitlength}{\unitlength * \real{\svgscale}}%
    \fi%
  \else%
    \setlength{\unitlength}{\svgwidth}%
  \fi%
  \global\let\svgwidth\undefined%
  \global\let\svgscale\undefined%
  \makeatother%
  \begin{picture}(1,0.6515535)%
    \lineheight{1}%
    \setlength\tabcolsep{0pt}%
    \put(0,0){\includegraphics[width=\unitlength]{zh_neg.eps}}%
    \put(0.44978272,0.34246882){\color[rgb]{0,0,0}\makebox(0,0)[lt]{\lineheight{40.54999924}\smash{\begin{tabular}[t]{l}$sw$\end{tabular}}}}%
  \end{picture}%
\endgroup%
 \end{array}
} \\\underline{The semi-welded move:}  & \underline{$\Zh$ for $\oplus$-crossings:} & \underline{$\Zh$ for $\ominus$-crossings:} \\ & & \\ \hline
\end{tabular}
\]
\caption{The semi-welded move (left) and the Bar-Natan $\Zh$-construction (right).} \label{fig_sw_move_and_zh}
\end{figure}

Now, let $L$ be an $r$-component virtual link diagram. The \emph{$\Zh$-construction} associates an $(r+1)$-component semi-welded link diagram $\Zh(L)=L \cup \omega$ as follows. First, each classical crossing of $L$ is flanked by two over-crossing blue arcs, as shown in Figure \ref{fig_sw_move_and_zh}, right. The next step is to connect all of these blue arcs together (arbitrarily) into a single component. If new intersections between blue arcs and black arcs (or with other blue arcs) are created during this process, these transversal intersections are marked as virtual crossings. This is well-defined due to the semi-welded move. Indeed, traveling along the blue arc, the semi-welded move allows any adjacent over-crossings with the $\alpha$-part to be exchanged. Since transpositions generate permutations, any desired order of the $\alpha$-part along $\omega$ can be achieved. Furthermore, if $L_1,L_2$ are equivalent virtual link diagrams, then $\Zh(L_1), \Zh(L_2)$ are semi-welded equivalent. These basic facts about the $\Zh$-construction were first proved by Bar-Natan \cite{bar_natan_talk}. Careful proofs can be found in \cite{boden_chrisman_21}. For an example, see Figure \ref{fig_zh_virt_tref}. Note that some blue arcs are drawn dashed to emphasize their arbitrary placement.

\begin{figure}[hbt]
\begin{tabular}{|c|} \hline \\
$\xymatrix{\begin{array}{c}\includegraphics[scale=.3]{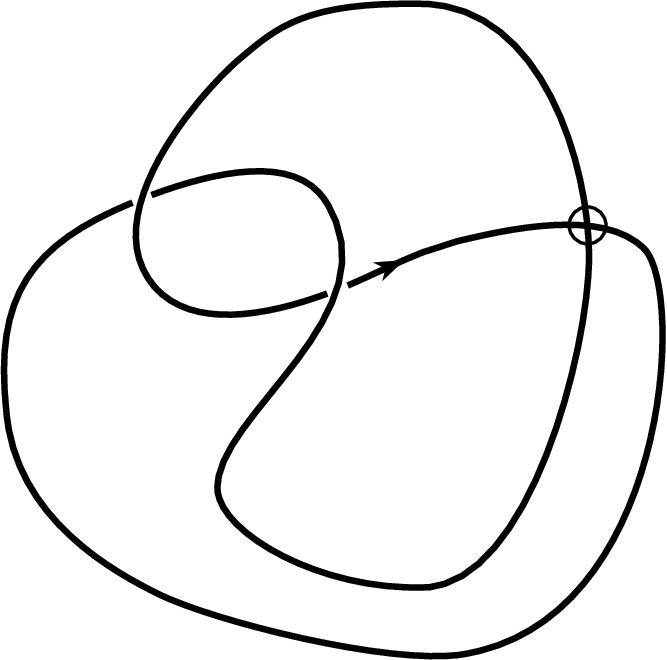} \end{array} \ar[rr]^-{\Zh} & & \begin{array}{c} \includegraphics[scale=.3]{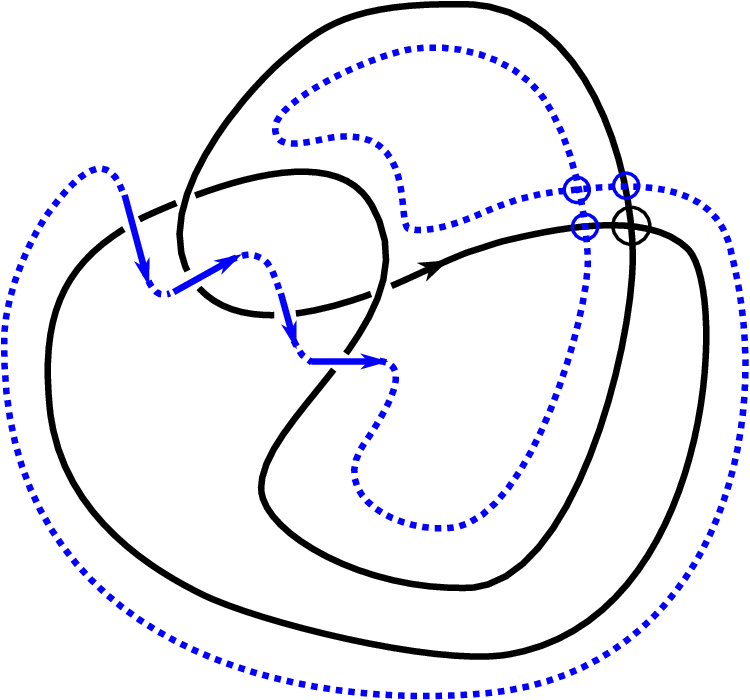} \end{array}}$
 \\ \\ \hline 
\end{tabular}
\caption{$\Zh$-construction for a right virtual trefoil. Compare Figure \ref{fig_rel_seif}.} \label{fig_zh_virt_tref}
\end{figure}

The topological interpretation of the $\Zh(L)$ is that $\omega$ depicts the homology class $[D] \in H_1(\Sigma;\mathbb{Z})$, where $D$ is any diagram representing $L$ on some surface $\Sigma$. To see this, compare Figure \ref{fig_rel_seif} with Figure \ref{fig_sw_move_and_zh}. Observe that the former may be obtained from the latter by changing the orientation of $\omega$ in Figure \ref{fig_sw_move_and_zh}. When the orientation of $\omega$ is changed, it is called the \emph{$\Zh^{\text{op}}$-construction}, denoted $\Zh^{\text{op}}(L)=L \cup \omega^{\text{op}}$. Then $\Zh^{\text{op}}(L)$ is the virtual link diagram corresponding to the boundary of a relative Seifert surface for $D$, as discussed in Section \ref{sec_infinite_cyclic}. Hence, $\omega$ is a diagrammatic representation of $[D] \in H_1(\Sigma;\mathbb{Z})$. The semi-welded move also has a topological interpretation: it records the fact that $H_1(\Sigma;\mathbb{Z})$ is abelian. For a more precise description of the  topological interpretation of the $\Zh(L)$, see \cite{chrisman_todd_23}, Section 3.3. To avoid confusion with the homotopy $\Zh$-construction defined ahead, we will henceforth refer to the original $\Zh$-construction as the \emph{homology $\Zh$-construction}. We now proceed to our multicolor generalization of semi-welded links. 

\subsection{Prismatic links} \label{sec_prismatic_links} The homotopy $\Zh$-construction is a multicolor generalization of the homology $\Zh$-construction where the additional components represent generators of $\pi_1(\Sigma,\infty)$. The set of allowed colors will be called the \emph{symplectic palette}, which is an ordered set of letters $\Omega=\{x_1,y_1,\ldots,x_g,y_g\}$. In addition to the colors $\Omega$, there is a distinguished color $\alpha$, which is shown as \textbf{black} in figures. A \emph{$\Omega$-prismatic link diagram} is a virtual link diagram $D$ whose components are colored in the set $\{\alpha\} \cup \Omega$. The \textbf{black} components are again called the $\alpha$-part and the other components are called the $\Omega$-part. Furthermore, it is required that in any classical crossing involving the $\Omega$-part, the over-crossing strand is in the $\Omega$-part and the under-crossing strand is in the $\alpha$-part\footnote{This is not strictly necessary for a well-defined theory (see e.g. \cite{CP}), but it more closely matches the geometric interpretation that the $\Omega$-part represents curves in $\Sigma \times \{1\}$, so that such components lie above the $\alpha$-part.}. If every color in $\{\alpha\} \cup \Omega$ appears in $D$ and each color in $\Omega$ appears exactly once, $D$ is called \emph{complete}. 

By an \emph{$\Omega$-semi-welded move}, we mean a semi-welded move where the over-crossing arc in Figure \ref{fig_sw_move_and_zh}, left, can carry any of the colors in $\Omega$. The requirement that the two under-crossing arcs are in the $\alpha$-part remains in force. By an \emph{$\Omega$-commutator move}, we mean the move shown in Figure \ref{fig_commutator_move}. Note that a commutator move can only be applied to a complete prismatic link. Also note that the over-crossing arcs of the same color are in the same component. Furthermore, observe that the under-crossing $\alpha$-arc is not oriented, so that the move applies to both possible orientations. The $\Omega$-commutator move is a diagrammatic realization of the relation $\prod_{i=1}^g [x_i,y_i]=1$ in $\pi_1(\Sigma,\infty)$. 

\begin{definition}[$\Omega$-prismatic equivalence] Two $\Omega$-prismatic links $W_1,W_2$ are said to be \emph{equivalent} if they are related by a finite sequence of extended Reidemeister moves, $\Omega$-semi-welded moves, and $\Omega$-commutator moves, where all intermediate diagrams are $\Omega$-prismatic links.
\end{definition}

\begin{figure}[htb]
\begin{tabular}{|ccccc|}  \hline & & & & \\ &
\includegraphics[scale=.6]{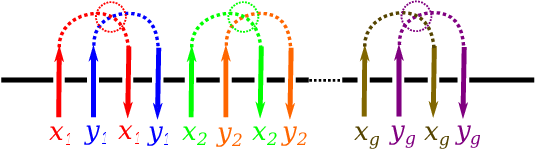} & \begin{tabular}{c} $\longleftrightarrow$ \\ \\ \\ \\ \end{tabular} & \includegraphics[scale=.6]{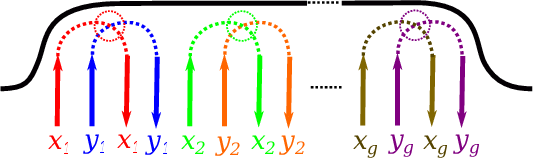} & \\  \hline
\end{tabular}
\caption{The $\Omega$-commutator move with $\Omega=\{x_1,y_1,\ldots,x_g,y_g\}$.} \label{fig_commutator_move}
\end{figure}

Note that in Figure \ref{fig_commutator_move}, the dashed arcs in between between solid arcs of the same color can be placed arbitrarily. This is again due to the $\Omega$-semi-welded move, which allows the over-crossings by any $\Omega$-colored component to be arranged along that component in any order. See Section \ref{sec_zh_review} for a discussion of the single color case. This fact is recorded below for future reference.

\begin{lemma} \label{lemma_rearrange} Let $W$ be an $\Omega$-prismatic link and $z \in \Omega$. Let $J$ be a component in $W$ colored $z$. Let $W'$ be a an $\Omega$-prismatic link obtained from $W$ by rearranging the virtual crossings in $J$ such that the classical crossings along $J$ occur in any arbitrary order. Then $W$ and $W'$ are equivalent. 
\end{lemma}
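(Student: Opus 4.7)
The plan is to reduce the problem to a sequence of adjacent transpositions of classical crossings along $J$, each implemented by a single $\Omega$-semi-welded move (together with detour arguments to manage intervening virtual crossings). The key observation is that, because $J$ is colored $z\in\Omega$ and $W$ is $\Omega$-prismatic, every classical crossing along $J$ has $J$ as the over-strand and an $\alpha$-part arc as the under-strand. Consequently, the hypotheses for applying the $\Omega$-semi-welded move are automatically satisfied at every classical crossing along $J$, since the move exchanges the order of two consecutive classical crossings along a single over-strand whose under-strands both belong to the $\alpha$-part.

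First, I would establish the \emph{local swap}: given two classical crossings $c_i$ and $c_{i+1}$ that are consecutive along $J$ (i.e.\ no other classical crossing of $J$ lies between them along $J$), they can be transposed. If only virtual crossings separate them along $J$, I would invoke the standard detour move---a consequence of the extended Reidemeister moves---which permits any arc meeting only virtual crossings to be redrawn with its endpoints fixed. Using this I would reroute the intervening virtual crossings so that $c_i$ and $c_{i+1}$ are placed in the configuration required by the $\Omega$-semi-welded move, apply the move, and then use detour again to re-absorb any virtual crossings as needed for the next step. All intermediate diagrams remain $\Omega$-prismatic, since neither the detour move nor the semi-welded move alters the colorings or the over/under assignment at any classical crossing.

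Finally, I would note that adjacent transpositions generate the full symmetric group on the classical crossings of $J$, so iterating the local swap realizes any prescribed ordering. The main (mild) obstacle is purely bookkeeping: one must verify that the detour move can always be carried out without creating classical crossings with other $\Omega$-colored components, but since detour only produces virtual crossings, and the classical crossings on $J$ retain their type, the $\Omega$-prismatic structure is preserved at every stage. This completes the outline, reducing the lemma to the semi-welded move plus the detour principle.
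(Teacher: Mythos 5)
Your proof is correct and follows the same approach the paper uses: the paper records this lemma without a formal proof, pointing back to the discussion in Section \ref{sec_zh_review}, where the argument is exactly that the semi-welded move realizes adjacent transpositions of over-crossings along an $\Omega$-colored strand, and that transpositions generate all permutations. Your version makes explicit the bookkeeping with detour moves and the preservation of the $\Omega$-prismatic structure, which the paper leaves implicit, but the idea is the same.
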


The $\Omega$-commutator move can appear in many different forms. The next result (and its proof) shows that any algebraic rearrangement of $\prod_{i=1}^g [x_i,y_i]=1$ is also an $\Omega$-commutator move.

\begin{lemma} \label{lemma_commutator_permute} The $\Omega$-commutator move holds after a cyclic permutation of its over-crossing arcs.
\end{lemma}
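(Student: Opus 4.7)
The strategy is to mimic diagrammatically the algebraic observation that if $u v = 1$ in a group, then the cyclically shifted word $v u = u^{-1}(u v) u = u^{-1} u = 1$. Every cyclic permutation of the word $\prod_{i=1}^g [x_i, y_i]$, viewed as a sequence of letters in $\Omega^{\pm 1}$ spelled by the over-crossings along an $\alpha$-arc, can be written in the form $v u$ for some prefix $u$ of the original word and corresponding suffix $v$, so it suffices to prove the lemma for one fixed cyclic shift at a time, parametrized by $u$.

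Given a complete $\Omega$-prismatic diagram in which an $\alpha$-arc $A$ carries the cyclically shifted word $v u$, the plan is first to insert the word $u^{-1} u$ at the top of $A$ using a sequence of Reidemeister R2 moves. Since $u^{-1} u$ freely reduces to the empty word by successively cancelling the innermost adjacent inverse pair, each such cancellation corresponds to an R2 move, and the insertion is realized by running these R2 moves in reverse: at each step, a small extension of the relevant $\Omega$-component is routed via virtual crossings and produces a cancelling bigon with $A$ at the prescribed position. After these insertions, $A$ carries the word $u^{-1}(u v) u$. Next, I would apply the original $\Omega$-commutator move to the interior subword $u v$ to remove it, so that $A$ carries only $u^{-1} u$. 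Finally, the original R2 moves, now run in the forward direction, clear the remaining crossings, leaving a plain $\alpha$-arc with no over-crossings from the $\Omega$-part.

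The main obstacle I anticipate is the bookkeeping needed to ensure that all intermediate diagrams remain legitimate $\Omega$-prismatic links, preserving both the over/under-crossing convention and the coloring constraints. In particular, each extension of an $\Omega$-component must be routed so that it produces only over-crossings with $A$ and only virtual crossings with the rest of the diagram. The $\Omega$-semi-welded move, together with Lemma \ref{lemma_rearrange}, provides the flexibility needed to rearrange the order of crossings along each extended component so that the cancelling bigons can be produced in the required nested order and so that the subword $u v$ appears as a contiguous block on $A$, which is the precise pattern needed to invoke the original commutator move. Once these routings are verified to be compatible with the $\Omega$-prismatic framework, the algebraic cancellation argument transfers cleanly to the diagrammatic setting, and iterating single-step shifts yields the full result.
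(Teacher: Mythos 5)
Your proof is correct and takes a genuinely different, arguably cleaner, route than the paper. The paper exploits the fact that the $\alpha$-arc in the $\Omega$-commutator move is unoriented, so the move holds both in its stated form $w=\prod_i[x_i,y_i] \leftrightarrow 1$ and in its reversed reading $w^{-1}\leftrightarrow 1$; combining the reversed form with one R2 cancellation, it derives a rewrite rule replacing a lone $x_1^{-1}$ by $y_g^{-1}x_g^{-1}y_gx_g\cdots y_1^{-1}x_1^{-1}y_1$, substitutes this into the one-step cyclic shift, collapses it by a cascade of R2 cancellations, and then iterates over single-step shifts. You instead realize an arbitrary cyclic shift $vu$ of $uv=w$ as the conjugate $u^{-1}(uv)u$: insert $u^{-1}u$ by reverse R2 moves, delete the contiguous middle block $uv$ by a single application of the original commutator move, and cancel the residual $u^{-1}u$ by R2. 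This handles an arbitrary prefix $u$ in one pass (so your closing remark about iterating single-step shifts is superfluous) and avoids invoking the reversed-orientation form of the move. Both proofs lean equally on Lemma \ref{lemma_rearrange} and the $\Omega$-semi-welded move to route the $\Omega$-colored arcs so that the letters to be cancelled or matched are adjacent both along their own component and along the $\alpha$-arc; you flag this as the delicate point, and leaving it at the level of "can be arranged via Lemma \ref{lemma_rearrange}" matches the rigor of the paper's own proof.
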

\begin{proof} Write the $\Omega$-commutator relation in shorthand as $x_1^{-1} y_1^{-1} x_1 y_1 \cdots x_g^{-1} y_g^{-1} x_g y_g=1$. This means that going from left to right on the under-crossing arc, we first see $x_1$ pointing up ($-1$ exponent), then $y_1$ pointing up ($-1$ exponent), then $x_1$ pointing down ($+1$ exponent), and so on. Turning the paper upside down, reading left to right, the relation is: $y_g^{-1} x_g^{-1} y_g x_g \cdots y_1^{-1} x_1^{-1} y_1 x_1=1$. Then:
\[
x_1^{-1}= 1 \cdot x_1^{-1} =y_g^{-1} x_g^{-1} y_g x_g \cdots y_1^{-1} x_1^{-1} y_1 x_1 x_1^{-1}
\]
By Lemma \ref{lemma_rearrange}, the arcs of the component colored $x_1$ can be rearranged so that $x_1^{-1}$ is passed immediately after $x_1$. Then $x_1 x_1^{-1}$ can be removed by a Reidemeister $2$ move. This gives:
\[
x_1^{-1}=y_g^{-1} x_g^{-1} y_g x_g \cdots y_1^{-1} x_1^{-1} y_1.
\]
Using this identity, and employing Lemma \ref{lemma_rearrange} as necessary to rearrange the arcs, we have:
\[
y_1^{-1} x_1 y_1 \cdots x_g^{-1} y_g^{-1} x_g y_g x_1^{-1}=y_1^{-1} x_1 y_1 \cdots x_g^{-1} y_g^{-1} x_g y_g y_g^{-1} x_g^{-1} y_g x_g \cdots y_1^{-1} x_1^{-1} y_1=1
\]
Continuing in this fashion, any cyclic permutation of the over-crossing arcs is realizable.
\end{proof}

\subsection{The homotopy\!\!\! $\Zh$-construction} \label{sec_homotopy_zh_defn} Let $D$ be a diagram of a link $L \subset \Sigma \times [0,1]$, where $g(\Sigma)=g$. Choose a symplectic basis $\Omega=\{x_1,y_1,\ldots,x_g,y_g\}$ of $H_1(\Sigma;\mathbb{Z})$, with base point $\infty$, which is ordered and oriented so that $\prod_{i=1}^g[x_i,y_i]=1$ in $\pi_1(\Sigma,\infty)$. Suppose that the arcs of $D$ intersects the curves in $\Omega$ transversely,  never at the crossings of $D$, and so that the base point at $\infty$ is avoided. The \emph{homotopy $\Zh$-construction} associates to $D$ an $\Omega$-prismatic link diagram $\Zh_{\Omega}(D)$ as follows. First, let $\widecheck{D}$ be a virtual link diagram having the same Gauss diagram as $D$ on $\Sigma$. This can be constructed, for example, by cutting $\Sigma$ along the curves in $\Omega$ and laying the resulting $4g$-gon in $\mathbb{R}^2$. Then the arcs of $D$ can be connected together appropriately, inserting virtual crossings as necessary. Moving along an arc $c$ of $D$, it may intersect an element $z \in \Omega$. Let $\widecheck{c}$ denote the arc corresponding to $c$ in $\widecheck{D}$. For each such intersection on $c$, we draw a small arc $\widecheck{b}$ over-crossing $\widecheck{c}$ which satisfies:
\begin{enumerate}
\item the color of $\widecheck{b}$ is $x_i$, if $z=y_i$, and the color of $\widecheck{b}$ is $y_i$, if $z=x_i$.
\item the orientation of $\widecheck{b}$ agrees with that of $z$ if $z=y_i$ and disagrees if $z=x_i$. 
\end{enumerate} 
Subsequent intersections along $c$ with an element of $\Omega$ are drawn on $\widecheck{c}$ in the same order as they appear on $c$. To complete $\Zh_{\Omega}(D)$, connect all of the added over-crossings arcs $\widecheck{b}$ having the same color into a single component. This can be done arbitrarily, as in the $\Zh$-construction. If any new intersections are created while connecting the over-crossing arcs $\widecheck{b}$ together, these are marked as virtual crossings. If some color of $\Omega$ is missing, draw an unknot disjoint from the rest of the diagram having that color. That $\Zh_{\Omega}(D)$ is a well-defined complete $\Omega$-prismatic link for a fixed diagram $D$ follows immediately from Lemma \ref{lemma_rearrange}. An example of $\Zh_{\Omega}(D)$ is drawn in Figure \ref{fig_mvzh_example}.

\begin{figure}[htb]
\begin{tabular}{|c|} \hline \\
$\xymatrix{
\begin{tabular}{c} \includegraphics[scale=.45]{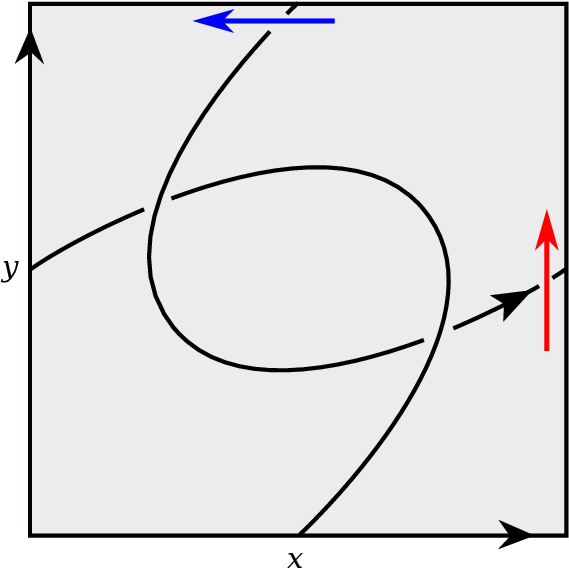} \end{tabular} \ar[rr]^-{\Zh_{\Omega}} & & \begin{tabular}{c} \includegraphics[scale=.35]{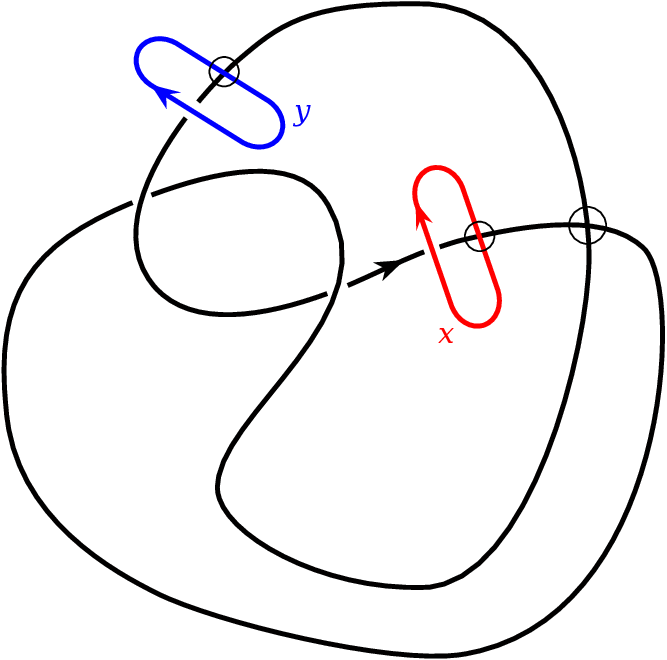} \end{tabular}
}$ \\ \\ \hline
\end{tabular}
\caption{Example of the prismatic\! $\Zh$-construction with $\Omega=\{x,y\}$.} \label{fig_mvzh_example}
\end{figure}

The homotopy $\Zh$-construction remembers the interaction between a link diagram on $\Sigma$ and the fixed generators of $\pi_1(\Sigma,\infty)$. The orientation conventions are chosen so that the $\Omega$-part records the homology of $-[D]$ in the coordinates of the symplectic basis. In other words, the homotopy $\Zh$-construction coordinatizes the $\Zh^{\text{op}}$-construction (see Section \ref{sec_zh_review}). This is proved in the following proposition. First, recall the definition of the virtual linking number. If $L=J \cup K$ is a two-component virtual link diagram, the \emph{virtual linking number} $\vlk(J,K)$ is the number of times, counted with crossing sign, that $J$ crosses over $K$. Beware: $\vlk(J,K)$ is not symmetric!

\begin{proposition} \label{prop_coordinatize_zh} Let $D$ be a link diagram on a surface $\Sigma$ and let $\Omega=\{x_1,y_1,\ldots,x_g,y_g\}$ be a choice of symplectic basis. For $z \in \Omega$, let $\widecheck{z}$ denote the $z$-colored component of $\Zh_{\Omega}(D)$. Then $[D] \in H_1(\Sigma;\mathbb{Z})$ in terms of $[x_i],[y_i] \in H_1(\Sigma;\mathbb{Z})$ is:
\[
[D]= \sum_{i=1}^g -\vlk\left(\widecheck{x}_i,\widecheck{D}\right) [x_i]-\vlk\left(\widecheck{y}_i,\widecheck{D}\right)[y_i].
\]
\end{proposition}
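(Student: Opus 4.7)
The plan is to use the fact that a symplectic basis diagonalizes the intersection form, so that every class $[D] \in H_1(\Sigma;\mathbb{Z})$ admits the decomposition
\[
[D] \;=\; \sum_{i=1}^{g} \bigl([D]\cdot [y_i]\bigr)\,[x_i] \;-\; \bigl([D]\cdot [x_i]\bigr)\,[y_i].
\]
Once this is on the table, the proposition reduces to identifying $[D]\cdot[y_i]$ with $-\vlk(\widecheck{x}_i,\widecheck{D})$ and $[D]\cdot[x_i]$ with $\vlk(\widecheck{y}_i,\widecheck{D})$, and these identities are local: a check at each transverse intersection point of $D$ with a basis curve.

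First I would derive the symplectic expansion above by writing $[D] = \sum_i a_i[x_i]+b_i[y_i]$ and pairing both sides with each basis element using $x_i\cdot y_j = \delta_{ij}$ and $x_i\cdot x_j = y_i\cdot y_j = 0$; this gives $a_i = [D]\cdot [y_i]$ and $b_i = -[D]\cdot[x_i]$. Since $D$ is assumed transverse to each curve in $\Omega$ away from $\infty$, the intersection number $[D]\cdot [z]$ is the signed count $\sum_p \epsilon_p$ over transverse intersection points $p \in D\pitchfork z$, where $\epsilon_p = +1$ when $(\dot D, \dot z)$ at $p$ agrees with the orientation of $\Sigma$.

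The main step is to trace what $\Zh_{\Omega}$ does at one such intersection point. By the construction in Section \ref{sec_homotopy_zh_defn}, a transverse intersection of $D$ with $y_i$ produces a small over-arc $\widecheck{b}$ in the component $\widecheck{x}_i$ oriented along $\dot y_i$ and passing over the arc $\widecheck{c}\subset\widecheck{D}$ corresponding to the local strand of $D$; a transverse intersection of $D$ with $x_i$ produces a small over-arc in $\widecheck{y}_i$ oriented opposite to $\dot x_i$, over $\widecheck{c}$. All further classical crossings that might be introduced when the small arcs of a single color are spliced together into one component are, by convention, marked virtual, so the only classical crossings between an $\Omega$-component and $\widecheck{D}$ are the ones produced at honest intersections on $\Sigma$. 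Fixing a convention for crossing signs and comparing the sign of the resulting over-crossing with $\epsilon_p$ in each of the two local models yields
\[
\vlk(\widecheck{x}_i,\widecheck{D}) \;=\; -\,[D]\cdot [y_i], \qquad \vlk(\widecheck{y}_i,\widecheck{D}) \;=\; [D]\cdot [x_i],
\]
where the relative minus sign between the two identities is precisely the algebraic shadow of the asymmetric agree/disagree orientation convention on $\widecheck{b}$. Substituting these into the symplectic expansion gives the claimed formula.

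The only real obstacle is the local sign bookkeeping in the model computation: one must confirm that the choice of over-arc orientation (agreeing with $z$ when $z=y_i$, disagreeing when $z=x_i$), combined with the paper's convention for the sign of a classical crossing, produces exactly the minus sign in front of \emph{both} terms in the statement rather than a mismatched pair of signs. After verifying this on one model intersection of each type, the rest is summation over the intersection points and an appeal to the symplectic expansion.
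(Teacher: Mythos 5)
Your proposal follows essentially the same route as the paper's proof: decompose $[D]$ in the symplectic basis using the intersection pairing, then identify each coefficient with a signed virtual linking number by tracking, at each transverse intersection point, how the homotopy $\Zh$-construction's asymmetric color-and-orientation rule converts an intersection sign into an over-crossing sign. The paper simply reduces to $g=1$ and does the matrix computation directly rather than writing it with $\delta_{ij}$'s, but the argument and the final sign bookkeeping are the same.
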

\begin{proof} Since the choice of symplectic basis decomposes $\Sigma$ into a connect sum of tori, it may be assumed that $g=1$ and $\Omega=\{x,y\}$. Let $[D]= \alpha [x]+\beta [y]$. Then $[D] \cdot [x]$ and  $[D]\cdot [y]$ are given by:
\[
\begin{pmatrix}
\alpha & \beta
\end{pmatrix} \begin{pmatrix}
0 & 1 \\ -1 & 0
\end{pmatrix} \begin{pmatrix}
1 \\ 0
\end{pmatrix}=-\beta, \quad \quad \begin{pmatrix}
\alpha & \beta
\end{pmatrix} \begin{pmatrix}
0 & 1 \\ -1 & 0
\end{pmatrix} \begin{pmatrix}
0 \\ 1
\end{pmatrix}=\alpha
\]
The following assertions are evident after consultation with Figure \ref{fig_mvzh_example}: if $D$ has a positive (resp., negative) intersection with $y$, the contribution to $\vlk(\widecheck{x},\widecheck{D})$ is $-1$ (resp. $+1$); if $D$ has a positive (resp., negative) intersection with $x$, the contribution to $\vlk(\widecheck{y},\widecheck{D})$ is $1$ (resp. $-1$). Then the above calculation implies that $\alpha=-\vlk(\widecheck{x},\widecheck{D})$ and $-\beta=\vlk(\widecheck{y},\widecheck{D})$.  
\end{proof}

Next we show that the homotopy $\Zh$-construction is an isotopy invariant of links in $\Sigma \times [0,1]$.

\begin{theorem} \label{thm_homotopy_zh_invariance} Let $\Omega=\{x_1,y_1,\ldots,x_g,y_g\}$ be a symplectic basis for $\Sigma$. If $D_1,D_2 \subset \Sigma \times [0,1]$ are Reidemeister equivalent diagrams on $\Sigma$, then $\Zh_{\Omega}(D_1)$, $\Zh_{\Omega}(D_2)$ are equivalent $\Omega$-prismatic links.
\end{theorem}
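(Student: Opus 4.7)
The plan is to decompose any Reidemeister equivalence $D_1 \simeq D_2$ on $\Sigma$ into a short list of elementary local moves, and then verify $\Omega$-prismatic invariance of $\Zh_\Omega$ for each. After a preparatory perturbation, I may assume every intermediate diagram is in general position with respect to $\Omega$: transverse to each basis curve, disjoint from $\infty$, and with crossings off of $\Omega$. The elementary moves I would use are then (a) a classical Reidemeister move supported in a disk disjoint from $\Omega$ and $\infty$; (b) an ambient isotopy supported in such a disk; (c) an ambient isotopy that pushes a single arc of $D$ across exactly one curve $z \in \Omega$, creating or destroying a pair of transverse intersections of opposite sign; and (d) an ambient isotopy that drags an arc once around $\infty$, whose trace meets the basis curves in the cyclic pattern of the boundary word $\prod_{i=1}^g[x_i,y_i]^{\pm 1}$. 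A standard transversality argument, stratifying a generic isotopy by its interactions with the one-complex $\Omega \cup \{\infty\}$, shows that any Reidemeister equivalence on $\Sigma$ is a finite composition of moves of types (a)--(d).

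Cases (a) and (b) are essentially free. The $\Omega$-part of $\Zh_\Omega(D)$ depends only on how $D$ meets $\Omega$, so it is unchanged, and the change to the $\alpha$-part is itself a classical Reidemeister move or a planar isotopy, possibly combined with a rerouting of the $\Omega$-coloured arcs which is absorbed by Lemma \ref{lemma_rearrange}. For case (c), suppose an arc $c$ of $D$ is pushed across $z = x_i$ (the case $z=y_i$ is symmetric). By the definition of $\Zh_\Omega$ together with the colour/orientation rules (1)--(2), the two new intersections of opposite sign produce two over-crossing arcs of colour $y_i$ with opposite orientations sitting on adjacent portions of $c$, hence two classical crossings of opposite sign. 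Using Lemma \ref{lemma_rearrange} to make these two crossings adjacent along the $y_i$-component, an ordinary Reidemeister 2 move cancels them and recovers $\Zh_\Omega(D_1)$ from $\Zh_\Omega(D_2)$.

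The only genuinely delicate step is case (d): pushing $c$ past $\infty$ introduces a run of over-crossings along $c$ whose colour sequence realises the word $\prod_{i=1}^g[x_i,y_i]^{\pm 1}$, and the cyclic order of this word along $c$ is determined by the direction from which the arc approaches $\infty$. This is exactly where Lemma \ref{lemma_commutator_permute} is used: it tells us that the $\Omega$-commutator move is available in any cyclic reordering, so the whole block can be erased, returning the prismatic diagram to the one before the push. The main obstacle I expect is the bookkeeping, namely checking that the colour/orientation conventions in the definition of $\Zh_\Omega$ really do produce a cancelling pair in case (c) and the commutator pattern with the correct signs in case (d). This amounts to a careful local picture analysis at each intersection of $D$ with $\Omega$, cross-checked against Proposition \ref{prop_coordinatize_zh} and the orientation rules (1)--(2) from the definition of the homotopy $\Zh$-construction.
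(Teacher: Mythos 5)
Your proof is correct and follows essentially the same strategy as the paper's: reduce any single Reidemeister move to one supported away from $\Omega$ and $\infty$, treating the transits across a basis curve and across the base point as separate elementary isotopies, with the $\Omega$-commutator move (via Lemma~\ref{lemma_commutator_permute}) handling the base-point case exactly as the paper does. Your explicit case (c) — that a finger move of an arc across a single curve $z \in \Omega$ creates two consecutive over-crossings of the same $\Omega$-color and opposite sign, cancellable by a Reidemeister~2 move after applying Lemma~\ref{lemma_rearrange} — is the content the paper's proof compresses into the remark that contracting the move-ball $B$ into the polygon $G$ ``may involve extended Reidemeister moves''; spelling it out as you do is a useful expansion of that step but not a departure from the argument.
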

\begin{proof} Cut $\Sigma$ along the curves in $\Omega$ to give a $4g$-gon $G$. Suppose that $D_1,D_2$ are related by a single Reidemeister move. A Reidemeister move occurs in a small ball $B$ on $\Sigma$. First consider the case that $B$ can be chosen so small that $\infty \not\in B$ but the move can nonetheless still be performed. Then $B$ can be contracted to so that $B \subset G$. When drawn in $\mathbb{R}^2$, this contraction may involve extended Reidemeister moves, but this does not change the equivalence class of $\Zh_{\Omega}(D_1)$. Performing the Reidemeister move in $B$ then gives an $\Omega$-prismatic link equivalent to $\Zh_{\Omega}(D_2)$.

The remaining case is that every ball $B$ defining the Reidemeister move contains $\infty$. That is, $\infty$ is in the interior of the triangle defining a Reidemeister 3 move, the bigon defining a Reidemeister 2 move, or the monogon defining a Reidemeister 1 move. To use the trick from the previous case, an isotopy must be applied to move one of the arcs involved in the move over the base point $\infty$. The effect on $\Zh_{\Omega}(D_1)$ of such an isotopy can be realized by an $\Omega$-commutator move. Indeed, if an arc intersects an element of $\Omega$, say $x_1$, and needs to be moved, an isotopy over the base point will now force an intersection with all the colors in $\Omega$. For example, in Figure \ref{fig_bp_moves_preserve_sr} an isotopy over the base point causes the replacement: 
\[
y_1=x_1y_1x_2^{-1}y_2^{-1}x_2y_2x_1^{-1}.
\] 
Here we are using the notation of Lemma \ref{lemma_commutator_permute}, where an upwards pointing arc is counted as $z^{-1}$ and a downwards one as $z$. Using the method of the proof of Lemma \ref{lemma_commutator_permute}, this relation is seen to be a consequence of the $\Omega$-semi-welded relations and commutator moves. In general, it is equivalent to $\prod_{i=1}^g [x_i,y_i]$. Hence, the isotopy over the base point results in an equivalent $\Omega$-prismatic link. The desired Reidemeister move may now be performed inside a ball $B$ such that $\infty \not\in B$. Hence, if follows from the previous case that $\Zh_{\Omega}(D_1)$ and $\Zh_{\Omega}(D_2)$ are equivalent $\Omega$-prismatic links.
\end{proof}

\begin{figure}[htb]
\begin{tabular}{|c|} \hline \\
\xymatrix{
\begin{tabular}{c} \includegraphics[scale=.5]{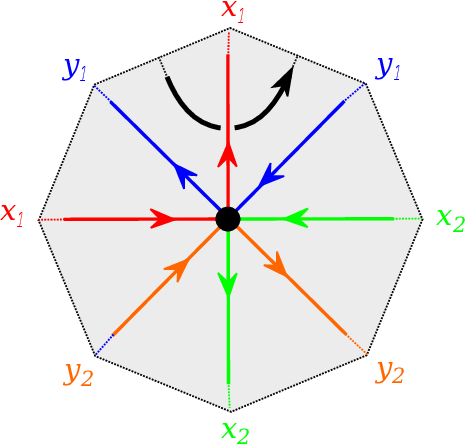} \end{tabular} \ar[r] & \begin{tabular}{c} \includegraphics[scale=.5]{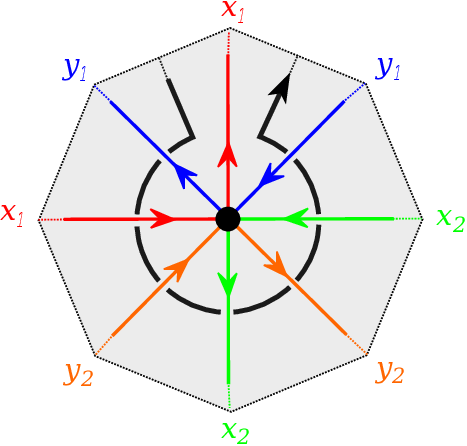} \end{tabular}
} \\ \\ \hline
\end{tabular}
\caption{An isotopy over a base point introduces an $\Omega$-commutator move in $\Zh_{\Omega}(D)$.} \label{fig_bp_moves_preserve_sr}
\end{figure}

Lastly, we discuss the effect of an orientation preserving diffeomorphism $h:\Sigma \to \Sigma$ on the homotopy $\Zh$-construction. Given an initial choice of symplectic basis, $\Omega=\{x_1,y_1,\ldots,x_g,y_g\}$ with base point $\infty$, $h$ maps $\Omega$ to a symplectic basis $h(\Omega)=\{h(x_1),h(y_1),\ldots,h(x_g),h(y_g)\}$ with common base point $h(\infty)$. This follows from the fact that the intersection number is preserved by orientation preserving diffeomorphisms. For the same reason, if $D$ is a link diagram on $\Sigma$, $h(D)$ is a link diagram on $\Sigma$ having the same underlying Gauss diagram as $D$. Then $\Zh_{h(\Omega)}(h(D))$ can be drawn by relabeling:

\begin{lemma} \label{lemma_homotopy_zh_and_diffeo} For $h: \Sigma \to \Sigma$ an orientation preserving diffeomorphism, $D$ a link diagram on $\Sigma$, and $\Omega=\{x_1,y_1,\ldots,x_g,y_g\}$ a symplectic basis, $\Zh_{h(\Omega)}(h(D))$ equivalent to the $h(\Omega)$-prismatic link obtained from $\Zh_{\Omega}(D)$ by relabeling the $z$-colored component by $h(z)$ for all $z \in \Omega$. 
\end{lemma}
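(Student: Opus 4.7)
The plan is to argue that essentially no work is required beyond unpacking the definition, and that the two prismatic links in question can in fact be produced by one and the same recipe once a careful choice of auxiliary data is made. The key point is that an orientation-preserving diffeomorphism preserves all of the combinatorial data that feeds into $\Zh_{\Omega}(D)$: transverse intersection points, intersection signs with the symplectic basis, crossing signs of $D$, and the cyclic order of crossings along arcs.

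First I would set up compatible data on both sides. On the $D$-side, fix a fundamental $4g$-gon $G$ obtained by cutting $\Sigma$ along $\Omega$ at the basepoint $\infty$, and use this to draw the virtual diagram $\widecheck{D}$. On the $h(D)$-side, use the fundamental $4g$-gon $h(G)$ obtained by cutting along $h(\Omega)$ at the basepoint $h(\infty)$, which is the natural choice for drawing $\widecheck{h(D)}$. Since $h|_G : G \to h(G)$ is an orientation-preserving homeomorphism sending the labeled boundary identifications of $G$ to those of $h(G)$, it sends $D \cap G$ to $h(D) \cap h(G)$ with the same Gauss diagram; hence $\widecheck{D}$ and $\widecheck{h(D)}$ may be taken to be \emph{the same} planar virtual link diagram (up to an ambient isotopy of $\mathbb{R}^2$ that changes nothing in the $\Omega$-prismatic category).

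Next I would check that the over-crossing arcs produced by the construction match after the relabeling $z \mapsto h(z)$. For each intersection point $p$ of an arc $c$ of $D$ with some $z \in \Omega$, the construction adds an over-crossing arc on $\widecheck{c}$ with color and orientation dictated by whether $z$ is an $x_i$ or $y_i$, and by the sign of the intersection $c \cdot z$ at $p$. Because $h$ is an orientation-preserving diffeomorphism, $h(c)$ meets $h(z)$ at $h(p)$ with the same sign, and $z$ is an $x_i$ (resp. $y_i$) iff $h(z)$ is $h(x_i)$ (resp. $h(y_i)$). Therefore the construction of $\Zh_{h(\Omega)}(h(D))$ adds, over the corresponding arc $\widecheck{h(c)}$, an over-crossing arc of color $h(x_i)$ or $h(y_i)$ with precisely the orientation obtained from the $\Zh_{\Omega}(D)$ rule by the renaming $z \mapsto h(z)$.

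Finally I would address the remaining ambiguity, which is how the same-colored over-crossing arcs are concatenated into a single component. In $\Zh_{\Omega}(D)$ this concatenation is arbitrary, and different choices yield equivalent $\Omega$-prismatic links by Lemma~\ref{lemma_rearrange}; the same holds for $\Zh_{h(\Omega)}(h(D))$. So I may choose in $\Zh_{h(\Omega)}(h(D))$ the concatenation pattern obtained from the one chosen in $\Zh_{\Omega}(D)$ by the relabeling $z \mapsto h(z)$, which yields literal equality of the two diagrams after renaming. If any of the unknotted ``missing color'' components need to be inserted, the same remark applies verbatim. I expect no real obstacle in this argument; the only bookkeeping hurdle is ensuring that the basepoint change $\infty \mapsto h(\infty)$ is handled by choosing the fundamental domain $h(G)$ on the target side, so that neither an $\Omega$-commutator move nor a nontrivial isotopy over the basepoint is needed.
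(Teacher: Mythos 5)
Your proof is correct and follows the same line of reasoning the paper sketches in the paragraph preceding the lemma (the paper in fact offers no formal proof; it just observes that the claim follows from the preservation of intersection numbers and Gauss diagrams under an orientation-preserving diffeomorphism). You have spelled out the details appropriately: identifying $\widecheck{D}$ and $\widecheck{h(D)}$ via the fundamental domains $G$ and $h(G)$, matching colors and orientations of the added over-crossing arcs crossing by crossing, and invoking Lemma~\ref{lemma_rearrange} to dispose of the arbitrary choice of concatenation.
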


\subsection{The prismatic link group $\&$ the operator group} \label{sec_prismatic_link_group} To relate the operator group to the homotopy $\Zh$-construction, we introduce the prismatic link group. It naturally generalizes the fundamental group of a virtual link to prismatic links. Recall that the \emph{fundamental group} $\pi_1(D)$ of a virtual link diagram $D$ has one generator for every arc of $D$ and one Wirtinger relation for every classical crossing of $D$. See Figure \ref{fig_wirt_rels}.

\begin{figure}[htb]
\begin{tabular}{|ccccccc|} \hline & & & & & & \\
& \begin{tabular}{c}\includegraphics[scale=.6]{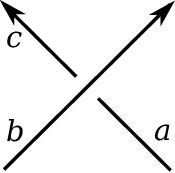} \\ $c=b^{-1}ab$ \end{tabular} & & \begin{tabular}{c}\includegraphics[scale=.6]{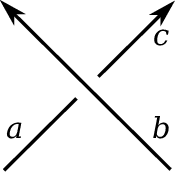} \\ $c=bab^{-1}$ \end{tabular} & & \begin{tabular}{c}\includegraphics[scale=.6]{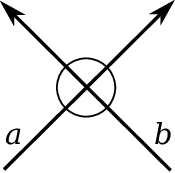}\\ none \end{tabular} & \\& & & & & & \\ \hline 
\end{tabular}
\caption{The Wirtinger relations for the group of a virtual link.} \label{fig_wirt_rels}
\end{figure}

The prismatic link group is the quotient of $\pi_1(D)$ which trivializes the $\Omega$-commutator relation. To define it, some notation must first be established. If $D$ is a $\Omega$-prismatic link diagram and $z \in \Omega$, the $z$-colored component of $D$ has only classical over-crossings or virtual crossings with the $\alpha$-part and only virtual crossings with the $\Omega$-part.  This means that the $z$-colored component is a continuous unbroken arc. If $D$ is also complete, then there is only one such arc for each $z \in \Omega$. Thus, in $\pi_1(D)$, each $z$-colored arc corresponds to a unique generator. This generator of $\pi_1(D)$ will also be denoted by its color $z$. With this convention in place, the prismatic link group can now be defined.

\begin{definition}[Prismatic link group] Let $\Omega=\{x_1,y_1,\ldots,x_g,y_g\}$ be a symplectic palette and $W$ a complete $\Omega$-prismatic link. The $\Omega$-\emph{prismatic group} $\pi_{1}^{\Omega}(W)$ of $W$ is $\pi_1(W)/\langle \prod_{i=1}^g [x_i,y_i] \rangle$. 
\end{definition}

\begin{theorem} If $W_1$, $W_2$ are equivalent $\Omega$-prismatic links, then $\pi_1^{\Omega}(W_1) \cong \pi_1^{\Omega}(W_2)$.
\end{theorem}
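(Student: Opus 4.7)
The plan is to verify invariance under each of the three move-types defining $\Omega$-prismatic equivalence: extended Reidemeister moves, $\Omega$-semi-welded moves, and $\Omega$-commutator moves. Throughout, I will work at the level of $\pi_1(W)$ (with its Wirtinger presentation read off from $W$) and then pass to the quotient by $\langle \prod_{i=1}^g[x_i,y_i]\rangle$.

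First I would handle the extended Reidemeister moves. Since the Wirtinger presentation of the fundamental group of a virtual link is well-known to be invariant under the extended Reidemeister moves (one verifies the standard Tietze transformations at each move), and since the commutator relation $\prod_{i=1}^g[x_i,y_i]$ lives on the $\Omega$-generators, which are unaffected by local Reidemeister moves on the $\alpha$-part, the quotient groups agree. This step is essentially routine and amounts to recalling the classical invariance argument.

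Next I would treat the $\Omega$-semi-welded move. The key observation is that the two over-crossing arcs involved are on the same $\Omega$-colored component (by the convention set up in Section~\ref{sec_prismatic_links} each color in $\Omega$ gives a single continuous strand, hence a single generator). Writing the local $\alpha$-strand as $a$, both the "before" and "after" sides of the move yield Wirtinger relations of the form $c = z^{-1}z^{-1}\,a\,z\,z$ (or the appropriate signed version), where $z \in \Omega$ is the over-color. The two presentations differ only by the placement of a virtual crossing between them, which produces no Wirtinger relation, so they give isomorphic groups after a Tietze comparison. When the two over-crossings carry different $\Omega$-colors (say $x$ on top of $y$, or vice versa, as permitted by Lemma~\ref{lemma_rearrange}), the conjugating words on the two sides differ by commutation of distinct generators; this does not require any commutator relation because the relevant crossing being swapped is virtual, not classical, so no Wirtinger relation is imposed in the first place.

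The main work is the $\Omega$-commutator move. Here the $\alpha$-arc on one side runs under the cyclic sequence of over-crossings realizing $w := \prod_{i=1}^g[x_i,y_i]$, while on the other side it does not. Reading off Wirtinger relations, the emerging $\alpha$-arc on the left is $c = w\,a\,w^{-1}$ (or $w^{-1}a\,w$, depending on the orientation of the $\alpha$-strand, which, as noted in Section~\ref{sec_prismatic_links}, is permitted to be either), while on the right it is simply $c = a$. Thus in $\pi_1$ the two groups differ by imposing $w\,a\,w^{-1} = a$ for one specific arc $a$, but in $\pi_1^{\Omega}$ we have killed $w$, so the Wirtinger relation becomes $a = a$, trivially satisfied. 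A Tietze transformation then identifies the two presentations after quotienting. Using Lemma~\ref{lemma_commutator_permute}, any algebraic rearrangement of the commutator word gives the same conclusion, so no extra care is needed about which cyclic form of $w$ appears.

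The main obstacle is bookkeeping in the commutator-move step: one must carefully track orientations and the placement of virtual crossings along the $\alpha$-strand through the commutator pattern so that the word read off really is $w$ up to a cyclic permutation and an overall inversion, and then verify that the Tietze identification of the two Wirtinger presentations on the nose reduces, modulo $\langle w\rangle$, to an isomorphism. Once this calculation is in hand, combining the three steps gives $\pi_1^{\Omega}(W_1) \cong \pi_1^{\Omega}(W_2)$ whenever $W_1$ and $W_2$ are related by a finite sequence of moves, which is the statement of the theorem.
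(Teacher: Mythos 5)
Your proposal follows the same route as the paper and the heart of the argument — reading the arc emerging from the commutator configuration as $w^{-1}aw$ (or $w\,a\,w^{-1}$) and observing that this relation collapses to $a=a$ once $w=\prod_{i=1}^g[x_i,y_i]$ is killed — is exactly the computation the paper performs. One small correction: the $\Omega$-semi-welded move has a \emph{single} over-strand of one $\Omega$-color passing over two \emph{distinct} $\alpha$-arcs on either side of a virtual crossing (the standard over-commute configuration), so the local Wirtinger relation is not of the form $c=z^{-1}z^{-1}a\,z\,z$; rather, each of the two under-arcs picks up its own conjugation by $z$, and the move merely reorders these two identical contributions while the virtual crossing contributes nothing, which is why (as the paper simply cites) the fundamental group is already a welded invariant and the semi-welded move is automatic. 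Your extra aside about "two over-crossings carrying different colors" does not actually arise in the move and can be dropped.
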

\begin{proof} The group of a virtual link is invariant under the welded move \cite{kauffman_vkt}, and hence is also invariant under any semi-welded move, regardless of the color of the over-crossing arc. For the $\Omega$-commutator move, consider traveling along the under-crossing arc $a$ in Figure \ref{fig_commutator_move}. Then the word in the group $\pi_1(W)$ of $W$ that corresponds to the left-hand side of Figure \ref{fig_commutator_move} is:
\[
   y_g^{-1} x_g^{-1} y_g x_g \cdots y_1^{-1} x_1^{-1} y_1 x_1 a x_1^{-1} y_1^{-1} x_1 y_1 \cdots x_g^{-1} y_g^{-1} x_g y_g=\left( \prod_{i=1}^g [x_i,y_i] \right)^{-1} a \left(\prod_{i=1}^g [x_i,y_i] \right)
\]
On the right hand side, the word is simply $a$. Then in $\pi_1^{\Omega}(W)$ this reduces to $a=a$.
\end{proof}

The next result relates the prismatic link group to the operator group. In particular, it implies that $\pi_1(X,(\infty,1))$ and $\pi_1^{\Omega}(\Zh_{\Omega}(L))$ only differ in the order of conjugation in the action of $\pi_1(\Sigma,\infty)$.
\begin{theorem} \label{thm_prismatic_link_group} Let $L \subset \Sigma \times [0,1]$ be a link and $\Omega=\{x_1,y_1,\ldots,x_g,y_g\}$ a symplectic basis for $\Sigma$. A presentation for  $\pi_1^{\Omega}(\Zh_{\Omega}(L))$ can be obtained from $\widetilde{\pi}_L$ by adding the generators $\Omega$, adding the relation $\prod_{i=1}^g [x_i,y_i]$ and replacing $a^{\gamma}$ with $\gamma^{-1} a \gamma$ in every relator.
\end{theorem}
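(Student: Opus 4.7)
The plan is to directly compute a Wirtinger presentation of $\pi_1(\Zh_{\Omega}(L))$ from the diagram and match it, arc by arc and crossing by crossing, against the operator-group presentation $\widetilde{P}=\langle a_1,\ldots,a_s\mid r_1,\ldots,r_s\rangle_\Gamma$ of $\widetilde{\pi}_L$ from Section~\ref{sec_covering}. The quotient relation $\prod_{i=1}^g[x_i,y_i]$ is automatic from the definition of $\pi_1^{\Omega}$, so the work is to identify generators and relations under the substitution $a^{\gamma}\mapsto\gamma^{-1}a\gamma$.

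For generators: each $z\in\Omega$ is a single closed component of $\Zh_{\Omega}(L)$ (the $\Omega$-part is never an under-strand at a classical crossing), so it contributes exactly one Wirtinger generator, which I call $z$. For the $\alpha$-part, I take as preferred Wirtinger generators those arcs of $\widecheck{D}$ that lie in the $4g$-gon obtained by cutting $\Sigma$ along $\Omega$; these correspond precisely to the arcs $a_1,\ldots,a_s$ in the fundamental domain $G\subset\widetilde{\Sigma}$ of the operator-group presentation. Every other Wirtinger arc in the $\alpha$-part arises by passing under some sequence of $\Omega$-over-crossings and is therefore a conjugate of one of the $a_i$ by a word in $\Omega$.

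The crux is identifying that word explicitly. The orientation rules of the homotopy $\Zh$-construction (Section~\ref{sec_homotopy_zh_defn}) together with Proposition~\ref{prop_coordinatize_zh} show that a positive intersection of an $\alpha$-arc with $y_i$ in $\Sigma$ appears in $\Zh_{\Omega}(L)$ as a negative crossing of the $x_i$-colored over-arc over the $\alpha$-arc (and symmetrically for the other color/sign combinations). At the level of the universal cover, crossing $y_i$ positively in $\Sigma$ is the covering transformation $x_i\in\Gamma$ by the duality $x_i\cdot y_j=\delta_{ij}$; in the operator group this is the replacement $a\mapsto a^{x_i}$. Meanwhile, applying the Wirtinger conventions of Figure~\ref{fig_wirt_rels} to a negative crossing with over-arc $x_i$ replaces the Wirtinger generator $a$ by $x_i^{-1}a\,x_i$. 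Iterating over all $\Omega$-over-crossings encountered along the arc, an $\alpha$-arc whose lift lies in the translate $\gamma\cdot G$ represents the Wirtinger word $\gamma^{-1}a_i\gamma$, which is precisely the image of the operator-group symbol $a_i^{\gamma}$ under the substitution $a^{\gamma}\mapsto\gamma^{-1}a\gamma$.

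With this dictionary, the Wirtinger relations at the (unchanged) classical crossings of $D$ inside $\Zh_{\Omega}(L)$ become, after rewriting each Wirtinger generator in terms of the preferred $a_i$, exactly the relators $r_1,\ldots,r_s$ read under the substitution $a^{\gamma}\mapsto\gamma^{-1}a\gamma$; Lemma~2.3 of \cite{CSW} ensures that these finitely many relations suffice and that no extra relations appear. The main obstacle is the sign-and-orientation bookkeeping of the third paragraph: one must verify that the paper's Wirtinger sign convention, the rule assigning color and orientation to the over-arc $\widecheck{b}$, and the intersection-pairing identification of the covering transformation all conspire to produce $\gamma^{-1}a\gamma$ rather than $\gamma a\gamma^{-1}$. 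It is precisely this reversal of conjugation order that distinguishes the present theorem from Lemma~\ref{lemma_pres_from_op_pres}, so getting this sign right is where the content of the theorem really lies.
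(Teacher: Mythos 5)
Your overall strategy coincides with the paper's: compute a Wirtinger presentation for $\pi_1(\Zh_{\Omega}(L))$, identify the $\alpha$-arcs lying in the cut-open polygon with the operator-group generators $a_1,\ldots,a_s$, read off the conjugating $\Omega$-words from the over-crossings, and compare against the covering-transformation labels to obtain the substitution $a^{\gamma}\mapsto\gamma^{-1}a\gamma$. Where the paper draws all four intersection configurations (Figures \ref{fig_zh_and_pi_1} and \ref{fig_zh_and_pi_2}) and records, for each, both the Wirtinger relation and the translated fundamental domain, you argue a single representative case and appeal to symmetry for the rest. That compression is reasonable in principle, but it only works if the one case you verify is verified correctly.

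The representative case you argue contains a sign inconsistency, and it sits precisely in the step you yourself flag as ``where the content of the theorem really lies.'' You claim that, by Figure \ref{fig_wirt_rels}, a negative crossing with over-arc $x_i$ replaces $a$ by $x_i^{-1}a\,x_i$; but that figure assigns $c=bab^{-1}$ to the negative crossing, so a negative crossing with over-arc $x_i$ gives $a\mapsto x_i a x_i^{-1}$, not the form you wrote. You also assert that a positive intersection with $y_i$ corresponds to the covering transformation $x_i$ ``by the duality $x_i\cdot y_j=\delta_{ij}$,'' but the intersection pairing only fixes the conjugating element up to inversion; determining the sign requires the explicit fundamental-domain picture, and the choice consistent both with Proposition \ref{prop_coordinatize_zh} (your own citation for the crossing sign) and with the paper's case analysis is $x_i^{-1}$, not $x_i$. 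These two inversions cancel, so your final substitution rule $a^{\gamma}\mapsto\gamma^{-1}a\gamma$ comes out right, but neither intermediate claim holds under the paper's stated conventions. This is exactly the silent cancellation the paper's four-case analysis is designed to preclude, and since the entire point of this theorem (as opposed to Lemma \ref{lemma_pres_from_op_pres}) is the direction of conjugation, the verification cannot be left to two errors that happen to cancel.
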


\begin{proof} Let $D$ be a diagram of $L$ on $\Sigma$ and suppose that an arc $c$ of $D$ intersects the symplectic basis element $z \in \Omega$. There are four options for how this can occur, depending on whether $z=x_i,y_i$ and on whether it intersects from left to right or right to left. These are labeled $\raisebox{.5pt}{\textcircled{\raisebox{-.9pt} {1}}}$, $\raisebox{.5pt}{\textcircled{\raisebox{-.9pt} {2}}}$, $\raisebox{.5pt}{\textcircled{\raisebox{-.9pt} {3}}}$, and $\raisebox{.5pt}{\textcircled{\raisebox{-.9pt} {4}}}$ in Figure \ref{fig_zh_and_pi_1}. The corresponding crossings in $\Zh_{\Omega}(D)$ for these four cases are shown in Figure \ref{fig_zh_and_pi_2}. The Wirtinger relations for these crossings are given as follows:
\begin{align*}
\raisebox{.5pt}{\textcircled{\raisebox{-.9pt} {1}}} \quad y_i c=d y_i & \implies d=y_i c y_i^{-1}, \\
\raisebox{.5pt}{\textcircled{\raisebox{-.9pt} {2}}} \quad y_i d=c y_i &\implies d=y_i^{-1} c y_i,\\
\raisebox{.5pt}{\textcircled{\raisebox{-.9pt} {3}}} \quad x_i c=dx_i &\implies d=x_i c x_i^{-1}, \\
\raisebox{.5pt}{\textcircled{\raisebox{-.9pt} {4}}} \quad x_i d=cx_i &\implies d=x_i^{-1} c x_i. 
\end{align*}
Now consider the lift of these pictures to the infinite cyclic cover $\widetilde{\Sigma}$ and let $G$ be the fundamental domain. Write $G^{\gamma}$ for image of $G$ under the action of $\gamma \in \pi_1(\Sigma,\infty)$. The arc  $x_i$ can be viewed as lying in the intersection of $G$ and $G^{y_i}$. The arc $c$ can be thought of as an arc exiting $G$ and the arc $d$ represents the same arc reentering $G$. In case $
\raisebox{.5pt}{\textcircled{\raisebox{-.9pt} {1}}}$, $d$ is entering from $G^{y_i^{-1}}$ and hence it is the continuation of the arc $c^{y_i^{-1}}$ into $G$. Therefore, $d=c^{y_i^{-1}}$. In case $\raisebox{.5pt}{\textcircled{\raisebox{-.9pt} {2}}}$, $d$ is entering from $G^{y_i}$ and hence it is a continuation of the arc $c^{y_i}$. This implies $d=c^{y_i}$. For cases $\raisebox{.5pt}{\textcircled{\raisebox{-.9pt} {3}}}$ and $\raisebox{.5pt}{\textcircled{\raisebox{-.9pt} {4}}}$, a similar argument gives $d=c^{x_i^{-1}}$ and $d=c^{x_i}$, respectively. By the above calculation, these four cases exactly match the Wirtinger relations corresponding to the $\Omega$-colored over-crossing arcs. Hence, making all these substitutions into the presentation $\widetilde{P}$ of $\widetilde{\pi}_L$ gives a presentation for $\pi_1^{\Omega}(\Zh_{\Omega}(D))$. 
\end{proof}

\begin{figure}[htb]
\begin{tabular}{|ccccc|} \hline & & & & \\ 
& \begin{tabular}{c} 
\includegraphics[scale=.7]{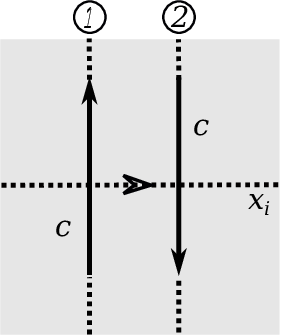}
\end{tabular} & & \begin{tabular}{c} 
\includegraphics[scale=.7]{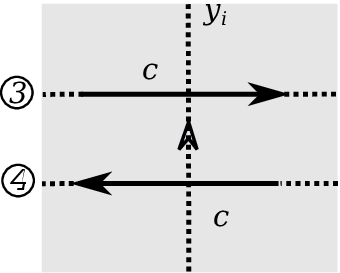}
\end{tabular} & \\ & & & &  \\ \hline
\end{tabular}
\caption{Four ways an arc $c$ of $D$ on $\Sigma$ can intersect a symplectic basis $\Omega$.} \label{fig_zh_and_pi_1}
\end{figure}

\begin{figure}[htb]
\begin{tabular}{|ccc|} \hline & & \\ 
& \includegraphics[scale=.7]{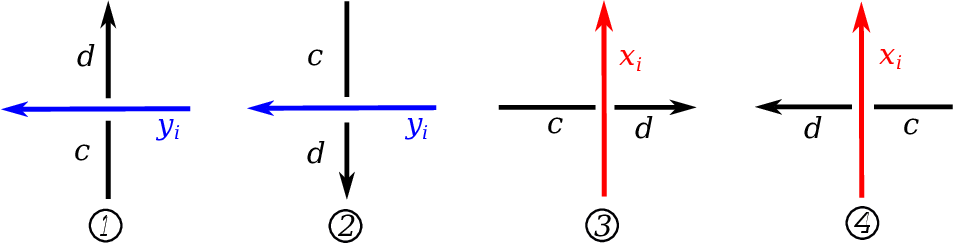} & \\ & & \\ \hline
\end{tabular}
\caption{Appearance of the four cases in Figure \ref{fig_zh_and_pi_1} in $\Zh_{\Omega}(D)$.} \label{fig_zh_and_pi_2}
\end{figure}

\section{The prismatic $U_q(\mathfrak{gl}(m|n))$-Reshetikhin-Turaev functor} \label{sec_uqglmn_rt}

The second part of the bookkeeping method is a $U_q(\mathfrak{gl}(m|n))$ Reshetikhin-Turaev functor. First, in Section \ref{sec_prismatic_tangles}, we give a brief review of categories of virtual tangles and give a generalization of them to the prismatic setting. A bare bones introduction to Lie superalgebras appears in Section \ref{sec_lie_sup_prelim}. The prismatic $U_q(\mathfrak{gl}(m|n))$ Reshetikhin-Turaev functor is defined in Section \ref{sec_uqglmn_rt}.

\subsection{Prismatic tangles} \label{sec_prismatic_tangles} Here we will follow the conventions for virtual tangles from \cite{CP}, Section 2.1. The \emph{category of virtual tangles}, denoted $\mathcal{VT}$, has as objects (possibly empty) sequences in the set of symbols $\{ \boxplus,\boxminus\}$. Morphisms in $\mathcal{VT}$ are \emph{oriented virtual tangle diagrams}. Any virtual tangle diagram can be drawn as a composition and tensor product of the elementary virtual tangles shown in Figure \ref{fig_cross}. Given a virtual tangle diagram, $T$, its domain $\text{dom}(T)$ is the sequence of free ends at the bottom, where a free end is assigned $\boxplus$ if the incident strand is oriented away from the bottom and $\boxminus$ if it is oriented towards the bottom. The codomain $\text{cod}(T)$ is defined likewise as the sequence of free ends at the top of $T$, but with the opposite orientation convention.  Virtual tangle diagrams are considered equivalent up to a sequence of classical Reshetikhin-Turaev moves (see \cite{CP}, Figure 6) and virtual tangle moves. The virtual tangle moves are repeated below in Figure \ref{fig_virtual_tangle_moves}  as they are likely less familiar. $\mathcal{VT}$ is then a monoidal category with the usual tensor product.

\begin{figure}[htb]
\begin{tabular}{|c|cccc|c|c|c|c|} \hline
\multirow[l]{2}*{\rotatebox{90}{ $\stackrel{\text{classical}}{ \text{crossings}}$ \hspace{.35cm}}}  & & & & & & \multirow[c]{2}*{\rotatebox{-90}{\hspace{.05cm} $\stackrel{\text{virtual}}{\text{crossings}}$ }} & & \multirow[c]{2}*{\rotatebox{-90}{ \hspace{-.25cm} $\stackrel{\text{identity}}{\text{morphisms}}$}} \\
& \begin{tabular}{c} 
\def\svgwidth{.5in}
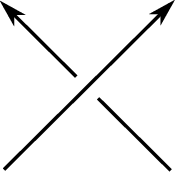 \end{tabular} & & \begin{tabular}{c} \def\svgwidth{.5in}
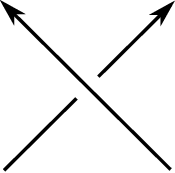 \end{tabular} & & \begin{tabular}{c} \def\svgwidth{.5in}
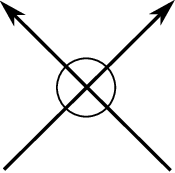 \end{tabular} & & \begin{tabular}{c} \def\svgwidth{.5in}
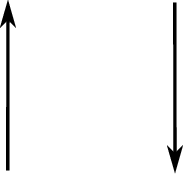 \end{tabular} & \\
& $\oplus$ & & $\ominus$ & & & & & \\\hline 
\multirow[l]{2}*{\rotatebox{90}{ caps }} & \multicolumn{3}{c}{\begin{tabular}{ccc} \def\svgwidth{.5in}
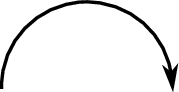 & & \def\svgwidth{.5in}
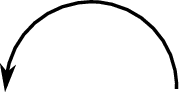 \end{tabular}} & & \multicolumn{3}{c|}{\begin{tabular}{ccc} \def\svgwidth{.5in}
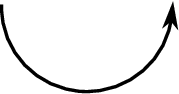 & & \def\svgwidth{.5in}
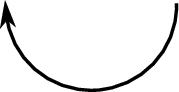  \end{tabular}} & \multirow[c]{2}*{\rotatebox{-90}{\hspace{-.3cm} cups }} \\ & & & & & \multicolumn{3}{c|}{} & \\ \hline
\end{tabular}
\caption{Elementary virtual tangles.} \label{fig_cross}
\end{figure}

\begin{figure}[htb]
    \begin{tabular}{|cc|} \hline & \\ 
\multicolumn{2}{|c|}{\begin{tabular}{cccc} \begin{tabular}{c} \def\svgwidth{1.1in}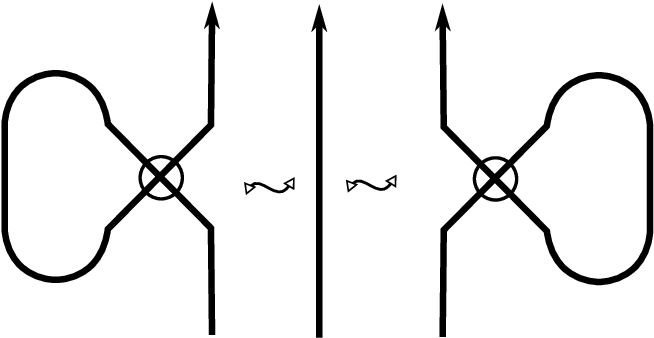 \\ \underline{$VT_1$} \end{tabular} & \begin{tabular}{c} \def\svgwidth{.67in}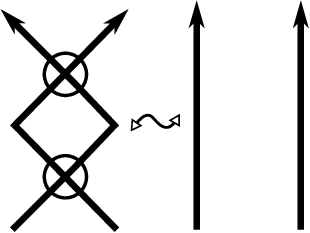 \\ \underline{$VT_2$} \end{tabular} & \begin{tabular}{c} \def\svgwidth{.9in}
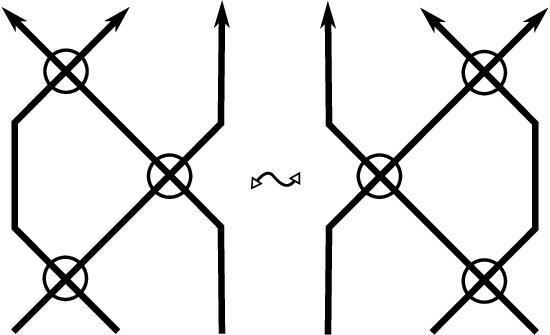 \\ \underline{$VT_3$} \end{tabular} & \begin{tabular}{c} \def\svgwidth{.9in}
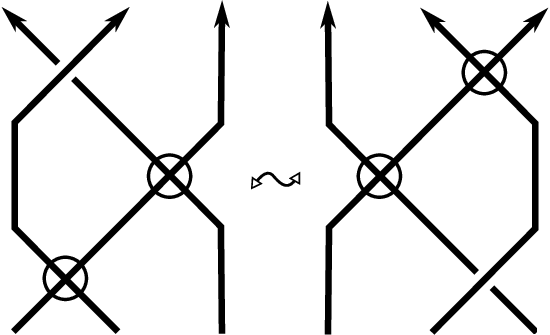 \\ \underline{$VT_4$} \end{tabular} \end{tabular}} \\ & \\
\multicolumn{2}{|c|}{\begin{tabular}{ccc} \begin{tabular}{c} \def\svgwidth{.85in}
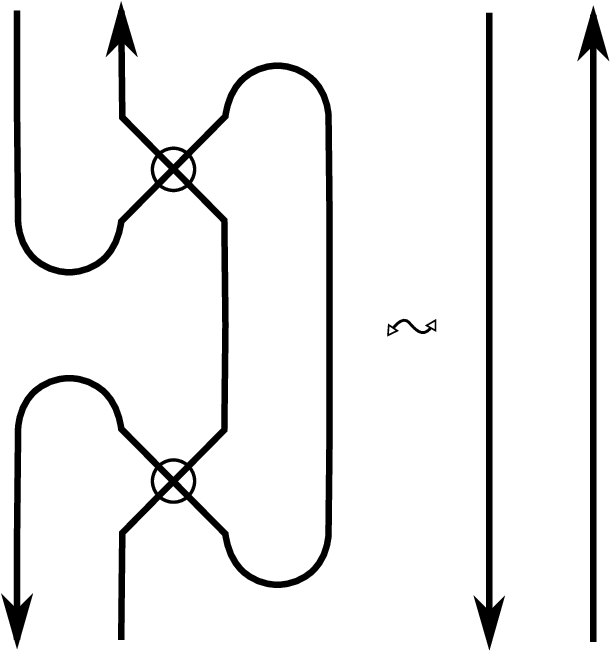 \\ \underline{$VT_5$} \end{tabular} & \begin{tabular}{c} \def\svgwidth{.85in}
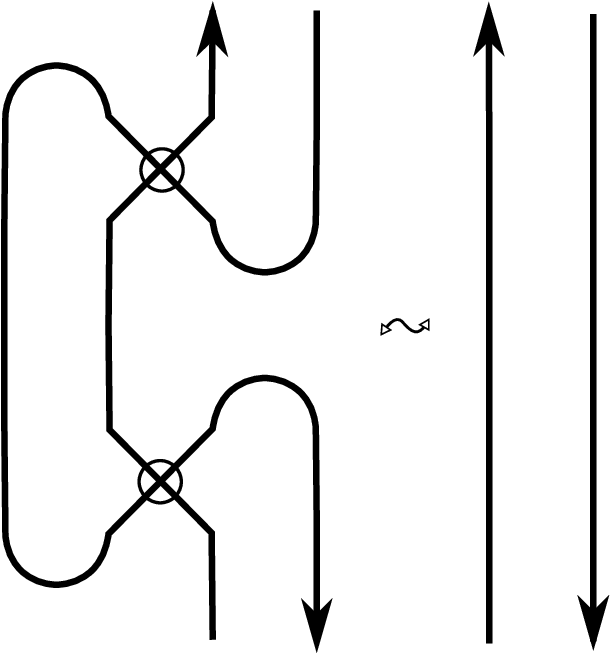  \\ \underline{$VT_6$} \end{tabular} & \begin{tabular}{c} \def\svgwidth{1.6in}
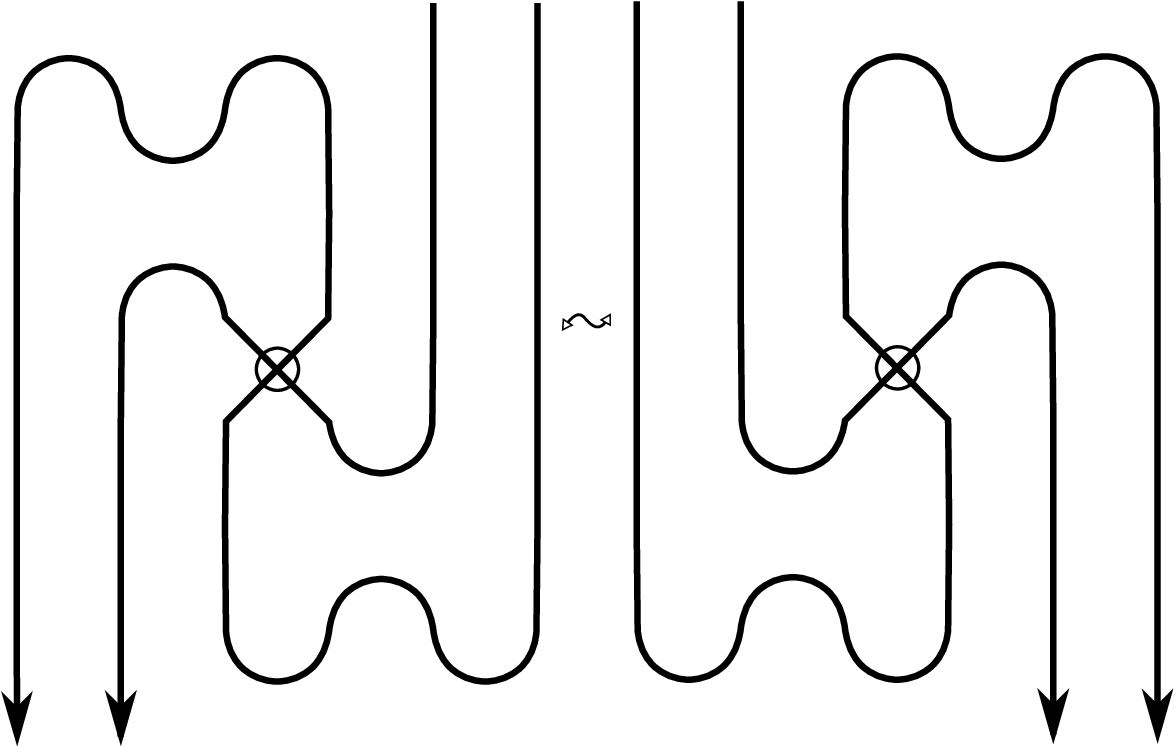 \\ \underline{$VT_7$} \end{tabular} \end{tabular}} \\ & \\ \hline
\end{tabular}
\caption{Virtual tangle moves.} \label{fig_virtual_tangle_moves}
\end{figure}

Let $\Omega$ be a symplectic palette. The objects of the category $\mathcal{PT}_{\Omega}$ of $\Omega$-prismatic tangles the same as those in $\mathcal{VT}$ except that they take colors in the set $\{\alpha\} \cup \Omega$. For $z \in \Omega$, we write $\boxplus^z$, $\boxminus^z$ to denote the color. An \emph{$\Omega$-prismatic tangle diagram} is a virtual tangle diagram $T$ whose components take colors in the set $\{\alpha\} \cup \Omega$ and such that $\text{dom}(T)$, $\text{cod}(T)$ are compatibly colored with the the strands in $T$. Furthermore, the $\Omega$-part of $T$ has only classical over-crossings or virtual crossings with the $\alpha$-part, and only virtual crossings with itself. Morphisms in $\mathcal{PT}_{\Omega}$ are considered equivalent up to classical tangle moves, virtual tangle moves, $\Omega$-semi-welded moves, and $\Omega$-commutator moves. For a set of generating tangle moves for the general $\Omega$-semi-welded move, see \cite{CP}, Figure 12.  

As observed by Kauffman \cite{kauffman_vkt}, additional categories are needed for quantum invariants of virtual tangles. Denote by $\mathcal{VT}^{fr}$ the category of framed virtual tangles, where the Reidemeister 1 move is disallowed but curls of opposite sign cancel. If the move $VT_1$ is disallowed, we have the category of \emph{rotational virtual tangles}, denoted $\mathcal{VT}^{rot}$. Note that the virtual framing relation $T_1^{rot}$ in Figure \ref{fig_framing} is automatically satisfied in $\mathcal{VT}^{rot}$; it is drawn here for emphasis. In the category $\mathcal{VT}^{fr,rot}$ of \emph{framed rotational tangles}, both classical and virtual framing occur. Likewise we have the categories $\mathcal{PT}^{fr},\mathcal{PT}^{rot},\mathcal{PT}^{fr,rot}$ of framed, rotational, and framed rotational prismatic tangles. For prismatic tangles, the disallowed moves in each case only apply to the $\alpha$-part of a prismatic tangle. The move $VT_1$ is allowed on the $\Omega$-part. Reidemeister 1 moves cannot occur on the $\Omega$ part in any event as  classical self-crossings are disallowed by hypothesis. 

\begin{figure}[htb]
\begin{tabular}{|c|c|} \hline & \\
\begin{tabular}{c} $\underline{\mathit{T}_1^{\mathit{rot}}}$:  \end{tabular}   &  \begin{tabular}{c} 
\includegraphics[scale=.4]{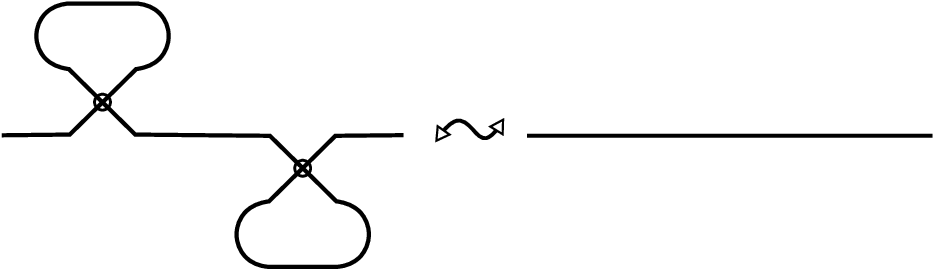} \\ \\ \end{tabular} \\ \hline
\end{tabular}
\caption{The rotational (i.e. virtual framing) relation.} \label{fig_framing}
\end{figure}

\subsection{Preliminaries on Lie superalgebras} \label{sec_lie_sup_prelim} A Lie superalgebra is a $\mathbb{Z}_2$-graded vector space $\mathfrak{g}=\mathfrak{g}_{\bar{0}} \oplus \mathfrak{g}_{\bar{1}}$, together with a bilinear map $[ \cdot, \cdot]: \mathfrak{g} \oplus \mathfrak{g} \to \mathfrak{g}$ (called the \emph{superbracket}) which satisfies the following graded anti-commutativity and graded Jacobi relations:
\begin{eqnarray*}
    [x,y] &=& -(-1)^{|x||y|}[y,x]\\
    {[x,[y,z]]} &=& [[x,y],z] + (-1)^{|x||y|}[y,[x,z]\,],
\end{eqnarray*}
where $x,y$ are homogeneous and $|z| \in \mathbb{Z}_2$ denotes the grading of a homogeneous element $z$. Elements in $\mathfrak{g}_{\bar{0}}$ are said to be \emph{even} and those in $\mathfrak{g}_{\bar{1}}$ are said to be \emph{odd}. Let $\mathbb{C}^{m|n}$ be the graded complex vector space whose even subspace has basis $x_1,\ldots,x_m$ and whose odd subspace has basis $x_{m+1},\ldots,x_{m+n}$. The vector space of endomorphisms of $\mathbb{C}^{m|n}$, denoted $\mathfrak{gl}(m|n)$, has the structure of a Lie superalgebra. The even and odd parts take the following forms:
\[
\underline{\text{Even}:}\,\, \left(\begin{array}{c|c}
    A & \textbf{0}_{m \times n} \\ \hline
    \textbf{0}_{n \times m} & D
\end{array}\right), \quad \underline{\text{Odd}:}\
\,\, \left(\begin{array}{c|c}
    \textbf{0}_{m \times m} & B\\ \hline
    C & \textbf{0}_{n\times n}
\end{array}\right),
\]
where $\textbf{0}_{j,k}$ denotes the $j \times k$ matrix of all zeros. The superbracket is: $[X,Y]=XY-(-1)^{|X||Y|}YX$. Analogous to the Lie algebra case, the universal enveloping algebra of $\mathfrak{gl}(m|n)$ can be deformed to obtain the \emph{quantized universal enveloping superalgebra} $U_q(\mathfrak{gl}(m|n))$ over the field $\mathbb{C}(q)$ \cite{zhang,zhang_2}. By the \emph{vector representation} $V$ of $U_q(\mathfrak{gl}(m|n))$, we mean the representation inherited from the defining action of $\mathfrak{gl}(m|n)$ on $\mathbb{C}^{m|n}$. The superalgebra $U_q(\mathfrak{gl}(m|n))$ furthermore has the structure of a \emph{ribbon Hopf superalgebra}. In particular, it has a ribbon element and a universal $R$-matrix. For our purposes, only the values for the cup, cap, and crossing maps for the vector representation of $U_q(\mathfrak{gl}(m|n))$ will be required. These can be easily found in the literature (see e.g. \cite{queffelec_19,queffelec_sartori}).

\subsection{The prismatic $U_q(\mathfrak{gl}(m|n))$-Reshetikhin-Turaev functor} \label{sec_prismatic_rt_defn} The prismatic Reshetikhin-Turaev functor is defined analogously to the extended Reshetikhin-Turaev functor of semi-welded tangles from \cite{CP}. Let $\Omega=\{x_1,y_1,\ldots,x_g,y_g\}$ be a symplectic palette and let $m,n \in \mathbb{N}$. Define $\mathbb{F}=\mathbb{C}(q,x_1,y_1\ldots,x_g,y_g)$ to be the field of rational functions in $q,x_1,y_1,\ldots,x_g,y_g$ over $\mathbb{C}$. The vector representation $V$ of $U_q(\mathfrak{gl}(m|n))$ over $\mathbb{C}(q)$ extends to a representation over $\mathbb{F}$, which we will also denote by $V$. The \emph{prismatic $U_q(\mathfrak{gl}(m|n))$ Reshetikhin-Turaev functor $\widetilde{Q}_{\Omega}^{m|n}:\mathcal{PT}_{\Omega}^{fr,rot} \to \textbf{Vect}_{\mathbb{F}}$} is defined on objects as follows. First set $\widetilde{Q}_{\Omega}^{m|n}(\varnothing)=\mathbb{F}$. Then for objects $(\varepsilon)$ of length $1$, define:
\[
\widetilde{Q}_{\Omega}^{m|n}((\varepsilon))=\left\{\begin{array}{cl} V & \text{if } \varepsilon=\boxplus \\ V^* & \text{if } \varepsilon=\boxminus \\ \mathbb{F} & \text{if } \varepsilon=\boxplus^{z} \text{ or } \boxminus^{z}, \text{ and } z \in \Omega \end{array} \right.
\] 
Note that for objects in the $\alpha$-part, we are using the vector representation but for objects in the $\Omega$-part, we are using a $1$-dimensional representation. For an object $a=(\varepsilon_1, \ldots,\varepsilon_k)$, define $\widetilde{Q}_{\Omega}^{m|n}(a)$ to be the tensor product of the representations for each $\varepsilon_i$. Next we define $\widetilde{Q}_{\Omega}^{m|n}(T)$ for $T$ an elementary virtual tangle. For elementary tangles involving only that classical $\alpha$-part, we use the same values as the $U_q(\mathfrak{gl}(m|n))$ Reshetikhin-Turaev functor on classical tangles (see e.g. Queffelec \cite{queffelec_19}).
\begin{align*} \label{eqn_r_matrices}
\widetilde{Q}^{m|n}_{\Omega} \left(\begin{array}{c} \includegraphics[height=.35in]{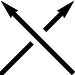} \end{array}\right) & : x_i \otimes x_j \to \left\{ \begin{array}{cl} 
q x_i \otimes x_i                                  & \text{if } i=j \le m      \\
(-1)^{|i||j|} x_j \otimes x_i+(q-q^{-1}) x_i \otimes x_j                           & \text{if } i<j         \\
(-1)^{|i||j|} x_j \otimes x_i & \text{if } i>j \\
-q^{-1} x_i \otimes x_i                                      & \text{if } i=j>m           
\end{array} \right. \\
\widetilde{Q}^{m|n}_{\Omega} \left(\begin{array}{c} \includegraphics[height=.35in]{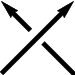} \end{array} \right)& : x_i \otimes x_j \to \left\{ \begin{array}{cl} 
q^{-1} x_i \otimes x_i                                  & \text{if } i=j \le m      \\
(-1)^{|i||j|} x_j \otimes x_i                           & \text{if } i<j         \\
(-1)^{|i||j|} x_j \otimes x_i+(q^{-1}-q)x_i \otimes x_j & \text{if } i>j \\
-q x_i \otimes x_i                                      & \text{if } i=j>m           
\end{array} \right. 
\end{align*}
\begin{align*}
\widetilde{Q}_{\Omega}^{m|n}\left(\begin{array}{c}  \includegraphics[height=.25in]{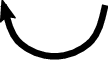} \end{array}\right) &: 1 \to \sum_{k=1}^{m+n} x_k \otimes x_k^*,\\  \widetilde{Q}_{\Omega}^{m|n}\left(\begin{array}{c}  \includegraphics[height=.25in]{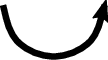} \end{array}\right) & : 1 \to q^{m-n} \left( \sum_{k=1}^{m} q^{1-2k} x_k^* \otimes x_k -\sum_{k=m+1}^{m+n} q^{-4m-1+2k} x_k^* \otimes x_k \right) 
\end{align*}
\[
\widetilde{Q}_{\Omega}^{m|n}\left(\begin{array}{c}  \includegraphics[height=.25in]{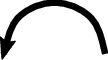} \end{array}\right) : x_k^* \otimes x_k \to 1, \widetilde{Q}_{\Omega}^{m|n}\left( \begin{array}{c} \includegraphics[height=.25in]{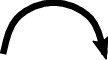} \end{array}\right) : x_k \otimes x_k^* \to \left\{\begin{array}{cl} q^{-m+n-1+2k} & \text{if } k \le m \\ -q^{3m+n+1-2k} & \text{if } k >m \end{array} \right.
\]
These are repeated from \cite{CP}, but with values of positive and negative crossings switched. This is to match the conventions of the CKS polynomial in the next section. In \cite{CP}, the values were chosen instead to match those of  Jaeger, Kauffman, and Saleur \cite{kauffman_saleur_91, kauffman_saleur_92, jaeger_kauffman_saleur_94}. For the virtual crossing, we will use the \emph{same} value as the extended $U_q(\mathfrak{gl}(m|n))$ functor from \cite{CP}, which is chosen to match the Alexander polynomial of an almost classical knot when $m=n=1$ (see \cite{CP}, Remarks 3.2.1-3.2.3).
\begin{align*} 
\widetilde{Q}^{m|n}_{\Omega} \left(\begin{array}{c} \includegraphics[height=.35in]{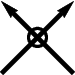} \end{array}\right) & : x_i \otimes x_j \to \left\{ \begin{array}{cl} x_j \otimes x_i & \text{if } i,j \le m \\ 
 q x_j \otimes x_i & \text{if } j \le m, i>m \\
 q^{-1} x_j \otimes x_i & \text{if } i \le m, j>m \\
-x_j \otimes x_i & \text{if } i,j > m \\
\end{array} \right.
\end{align*}
For the classical crossings involving the $\Omega$-part, the associated value must be a linear transformation $V \to V$, since $V \otimes \mathbb{F} \cong V \cong \mathbb{F} \otimes V$. Following \cite{CP}, we define the action of the $\Omega$-part to be multiplication on the odd subspace by a scalar. Then for each $z \in \Omega$, we have the following assignments:
\[
\widetilde{Q}^{m|n}_{\Omega}\left(\begin{array}{c} \includegraphics[height=.35in]{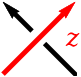} \end{array}\right) :x_i \to \left\{ \begin{array}{cl} x_i & \text{if } i \le m \\ z^{-1} \cdot x_i & \text{if } i>m \end{array} \right., \quad 
\widetilde{Q}^{m|n}_{\Omega}\left(\begin{array}{c} \includegraphics[height=.35in]{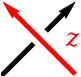} \end{array}\right):x_i \to \left\{ \begin{array}{cl} x_i & \text{if } i \le m \\ z \cdot x_i & \text{if } i>m \end{array} \right.,
\]
Next consider a virtual crossing involving the $\Omega$-part. From the topological perspective, it is expected that only classical over-crossings should record the action of $\pi_1(\Sigma,\infty)$ on the lift of the diagram to the universal cover. Such crossings should therefore be assigned to an identity matrix:
\[ \widetilde{Q}^{m|n}_{\Omega}\left(\begin{array}{c} \includegraphics[height=.35in]{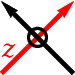} \end{array} \text{ or }\begin{array}{c} \includegraphics[height=.35in]{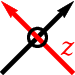} \end{array} \right) \,\,: x_i \to x_i, \quad \widetilde{Q}^{m|n}_{\Omega}\left(
\begin{array}{c} \includegraphics[height=.35in]{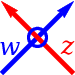} \end{array} \right)\,\, : 1 \to 1
\] 
Lastly, for the evaluation and coevaluation maps, note that $\mathbb{F} \otimes \mathbb{F}^* \cong \mathbb{F}$. These are thus homomorphisms $\mathbb{F} \to \mathbb{F}$. Hence, all such maps are also assigned to the appropriate identity map:    \[
\widetilde{Q}^{m|n}_{\Omega}\left(\begin{array}{c} \includegraphics[height=.25in]{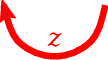} \end{array} \text{ or }  
\begin{array}{c} \includegraphics[height=.25in]{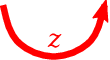} \end{array}\right): 1 \to 1 \otimes 1, \quad \widetilde{Q}^{m|n}_{\Omega}\left( \begin{array}{c} \includegraphics[height=.25in]{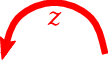} \end{array} \text{ or } 
\begin{array}{c} \includegraphics[height=.25in]{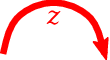} \end{array} \right): 1 \otimes 1 \to 1,
\]
\begin{theorem}\label{thm_mv_functor_is_wd} $\widetilde{Q}^{m|n}_{\Omega}:\mathcal{PT}_{\Omega}^{fr,rot} \to \textbf{Vect}_{\mathbb{F}}$ is a well-defined functor of $\Omega$-prismatic tangles in the framed rotational category. If $m=n=1$, this holds for $\mathcal{PT}_{\Omega}$, up to multiples of $q^k$.
\end{theorem}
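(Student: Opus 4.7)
The plan is to check that $\widetilde{Q}^{m|n}_{\Omega}$ respects each of the defining equivalences on $\mathcal{PT}^{fr,rot}_{\Omega}$, splitting into cases according to which colors are involved. First restrict to the subcategory whose objects and morphisms use only the $\alpha$-color. On this subcategory $\widetilde{Q}^{m|n}_{\Omega}$ agrees with the extended $U_q(\mathfrak{gl}(m|n))$ Reshetikhin--Turaev functor constructed in \cite{CP}, which is already known to be well-defined on $\mathcal{VT}^{fr,rot}$ (and on $\mathcal{VT}$ when $m=n=1$, up to unit multiples). This immediately handles all classical Reshetikhin--Turaev moves other than $R_1$, every virtual tangle move $VT_2,\ldots,VT_7$, and any $\Omega$-semi-welded move whose over-crossing arc is $\alpha$-colored.

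Next I would observe that the $\Omega$-colored strands carry the $1$-dimensional representation $\mathbb{F}$, and virtual crossings and cups/caps involving $\Omega$-strands are assigned the identity map. Consequently, any virtual tangle move $VT_i$, rotational relation $T_1^{rot}$, or $\Omega$-semi-welded move whose participating strands include at least one $\Omega$-strand is reduced to the assertion that the diagonal operator on $V$ attached to an $\Omega$-over-crossing commutes with permutations of the standard basis induced by virtual crossings of $\alpha$-strands. Since the operator assigned to a $z$-colored over-crossing is diagonal in the basis $\{x_1,\ldots,x_{m+n}\}$ (being the identity on the even subspace and multiplication by $z^{\pm 1}$ on the odd subspace), and since $\widetilde{Q}^{m|n}_{\Omega}$ of the virtual $\alpha$-crossing is a signed permutation of those same basis vectors, the two commute up to the signs already built into the virtual $R$-matrix. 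Thus these moves pose no new obstruction.

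The key new ingredient is the $\Omega$-commutator move. On the left-hand side, an $\alpha$-strand passes under the concatenation of $\Omega$-over-crossings coming from $\prod_{i=1}^g[x_i,y_i]$. Since each such crossing acts on $V$ by a diagonal operator, the composite operator on an odd basis vector is the product of scalars
\begin{equation*}
\prod_{i=1}^g x_i\, y_i\, x_i^{-1}\, y_i^{-1} = 1
\end{equation*}
where equality holds because $x_i,y_i\in\mathbb{F}$ commute. On the even subspace each factor is the identity, so the overall operator is the identity, matching the right-hand side where no crossings appear. By Lemma \ref{lemma_commutator_permute}, all cyclic rearrangements of the commutator move give the same conclusion. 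A parallel argument handles the case where the under-crossing $\alpha$-arc points in the opposite direction, using the transpose representation on $V^*$ whose diagonal operator is the inverse scalar.

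Finally, the classical and virtual framing issues are inherited from \cite{CP}: on $\alpha$-strands the Reidemeister $1$ and virtual framing anomalies are controlled exactly as in the $\mathcal{VT}^{fr,rot}$ setting, and on $\Omega$-strands both anomalies are trivial because scalar diagonal operators have trivial trace contributions. For the addendum in the case $m=n=1$, the value of the $\alpha$-curl is a single unit $q^{\pm k}\in \mathbb{F}^\times$, so $\widetilde{Q}^{1|1}_{\Omega}$ descends to the full category $\mathcal{PT}_{\Omega}$ up to multiplication by $q^k$, as claimed. The main obstacle in this program is the commutator move; the entire design of the assignment on $\Omega$-strands (scalar action on a one-dimensional representation, in commuting variables $x_i,y_i$) is engineered precisely so that this move holds on the nose, and the remainder of the verification is bookkeeping against the functoriality already established in \cite{CP}.
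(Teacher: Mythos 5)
Your proof is correct and follows the same outline as the paper's: delegate all moves except the $\Omega$-commutator move to the well-definedness of the extended $U_q(\mathfrak{gl}(m|n))$ functor from \cite{CP} (which is the single-color case of the prismatic functor), then verify the $\Omega$-commutator move directly by observing that the scalars $x_i, y_i \in \mathbb{F}$ commute so that $\prod_i x_i y_i x_i^{-1} y_i^{-1} = 1$, and handle the $m=n=1$ addendum by noting the curl evaluates to $q^{\pm 1} I_2$. Two small imprecisions worth cleaning up: the reference to an ``$\Omega$-semi-welded move whose over-crossing arc is $\alpha$-colored'' is vacuous, since by the paper's definition the over-crossing arc in an $\Omega$-semi-welded move is always carried by a color in $\Omega$; and the justification that $\Omega$-strand framing anomalies vanish ``because scalar diagonal operators have trivial trace contributions'' is not quite the right reason---it is simply that every cup, cap, and virtual crossing involving an $\Omega$-strand is assigned the identity, so a virtual curl on an $\Omega$-strand evaluates to the identity.
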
 
\begin{proof} This is essentially the same as the proof for the extended $U_q(\mathfrak{gl}(m|n))$ Reshetikhin-Turaev functor from \cite{CP}, which can be viewed as a single color version (i.e. $\Omega=\{\textcolor{blue}{\textbf{\text{blue}}}\}$) of the prismatic functor. In particular, the $\Omega$-semi-welded move is satisfied for any color of the over-crossing arc. For details of the proofs of invariance under the various moves, see \cite{CP}, Section 3.4. The only rule that needs to be checked is the $\Omega$-commutator move. Going from left to right in the left hand side of Figure \ref{fig_commutator_move}, the effect is to multiply the odd part of $V$ by $x_1 y_1 x_1^{-1} y_1^{-1} \cdot \ldots \cdot x_g y_g x_g^{-1} y_g^{-1}$. In $\mathbb{F}$, this simplifies to $1$. Hence, the $\Omega$-commutator move is satisfied for all $m,n$. If $m=n=1$, $\dim(V)=2$. A classical curl evaluates to the identity matrix $I_2$ and a virtual curl evaluates to $q^{\pm 1} I_2$ (see again \cite{CP}, Section 3.4). Hence we have a true invariant of virtual tangles, up to multiples of $q^k$.    
\end{proof}

\section{The prismatic $U_q(\mathfrak{gl}(1|1))$ invariant and the CSW polynomial} \label{sec_uqgl11_csw}

Having established the two parts of the bookkeeping method, we proceed to showing that their composition is equivalent to the CSW polynomial when $m=n=1$. This is accomplished by generalizing Kauffman and Saleur's proof identifying the classical Alexander polynomial with the $U_q(\mathfrak{gl}(1|1))$ quantum invariant \cite{kauffman_saleur_91}. First, in Section \ref{sec_prismatic_braid_group}, we define a prismatic extension of the virtual braid group. In Section \ref{sec_prismatic_burau}, we define a representation of this prismatic braid group, called the prismatic Burau representation. Finally, in Section \ref{sec_quantum_CSW}, we identify the CSW polynomial and the prismatic $U_q(\mathfrak{gl}(1|1))$ invariant via the exterior power of the prismatic Burau representation.

\subsection{The prismatic braid group} \label{sec_prismatic_braid_group} The $\Omega$-prismatic braid group extends the virtual braid group by adding one strand for each color in $\Omega$. First, let's recall the definition of the virtual braid group $VB_N$ on $N$ strands. The generators $\sigma_i$, $\chi_i$ of $\mathit{VB}_N$ are are shown in Figure \ref{fig_braid_gen}. The relations are:
\begin{align} 
\label{rel_vbr1} \text{(Classical Relations)} \quad  & \left\{\begin{array}{cl} \sigma_i \sigma_i^{-1}=\sigma_i^{-1}\sigma_i=1 & \\
\sigma_i \sigma_j=\sigma_j \sigma_i &  \text{if } |i-j|>1 \\
\sigma_i \sigma_{i+1} \sigma_i=\sigma_{i+1} \sigma_i \sigma_{i+1}
  \end{array} \right. \\
\label{rel_vbr2} \text{(Virtual Relations)} \quad  & \left\{\begin{array}{cl} \chi_i^2=1 & \\
\chi_i \chi_j=\chi_j \chi_i &  \text{if } |i-j|>1 \\
\chi_i \chi_{i+1} \chi_i=\chi_{i+1} \chi_i \chi_{i+1}
  \end{array} \right. \\ 
\label{rel_vbr3} \text{(Mixed Relations)} \quad  & \left\{\begin{array}{cl}
\sigma_i \chi_j=\chi_j \sigma_i &  \text{if } |i-j|>1 \\
\sigma_i \chi_{i+1} \chi_i=\chi_{i+1} \chi_i \sigma_{i+1}
  \end{array} \right. 
\end{align}
Now, let $\Omega=\{x_1,y_1,\ldots,x_g,y_g\}$ be a fixed symplectic palette. Let $\beta \in \mathit{VB}_{N+2g}$ and let $\pi_{\beta} \in \mathbb{S}_{N+2g}$ be the permutation of the strands $\{1,\ldots,N,N+1,\ldots N+2g\}$ defined by $\beta$. Suppose that the restriction of $\pi_{\beta}$ to $\{N+1,\ldots,N+2g\}$ is the identity. In other words, $\beta$ is pure on the last $2g$ strands. We then consider the last $2g$ strands to be colored, left to right, as $x_1,y_1,\ldots,x_g,y_g$. These will be called the \emph{$\Omega$-strands} while the first $N$ strands will be called that \emph{$\alpha$-strands}. Suppose furthermore that the $\Omega$-strands have only classical over-crossings or virtual crossings with the $\alpha$-strands and only virtual crossings with other $\Omega$-strands. Then we will say that $\beta$ is an \emph{$\Omega$-prismatic braid on $N$ strands}. When applying relations (\ref{rel_vbr1}), (\ref{rel_vbr2}), and (\ref{rel_vbr3}) to $\Omega$-prismatic braids, only moves in which both sides of the equality are $\Omega$-prismatic braids are allowed. Also allowed between $\Omega$-prismatic braids are $\Omega$-semi-welded moves (Figure \ref{fig_sw_move_and_zh}) and $\Omega$-commutator moves (Figure \ref{fig_commutator_move}). All together, these moves generate the equivalence relation for $\Omega$-prismatic braids. 

\begin{figure}[htb]
\begin{tabular}{|c|c|c|} \hline & & \\
\begin{tabular}{c}\\ \def\svgwidth{1.2in}
%% Creator: Inkscape 1.0.2-2 (e86c870879, 2021-01-15), www.inkscape.org
%% PDF/EPS/PS + LaTeX output extension by Johan Engelen, 2010
%% Accompanies image file '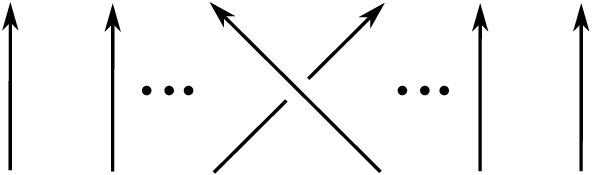' (pdf, eps, ps)
%%
%% To include the image in your LaTeX document, write
%%   \input{<filename>.pdf_tex}
%%  instead of
%%   \includegraphics{<filename>.pdf}
%% To scale the image, write
%%   \def\svgwidth{<desired width>}
%%   \input{<filename>.pdf_tex}
%%  instead of
%%   \includegraphics[width=<desired width>]{<filename>.pdf}
%%
%% Images with a different path to the parent latex file can
%% be accessed with the `import' package (which may need to be
%% installed) using
%%   \usepackage{import}
%% in the preamble, and then including the image with
%%   \import{<path to file>}{<filename>.pdf_tex}
%% Alternatively, one can specify
%%   \graphicspath{{<path to file>/}}
%% 
%% For more information, please see info/svg-inkscape on CTAN:
%%   http://tug.ctan.org/tex-archive/info/svg-inkscape
%%
\begingroup%
  \makeatletter%
  \providecommand\color[2][]{%
    \errmessage{(Inkscape) Color is used for the text in Inkscape, but the package 'color.sty' is not loaded}%
    \renewcommand\color[2][]{}%
  }%
  \providecommand\transparent[1]{%
    \errmessage{(Inkscape) Transparency is used (non-zero) for the text in Inkscape, but the package 'transparent.sty' is not loaded}%
    \renewcommand\transparent[1]{}%
  }%
  \providecommand\rotatebox[2]{#2}%
  \newcommand*\fsize{\dimexpr\f@size pt\relax}%
  \newcommand*\lineheight[1]{\fontsize{\fsize}{#1\fsize}\selectfont}%
  \ifx\svgwidth\undefined%
    \setlength{\unitlength}{291.82369523bp}%
    \ifx\svgscale\undefined%
      \relax%
    \else%
      \setlength{\unitlength}{\unitlength * \real{\svgscale}}%
    \fi%
  \else%
    \setlength{\unitlength}{\svgwidth}%
  \fi%
  \global\let\svgwidth\undefined%
  \global\let\svgscale\undefined%
  \makeatother%
  \begin{picture}(1,0.35400402)%
    \lineheight{1}%
    \setlength\tabcolsep{0pt}%
    \put(0,0){\includegraphics[width=\unitlength]{sigma_i_inverse.eps}}%
    \put(0.93743895,0.32293176){\color[rgb]{0,0,0}\makebox(0,0)[lt]{\lineheight{40.54999924}\smash{\begin{tabular}[t]{l}$N$\end{tabular}}}}%
    \put(0.52,0.32125272){\color[rgb]{0,0,0}\makebox(0,0)[lt]{\lineheight{40.54999924}\smash{\begin{tabular}[t]{l}$i+1$\end{tabular}}}}%
    \put(0.29099413,0.32628995){\color[rgb]{0,0,0}\makebox(0,0)[lt]{\lineheight{40.54999924}\smash{\begin{tabular}[t]{l}$i$\end{tabular}}}}%
    \put(-0.00284444,0.32796899){\color[rgb]{0,0,0}\makebox(0,0)[lt]{\lineheight{40.54999924}\smash{\begin{tabular}[t]{l}$1$\end{tabular}}}}%
  \end{picture}%
\endgroup%
 \\ 
\end{tabular} & \begin{tabular}{c} \\ \def\svgwidth{1.2in}
%% Creator: Inkscape 1.0.2-2 (e86c870879, 2021-01-15), www.inkscape.org
%% PDF/EPS/PS + LaTeX output extension by Johan Engelen, 2010
%% Accompanies image file '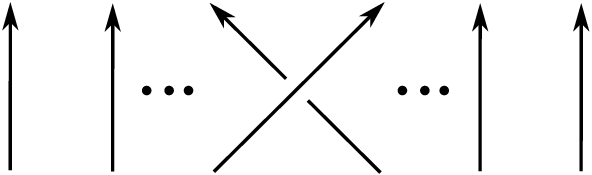' (pdf, eps, ps)
%%
%% To include the image in your LaTeX document, write
%%   \input{<filename>.pdf_tex}
%%  instead of
%%   \includegraphics{<filename>.pdf}
%% To scale the image, write
%%   \def\svgwidth{<desired width>}
%%   \input{<filename>.pdf_tex}
%%  instead of
%%   \includegraphics[width=<desired width>]{<filename>.pdf}
%%
%% Images with a different path to the parent latex file can
%% be accessed with the `import' package (which may need to be
%% installed) using
%%   \usepackage{import}
%% in the preamble, and then including the image with
%%   \import{<path to file>}{<filename>.pdf_tex}
%% Alternatively, one can specify
%%   \graphicspath{{<path to file>/}}
%% 
%% For more information, please see info/svg-inkscape on CTAN:
%%   http://tug.ctan.org/tex-archive/info/svg-inkscape
%%
\begingroup%
  \makeatletter%
  \providecommand\color[2][]{%
    \errmessage{(Inkscape) Color is used for the text in Inkscape, but the package 'color.sty' is not loaded}%
    \renewcommand\color[2][]{}%
  }%
  \providecommand\transparent[1]{%
    \errmessage{(Inkscape) Transparency is used (non-zero) for the text in Inkscape, but the package 'transparent.sty' is not loaded}%
    \renewcommand\transparent[1]{}%
  }%
  \providecommand\rotatebox[2]{#2}%
  \newcommand*\fsize{\dimexpr\f@size pt\relax}%
  \newcommand*\lineheight[1]{\fontsize{\fsize}{#1\fsize}\selectfont}%
  \ifx\svgwidth\undefined%
    \setlength{\unitlength}{291.82369523bp}%
    \ifx\svgscale\undefined%
      \relax%
    \else%
      \setlength{\unitlength}{\unitlength * \real{\svgscale}}%
    \fi%
  \else%
    \setlength{\unitlength}{\svgwidth}%
  \fi%
  \global\let\svgwidth\undefined%
  \global\let\svgscale\undefined%
  \makeatother%
  \begin{picture}(1,0.35400402)%
    \lineheight{1}%
    \setlength\tabcolsep{0pt}%
    \put(0,0){\includegraphics[width=\unitlength]{sigma_i.eps}}%
    \put(0.93743895,0.32293176){\color[rgb]{0,0,0}\makebox(0,0)[lt]{\lineheight{40.54999924}\smash{\begin{tabular}[t]{l}$N$\end{tabular}}}}%
    \put(0.52,0.32125272){\color[rgb]{0,0,0}\makebox(0,0)[lt]{\lineheight{40.54999924}\smash{\begin{tabular}[t]{l}$i+1$\end{tabular}}}}%
    \put(0.29099413,0.32628995){\color[rgb]{0,0,0}\makebox(0,0)[lt]{\lineheight{40.54999924}\smash{\begin{tabular}[t]{l}$i$\end{tabular}}}}%
    \put(-0.00284444,0.32796899){\color[rgb]{0,0,0}\makebox(0,0)[lt]{\lineheight{40.54999924}\smash{\begin{tabular}[t]{l}$1$\end{tabular}}}}%
  \end{picture}%
\endgroup%
  \\ \end{tabular} & \begin{tabular}{c} \\  \def\svgwidth{1.2in}
%% Creator: Inkscape 1.0.2-2 (e86c870879, 2021-01-15), www.inkscape.org
%% PDF/EPS/PS + LaTeX output extension by Johan Engelen, 2010
%% Accompanies image file '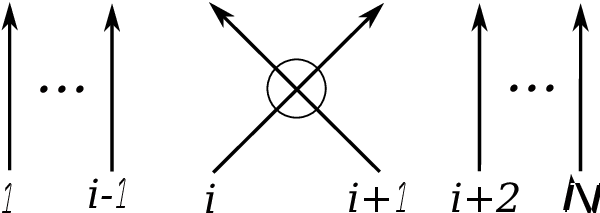' (pdf, eps, ps)
%%
%% To include the image in your LaTeX document, write
%%   \input{<filename>.pdf_tex}
%%  instead of
%%   \includegraphics{<filename>.pdf}
%% To scale the image, write
%%   \def\svgwidth{<desired width>}
%%   \input{<filename>.pdf_tex}
%%  instead of
%%   \includegraphics[width=<desired width>]{<filename>.pdf}
%%
%% Images with a different path to the parent latex file can
%% be accessed with the `import' package (which may need to be
%% installed) using
%%   \usepackage{import}
%% in the preamble, and then including the image with
%%   \import{<path to file>}{<filename>.pdf_tex}
%% Alternatively, one can specify
%%   \graphicspath{{<path to file>/}}
%% 
%% For more information, please see info/svg-inkscape on CTAN:
%%   http://tug.ctan.org/tex-archive/info/svg-inkscape
%%
\begingroup%
  \makeatletter%
  \providecommand\color[2][]{%
    \errmessage{(Inkscape) Color is used for the text in Inkscape, but the package 'color.sty' is not loaded}%
    \renewcommand\color[2][]{}%
  }%
  \providecommand\transparent[1]{%
    \errmessage{(Inkscape) Transparency is used (non-zero) for the text in Inkscape, but the package 'transparent.sty' is not loaded}%
    \renewcommand\transparent[1]{}%
  }%
  \providecommand\rotatebox[2]{#2}%
  \newcommand*\fsize{\dimexpr\f@size pt\relax}%
  \newcommand*\lineheight[1]{\fontsize{\fsize}{#1\fsize}\selectfont}%
  \ifx\svgwidth\undefined%
    \setlength{\unitlength}{291.82369523bp}%
    \ifx\svgscale\undefined%
      \relax%
    \else%
      \setlength{\unitlength}{\unitlength * \real{\svgscale}}%
    \fi%
  \else%
    \setlength{\unitlength}{\svgwidth}%
  \fi%
  \global\let\svgwidth\undefined%
  \global\let\svgscale\undefined%
  \makeatother%
  \begin{picture}(1,0.35400402)%
    \lineheight{1}%
    \setlength\tabcolsep{0pt}%
    \put(0,0){\includegraphics[width=\unitlength]{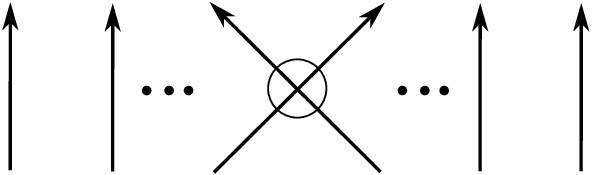}}%
    \put(0.93743895,0.32293176){\color[rgb]{0,0,0}\makebox(0,0)[lt]{\lineheight{40.54999924}\smash{\begin{tabular}[t]{l}$N$\end{tabular}}}}%
    \put(0.52,0.32125272){\color[rgb]{0,0,0}\makebox(0,0)[lt]{\lineheight{40.54999924}\smash{\begin{tabular}[t]{l}$i+1$\end{tabular}}}}%
    \put(0.29099413,0.32628995){\color[rgb]{0,0,0}\makebox(0,0)[lt]{\lineheight{40.54999924}\smash{\begin{tabular}[t]{l}$i$\end{tabular}}}}%
    \put(-0.00284444,0.32796899){\color[rgb]{0,0,0}\makebox(0,0)[lt]{\lineheight{40.54999924}\smash{\begin{tabular}[t]{l}$1$\end{tabular}}}}%
  \end{picture}%
\endgroup%
 \\ \end{tabular} \\  \underline{$\sigma_i$:} & \underline{$\sigma_i^{-1}$:} & \underline{$\chi_i$:} \\ & & \\ \hline
\end{tabular}
\caption{Generators of the virtual braid group $\mathit{VB}_N$ on $N$ strands.} \label{fig_braid_gen}
\end{figure} 

\begin{definition}[$\Omega$-Prismatic braid group on $N$ strands] The \emph{$\Omega$-prismatic braid group} $\mathit{VB}_{N,\Omega}$ is the set of equivalence classes of $\Omega$-prismatic braids on $N$ strands. For $\beta_1,\beta_2 \in \mathit{VB}_{N,\Omega}$ the operation $\beta_2 \circ \beta_1$ is defined to be $\beta_2$ stacked on top of $\beta_1$.
\end{definition}

A generating set for $\mathit{VB}_{N,\Omega}$ can be described as follows. Let $1_{2g}$ denote the the identity braid on $2g$ strands. For $\gamma \in \mathit{VB}_N$, let $\gamma \otimes 1_{2g} \in \mathit{VB}_{N+2g}$ denote the virtual braid by the horizontal juxtaposition of $1_{2g}$ on the right of $\gamma$. Then for $1 \le i \le N-1$, we have that $\sigma_i^{\pm 1} \otimes 1_{2g}, \chi_i \otimes 1_{2g} \in \mathit{VB}_{N,\Omega}$. Clearly, any $\Omega$-prismatic braid whose $\Omega$-strands have no classical crossings with the $\alpha$-strands are in the subgroup generated by $\sigma_i^{\pm 1} \otimes 1_{2g}, \chi_i \otimes 1_{2g}$. To obtain the remaining braids, it suffices to include as generators the $\Omega$-prismatic braids $\lambda_{j,w_k}$ shown in Figure \ref{fig_lambda_gens}, where $1 \le j \le N$ and $w_k \in \Omega$. Indeed, if the $j$-th strand of $\beta$ is over-crossed by the $w_k$-strand, the $w_k$-strand can be diverted nearby the crossing so that it appears as exactly as in the left or right of Figure \ref{fig_lambda_gens}. This can be done using only the relations (\ref{rel_vbr2}). Since the $w_k$-strand is pure, it can be assumed to be stationary between all such classical over-crossings. Thus, $\mathit{VB}_{N,\Omega}$ is generated by $\sigma_i\otimes 1_{2g}$, $\chi_i \otimes 1_{2g}$,$\lambda_{i,w_k}$, $1 \le i \le N$, $w_k \in \Omega$. 

\begin{figure}[htb]
\begin{tabular}{|cccc|} \hline & & & \\
& \begin{tabular}{c}
\includegraphics[scale=.7]{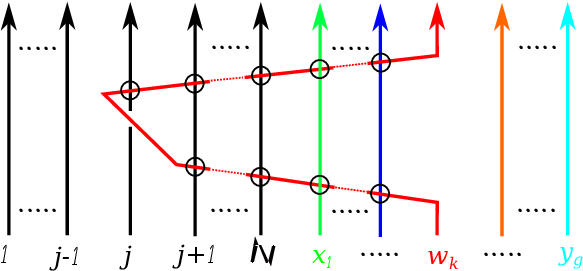} \\ \underline{$\lambda_{j,w_k}$:}
\end{tabular} &
\begin{tabular}{c}
\includegraphics[scale=.7]{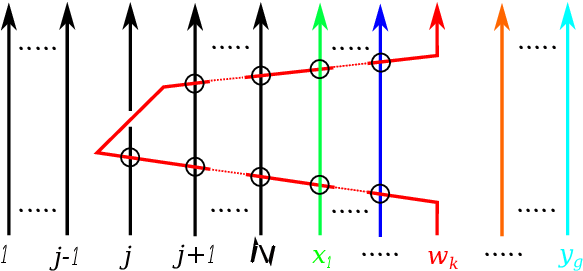} \\
\underline{$\lambda_{j,w_k}^{-1}$:}
\end{tabular} &
\\ & & & \\ \hline
\end{tabular}
\caption{The generators $\lambda_{j,w_k}$ of $\mathit{VB}_{N,\Omega}$ for $1 \le j \le N$ and $w_k \in \Omega$.} \label{fig_lambda_gens}
\end{figure}

\begin{theorem} \label{thm_prismatic_braid_closure} Every complete $\Omega$-prismatic link is the closure of an $\Omega$-prismatic braid.  
\end{theorem}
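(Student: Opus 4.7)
The plan is to reduce to the classical virtual Alexander theorem of Kauffman–Lambropoulou, which asserts that every virtual link is the closure of a virtual braid. Let $W$ be a complete $\Omega$-prismatic link. I would split the argument into first braiding the $\alpha$-part and then re-inserting the $\Omega$-part strand by strand, using Lemma \ref{lemma_rearrange} to control the bookkeeping.

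First I would record the data of $W$ as a decorated virtual link diagram $W_\alpha$ on the $\alpha$-part: at each classical over-crossing of an $\alpha$-arc by an $\Omega$-arc, place a ``marker'' on the $\alpha$-arc labeled by the color $z \in \Omega$ of the over-crossing arc and by the sign of the crossing. The ambient virtual crossings and $\alpha/\alpha$ classical crossings are kept in place. This produces a virtual link diagram with finitely many marked points. Applying the Kauffman–Lambropoulou virtual Alexander theorem to $W_\alpha$ (treating the markers as punctures that must be carried along), I would obtain a virtual braid $\beta_\alpha$ on $N$ strands whose closure recovers $W_\alpha$ up to virtual isotopy, and whose strands still carry the markers in the correct cyclic order.

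Next, for each color $z \in \Omega$ I would adjoin a single pure strand on the far right, in the positions $N+1,\ldots,N+2g$ matching the ordered palette. To realize a marker of color $z$ and sign $\varepsilon$ appearing at some height on the $j$-th $\alpha$-strand, I would route the $z$-strand from its rightmost position leftward using the generators $\chi_{\ast} \otimes 1_{2g}$ (virtual crossings over $\alpha$-strands and over intervening $\Omega$-strands), apply $\lambda_{j,z}^\varepsilon$ to realize the classical over-crossing at the marker, and then route the $z$-strand back to its rightmost position using further $\chi_{\ast}$ crossings. This stays inside $\mathit{VB}_{N,\Omega}$ because all non-classical interactions of $\Omega$-strands with anything else are virtual, and the classical over-crossings all have the correct sign and chirality. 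Repeating this for every marker, in the order they appear going up the braid, produces a prismatic braid $\beta \in \mathit{VB}_{N,\Omega}$.

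The main obstacle is to verify that the closure $\widehat{\beta}$ is equivalent to $W$ as a complete $\Omega$-prismatic link, rather than merely as an uncolored virtual link. Two things must be checked. First, each $z$-colored component of $\widehat{\beta}$ is a single closed loop (since the $z$-strand is pure, closing it produces one loop), matching the completeness of $W$. If $W$ had its unique $z$-loop realized as an unknotted circle disjoint from the rest, I would include the corresponding trivial $z$-strand with no $\lambda$-generators; this strand is always present by construction. Second, the specific order in which the $z$-strand traverses its markers in $\beta$ may differ from the order along the $z$-component in $W$, but by Lemma \ref{lemma_rearrange} all such orderings are equivalent under $\Omega$-semi-welded moves, and these moves are built into the equivalence relation on $\mathit{VB}_{N,\Omega}$. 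Hence $\widehat{\beta}$ and $W$ agree up to prismatic equivalence, completing the argument.
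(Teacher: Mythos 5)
Your proposal is correct and follows essentially the same route as the paper: braid the $\alpha$-part via a virtual Alexander theorem, then reinsert the $\Omega$-part as pure rightmost strands diverted by $\lambda_{j,z}^{\pm 1}$ to each classical over-crossing, and finish by invoking Lemma \ref{lemma_rearrange}. The one place the paper is tighter is the ``markers carried along'' step, which it justifies by citing that Kamada's braiding preserves the Gauss diagram of the $\alpha$-part; your parenthetical about punctures gestures at the same fact but leaves it implicit.
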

\begin{proof} Let $L$ be a complete $\Omega$-prismatic link diagram. Let $A$ denote the $\alpha$-part of $L$. By Kamada \cite{kamada_v_braid}, Proposition 3, there is an $N \ge 1$ and virtual braid $\beta \in \mathit{VB}_N$ such that $A$ is equivalent to $\widehat{\beta}$. Here we will take $\widehat{\beta}$ to be the right closure of $\beta$ (see Figure \ref{fig_pt_at_infty}). Furthermore, it may be assumed from \cite{kamada_v_braid} that $A$ and $\widehat{\beta}$ have the same Gauss diagram. Hence, the classical over-crossing arcs of the $\Omega$-part of $L$ can be drawn in the same positions on arcs of $\beta$. These $\Omega$-colored arcs can be assumed to be drawn at different heights. Denote this new partial diagram by $\vec{\beta}$. Now consider $\vec{\beta} \otimes 1_{2g}$. Let $\widecheck{b}$ denote the first $\Omega$-colored over-crossing arc in $\vec{\beta}$ that appears when traveling from bottom to top of $\vec{\beta}$. If $\widecheck{\beta}$ is colored $w_k$, we divert the $w_k$-strand of $\vec{\beta} \otimes 1_{2g}$ as shown in Figure \ref{fig_lambda_gens}, according to the sign of the crossing. Moving upwards, continue this process until all of the $\Omega$-colored arcs of $\vec{\beta}$ are accounted for. Let $\beta' \in \mathit{VB}_{N,\Omega}$ denote the final result. By Lemma \ref{lemma_rearrange},  $\widehat{\beta'}$ is equivalent to $L$.
\end{proof}
\subsection{The prismatic Burau representation} \label{sec_prismatic_burau} Recall that to any $\beta \in \mathit{VB}_N$, we can associate an automorphism of the free group $F_N$ on $N$ letters $s_1,\ldots,s_N$. For $\sigma_i$, $\chi_i$, the automorphisms are given by (see e.g. Bardakov-Bellingeri \cite{bardakov_bellingeri}): 
\begin{align} \label{eqn_virt_braid_auto}
\sigma_i: \left\{ \begin{array}{ccc} s_i & \longrightarrow & s_i s_{i+1} s_i^{-1} \\ s_{i+1} & \longrightarrow & s_i \\ s_k & \longrightarrow & s_k \,\, (k \ne i,i+1) \end{array} \right., &\quad \chi_i: \left\{ \begin{array}{ccc} s_i & \longrightarrow &  s_{i+1} \\ s_{i+1} & \longrightarrow & s_i \\ s_k & \longrightarrow & s_k \,\, (k \ne i,i+1) \end{array} \right. 
\end{align} 
Now, let $\Omega=\{x_1,y_1,\ldots,x_g,y_g\}$ be a symplectic palette and let $F_{N+2g}$ be the free group on the letters $s_1,\ldots,s_N,x_1,y_1\ldots,x_g,y_g$. For the generators of $\mathit{VB}_{N,\Omega}$, it follows from Equation \ref{eqn_virt_braid_auto} that:
\begin{align}  
\sigma_i \otimes 1_{2g} &: \left\{ \begin{array}{ccc} s_i & \longrightarrow & s_i s_{i+1} s_i^{-1} \\ s_{i+1} & \longrightarrow & s_i \\ s_k & \longrightarrow & s_k \,\, (k \ne i,i+1) \\ w_k & \longrightarrow & w_k \,\,  (w_k \in \Omega) \end{array} \right., \quad \chi_i \otimes 1_{2g}: \left\{ \begin{array}{ccc} s_i & \longrightarrow &  s_{i+1} \\ s_{i+1} & \longrightarrow & s_i \\ s_k & \longrightarrow & s_k \,\, (k \ne i,i+1) \\ w_k & \longrightarrow & w_k \,\,  (w_k \in \Omega) \end{array} \right., \\ \label{eqn_lambda} \lambda_{i,w_j}&: \left\{ \begin{array}{ccc} s_i & \longrightarrow & w_j s_{i} w_j^{-1} \\ s_k & \longrightarrow & s_k \,\, (k \ne i) \\ w_k & \longrightarrow & w_k \,\,  (w_k \in \Omega) \end{array}\right., \quad \quad \quad\quad \lambda_{i,w_j}^{-1}:\left\{\begin{array}{ccc} s_i & \longrightarrow & w_j^{-1} s_{i} w_j \\ s_k & \longrightarrow & s_k \,\, (k \ne i) \\ w_k & \longrightarrow & w_k \,\,  (w_k \in \Omega) \end{array} \right. .
\end{align}
Then every $\beta \in \mathit{VB}_{N,\Omega}$ gives an element of $\text{Aut}(F_{N+2g})$, also denoted $\beta$. The Jacobian formula for computing the CSW polynomial (see Theorem \ref{thm_csw_poly_jacobian}) now suggests the following definition for a Burau representation of $\mathit{VB}_{N,\Omega}$. Below, $\phi:\mathbb{Z}[F_{N+2g}] \to \mathbb{Z}[\mathbb{Z} \times H_1(\Sigma)]$ is defined by $\phi(s_i)=t$ for all $i$ and $\phi(w_i)=w_i$ for all $w_i \in \Omega$. The maps $\partial/\partial s_i$ denote the usual Fox derivative (see Section 
\ref{sec_csw_poly}).
\begin{definition}[$\Omega$-Prismatic Burau representation]
The \emph{$\Omega$-prismatic Burau representation} $\rho_{N,\Omega}:\mathit{VB}_{N,\Omega} \to GL(N, \mathbb{C}(t,x_1,y_1,\ldots,x_g,y_g))$ is defined by:
\[
\rho_{N,\Omega}(\beta)=\left(\phi\left(\frac{\partial \beta(s_j)}{\partial s_i}\right) \right)_{1 \le i,j \le N}
\]
\end{definition}

Note that $\rho_{N,\Omega}(\beta)$ is an $N \times N$ matrix and that there are no derivatives with respect to any $w_i \in \Omega$. This agrees with the Jacobian formula for the CSW polynomial from Theorem \ref{thm_csw_poly_jacobian}. For the generators $\sigma_i \otimes 1_{2g}$ and $\chi_i \otimes 1_{2g}$, this recovers the usual virtual Burau representation: 
\begin{align} \label{eqn_rho_alpha}
\rho_{N,\Omega}(\sigma_i \otimes 1_{2g})= \left(\begin{array}{c|c|c|c}
I_{i-1} & \textbf{0} & \textbf{0} & \textbf{0} \\ \hline
\textbf{0} & 1-t & t & \textbf{0} \\ \hline
\textbf{0} & 1   & 0 & \textbf{0} \\ \hline
\textbf{0} & \textbf{0}   & \textbf{0} & I_{N-i-1} \\ 
\end{array} \right), & \quad \rho_{N,\Omega}(\chi_i \otimes 1_{2g})= \left(\begin{array}{c|c|c|c}
I_{i-1} & \textbf{0} & \textbf{0} & \textbf{0} \\ \hline
\textbf{0} & 0 & 1 & \textbf{0} \\ \hline
\textbf{0} & 1   & 0 & \textbf{0} \\ \hline
\textbf{0} & \textbf{0}   & \textbf{0} & I_{N-i-1} \\ 
\end{array} \right),
\end{align}
where $I_k$ is the $k \times k$ identity matrix and $\textbf{0}$ is a zero matrix of the appropriate dimensions. For the generators $\lambda_{i,w_j}$, the value is given by:
\begin{align} \label{eqn_rho_omega}
\rho_{N,\Omega}(\lambda_{i,w_j})=\left(\begin{array}{c|c|c}
I_{i-1} & \bm{0} & \bm{0} \\ \hline
 \bm{0} & w_j & \bm{0} \\ \hline
 \bm{0} & \bm{0} & I_{N-i}
\end{array} \right).
\end{align}
\begin{proposition} The map $\rho_{N,\Omega}$ is a representation of the $\Omega$-prismatic braid group $\mathit{VB}_{N,\Omega}$.
\end{proposition}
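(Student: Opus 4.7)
The plan is to split the verification into two parts: first, checking that the assignment on generators is multiplicative via the chain rule for Fox derivatives, and second, checking that every defining relation of $\mathit{VB}_{N,\Omega}$ is sent to a matrix identity.

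For multiplicativity, I would apply the chain rule for Fox derivatives to compositions of generators. For any two automorphisms $\alpha,\beta$ of $F_{N+2g}$ arising from our generators,
\[
\frac{\partial (\alpha \circ \beta)(s_j)}{\partial s_i} \;=\; \sum_{k=1}^{N+2g} \alpha\!\left(\frac{\partial \beta(s_j)}{\partial s_k}\right) \cdot \frac{\partial \alpha(s_k)}{\partial s_i}.
\]
Since each generator of $\mathit{VB}_{N,\Omega}$ fixes every $w_l \in \Omega$, the terms with $k > N$ contribute $\partial \alpha(w_l)/\partial s_i = 0$, and the sum truncates to $1 \le k \le N$. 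Moreover, every generator sends each $s_k$ to a conjugate of some $s_l$ by a word in $\Omega \cup \{s_m\}$, and $\phi$ collapses all such conjugates to $t$ in the commutative ring $\mathbb{C}(t,x_1,y_1,\ldots,x_g,y_g)$, so $\phi \circ \alpha = \phi$. Applying $\phi$ to the chain rule and rearranging factors using commutativity, the right-hand side becomes exactly the $(i,j)$-entry of $\rho_{N,\Omega}(\alpha)\,\rho_{N,\Omega}(\beta)$. Hence $\rho_{N,\Omega}(\alpha \circ \beta) = \rho_{N,\Omega}(\alpha)\,\rho_{N,\Omega}(\beta)$ for all words in the generators.

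For well-definedness, the relations split into two classes. The classical, virtual, and mixed braid relations (\ref{rel_vbr1})--(\ref{rel_vbr3}) involve only $\sigma_i \otimes 1_{2g}$ and $\chi_i \otimes 1_{2g}$, on which $\rho_{N,\Omega}$ restricts to the standard virtual Burau representation, whose invariance under these relations is classical. For the remaining relations, I would exploit the explicit form (\ref{eqn_rho_omega}): $\rho_{N,\Omega}(\lambda_{i,w_j})$ is diagonal with its only non-unit entry $w_j$ in position $(i,i)$. Any two such matrices commute, so the $\Omega$-semi-welded move becomes the tautology $\rho(\lambda_{i,w_j})\rho(\lambda_{i,w_k}) = \rho(\lambda_{i,w_k})\rho(\lambda_{i,w_j})$, and the $\Omega$-commutator move becomes $\prod_{l=1}^g [\rho(\lambda_{i,x_l}),\rho(\lambda_{i,y_l})] = I$, both immediate since commutators of diagonal matrices are trivial. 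Mixed relations between a $\sigma_i$ or $\chi_i$ and a $\lambda_{j,w_k}$ either reduce to short matrix computations or, combined with the multiplicativity from the previous paragraph, to identities that already hold in $\mathrm{Aut}(F_{N+2g})$.

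The main hurdle is the verification that $\phi \circ \alpha = \phi$ for $\alpha = \lambda_{i,w_j}$, whose action $s_i \mapsto w_j s_i w_j^{-1}$ is the only one conjugating an $\alpha$-strand generator by an $\Omega$-variable. Crucially, $\phi(w_j s_i w_j^{-1}) = w_j\, t\, w_j^{-1} = t = \phi(s_i)$ precisely because the target ring is commutative. This single commutativity fact simultaneously powers the chain-rule argument and explains why the free-group assignment, which does \emph{not} descend to a well-defined $\mathit{VB}_{N,\Omega}$-action on $F_{N+2g}$ (the $\Omega$-semi-welded and $\Omega$-commutator moves produce genuinely distinct automorphisms in the free group), nevertheless yields a well-defined matrix representation after $\phi$.
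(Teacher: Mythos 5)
Your multiplicativity argument via the Fox chain rule matches the paper's approach (the paper invokes the chain rule for Jacobians and cites a standard reference), and your handling of the classical/virtual/mixed braid relations and of the $\Omega$-commutator move is correct and agrees with the paper: the commutator move is exactly where commutativity of the target ring is needed, since the $\lambda_{i,x_l}$ and $\lambda_{i,y_l}$ do \emph{not} commute as automorphisms of $F_{N+2g}$ but their images under $\rho_{N,\Omega}$ are commuting diagonal matrices.

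However, you have misidentified the $\Omega$-semi-welded move, and this is a genuine gap. The relation you write, $\rho(\lambda_{i,w_j})\rho(\lambda_{i,w_k}) = \rho(\lambda_{i,w_k})\rho(\lambda_{i,w_j})$ for $w_j \ne w_k$, is not the semi-welded move and is not even a relation in $\mathit{VB}_{N,\Omega}$: at the automorphism level $\lambda_{i,w_j}\lambda_{i,w_k}$ sends $s_i \mapsto w_k w_j s_i w_j^{-1} w_k^{-1}$ while $\lambda_{i,w_k}\lambda_{i,w_j}$ sends $s_i \mapsto w_j w_k s_i w_k^{-1} w_j^{-1}$, and Lemma~\ref{lemma_rearrange} only permits rearranging over-crossings belonging to the \emph{same} $\Omega$-colored component. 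The actual $\Omega$-semi-welded move, written in braid generators as the paper does, is
\[
\lambda_{i,w_j}^{-1}(\chi_i \otimes 1_{2g})\lambda_{i,w_j}=\lambda_{(i+1),w_j}(\chi_i \otimes 1_{2g})\lambda_{(i+1),w_j}^{-1},
\]
a single color $w_j$ sliding past a virtual crossing $\chi_i$. This \emph{does} fall under your vague catch-all about mixed relations already holding in $\mathrm{Aut}(F_{N+2g})$ — one can check directly that both sides define the same free-group automorphism, so your multiplicativity step suffices — but your closing paragraph asserts the opposite (``the $\Omega$-semi-welded \ldots moves produce genuinely distinct automorphisms in the free group''), which is false: only the $\Omega$-commutator move fails at the $\mathrm{Aut}(F_{N+2g})$ level. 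The paper, by contrast, does not rely on this observation and instead verifies the semi-welded relation by an explicit matrix computation; you should either correctly identify and check the braid form of the semi-welded move, or justify explicitly that it holds in $\mathrm{Aut}(F_{N+2g})$ so that multiplicativity handles it.
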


\begin{proof} That $\rho_{N,\Omega}$ is a homomorphism (i.e.  $\rho_{N,\Omega}(\beta_1 \circ \beta_2)=\rho_{N,\Omega}(\beta_1)\rho_{N,\Omega}(\beta_2)$) follows from the Chain Rule for Jacobians (see e.g. \cite{bz}, Section 9.B). Using Equations \ref{eqn_rho_alpha} and \ref{eqn_rho_omega}, it is straightforward to check that $\rho_{N,\Omega}$ satisfies the set of relations (\ref{rel_vbr1}), (\ref{rel_vbr2}), (\ref{rel_vbr3}) on $\Omega$-prismatic braids. The only relations that need to be checked are the $\Omega$-semi-welded move and the $\Omega$-commutator move. The $\Omega$-semi-welded move shown in Figure \ref{fig_sw_move_and_zh} can be written in terms of generators as $\lambda_{i,w_j}^{-1}(\chi_i \otimes 1_{2g})\lambda_{i,w_j}=\lambda_{(i+1),w_j}(\chi_i \otimes 1_{2g} )\lambda_{(i+1),w_j}^{-1}$. Applying $\rho_{N,\Omega}$ the left and right hand sides of this equation yields:
\begin{align*}
\left(\begin{array}{c|c|c}
I_{i-1} & \bm{0} & \bm{0} \\ \hline
 \bm{0} & w_j^{-1} & \bm{0} \\ \hline
 \bm{0} & \bm{0} & I_{N-i}
\end{array} \right)\left(\begin{array}{c|c|c|c}
I_{i-1} & \textbf{0} & \textbf{0} & \textbf{0} \\ \hline
\textbf{0} & 0 & 1 & \textbf{0} \\ \hline
\textbf{0} & 1   & 0 & \textbf{0} \\ \hline
\textbf{0} & \textbf{0}   & \textbf{0} & I_{N-i-1} \\ 
\end{array} \right) \left(\begin{array}{c|c|c}
I_{i-1} & \bm{0} & \bm{0} \\ \hline
 \bm{0} & w_j & \bm{0} \\ \hline
 \bm{0} & \bm{0} & I_{N-i}
\end{array} \right) &= \left(\begin{array}{c|c|c|c}
I_{i-1} & \textbf{0} & \textbf{0} & \textbf{0} \\ \hline
\textbf{0} & 0 & w_j^{-1} & \textbf{0} \\ \hline
\textbf{0} & w_j   & 0 & \textbf{0} \\ \hline
\textbf{0} & \textbf{0}   & \textbf{0} & I_{N-i-1} \\ 
\end{array} \right) \\ 
\left(\begin{array}{c|c|c}
I_{i} & \bm{0} & \bm{0} \\ \hline
 \bm{0} & w_j & \bm{0} \\ \hline
 \bm{0} & \bm{0} & I_{N-i-1}
\end{array} \right)\left(\begin{array}{c|c|c|c}
I_{i-1} & \textbf{0} & \textbf{0} & \textbf{0} \\ \hline
\textbf{0} & 0 & 1 & \textbf{0} \\ \hline
\textbf{0} & 1   & 0 & \textbf{0} \\ \hline
\textbf{0} & \textbf{0}   & \textbf{0} & I_{N-i-1} \\ 
\end{array} \right) \left(\begin{array}{c|c|c}
I_{i} & \bm{0} & \bm{0} \\ \hline
 \bm{0} & w_j^{-1} & \bm{0} \\ \hline
 \bm{0} & \bm{0} & I_{N-i-1}
\end{array} \right) &= \left(\begin{array}{c|c|c|c}
I_{i-1} & \textbf{0} & \textbf{0} & \textbf{0} \\ \hline
\textbf{0} & 0 & w_j^{-1} & \textbf{0} \\ \hline
\textbf{0} & w_j   & 0 & \textbf{0} \\ \hline
\textbf{0} & \textbf{0}   & \textbf{0} & I_{N-i-1} \\ 
\end{array} \right)
\end{align*}  
Now consider the $\Omega$-commutator move (see Figure \ref{fig_commutator_move}). Suppose that the black strand is the $i$-th one in the $\alpha$-part, $1 \le i \le N$. Then the effect of applying $\rho_{N,\Omega}$ is to multiply the $(i,i)$ entry of $I_N$ by $\prod_{i=1}^g [x_i,y_i]$. Since this is evaluated in $\mathbb{Z}[\mathbb{Z} \times H_1(\Sigma)]$, this coefficient reduces to $1$.    \end{proof}

Finally, it can be proved that the CSW polynomial can be calculated from a determinant formula via the representation $\rho_{N,\Omega}$ of the $\Omega$-prismatic braid group .

\begin{theorem} \label{thm_CSW_from_braid} Let $D$ be a link diagram on a surface $\Sigma$ and let $\Omega$ be a symplectic basis for $\Sigma$. Represent $\Zh_{\Omega}(D)$ as $\widehat{\beta}$ for some $\beta \in \mathit{VB}_{N,\Omega}$. Then the CSW polynomial for $L$ is given by:
\[
\Delta^0_D(t,x_1^{-1},y_1^{-1},\ldots,x_g^{-1},y_g^{-1}) \doteq \det\left(\rho_{N,\Omega}(\beta)-I_N\right),
\]
where ``$\doteq$'' means up to multiples of units in $\mathbb{Z}[\mathbb{Z} \times H_1(\Sigma)]$.
\end{theorem}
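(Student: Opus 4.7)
The plan is to adapt the classical Burau formula for the Alexander polynomial to the prismatic setting, by reading a Wirtinger-style presentation of the prismatic link group off the braid $\beta$, computing its Fox Jacobian, identifying this Jacobian with $I_N - \rho_{N,\Omega}(\beta)^T$, and then translating to the operator group language of Theorem \ref{thm_csw_poly_jacobian} via Theorem \ref{thm_prismatic_link_group}. The inverses $x_i^{-1}, y_i^{-1}$ in the statement will be accounted for by the difference between the two opposite conjugation conventions.

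First I would write down a finite presentation $\mathcal{P}_\beta$ of $\pi_1^{\Omega}(\widehat{\beta})$ with generators $s_1, \ldots, s_N$ (one per $\alpha$-strand at the bottom of $\beta$) together with the $\Omega$-generators $x_1, y_1, \ldots, x_g, y_g$, and with relations $r_i = s_i \beta(s_i)^{-1}$ for $1 \le i \le N$ (from identifying the bottom and top strands in the closure) together with the commutator relation $\prod_{k=1}^g [x_k, y_k]$. That this does present $\pi_1^{\Omega}(\widehat{\beta}) = \pi_1^{\Omega}(\Zh_{\Omega}(D))$ follows from the usual Wirtinger reading of a braid closure, together with Theorem \ref{thm_prismatic_braid_closure} and the presentation of $VB_{N,\Omega}$ by the automorphisms in (\ref{eqn_virt_braid_auto})--(\ref{eqn_lambda}).

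Next, I would compute the $N \times N$ block of Fox derivatives obtained by differentiating $r_1, \ldots, r_N$ with respect to $s_1, \ldots, s_N$. The standard identities give
\[
\frac{\partial r_i}{\partial s_j} \;=\; \delta_{ij} \;-\; s_i\beta(s_i)^{-1}\,\frac{\partial \beta(s_i)}{\partial s_j}.
\]
Since every meridian of the $\alpha$-part maps to $t$ under $\varepsilon$, we have $\varphi(s_i) = \varphi(\beta(s_i)) = t$, so $\varphi(s_i\beta(s_i)^{-1}) = 1$ and
\[
\varphi\!\left(\frac{\partial r_i}{\partial s_j}\right) \;=\; \delta_{ij} \;-\; \varphi\!\left(\frac{\partial \beta(s_i)}{\partial s_j}\right) \;=\; \bigl(I_N - \rho_{N,\Omega}(\beta)^{T}\bigr)_{ij},
\]
by the very definition of $\rho_{N,\Omega}$. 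Taking determinants and using that transposition together with the sign $(-1)^N$ contribute only units, the Jacobian determinant equals $\det\bigl(\rho_{N,\Omega}(\beta) - I_N\bigr)$ up to units of $\mathbb{Z}[\mathbb{Z}\times H_1(\Sigma)]$.

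Finally, I would invoke Theorem \ref{thm_csw_poly_jacobian} to identify this Jacobian determinant with a CSW polynomial. The presentation $\mathcal{P}_\beta$ is not literally the operator-group presentation of Lemma \ref{lemma_pres_from_op_pres}; by Theorem \ref{thm_prismatic_link_group}, it differs from it exactly by replacing each $\gamma a\gamma^{-1}$ (operator-group conjugation) by $\gamma^{-1} a \gamma$ (prismatic-link-group conjugation). At the level of the abelianized Fox Jacobian, this is precisely the substitution $x_k \mapsto x_k^{-1}$, $y_k \mapsto y_k^{-1}$. Applying Theorem \ref{thm_csw_poly_jacobian} to the operator-group side therefore gives the desired identification
\[
\det\bigl(\rho_{N,\Omega}(\beta) - I_N\bigr) \;\doteq\; \Delta^0_D\!\bigl(t, x_1^{-1}, y_1^{-1}, \ldots, x_g^{-1}, y_g^{-1}\bigr).
\]

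The main obstacle I expect is reconciling the two conjugation conventions and checking that dropping the $(N{+}1)$-st row (the commutator relation) and the $2g$ columns corresponding to $\partial/\partial x_k, \partial/\partial y_k$ from the full Fox matrix is justified. The justification is exactly the content of Theorem \ref{thm_csw_poly_jacobian}: under $\varphi$, the $\Omega$-columns and the commutator row do not interact with the $\alpha$-block in the Alexander ring, so the CSW polynomial is computed by the $N\times N$ $\alpha$-block alone. A careful transposition/sign/index bookkeeping together with this localization argument will close the proof.
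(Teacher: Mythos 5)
Your proposal is correct and follows essentially the same route as the paper: present $\pi_1^{\Omega}(\widehat{\beta})$ via the braid automorphism, take the Fox Jacobian of the $\alpha$-relations to recover $\rho_{N,\Omega}(\beta)-I_N$ up to transpose and units, and use Theorem~\ref{thm_prismatic_link_group} together with Lemma~\ref{lemma_pres_from_op_pres} and Theorem~\ref{thm_csw_poly_jacobian} to translate the opposite conjugation conventions into the substitution $w_i\to w_i^{-1}$. The only cosmetic difference is that the paper writes the closure relations as $s_i^{-1}l_i s_{\pi(i)}l_i^{-1}$ rather than $s_i\beta(s_i)^{-1}$, which changes the computation only by a unit.
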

\begin{proof} The $\Omega$-prismatic braid $\beta$ determines an automorphism of the free group $F_{N+2g}$ on the generators $s_1,\ldots,s_{N},x_1,y_1,\ldots,x_g,y_g$.  The automorphism is given by:
\begin{align*}
\beta &: \left\{ \begin{array}{cccc} s_i & \longrightarrow & l_i s_{\pi(i)} l_i^{-1} & (1 \le i \le N) \\ x_i & \longrightarrow & x_i & (1 \le i \le g) \\ y_i & \longrightarrow & y_i & (1 \le i \le g) \end{array} \right.,
\end{align*}
where $l_i$ is a word in the generators of $F_{N+2g}$ and $\pi \in \mathbb{S}_{N}$ is the permutation of $1,\ldots,N$ defined by the $\alpha$-part of $\beta$. Note that $\beta$ acts on $x_i,y_i$ as the identity because the $\Omega$-colored components only have classical over-crossings with the $\alpha$-part, and all other crossings are virtual. Then the $\Omega$-prismatic link group $\pi_1^{\Omega}(\Zh_{\Omega}(D))$ has a presentation of the form:
\[
\left\langle s_1,\ldots,s_{N},x_1,y_1,\ldots,x_g,y_g| s_i^{-1}l_is_{\pi(i)} l_i^{-1} , 1 \le i \le N, \prod_{i=1}^g [x_i,y_i] \right\rangle,
\]
Combining Lemma \ref{lemma_pres_from_op_pres} and Theorems \ref{thm_csw_poly_jacobian} and \ref{thm_prismatic_link_group}, it follows that a presentation matrix for $A(D)$ can be obtained from this by applying the Fox calculus to the relations $s_i^{-1}l_is_{\pi(i)} l_i^{-1}$ and performing the substitution $w_i \to w_i^{-1}$ for all $w_i \in \Omega$. On the other hand, this calculation gives $\rho_{N,\Omega}(\beta)-I_{N}$. Thus, as claimed, the determinant of $\rho_{N,\Omega}(\beta)-I_{N}$ is the CSW polynomial, after the change of variables and up to multiples of units in $\mathbb{Z}[\mathbb{Z} \times H_1(\Sigma)]$.
\end{proof}

\subsection{Quantum model for the CSW polynomial} \label{sec_quantum_CSW} Starting with Theorem \ref{thm_CSW_from_braid}, the quantum model for the CSW polynomial can now be obtained from the well-known method of Kauffman--Saleur \cite{kauffman_saleur_91}, i.e. by passing to the exterior algebra. Let $D$ be a link diagram on a surface $\Sigma$ of genus $g$ and $\Omega=\{x_1,y_1,\ldots,x_g,y_g\}$ a symplectic basis. By Theorem \ref{thm_prismatic_braid_closure}, we may write $\Zh_{\Omega}(D)=\widehat{\beta}$ for some $\beta \in \mathit{VB}_{N,\Omega}$. Next, apply the determinant-trace formula to Theorem \ref{thm_CSW_from_braid}. This gives:
\begin{equation} \label{eqn_prismatic_trace_det}
\det(\rho_{N,\Omega}(\beta)-I_N)=(-1)^N \sum_{k=0}^N (-1)^k \text{tr}\left(\bigwedge\,\!\!^k \rho_{N,\Omega}(\beta)\right),
\end{equation}
where $\bigwedge\,\!\!^k$ denotes the $k$-th exterior power. On the other hand, there is also the trace formula for a Reshetikhin-Turaev functor (see e.g. \cite{jackson_moffatt}, Theorem 7.31):
\begin{equation} \label{eqn_prismatic_rt_trace_det}
\widetilde{Q}^{1|1}_{\Omega}(\widehat{\beta})=\text{tr}\left(\widetilde{Q}^{1|1}_{\Omega}(\beta) \circ \mu^{\otimes N} \right),
\end{equation}
where $\mu:V \to V$ is the cup-cap map. For the vector representation of $U_q(\mathfrak{gl}(1|1))$, this is given by:
\[
\mu=\begin{pmatrix}
q & 0 \\ 0 & -q
\end{pmatrix}
\]
Then $\mu^{\otimes N}(x_{i_1} \otimes \cdots \otimes x_{i_N})=q^N (-1)^{\sum_{i_j} |i_j|} x_{i_1} \otimes \cdots \otimes x_{i_N}$. Observe that there is a factor of $-1$ exactly with $x_{i_1} \otimes \cdots \otimes x_{i_N}$ is odd. Now, let $U$ be the $N$ dimensional vector space on which the $\rho_{N,\Omega}$ acts and let $\{u_1,\ldots,u_N\}$ denote its basis. Perform the substitution $t \to q^{-2}$. We may then identify $\bigwedge\!^* U$ with $V^{\otimes N}$ by setting $u_{i_1} \wedge \cdots \wedge u_{i_k}$, $i_1 <\cdots<i_k$, equal to the basis element $x_{l_1} \otimes \cdots \otimes x_{l_N}$ of $V^{\otimes N}$ having $x_2$ in the $(i_j+1) \pmod N$ position, $1  \le j \le N$, and $x_1$ in all other positions. In particular, the target element $x_{l_1} \otimes \cdots \otimes x_{l_N}$ is odd if and only if $k$ is odd. Then (\ref{eqn_prismatic_trace_det}) may be rewritten as:
\begin{equation} \label{eqn_rewrite_det_trace}
(-1)^N q^{-N} \text{tr}\left(\bigwedge\!^* \rho_{N,\Omega}(\beta) \circ \mu^{\otimes N} \right)
\end{equation}
Comparing (\ref{eqn_prismatic_rt_trace_det}) and (\ref{eqn_rewrite_det_trace}), we see that the result will follow if we can relate $\widetilde{Q}^{1|1}_{\Omega}$ and $\bigwedge\!^* \rho_{N,\Omega}(\beta)$. Since $\bigwedge\!^*$ is functorial, it suffices to consider the cases $N=2$, $\Omega=\varnothing$ and $N=1, \Omega=\{x_1,y_1\}$. For the first case, we first perform a change of basis using the matrix:
\[
\begin{pmatrix}
q^{-1/2} & 0 \\ 0 & q^{1/2}
\end{pmatrix}
\]
For $N=2$, $\Omega=\varnothing$, Equation (\ref{eqn_rho_alpha}) implies that the $\Omega$-prismatic braid representation is given by:
\[
\rho_{2,\varnothing}(\sigma_1)=\begin{pmatrix}
    1-\tfrac{1}{q^2} & \tfrac{1}{q} \\
    \tfrac{1}{q} & 0
    \end{pmatrix}, \quad \rho_{2,\varnothing}(\sigma_1^{-1})=\begin{pmatrix}
    0 & q \\
    q & 1-q^2
    \end{pmatrix}, \quad \rho_{2,\varnothing}(\chi_1)=\begin{pmatrix}
    0 & q \\
    \tfrac{1}{q} & 0
    \end{pmatrix}. 
\]
If $N=1, \Omega=\{x_1,y_1\}$, Equation (\ref{eqn_rho_omega}) implies that the $\Omega$-prismatic braid representation is given by:
\[
\rho_{1,\Omega}(\lambda_{1,w_i})=(w_i), \quad \rho_{1,\Omega}(\lambda_{1,w_i}^{-1})=(w_i^{-1}),
\]
where $w_i \in \Omega$. Next, apply $\bigwedge\!^*$ to $\rho_{2,\varnothing}$ and $\rho_{1,\Omega}$. The result is:
\small
\[
\bigwedge \!^*\rho_{2,\varnothing}(\sigma_1)=\begin{pmatrix}
    1 & 0 & 0 & 0 \\
    0 & 1-\tfrac{1}{q^2} & \tfrac{1}{q} & 0 \\
    0 & \tfrac{1}{q} & 0 & 0 \\
    0 & 0 & 0 & -\tfrac{1}{q^2}
\end{pmatrix}, \bigwedge \!^*\rho_{2,\varnothing}(\sigma_1^{-1})= \begin{pmatrix}
    1 & 0 & 0 & 0 \\
    0 & 0 & q & 0 \\
    0 & q & 1-q^2 & 0 \\
    0 & 0 & 0 & -q^2
\end{pmatrix}, \bigwedge \!^*\rho_{2,\varnothing}(\chi_1)=
\begin{pmatrix}
    1 & 0 & 0 & 0 \\
    0 & 0 & q & 0 \\
    0 & \tfrac{1}{q} & 0 & 0 \\
    0 & 0 & 0 & -1
\end{pmatrix},
\]
\[
\bigwedge \!^* \rho_{1,\Omega}(\lambda_{1,w_i})=\begin{pmatrix}
1 & 0 \\ 0 & w_i
\end{pmatrix}, \quad 
\bigwedge \!^* \rho_{1,\Omega}(\lambda_{1,w_i}^{-1})=\begin{pmatrix}
1 & 0 \\ 0 & w_i^{-1}
\end{pmatrix},
\]
\normalsize
Comparing these values with prismatic $U_q(\mathfrak{gl}(1|1))$ functor in Section \ref{sec_prismatic_rt_defn}, we get: 
\begin{equation} \label{eqn_final_Burau_to_quantum_I}
\bigwedge \!^*\rho_{2,\varnothing}(\sigma_1)=q^{-1} \cdot \widetilde{Q}^{1|1}_{\varnothing} (\sigma_1), \quad \bigwedge \!^*\rho_{2,\varnothing}(\sigma_1^{-1})=q \cdot \widetilde{Q}^{1|1}_{\varnothing} (\sigma_1^{-1}), \quad \bigwedge \!^*\rho_{2,\varnothing}(\chi_1)=\widetilde{Q}^{1|1}_{\varnothing} (\chi_1),
\end{equation}
\begin{equation}  \label{eqn_final_Burau_to_quantum_II}
\bigwedge \!^* \rho_{1,\Omega}(\lambda_{1,w_i})=\widetilde{Q}^{1|1}_{\Omega}(\lambda_{1,w_i}), \quad \bigwedge \!^* \rho_{1,\Omega}(\lambda_{1,w_i}^{-1})=\widetilde{Q}^{1|1}_{\Omega}(\lambda_{1,w_i}^{-1})
\end{equation}
Thus we have the following theorem relating the CSW polynomial to the prismatic functor $\widetilde{Q}^{1|1}_{\Omega}$.

\begin{theorem} \label{thm_quantum_csw_model} For $D$ be a link diagram on $\Sigma$ and $\Omega=\{x_1,y_1,\ldots,x_g,y_g\}$ a symplectic basis. Then after the change of variables $\Delta_D^0(q^{-2},x_1^{-1},y_1^{-1},\ldots,x_g^{-1},y_g^{-1})$ and a change of basis, the CSW polynomial coincides with $\widetilde{Q}^{1|1}_{\Omega}(\Zh_{\Omega}(D))$, up to multiples of units in $\mathbb{Z}[\langle q\rangle \times H_1(\Sigma)]$.
\end{theorem}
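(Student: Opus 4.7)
The plan is to combine all the machinery already assembled: Theorem \ref{thm_CSW_from_braid} identifies $\Delta_D^0$ (after the variable substitution $w_i \to w_i^{-1}$) with $\det(\rho_{N,\Omega}(\beta)-I_N)$, where $\widehat{\beta}=\Zh_\Omega(D)$ as guaranteed by Theorem \ref{thm_prismatic_braid_closure}. On the quantum side, the ribbon-category trace formula \eqref{eqn_prismatic_rt_trace_det} expresses $\widetilde{Q}^{1|1}_\Omega(\widehat{\beta})$ as a twisted trace of $\widetilde{Q}^{1|1}_\Omega(\beta)$. So the task reduces to matching these two quantities, which is precisely what the alternating-sign identity \eqref{eqn_prismatic_trace_det} is designed for, provided we interpret the exterior algebra correctly.

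First I would set up the identification $\bigwedge^{\!*} U \cong V^{\otimes N}$ described just before equation \eqref{eqn_rewrite_det_trace}, performing the change of variable $t\to q^{-2}$ and the basis change by $\mathrm{diag}(q^{-1/2},q^{1/2})$. Under this identification the $k$-th exterior power $\bigwedge^{\!k} U$ lands in the subspace of $V^{\otimes N}$ whose basis tensors have $\mathbb{Z}_2$-grading congruent to $k\pmod 2$. Because the cup–cap map is $\mu=\mathrm{diag}(q,-q)$, the operator $\mu^{\otimes N}$ acts on such a tensor by $q^{N}(-1)^k$. Hence \eqref{eqn_prismatic_rt_trace_det} rewrites as
\[
\widetilde{Q}^{1|1}_\Omega(\widehat{\beta})
= q^{N}\sum_{k=0}^{N}(-1)^{k}\,\mathrm{tr}\!\left(\widetilde{Q}^{1|1}_\Omega(\beta)\big|_{\bigwedge^{\!k}U}\right),
\]
which is exactly the signed alternating sum appearing in \eqref{eqn_prismatic_trace_det}, up to the global factor $(-1)^N q^N$.

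The main content is then to upgrade the generator-level comparisons \eqref{eqn_final_Burau_to_quantum_I}–\eqref{eqn_final_Burau_to_quantum_II} to a global equality between the exterior power $\bigwedge^{\!*}\rho_{N,\Omega}(\beta)$ and $\widetilde{Q}^{1|1}_\Omega(\beta)$. Since both $\bigwedge^{\!*}$ and $\widetilde{Q}^{1|1}_\Omega$ are functorial (tensor and composition), it suffices to write $\beta$ as a word in the generators $\sigma_i^{\pm1}\otimes 1_{2g}$, $\chi_i\otimes 1_{2g}$, $\lambda_{i,w_j}^{\pm1}$ (padded by identities to the full $N$ strands) and invoke equations \eqref{eqn_final_Burau_to_quantum_I}–\eqref{eqn_final_Burau_to_quantum_II} on each generator. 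The mismatched scalars there are all powers of $q$: each $\sigma_i$ contributes $q^{-1}$, each $\sigma_i^{-1}$ contributes $q$, while $\chi_i$ and $\lambda_{i,w_j}^{\pm1}$ contribute $1$. Accumulating these gives a single global factor $q^{-w(\beta)}$, where $w(\beta)$ is the writhe of the $\alpha$-part.

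Combining the three ingredients yields
\[
\widetilde{Q}^{1|1}_\Omega(\Zh_\Omega(D))
\;=\;(-1)^N q^{\,N-w(\beta)}\,\det\!\bigl(\rho_{N,\Omega}(\beta)-I_N\bigr)
\;\doteq\;\Delta_D^{0}\!\left(q^{-2},x_1^{-1},y_1^{-1},\dots,x_g^{-1},y_g^{-1}\right),
\]
where the last step uses Theorem \ref{thm_CSW_from_braid} and absorbs the prefactor into units of $\mathbb{Z}[\langle q\rangle \times H_1(\Sigma)]$. The main technical obstacle I expect is bookkeeping: making sure that the braid-theoretic writhe correction, the $q^N$ from the $N$ copies of $\mu$, and the $(-1)^N$ from the determinant–trace identity really do assemble into a single unit rather than something that depends on $\beta$ beyond its writhe. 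The framed/rotational nature of $\widetilde{Q}^{1|1}_\Omega$ (Theorem \ref{thm_mv_functor_is_wd}, which notes that in the $m=n=1$ case invariance holds only up to $q^k$) is exactly what makes this harmless, and the resulting equivalence of polynomials up to units and the specified change of variables is the desired conclusion.
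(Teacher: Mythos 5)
Your proposal is correct and follows essentially the same route as the paper's proof: express $\Zh_\Omega(D)$ as a braid closure, use the determinant--trace identity \eqref{eqn_prismatic_trace_det} and the Reshetikhin--Turaev trace formula \eqref{eqn_prismatic_rt_trace_det}, identify $\bigwedge^{\!*}U$ with $V^{\otimes N}$, invoke functoriality together with the generator-level comparisons \eqref{eqn_final_Burau_to_quantum_I}--\eqref{eqn_final_Burau_to_quantum_II}, and finally apply Theorem~\ref{thm_CSW_from_braid}, absorbing the residual $\pm q^{N - w(\beta)}$ factor into units. Your write-up is a bit more explicit about the bookkeeping of scalars (the $q^N$ from $\mu^{\otimes N}$, the $(-1)^N$ from the determinant--trace identity, and the $q^{\pm1}$ from each classical crossing), but the argument's content is the same.
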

\begin{proof} We will use the notation of the current subsection, so that $D=\widehat{\beta}$ for some $\beta \in \mathit{VB}_{N,\Omega}$. Now, since $\bigwedge\!^*$ is functorial, Equations (\ref{eqn_final_Burau_to_quantum_I}) and (\ref{eqn_final_Burau_to_quantum_II}) imply that  we may replace $\bigwedge\!^* \rho_{N,\Omega}(\beta)$ with $\widetilde{Q}^{1|1}_{\Omega}(\beta)$ in Equation (\ref{eqn_rewrite_det_trace}). In doing so, there is a cost of $\pm q^{k}$, where $k$ is the writhe of of the $\alpha$-part of $\beta$. Such a factor is allowed, due to the symbol ``$\doteq$''. The claim then follows by Theorem \ref{thm_CSW_from_braid}.
\end{proof}

\section{$U_q(\mathfrak{gl}(m|n))$ lower bounds on the virtual $2$-genus} \label{sec_gen_bounds}
In Section \ref{sec_quantum_CSW}, the CSW polynomial of a link $L \subset \Sigma \times [0,1]$, for a fixed symplectic basis $\Omega$, was identified with the composition of the prismatic $U_q(\mathfrak{gl}(1|1))$ Reshetikhin-Turaev functor and the homotopy $\Zh$-construction. Hence, the symplectic rank of the prismatic $U_q(\mathfrak{gl}(1|1))$ Reshetikhin-Turaev polynomial of a link gives a lower bound on $\widecheck{g}_2(L)$. To generalize this bound to every $U_q(\mathfrak{gl}(m|n))$, we study the extent to which this composition for $U_q(\mathfrak{gl}(m|n))$ gives an invariant of knots in thickened surfaces. Additional care is required in this case since quantum invariants of virtual links are only invariant in the rotational category for $(m,n) \ne (1,1)$. To remedy this, we first refine the homotopy $\Zh$-construction so that defined relative to a fixed ``screen''. The link diagram on $\Sigma$ is projected to a virtual link diagram using this screen (see Section \ref{sec_homotopy_zh_screen}). Invariants of links in thickened surfaces and once-punctured surfaces are discussed in Section   \ref{sec_invar_in_thick_surf}. The $U_q(\mathfrak{gl}(m|n))$ bounds on $\widecheck{g}_2(L)$ are given in Section \ref{sec_gen_glmn_bounds}.
 
\subsection{Homotopy $\Zh$-construction relative to a screen} \label{sec_homotopy_zh_screen}
 Let $D$ be a link diagram on $\Sigma$ and $\Omega=\{x_1,y_1,\ldots,x_g,y_g\}$ is a symplectic basis with base point $\infty$. Here we assuming, as in the usual setup for the homotopy $\Zh$-construction, that $D$ intersects $\Omega$ transversely, never at a crossing, and never at $\infty$. Let $U \approx D^2$ be a small neighborhood of $\infty$ and set $\Sigma_{\infty}=\overline{\Sigma \smallsetminus U}$. Cutting $\Sigma_{\infty}$ along the curves in $\Omega$ gives a $4g$-gon $P$, which is identified with a fixed region in the plane $\mathbb{R}^2$. Up to now, this has agreed with the usual $\Zh_{\Omega}$. Now, attach $2g$ 1-handles $H^1_{x_1},H^1_{y_1},\ldots,H^1_{x_g},H^1_{y_g}$ to the sides of $P$ so that $H^1_{x_i}$ (resp. $H^1_{y_i}$) is attached along the sides of $P$ labeled $x_i$ (resp. $y_i$). See Figure \ref{fig_screen}. We will assume a specific choice of such ``screen'' on which to project link diagrams for a given choice of $\Sigma$ and $\Omega$, and we denote this choice by $S$. For the placement of the $1$-handles, it is required that the the cores of the one handles $H^1_{x_i},H^1_{y_i}$ have exactly one transversal intersection, the intersection $H^1_{x_i} \cap H^1_{y_i}$ is a disc $O_i \approx D^2$ for $1 \le i \le g$, and $H^1_{x_i} \cap H^1_{x_j}=H^1_{y_i} \cap H^1_{y_j}=\varnothing$ for $1 \le i \ne j \le g$. The disc $O_i$ is called that \emph{virtual region} of $H^1_{x_i}$ and $H^1_{y_i}$. Note also that $\partial S$ is itself a virtual knot diagram.

\begin{figure}[htb]
\begin{tabular}{|c|} \hline \\
\xymatrix{
\begin{tabular}{c}
\includegraphics[scale=.52]{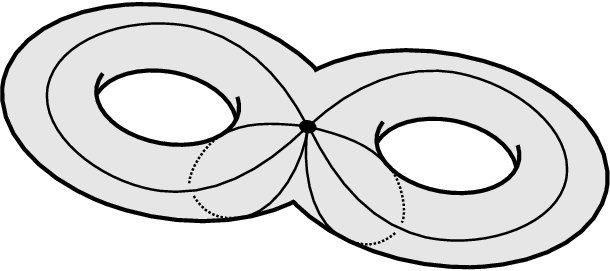}
\end{tabular} \ar@/^5pc/[r]^-{\text{puncture $\&$ flatten}}
&
\begin{tabular}{c}
\includegraphics[scale=.55]{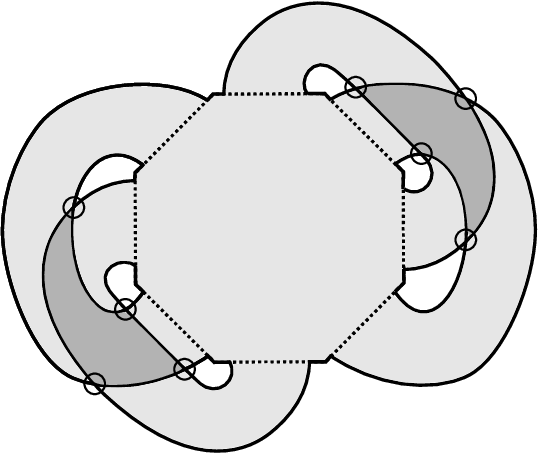}
\end{tabular}
} \\ \\ \hline
\end{tabular}
\caption{A fixed ``screen'' for a two-holed torus.} \label{fig_screen}
\end{figure}

Now the link diagram $D$ on $\Sigma$ may be drawn as a virtual link diagram $\widecheck{D}$ in $\mathbb{R}^2$. All classical crossings of $D$ appear on the interior of $P$. The crossings of $P$ are joined by arcs either lying in the interior of $P$, or arcs which intesect the boundary of $P$. Suppose in the latter case that an arc intersects a side labeled $z_i$ at a point. Then there must be another arc of $D$ intersecting the same point of the other side of $P$ labeled $z_i$.  Hence, they may be connected by a simple arc in $H^1_{z_i}$ that is parallel to the sides of $H^1_{z_i}$. The set of all such simple arcs added to the figure may have intersections, but all will lie in virtual regions $O_1,\ldots,O_g$. Any intersections in these overlapping regions are marked as virtual crossings of the virtual link diagram $\widecheck{D}$. Note here the emphasis that virtual crossings are artifacts of how one chooses to flatten the surface $\Sigma$ on which the real diagram $D$ lives. With a fixed screen, virtual crossings can only ever appear in its virtual regions $O_1,\ldots,O_g$. 

\begin{lemma} \label{lemma_screen_rot_equiv} If $D_1,D_2$ are Reidemeister equivalent link diagrams on $\Sigma_{\infty}=\overline{\Sigma\smallsetminus U}$, then $\widecheck{D}_1, \widecheck{D}_2$ are rotationally equivalent virtual link diagrams
\end{lemma}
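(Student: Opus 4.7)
The proof strategy is to reduce to two elementary cases: (i) the Reidemeister move occurs in a disk on $\Sigma_{\infty}$ disjoint from the attaching regions of the $1$-handles $H^1_{x_i}, H^1_{y_i}$, and (ii) an ambient isotopy of $\Sigma_{\infty}$ pushes a portion of the diagram through one or more $1$-handles. Any Reidemeister equivalence on $\Sigma_{\infty}$ factors as a finite composition of such moves, so it suffices to show that each witnesses an equivalence in $\mathcal{VT}^{rot}$.

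In case (i), the move happens in a small disk $B$ with $B \subset P$, so after flattening via the screen $S$ it appears as the same local move relating $\widecheck{D}_1$ to $\widecheck{D}_2$. This is a classical Reidemeister move performed in $\mathbb{R}^2$, and all such moves (including classical R1) are permitted in the rotational category; only the virtual curl move $VT_1$ is forbidden there.

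In case (ii), pushing an arc of $D$ across a curve $z \in \Omega$ in the interior of $\Sigma_{\infty}$ corresponds to a detour of an arc of $\widecheck{D}$ through the handle $H^1_z$: the arc is rerouted so that its new trajectory acquires virtual crossings with every strand it now passes in the virtual region, while shedding the virtual crossings on its old trajectory. This is precisely the classical virtual detour, realizable as a finite composition of the virtual tangle moves $VT_2, VT_3$ together with the mixed moves $VT_5, VT_6, VT_7$; the forbidden move $VT_1$ never appears, because a surface isotopy cannot create or destroy a topologically essential virtual curl (such a curl would correspond to an extraneous rotation of a strand, not to any transversal change of the diagram on $\Sigma$).

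The main technical step, which I expect to require the most care, is handling surface isotopies whose track crosses the overlap region $O_i = H^1_{x_i} \cap H^1_{y_i}$, where two handles meet, and Reidemeister moves whose support meets $\partial P$. In the first situation, the detour passes through a region in which the $x_i$- and $y_i$-sectors interleave, so the rerouted arc may acquire cascades of new virtual crossings with both families of strands; each such cascade is untangled by repeated application of $VT_2, VT_3$ and the mixed moves. In the second situation, one first applies a surface isotopy of type (ii) to shrink the support of the move into the interior of $P$, reducing to case (i). Chaining these reductions, every surface Reidemeister move on $\Sigma_{\infty}$ induces a rotational equivalence on the flattened diagrams, which establishes the lemma.
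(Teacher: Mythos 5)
Your proposal takes essentially the same approach as the paper: reduce any Reidemeister move to one supported inside the polygon $P$ by a surface isotopy, observe that in-$P$ moves descend to classical moves on the screen, and argue that the reduction isotopy is realizable on the screen via detour moves that never invoke $VT_1$. Your rotation-number justification for excluding $VT_1$ is a cosmetic variant of the paper's observation that virtual crossings of $\widecheck{D}$ arise only in the overlap regions $O_i$ and are created or destroyed in pairs during the contraction, so the arguments agree in substance.
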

\begin{proof} A Reidemeister move on $\Sigma_{\infty}$ occuring entirely within the the interior of $P$ appears in $S$ as a classical Reidemeister move. Hence, in this case, $\widecheck{D}_1$, $\widecheck{D}_2$ are rotationally equivalent. Suppose the arcs of a Reidemeister move intersect $\Omega$. For each classical Reidemeister move of type 1, 2, or 3, the arcs involved on at least one of its left and right hand sides enclose a region of $\Sigma$ homeomorphic to a disc $B$. By hypothesis, this disc cannot contain $\infty$. Hence, $B$ can be contracted so that it lies inside $P$. This implies that the second case reduces to the first. The contraction of $B$ appears $S$ as pushing some of its arcs over some of the handles $H^1_{x_1},H^1_{y_1},\ldots,H^1_{x_g},H^1_{y_g}$. In passing some arc over a $1$-handle $H_{z_i}^1$, virtual Reidemeister moves may be required. If $H^1_{z_i'}$ is the unique $1$-handle having non-empty intersection with $H^1_{z_i}$, these moves involve passing some arcs parallel to the sides of $H^1_{z_i'}$ over the arcs in the move. But along these arcs, there are only virtual crossings in $O_i$. After the contraction of $B$, the number of arcs of $\widecheck{D}$ parallel to the sides of $H^1_{z_i}$ and $H^1_{z_i'}$ is fixed. In particular, the number of virtual crossings of $\widecheck{D}$ has not changed. Then the contraction avoids virtual Reidemeister 1 moves ($VT_1$ in Figure \ref{fig_virtual_tangle_moves}), so that $\widecheck{D}_1$, $\widecheck{D}_2$ are rotationally equivalent.
\end{proof}

\begin{figure}[htb]
\begin{tabular}{|c|} \hline \\
\xymatrix{
\begin{tabular}{c}
\includegraphics[scale=.6]{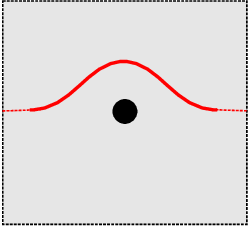} \end{tabular} \ar[r] &
\begin{tabular}{c}
\includegraphics[scale=.6]{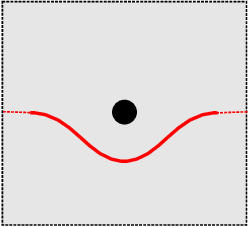}
\end{tabular}
} \\ \\ \hline
\end{tabular}
\caption{An isotopy over the base point $\infty$.} \label{fig_isotopy_over_basepoint}
\end{figure}

If $D_1,D_2$ are Reidemeister equivalent by a Reidemeister move on $\Sigma$ but not on $\Sigma_{\infty}$, an arc of $D_1$ must have passed over the base point $\infty$. See Figure \ref{fig_isotopy_over_basepoint}. The effect of passing over the base point can be realized in the screen $S$ by taking a connected sum of $\widecheck{D}$ with a parallel copy of the virtual knot diagram $\partial S$, appropriately oriented. See Figure \ref{fig_isotopy_basepoint}. This is called a \emph{base-point operation}.

\begin{figure}[htb]
\begin{tabular}{|c|} \hline 
\\ 
\xymatrix{
\begin{tabular}{c}
\includegraphics[scale=.65]{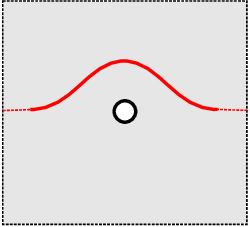}\end{tabular}
\ar[r] &
\begin{tabular}{c} \includegraphics[scale=.7]{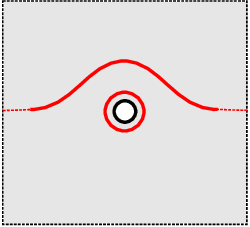} \end{tabular} \ar[r]
&
\begin{tabular}{c}
\includegraphics[scale=.65]{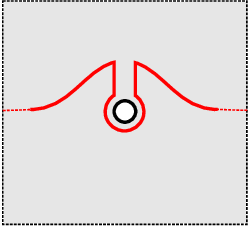}
\end{tabular}
\ar[r]
&
\begin{tabular}{c}
\includegraphics[scale=.65]{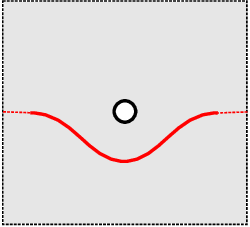} \end{tabular}} \\ \\ \hline
\end{tabular}

\caption{The effect of an isotopy over the base point is the same as a connect sum with a boundary parallel component.}\label{fig_isotopy_basepoint}
\end{figure}

\begin{example} \label{ex_basepoint_op_torus} A base-point operation applied to our running example is shown in Figure \ref{fig_basepoint_example}. The connected sum with the screen boundary is shown in the middle of Figure \ref{fig_basepoint_example}. Deleting the screen and performing a rotational isotopy in the plane, it can be easily checked that this is the right-handed virtual trefoil (Figure \ref{fig_basepoint_example}, right). For $g=1$, the screen boundary is rotationally trivial.
\end{example}

\begin{figure}[htb]
\begin{tabular}{|ccc|} \hline
 & & \\
\begin{tabular}{c} \includegraphics[scale=.4]{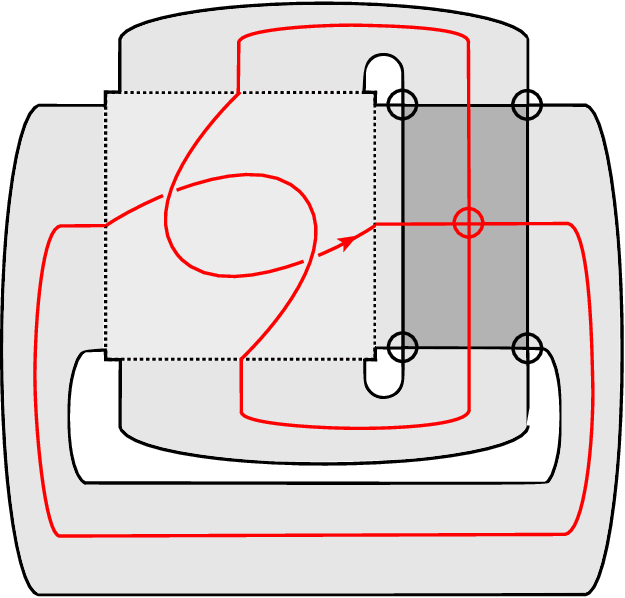} \end{tabular} & \begin{tabular}{c} \includegraphics[scale=.4]{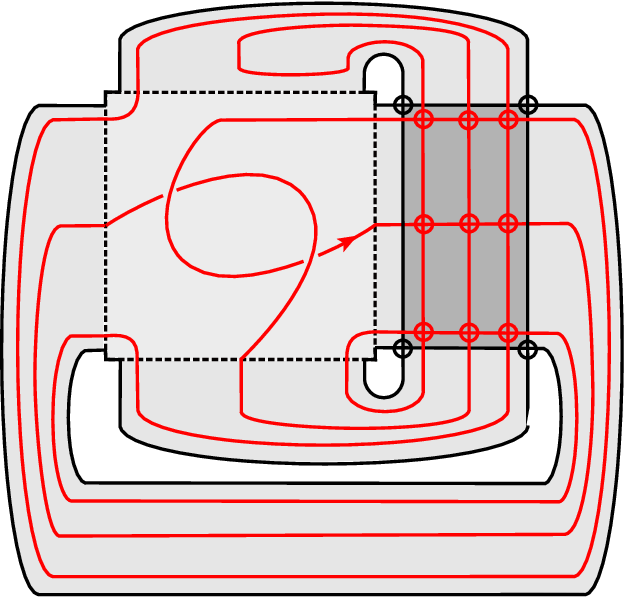} \end{tabular} & \begin{tabular}{c} \includegraphics[scale=.4]{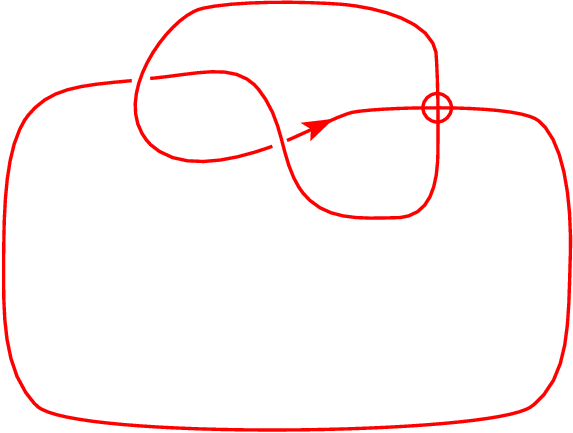}
\end{tabular} \\ & & \\ \hline \end{tabular}
\caption{The effect of base-point move on a surface of genus $g=1$.} \label{fig_basepoint_example}
\end{figure}

\begin{example} \label{ex_basepoint_op_rot_number} For $g \ne 1$, the screen boundary need not be rotationally trivial. Consider the screen for a surface $\Sigma$ of genus $2$ shown on the left in Figure 
\ref{fig_basepoint_rotation_number}. A parallel copy of its boundary is drawn in red. The virtual crossings for this knot have been suppressed for aesthetic reasons. When the screen is deleted, we have the rotational virtual knot shown in the center of Figure \ref{fig_basepoint_rotation_number}. This is equivalent to the knot shown on the right in Figure \ref{fig_basepoint_rotation_number} after a rotational isotopy. Although this is virtually equivalent to the unknot, it is not rotationally equivalent to the unknot (Kauffman \cite{kauffman_cobordism}).
\end{example}

\begin{figure}
\begin{tabular}{|c|} \hline
\\
\includegraphics[scale=.75]{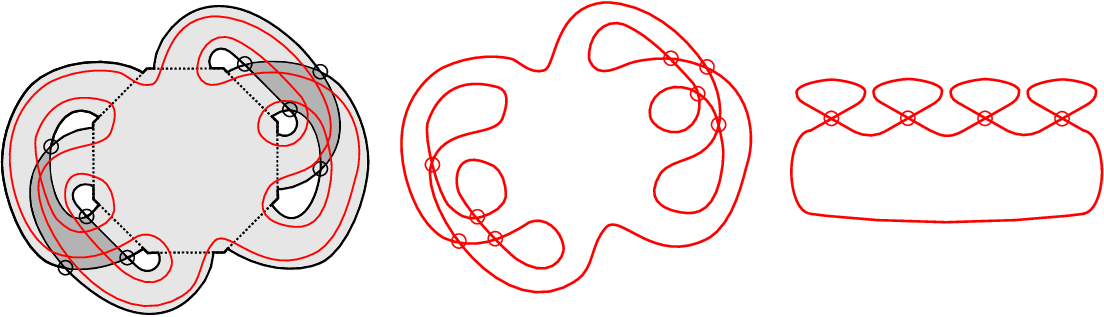}
\\ \\ \hline
\end{tabular}
\caption{Base-point operations and the rotation number.} \label{fig_basepoint_rotation_number}
\end{figure}

\begin{example} \label{ex_basepoint_op_order_two} Suppose a base-point operation is performed on a link diagram $D$ on $\Sigma$ in Figure \ref{fig_isotopy_basepoint}, and then the arc is immediately passed back over the basepoint again. The result is that $D$ is returned to its original position. On the screen, this is equivalent to taking a connect sum with the boundary and then a second connect sum where the orientation of the boundary is reversed. Then it is easy to see that the original diagram of $\widecheck{D}$ is recovered after a rotational isotopy. 
\end{example}

\begin{lemma} \label{lemma_rot_and_base} If $D_1,D_2$ are Reidemeister equivalent link diagrams on $\Sigma$, then $\widecheck{D}_1, \widecheck{D}_2$ are equivalent up to  rotational isotopy and base-point operations.
\end{lemma}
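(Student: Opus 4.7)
The plan is to reduce the statement to Lemma \ref{lemma_screen_rot_equiv} by absorbing the interaction of the link with the puncture $U$ into a finite number of base-point operations. Without loss of generality we may assume $D_1$ and $D_2$ differ by a single Reidemeister move, which occurs inside a small disc $B \subset \Sigma$. If $B \cap U = \varnothing$, then $B \subset \Sigma_{\infty}$ and the move is already a Reidemeister move on $\Sigma_{\infty}$, so Lemma \ref{lemma_screen_rot_equiv} gives that $\widecheck{D}_1$ and $\widecheck{D}_2$ are rotationally equivalent and we are done. The nontrivial case is when every disc $B$ supporting the move must contain $\infty$; that is, $\infty$ lies in the interior of the monogon, bigon, or triangle bounded by the arcs involved.

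In that case, I proceed by the same trick used in the proof of Theorem \ref{thm_homotopy_zh_invariance}: perform an ambient isotopy of $\Sigma$ that pushes one of the arcs in $B$ across $\infty$, thereby producing an intermediate diagram $D_1'$ in which the supporting disc misses $\infty$. Since $D_1 \to D_1'$ is an isotopy of $\Sigma$ whose only non-trivial feature is an arc crossing the base point, it induces on the screen a base-point operation by the discussion accompanying Figure \ref{fig_isotopy_basepoint}: the strand that crosses $\infty$ is replaced on $S$ by the same strand followed by a parallel copy of $\partial S$. After this base-point operation, the Reidemeister move from $D_1'$ to $D_2'$ is supported in a disc inside $\Sigma_{\infty}$, so Lemma \ref{lemma_screen_rot_equiv} converts it into a rotational equivalence $\widecheck{D}_1' \sim_{\mathrm{rot}} \widecheck{D}_2'$. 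A final base-point operation, if necessary, brings $D_2'$ back to $D_2$ (as in Example \ref{ex_basepoint_op_order_two}, one may need to push the arc back across $\infty$).

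Iterating over all moves in the sequence realizing $D_1 \simeq D_2$ and composing the resulting string of rotational isotopies and base-point operations yields the desired equivalence between $\widecheck{D}_1$ and $\widecheck{D}_2$. The only subtlety, and arguably the main obstacle, is the precise diagrammatic identification between an isotopy over $\infty$ and a connect-sum with (an oriented parallel copy of) $\partial S$; this is what is encoded in Figure \ref{fig_isotopy_basepoint} and has to be checked case-by-case according to which direction the arc crosses $\infty$ and how it meets the curves of $\Omega$ on either side. Once this local correspondence is established, the global argument is just an induction on the number of Reidemeister moves in a chosen sequence from $D_1$ to $D_2$.
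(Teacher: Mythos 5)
Your argument is correct and follows the same route as the paper: reduce to a single Reidemeister move, handle the case supported away from $\infty$ via Lemma \ref{lemma_screen_rot_equiv}, and absorb any passage over the base point into a base-point operation as in Figure \ref{fig_isotopy_basepoint}. The paper's proof is simply a one-line "follows immediately from Lemma \ref{lemma_screen_rot_equiv} and the preceding discussion," so you have spelled out exactly the intended reasoning, including the correct observation (via Example \ref{ex_basepoint_op_order_two}) that a second, inverse base-point operation may be needed to return to $D_2$.
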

\begin{proof} This follows immediately from Lemma \ref{lemma_screen_rot_equiv} and the preceding discussion.
\end{proof}

Now suppose that $D$ is a link diagram on $\Sigma$, $\Omega$ is a symplectic basis at $\infty$, and $S \subset \mathbb{R}^2$ is a screen. Let $\widecheck{D}$ denote the corresponding virtual link diagram in $S$. Perform the homotopy $\Zh$-construction by overlaying the arcs of $\widecheck{D}$ with the $\Omega$-colored over-crossing arcs, as described in Section \ref{sec_homotopy_zh_defn}. This called the \emph{homotopy $\Zh$-construction relative to $S$}, denoted $\Zh_{\Omega}^S(D)$.

\begin{lemma} \label{lemma_prism_zh_rel_screen} If $D_1,D_2$ are Reidemeister equivalent link diagrams on $\Sigma$, then $\Zh_{\Omega}^S(D_1)$, $\Zh_{\Omega}^S(D_2)$ are rotationally equivalent $\Omega$-prismatic links, up to base-point operations.
\end{lemma}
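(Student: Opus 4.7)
The plan is to combine Lemma \ref{lemma_rot_and_base}, which handles the underlying virtual diagram, with a rotational refinement of Theorem \ref{thm_homotopy_zh_invariance}, which handles the added $\Omega$-colored over-crossings. It suffices to treat the case where $D_1$ and $D_2$ differ by a single Reidemeister move, occurring inside some ball $B \subset \Sigma$. I would then split into two cases according to whether $B$ can be chosen so as to avoid the base point $\infty$.

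In the first case, $B$ can be contracted into the fundamental $4g$-gon $P$ of the screen $S$. By the argument of Lemma \ref{lemma_screen_rot_equiv}, this contraction is realized on $\widecheck{D}_1$ through virtual Reidemeister moves that do not include any $VT_1$, since the contraction merely slides arcs parallel to the one-handles of $S$ without altering the number of virtual crossings. I would then verify that the $\Omega$-colored over-crossings of $\Zh_\Omega^S(D_1)$ can be transported along with the $\alpha$-arcs using only the $\Omega$-semi-welded move and the rotational virtual moves, the first because of Lemma \ref{lemma_rearrange}, the second because the $\Omega$-part crosses itself only virtually. Performing the Reidemeister move inside the contracted $B$ then yields $\Zh_\Omega^S(D_2)$ up to moves in the rotational $\Omega$-prismatic category.

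In the second case, every such $B$ contains $\infty$, so the move requires pushing an arc over the base point. By the discussion preceding Lemma \ref{lemma_rot_and_base}, the effect on $\widecheck{D}_1$ is a base-point operation, i.e.\ a connected sum with a suitably oriented parallel copy of $\partial S$. Translating this to $\Zh_\Omega^S(D_1)$, the arc carries with it a string of $\Omega$-colored over-crossings whose word around $\partial S$ is, by the orientation conventions in Section \ref{sec_homotopy_zh_defn}, a cyclic conjugate of $\prod_{i=1}^g [x_i,y_i]$. By Lemma \ref{lemma_commutator_permute} together with Lemma \ref{lemma_rearrange}, this configuration is equivalent to the trivial insertion via $\Omega$-commutator moves; after performing the resulting modified base-point operation on $\Zh_\Omega^S(D_1)$, I would reduce to the first case and conclude that $\Zh_\Omega^S(D_1)$ and $\Zh_\Omega^S(D_2)$ are rotationally equivalent up to base-point operations.

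The main obstacle is the bookkeeping in the second case: one has to check that the $\Omega$-over-crossings accumulated when the arc is swept across $\infty$ really do rearrange to a cyclic form of the commutator word, and that the further reduction to the identity in the prismatic link uses only $\Omega$-semi-welded and $\Omega$-commutator moves (no virtual $R_1$). The first point follows from the same intersection-theoretic analysis used in Proposition \ref{prop_coordinatize_zh}, and the second is ensured by the fact that working inside the fixed screen $S$ prevents any virtual curl from being created, exactly as in the proof of Lemma \ref{lemma_screen_rot_equiv}.
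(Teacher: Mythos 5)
Your proof is correct and follows essentially the same route as the paper: split into the case where the ambient ball avoids $\infty$ (handled via Lemma \ref{lemma_screen_rot_equiv} and the argument of Theorem \ref{thm_homotopy_zh_invariance}) and the case where it must contain $\infty$, which you reduce to a base-point operation and then remove the accumulated $\Omega$-colored over-crossings using Lemmas \ref{lemma_rearrange} and \ref{lemma_commutator_permute}. The paper organizes the second case slightly differently — it first proves that $\Zh_{\Omega}^S(C)$ for a boundary-parallel curve $C$ splits as $\widecheck{C}$ disjoint-union an $\Omega$-unlink, and then concludes via the identification $\Zh_{\Omega}^S(D_1 \# C) \simeq \Zh_{\Omega}^S(D_1) \# \widecheck{C}$ — but this is the same commutator-word removal you describe, just packaged as a connected-sum decomposition.
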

\begin{proof} If $D_1,D_2$ are Reidemeister equivalent on $\Sigma_{\infty}$, then the result follows as in Lemma \ref{lemma_screen_rot_equiv} and Theorem \ref{thm_homotopy_zh_invariance}. If $D_1,D_2$ are related by a Reidemeister move that encloses $\infty$, then as in previous proofs, we can reduce this case to the first by first moving an arc of $D_1$ over the base point. Hence, it suffices to prove the claim when $D_1,D_2$ are related by an isotopy over $\infty$, as shown in Figure \ref{fig_isotopy_over_basepoint}. In this event, Lemma \ref{lemma_rot_and_base} implies that $\widecheck{D}_1$ and $\widecheck{D}_2$ are related by a base-point operation. On the other hand, Theorem \ref{thm_homotopy_zh_invariance} implies that $\Zh_{\Omega}(D_1)$ and $\Zh_{\Omega}(D_2)$ are related by an $\Omega$-commutator move. It must then be shown that these two moves can be performed interchangeably.  
 
First, let $C$ in $\Sigma_{\infty}$ be a closed curve parallel to $\partial U \subset \Sigma_{\infty}$, where $B^2 \approx U \subset \Sigma$ is a small neighborhood of $\infty$. Let $\widecheck{C}$ denote the corresponding virtual knot diagram representing a parallel copy of $\partial S$. Let $\widecheck{b}$ be a small $\Omega$-colored over-crossing arc added to $\widecheck{C}$ in the homotopy $\Zh$-construction applied to $C$. For the moment, assume that $\widecheck{C}$ contains only one such arc. Using rotational moves, $\widecheck{b}$ can be pushed along $\widecheck{C}$ to any other point by moving the virtual crossings it encounters on $\widecheck{C}$ out of the way. Hence, for all of the $\Omega$-colored arcs added to $\widecheck{C}$ by $\Zh_{\Omega}^S$, it can be assumed that they can be placed as in the left-hand side of Figure \ref{fig_commutator_move} or some cyclic permutation thereof. By Lemma \ref{lemma_commutator_permute}, any cyclic permutation of the vertical over-crossing arcs in an $\Omega$-commutator move is equivalent to the $\Omega$-commutator move itself. Hence, all the $\Omega$-colored arcs can be pulled off $\widecheck{C}$ as in the right hand side of Figure \ref{fig_commutator_move}. Then $\Zh_{\Omega}^S(C)$ is equivalent as an $\Omega$-prismatic link to the disjoint union of $\widecheck{C}$ and an unlink consisting of one unknotted component for each element of $\Omega$.   

Now, for $D_1$ and $D_2$, the argument above shows that $\Zh_{\Omega}^S(D_1  \# C)$ and $\Zh_{\Omega}^S(D_1)\# \widecheck{C}$ are rotationally equivalent $\Omega$-prismatic links, where $\#$ denotes the appropriate connected sum. Since $D_2=D_1 \# C $, $\Zh_{\Omega}^S(D_1)$ and $\Zh_{\Omega}^S(D_2)$ are rotationally equivalent, up to base-point operations.  
\end{proof}

\subsection{$U_q(\mathfrak{gl}(m|n))$ invariants of knots in $\Sigma \times [0,1]$ $\&$ $\Sigma_{\infty} \times [0,1]$} \label{sec_invar_in_thick_surf} In Section \ref{sec_quantum_CSW}, the CSW polynomial of a link in $\Sigma \times [0,1]$ was identified with a composition of the $U_q(\mathfrak{gl}(1|1))$ invariant and the homotopy $\Zh$-construction on $\Sigma$, assuming a fixed choice of symplectic basis. This section extends this idea to $U_q(\mathfrak{gl}(m|n))$ to obtain invariants of knots in various thickened surfaces.

\begin{definition}[Prismatic $U_q(\mathfrak{gl}(m|n))$ Reshetikhin-Turaev polynomial] Let $\Sigma$ be a closed connected orientable surface and $\Omega=\{x_1,y_1,\ldots,x_g,y_g\}$ be a choice of symplectic basis. Let $S$ be a fixed screen for $\Sigma,\Omega$ and let $D$ be a link diagram on $\Sigma$. Define:
\[
\widetilde{f}_{(\Sigma,\Omega,D)}^{\,\, m|n}(q,x_1,y_1,\ldots,x_g,y_g)=q^{(m-n)\text{wr}(D)} \widetilde{Q}^{\,\, m|n}_{\Omega}(\Zh_{\Omega}^S(D)),
\]
where $\text{wr}(D)$ is the classical writhe of $D$ on $\Sigma$.
\end{definition}

Next we consider the effect of Reidemeister equivalence and orientation preserving diffeomorphisms on the prismatic $U_q(\mathfrak{gl}(m|n))$ Reshetikhin-Turaev polynomial. Recall from Section \ref{sec_covering} that if $h:\Sigma \to \Sigma$ is an orientation preserving diffeomorphism, there is an induced map $h_{\sharp}:\mathbb{Z}[H_1(\Sigma;\mathbb{Z})] \to \mathbb{Z}[H_1(\Sigma;\mathbb{Z})]$ whose effect on a polynomial $f(t,x_1,y_1,\ldots,x_g,y_g)$ is to apply the induced map on homology to the coefficients of $t$.  This same notation will be used in the following lemma.

\begin{lemma} \label{lemma_fmn_invariants} Let $D_1,D_2 \subset \Sigma$ be link diagrams, $\Omega=\{x_1,y_1,\ldots,x_g,y_g\}$ a symplectic basis at $\infty$. 
\begin{enumerate} 
\item If $D_1,D_2$ are Reidemeister equivalent on $\Sigma_{\infty}$, then: 
\[
\widetilde{f}^{\,\,m|n}_{(\Sigma,\Omega,D_1)}(q,x_1,y_1,\ldots,x_g,y_g)=\widetilde{f}^{\,\,m|n}_{(\Sigma,\Omega,D_2)}(q,x_1,y_1,\ldots,x_g,y_g).
\]
\item If $h(D_1)=D_2$ for some $h:\Sigma \to \Sigma$ with $h(\infty)=\infty$, then relative to the same screen $S$:
\[
h_{\sharp}\left(\widetilde{f}^{\,\,m|n}_{(\Sigma,\Omega,D_1)}(q,x_1,y_1,\ldots,x_g,y_g)\right)=\widetilde{f}^{\,\,m|n}_{(\Sigma,h(\Omega),D_2)}(q,x_1',y_1',\ldots,x_g',y_g'),
\]
where $h(\Omega)=\{x_1',y_1',\ldots,x_g',y_g'\}$.
\end{enumerate} 
\end{lemma}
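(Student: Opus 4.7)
For part (1), the plan is to combine Lemma~\ref{lemma_screen_rot_equiv} with the functoriality in Theorem~\ref{thm_mv_functor_is_wd}, with the prefactor $q^{(m-n)\operatorname{wr}(D)}$ absorbing the classical Reidemeister~$1$ anomaly. Since the Reidemeister moves relating $D_1$ and $D_2$ lie inside $\Sigma_\infty$, none of them crosses $\infty$, so no $\Omega$-commutator move is forced on the prismatic diagram and no base-point operation arises on the screen. Lemma~\ref{lemma_screen_rot_equiv} then gives that the flattenings $\widecheck{D}_1$ and $\widecheck{D}_2$ are rotationally equivalent, and because the intersection pattern of each $D_i$ with $\Omega$ is preserved by the moves, the $\Omega$-colored over-crossing arcs attached by $\Zh_\Omega^S$ can be matched across the sequence. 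Hence $\Zh^S_\Omega(D_1)$ and $\Zh^S_\Omega(D_2)$ are equivalent as $\Omega$-prismatic tangles in the rotational (but not necessarily framed) category. By Theorem~\ref{thm_mv_functor_is_wd}, the only such move that $\widetilde{Q}^{m|n}_\Omega$ fails to respect is a classical Reidemeister~$1$ on the $\alpha$-part: a curl of sign $\epsilon=\pm 1$ scales $\widetilde{Q}^{m|n}_\Omega$ by $q^{-\epsilon(m-n)}$ (the ribbon eigenvalue of the vector representation) and changes $\operatorname{wr}(D)$ by $\epsilon$, so the prefactor $q^{(m-n)\operatorname{wr}(D)}$ cancels the discrepancy exactly.

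For part (2), the plan is to apply a screen-relative analog of Lemma~\ref{lemma_homotopy_zh_and_diffeo}. I would choose the flattening of $(\Sigma, h(\Omega), D_2)$ onto $S$ induced from the flattening of $(\Sigma, \Omega, D_1)$: namely, relabel the $1$-handle of $S$ corresponding to $z\in\Omega$ as the handle corresponding to $h(z)\in h(\Omega)$, and use $h$ itself to carry the interior of the $4g$-gon. Because $h$ is an orientation-preserving diffeomorphism with $h(\infty)=\infty$ and $h(D_1)=D_2$, this identifies the planar virtual diagram $\widecheck{D}_1$ (drawn in $S$ with the $\Omega$-labeling) with $\widecheck{D}_2$ (drawn in $S$ with the $h(\Omega)$-labeling) up to a rotational isotopy. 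The homotopy $\Zh$-construction then produces $\Zh^S_{h(\Omega)}(D_2)$ from $\Zh^S_\Omega(D_1)$ simply by recoloring the $z$-component as $h(z)$-colored for every $z\in\Omega$. Running $\widetilde{Q}^{m|n}_{h(\Omega)}$ on the recolored diagram executes the same symbolic computation as $\widetilde{Q}^{m|n}_\Omega$ on the original, but with each formal variable $z\in\Omega$ replaced by $h(z)$. This is exactly the substitution induced by $h_\sharp$ on coefficients of $q$. The prefactor matches because $h$ preserves crossing signs and hence $\operatorname{wr}(D_1)=\operatorname{wr}(D_2)$.

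The main technical obstacle is aligning the two flattenings onto the fixed screen $S$ in part (2): I must verify that $h$ can be isotoped rel $\infty$ so that it restricts to a homeomorphism of the $4g$-gon $P$ sending the side labeled $z$ to the side labeled $h(z)$, so that the planar projections really do differ only by relabeling of the $1$-handles. This uses that, up to isotopy fixing $\infty$, an orientation-preserving diffeomorphism of $\Sigma$ is determined by its action on a symplectic system of curves, together with the fact that rotational moves of $\widecheck{D}$ and $\Omega$-semi-welded moves of $\Zh_\Omega^S(D)$ are absorbed by Lemma~\ref{lemma_screen_rot_equiv} and Theorem~\ref{thm_mv_functor_is_wd}. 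Once this alignment is in place, the remainder of both (1) and (2) is a bookkeeping exercise comparing colors, variable labels, and writhes.
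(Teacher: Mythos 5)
Your proof follows essentially the same route as the paper's: part (1) reduces to showing that $\Zh_{\Omega}^S(D_1)$ and $\Zh_{\Omega}^S(D_2)$ are rotationally equivalent $\Omega$-prismatic links (no base-point operations since the moves lie in $\Sigma_\infty$) and then uses the prefactor $q^{(m-n)\operatorname{wr}(D)}$ to absorb the classical Reidemeister~1 anomaly; part (2) identifies $\Zh_{\Omega}^S(D_1)$ with $\Zh_{h(\Omega)}^S(D_2)$ by relabeling the screen's $1$-handles via $h$ and notes that the functor's output transforms by $h_\sharp$. One caution on part (1): the claim that ``the intersection pattern of each $D_i$ with $\Omega$ is preserved by the moves'' is not literally true, since a Reidemeister move on $\Sigma$ can carry arcs across curves of $\Omega$; the matching of the $\Omega$-colored over-crossing arcs actually requires the contraction argument of Lemma~\ref{lemma_prism_zh_rel_screen} (shrinking the disc supporting the move into the interior of the $4g$-gon $P$), which the paper cites directly at this step in place of Lemma~\ref{lemma_screen_rot_equiv} alone. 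Your concern in part (2) about aligning the two flattenings is legitimate, but it is resolved exactly by the convention you describe (using $h$ to carry the identification of the cut surface with $P$), which is the same implicit convention the paper uses; no isotopy of $h$ is actually needed.
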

\begin{proof} By Lemma \ref{lemma_prism_zh_rel_screen}, $\Zh_{\Omega}^S(D_1)$ and $\Zh_{\Omega}^S(D_2)$ are rotationally equivalent, up to base-point moves. On $\Sigma_{\infty}$, base-point operations are not allowed, so it follows that $\Zh_{\Omega}^S(D_1)$ and $\Zh_{\Omega}^S(D_2)$ are rotationally equivalent $\Omega$-prismatic links. By Theorem \ref{thm_mv_functor_is_wd}, $\widetilde{Q}^{\,\,m|n}_{\Omega}$ is an invariant of framed rotational isotopy. As can be easily checked with our conventions, the effect of $\widetilde{Q}^{\,\,m|n}_{\Omega}$ on a positive classical curl is $q^{-(m-n)} I_2$ and the effect on a negative classical curl is $q^{(m-n)} I_2$. It follows that $\widetilde{f}^{\, m|n}_{(\Sigma,\Omega,D)}$ is invariant under all Reidemeister moves on $\Sigma$. This proves the first claim. The second claim follows as in Lemma \ref{lemma_homotopy_zh_and_diffeo}. Indeed, $h$ maps $\Omega$ to a symplectic basis $h(\Omega)$. Using the same screen $S$, the sides of the polygon $P$ can be relabeled by the elements of $h(\Omega)$.  But since $D_2=h(D_1)$ and $h$ preserves the intersection pairing, it follows that $\Zh_{\Omega}^S(D_1)=\Zh_{h(\Omega)}^S(D_2)$.
\end{proof}

Recall from Section \ref{sec_csw_poly} that polynomials $f,f' \in \mathbb{Z}[\langle q \rangle  \times H_1(\Sigma;\mathbb{Z})]$ are called \emph{equivalent} if $f=u \cdot \varphi_{\sharp}(f')$ for some orientation preserving diffeomorphism $\varphi$ of $\Sigma$ and unit $u$. If we consider $\widetilde{f}^{\,\, m|n}_{(\Sigma,\Omega,D)}(q,x_1,y_1,\ldots,x_g,y_g)$ to be an element of $\mathbb{Z}[\langle q \rangle  \times H_1(\Sigma;\mathbb{Z})]$, this can be applied to $U_q(\mathfrak{gl}(m|n))$  Reshetikhin-Turaev polynomials to obtain invariants of links in thickened surfaces.

\begin{theorem} Let $\Sigma$ be a closed connected orientable surface and $\Omega=\{x_1,y_1,\ldots,x_g,y_g\}$ be a symplectic basis with base point $\infty$. For a link $L \subset \Sigma \times [0,1]$, let $D$ be a diagram of $L$ on $\Sigma$. Then:
\begin{enumerate}
\item The equivalence class of $\widetilde{f}^{\,\,1|1}_{(\Sigma,\Omega,D)}(q,x_1,y_1,\ldots,x_g,y_g)$ is an invariant of links in $\Sigma \times [0,1]$.
\item The equivalence class of $\widetilde{f}^{\,\,m|n}_{(\Sigma,\Omega,D)}(q,x_1,y_1,\ldots,x_g,y_g)$ is an invariant of links in $\Sigma_{\infty} \times [0,1]$.
\end{enumerate}
\end{theorem}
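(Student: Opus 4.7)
My plan is to establish both parts by reducing each equivalence between diagrams on $\Sigma$ to an invariance statement already at our disposal. Lemma \ref{lemma_fmn_invariants}(1) gives exact preservation of $\widetilde{f}^{\,m|n}$ under Reidemeister moves confined to $\Sigma_\infty$; Lemma \ref{lemma_fmn_invariants}(2) gives $h_\sharp$-equivariance under diffeomorphisms of $\Sigma$ fixing $\infty$; Lemma \ref{lemma_prism_zh_rel_screen} interprets Reidemeister moves crossing $\infty$ as base-point operations on $\Zh_\Omega^S(D)$; and Theorem \ref{thm_mv_functor_is_wd} provides invariance of $\widetilde{Q}^{\,m|n}_\Omega$ under framed rotational equivalence for all $(m,n)$, and under full virtual equivalence up to multiples of $q^k$ precisely when $m=n=1$. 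The dichotomy between parts (1) and (2) is exactly the difference between allowing or disallowing moves that cross the basepoint.

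For part (2), any two diagrams $D_1,D_2$ on $\Sigma$ of equivalent links in $\Sigma_\infty \times [0,1]$ differ by a finite sequence of Reidemeister moves on $\Sigma_\infty$ followed by an orientation-preserving diffeomorphism $h:\Sigma_\infty\to\Sigma_\infty$ which, extended by the identity on a collar of $\infty$, gives $\tilde h:\Sigma\to\Sigma$ fixing $\infty$. Lemma \ref{lemma_fmn_invariants}(1) preserves the polynomial exactly on the Reidemeister side, while Lemma \ref{lemma_fmn_invariants}(2) supplies
\[
\tilde h_\sharp\bigl(\widetilde{f}^{\,m|n}_{(\Sigma,\Omega,D_1)}\bigr) \;=\; \widetilde{f}^{\,m|n}_{(\Sigma,\tilde h(\Omega),D_2)},
\]
placing both polynomials in a common equivalence class. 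The further change of basis from $\tilde h(\Omega)$ back to $\Omega$ is itself effected by a diffeomorphism of $\Sigma$ fixing $\infty$ and so is absorbed into the relation, yielding part (2).

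For part (1), two additional classes of move appear: Reidemeister moves whose enclosed monogon, bigon, or triangle contains $\infty$, and diffeomorphisms of $\Sigma$ moving $\infty$. For the first, Lemma \ref{lemma_prism_zh_rel_screen} shows $\Zh_\Omega^S(D)$ is changed by a base-point operation, namely a connect sum with $\widecheck{C}$ (the screen boundary) decorated by certain $\Omega$-arcs. The proof of that lemma uses Lemma \ref{lemma_commutator_permute} to strip off these $\Omega$-decorations, so that the decorated boundary is equivalent, as a complete $\Omega$-prismatic link, to $\widecheck{C}\sqcup(\text{unknots for }\Omega)$. In the $(1,1)$ case, Theorem \ref{thm_mv_functor_is_wd} makes $\widetilde{Q}^{\,1|1}_\Omega$ a virtual tangle invariant up to $q^k$-factors, so cutting $\widecheck{C}$ open at the connect-sum arc turns it into a virtually trivial $1,1$-tangle acting on $V$ by $q^k\,I_2$, a unit in $\mathbb{Z}[\langle q\rangle \times H_1(\Sigma)]$; the $\Omega$-unknots, whose strands carry trivial one-dimensional representations, contribute the scalar $1$. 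Hence base-point operations alter $\widetilde{f}^{\,1|1}$ only by a unit. For the second, any diffeomorphism $h$ with $h(\infty)\neq\infty$ may be written as an ambient isotopy dragging $\infty$ along a path to $h(\infty)$ (realized as a finite sequence of base-point operations on the screen, and hence handled by the preceding step) followed by a diffeomorphism fixing $h(\infty)$ (handled by Lemma \ref{lemma_fmn_invariants}(2) relative to the new basepoint).

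The principal technical obstacle is confirming that the $1,1$-tangle obtained from the simplified decorated screen boundary really evaluates to a scalar unit. As Example \ref{ex_basepoint_op_rot_number} shows, $\widecheck{C}$ can carry a nontrivial rotation number contributed by virtual crossings in the virtual regions $O_1,\ldots,O_g$ of the screen. However, each virtual Reidemeister-1 curl on an $\alpha$-strand evaluates under $\widetilde{Q}^{\,1|1}_\Omega$ to $q^{\pm 1}I_2$ (as recorded in the proof of Theorem \ref{thm_mv_functor_is_wd}), so the aggregate factor collapses to a monomial $q^k$, i.e.\ a unit in $\mathbb{Z}[\langle q\rangle \times H_1(\Sigma)]$, completing the argument.
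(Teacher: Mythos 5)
Your proof is correct and follows essentially the same route as the paper: part (2) via Lemma \ref{lemma_fmn_invariants}, and part (1) by reducing the extra basepoint-crossing moves to base-point operations via Lemma \ref{lemma_prism_zh_rel_screen} and noting that, for $m=n=1$, virtual curls contribute only scalar factors $q^{\pm 1}$. Your elaboration on decomposing diffeomorphisms that move $\infty$ into basepoint-dragging isotopies plus a basepoint-fixing diffeomorphism, and on why the decorated screen boundary and $\Omega$-unknots contribute only unit factors, fills in details that the paper's terser argument leaves implicit, but the underlying strategy is the same.
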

\begin{proof} If $L_1,L_2$ are equivalent in $\Sigma \times [0,1]$, then they have diagrams $D_1,D_2$, respectively, on $\Sigma$, that are related by a finite sequence of Reidemeister moves on $\Sigma$ and diffeomorphisms of $\Sigma$. The second claim is thus an immediate consequence of Theorem \ref{lemma_fmn_invariants}. For the first claim, suppose that $D_1$ and $D_2$ are related by an isotopy over $\infty$. In this event, $\Zh_{\Omega}^S(D_1)$ and $\Zh_{\Omega}^S(D_2)$ are related by a base-point operation. A base-point operation does not change the virtual equivalence class, but can change the rotational equivalence class. However, when $m=n=1$, the effect of $\widetilde{Q}^{\,m|n}_{\Omega}(T)$ on a virtual curl $T$ is simply to multiply the identity matrix by $q^{\pm 1}$ (see \cite{CP}, Section 3.4). Thus, $\widetilde{f}^{\,\,1|1}_{(\Sigma,\Omega,D_1)}(q,x_1,y_1,\ldots,x_g,y_g)$ and $\widetilde{f}^{\,\,1|1}_{(\Sigma,\Omega,D_2)}(q,x_1,y_1,\ldots,x_g,y_g)$ are equivalent polynomials.
\end{proof}

\subsection{The CSW bound for $U_q(\mathfrak{gl}(m|n))$} \label{sec_gen_glmn_bounds} The first task in generalizing the CSW bound is to define the symplectic rank of a prismatic $U_q(\mathfrak{gl}(m|n))$ polynomial. Consider the coefficients of $q$ in the  polynomial $\widetilde{f}^{\, m|n}_{(\Sigma,\sigma,D)}(q,x_1,y_1,\ldots,x_g,y_g)$. Each is a power product of the elements of $\Omega$. But $\Omega$ is identified with a symplectic basis for $H_1(\Sigma;\mathbb{Z})$. Thus, if each coefficient is written additively rather than as a power product, we obtain an element of $H_1(\Sigma;\mathbb{Z})$. Let $W_{(\Sigma,\Omega,D)}$ be the submodule of $H_1(\Sigma;\mathbb{R})$ generated by these rewritten coefficients of $\widetilde{f}^{\, m|n}_{(\Sigma,\sigma,D)}(q,x_1,y_1,\ldots,x_g,y_g)$.

\begin{definition}[Symplectic rank of $\widetilde{f}^{\,\,m|n}_{(\Sigma,\sigma,D)}$] The \emph{symplectic rank} of $\widetilde{f}^{\, m|n}_{(\Sigma,\sigma,D)}(q,x_1,y_1,\ldots,x_g,y_g)$ is defined to be the symplectic rank of the submodule $W_{(\Sigma,\sigma,D)}$ of $H_1(\Sigma;\mathbb{R})$ generated by the coefficients of $q$ in $\widetilde{f}^{\, m|n}_{(\Sigma,\sigma,D)}(q,x_1,y_1,\ldots,x_g,y_g)$, written additively. 
\end{definition}

Now we can prove the main theorem of this paper. First, two lemmas are needed.

\begin{lemma} \label{lemma_symp_rank_reidemeister} Suppose $D_1,D_2$ are Reidemeister equivalent on $\Sigma$ and $\Omega=\{x_1,y_1,\ldots,x_g,y_g\}$ is a symplectic basis. Then relative to the same screen $S$:
\[
\text{rk}_s\left(\widetilde{f}^{\,\,m|n}_{(\Sigma,\Omega,D_1)}\right)=\text{rk}_s\left(\widetilde{f}^{\,\,m|n}_{(\Sigma,\Omega,D_2)}\right).
\]
\end{lemma}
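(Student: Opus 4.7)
My plan is to reduce the question to the effect of a single base-point operation on $\widetilde{f}^{\,m|n}_{(\Sigma,\Omega,D)}$, and to show that such an operation rescales the polynomial only by a factor of the form $q^k$, which visibly preserves the symplectic rank. Every Reidemeister equivalence on $\Sigma$ factors as a sequence of moves supported on $\Sigma_\infty$ interspersed with isotopies that carry an arc over the base point $\infty$. For the moves supported on $\Sigma_\infty$, Lemma~\ref{lemma_fmn_invariants}(1) gives the stronger statement that the polynomials themselves are equal, which already equates the symplectic ranks. It therefore suffices to control the effect of a single isotopy over $\infty$.

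For such an isotopy, the proof of Lemma~\ref{lemma_prism_zh_rel_screen} shows that $\Zh^S_\Omega(D_2)$ is rotationally equivalent to the diagram obtained from $\Zh^S_\Omega(D_1)$ by inserting along the affected $\alpha$-arc a ``detour'' that wraps once around the boundary of the base-point neighborhood $U$; in the prismatic picture this detour meets each element of $\Omega$ in the cyclic pattern of a full $\Omega$-commutator (up to the rearrangements licensed by Lemma~\ref{lemma_commutator_permute}), together possibly with some virtual curls coming from the rotation of $\partial S$ in the plane. I would then apply $\widetilde{Q}^{m|n}_\Omega$: by the argument in the proof of Theorem~\ref{thm_mv_functor_is_wd}, the commutator pattern acts as multiplication by $\prod_{i=1}^g x_i y_i x_i^{-1} y_i^{-1} = 1 \in \mathbb{F}$ on the odd basis vectors of $V$ and trivially on the even ones, giving the identity endomorphism of $V$ at the insertion point; the virtual curls in the detour contribute a monomial $q^k$ via the rotational framing rule. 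Since no classical crossings of $D$ are created or destroyed by an isotopy over $\infty$, the normalization factor $q^{(m-n)\operatorname{wr}(D)}$ is unchanged.

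Consequently $\widetilde{f}^{\,m|n}_{(\Sigma,\Omega,D_2)} = q^k \cdot \widetilde{f}^{\,m|n}_{(\Sigma,\Omega,D_1)}$ for some integer $k$. Such a rescaling shifts the indexing of $q$-powers but leaves the set of nonzero coefficients in $\mathbb{Z}[H_1(\Sigma)]$ unchanged, so the submodules $W_{(\Sigma,\Omega,D_1)}$ and $W_{(\Sigma,\Omega,D_2)}$ of $H_1(\Sigma;\mathbb{R})$ coincide and their symplectic ranks agree. The step requiring the most care is the verification that the inserted commutator pattern really does act as a scalar on the strand traversing it when that strand is simultaneously interacting with the rest of the diagram; this follows because each colored over-crossing acts by an $\mathbb{F}$-linear diagonal operator on $V$, so the product of their contributions along the arc telescopes to the commutator word $1 \in \mathbb{F}$ independently of the global context.
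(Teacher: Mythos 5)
The first half of your argument tracks the paper: you reduce to the effect of a single isotopy over the base point via Lemma~\ref{lemma_fmn_invariants}(1) and Lemma~\ref{lemma_prism_zh_rel_screen}, and you correctly observe that the $\Omega$-colored over-crossings inserted by the detour telescope to $\prod_i [x_i,y_i] = 1 \in \mathbb{F}$ and contribute the identity. The gap is the next step, where you assert that the remaining virtual curls in the detour act as a monomial $q^k$ times the identity, so that $\widetilde{f}^{\,m|n}_{(\Sigma,\Omega,D_2)} = q^k \cdot \widetilde{f}^{\,m|n}_{(\Sigma,\Omega,D_1)}$. That is only true when $m=n=1$ (where $\dim V = 2$ and the virtual curl is indeed $q^{\pm 1} I_2$, as used in the proof of the preceding theorem in Section~\ref{sec_invar_in_thick_surf}). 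For $m+n > 2$, a virtual curl evaluates to a diagonal matrix that is \emph{not} a scalar: a direct computation using the cup, cap and virtual-crossing formulas in Section~\ref{sec_prismatic_rt_defn} shows the positive virtual curl sends $x_i \mapsto \epsilon_{ii} d_i x_i$, where $\epsilon_{ii}=\pm 1$ is the diagonal of the virtual crossing and $d_i$ is the cap-right coefficient. For $U_q(\mathfrak{gl}(2|1))$ this gives $\mathrm{diag}(1,q^2,q^2)$, and for $U_q(\mathfrak{gl}(2|2))$ it gives $\mathrm{diag}(q,q^3,q^3,q)$. Inserting such a non-scalar diagonal operator into one strand does not merely rescale the trace, so your conclusion $\widetilde{f}^{\,m|n}_{(\Sigma,\Omega,D_2)} = q^k \cdot \widetilde{f}^{\,m|n}_{(\Sigma,\Omega,D_1)}$ fails in general. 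This is also why the paper restricts the ``equivalent polynomials'' claim to the $m=n=1$ case and is careful to say here that ``the polynomials themselves may differ.''

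The paper's own proof sidesteps the issue by working at the level of $\Omega$-monomials rather than the full polynomial: the inserted tangle $\widecheck{C}$ (after pulling off the $\Omega$-colored arcs) involves only virtual crossings, cups and caps, so it evaluates to a diagonal operator with entries in $\mathbb{C}(q)$ only, hence introduces no new factors of any $z \in \Omega$ into the coefficients; applying the same reasoning to the reverse base-point operation shows no $\Omega$-monomials are lost either, so the submodule $W_{(\Sigma,\Omega,D)}$ of $H_1(\Sigma;\mathbb{R})$, and therefore $\mathrm{rk}_s$, is unchanged even though the $q$-coefficients shift. If you want to repair your proof, replace the scalar-$q^k$ claim with this $\Omega$-monomial argument (and make the symmetry step explicit so that the containment of submodules goes both ways).
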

\begin{proof}  If $D_1,D_2$ are Reidemeister equivalent on $\Sigma_{\infty}$, then relative to the same screen $S$, they have equal $U_q(\mathfrak{gl}(m|n))$ Reshetikhin-Turaev polynomial by Lemma \ref{lemma_fmn_invariants}. Hence, in this case, they must have the same symplectic rank. If $D_1,D_2$ are related by an isotopy over a base-point, Lemma \ref{lemma_prism_zh_rel_screen} implies that $\Zh_{\Omega}^S(D_1)$ and $\Zh_{\Omega}^S(D_2)$ are equivalent prismatic rotational links, up to a base-point operation. Since the functor $\widetilde{Q}_{\Omega}^{m|n}$ is an invariant of rotational isotopy of prismatic links, we need only consider the effect of a base-point operation on the symplectic rank. A base-point operation is a connected sum of $\Zh_{\Omega}^S(D_1)$ with a parallel (or antiparallel) copy $\widecheck{C}$ of the screen boundary $ \partial S$. Then we may write $\Zh_{\Omega}^S(D_2)=\Zh_{\Omega}^S(D_1)\# \widecheck{C}$. As the connect sum does not effect the $\Omega$-part of the prismatic link, no new factors of any $z_i \in \Omega$ can be added to the coefficients of $q$ in $\widetilde{f}^{\,\, m|n}_{(\Sigma,\Omega,D_2)}(q,x_1,y_1,\ldots,x_g,y_g)$ when the operation is performed. Thus, the symplectic rank of the polynomials is preserved by the operation, even though the polynomials themselves may differ. 
\end{proof}

\begin{lemma} \label{lemma_symp_rank_cob} Suppose $D_1,D_2$ are Reidemeister equivalent on $\Sigma$ and $\Omega=\{x_1,y_1,\ldots,x_g,y_g\}$ is a symplectic basis. If $h(D_1)=D_2$ for some orientation preserving diffeomorphism $h:\Sigma \to \Sigma$, then relative to the same screen $S$:
\[
\text{rk}_s\left(\widetilde{f}^{\,\,m|n}_{(\Sigma,\Omega,D_1)}\right)=\text{rk}_s\left(\widetilde{f}^{\,\,m|n}_{(\Sigma,h(\Omega),D_2)}\right).
\] 
\end{lemma}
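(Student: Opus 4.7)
The plan is to deduce the statement directly from Lemma \ref{lemma_fmn_invariants}(2) together with the fact that an orientation preserving diffeomorphism induces a symplectic automorphism on $H_1(\Sigma;\mathbb{Z})$. More precisely, the two polynomials differ only by the change of variables $h_{\sharp}$, and the symplectic rank is invariant under any change of basis that preserves the intersection pairing.

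First, I would apply Lemma \ref{lemma_fmn_invariants}(2) to record the identity
\[
h_{\sharp}\left(\widetilde{f}^{\,\,m|n}_{(\Sigma,\Omega,D_1)}(q,x_1,y_1,\ldots,x_g,y_g)\right)=\widetilde{f}^{\,\,m|n}_{(\Sigma,h(\Omega),D_2)}(q,x_1',y_1',\ldots,x_g',y_g'),
\]
where $h(\Omega)=\{x_1',y_1',\ldots,x_g',y_g'\}$ and $h_{\sharp}$ is the ring homomorphism induced by extending the map $h_*:H_1(\Sigma;\mathbb{Z})\to H_1(\Sigma;\mathbb{Z})$ to the group ring. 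Collecting coefficients of $q$ on both sides and writing them additively, this says that the submodule $W_{(\Sigma,h(\Omega),D_2)}\subset H_1(\Sigma;\mathbb{R})$ is the image of $W_{(\Sigma,\Omega,D_1)}$ under the $\mathbb{R}$-linear extension of $h_*$, which I will continue to denote by $h_*$.

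Next, I would invoke the fact that $h$ is an orientation preserving diffeomorphism of $\Sigma$, so $h_*\in \mathrm{Sp}(2g;\mathbb{Z})$ acts as a symplectic automorphism of $(H_1(\Sigma;\mathbb{R}),\,\cdot\,)$. In particular, for any submodule $W\subset H_1(\Sigma;\mathbb{R})$ we have $h_*(W^{\perp})=h_*(W)^{\perp}$ and therefore $h_*(W\cap W^{\perp})=h_*(W)\cap h_*(W)^{\perp}$. Applying this to $W=W_{(\Sigma,\Omega,D_1)}$ and using that $h_*$ is an isomorphism,
\[
\frac{W_{(\Sigma,h(\Omega),D_2)}}{W_{(\Sigma,h(\Omega),D_2)}\cap W_{(\Sigma,h(\Omega),D_2)}^{\perp}}\;\cong\; \frac{W_{(\Sigma,\Omega,D_1)}}{W_{(\Sigma,\Omega,D_1)}\cap W_{(\Sigma,\Omega,D_1)}^{\perp}}
\]
as real vector spaces, so their dimensions agree. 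By the definition of the symplectic rank in Section~\ref{sec_symp_rank}, this dimension is precisely $\mathrm{rk}_s(\widetilde{f}^{\,\,m|n}_{(\Sigma,h(\Omega),D_2)})=\mathrm{rk}_s(\widetilde{f}^{\,\,m|n}_{(\Sigma,\Omega,D_1)})$, which is the desired equality.

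There is no real obstacle in this argument; the only subtlety is the care required in matching conventions between Lemma~\ref{lemma_fmn_invariants}(2) and the definition of symplectic rank, and in verifying that the symplectic rank depends only on the submodule $W$ and the intersection form, not on any specific choice of symplectic basis. Both are immediate from the definitions, so the whole proof reduces to chaining together Lemma~\ref{lemma_fmn_invariants}(2) with the observation that $h_*$ is a symplectic isomorphism.
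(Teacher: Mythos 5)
Your proposal is correct and follows essentially the same route as the paper's proof: both invoke Lemma \ref{lemma_fmn_invariants}(2) to express the second polynomial as $h_{\sharp}$ applied to the first, observe that $h_*\in\mathrm{Sp}(2g;\mathbb{Z})$ preserves the intersection pairing, and conclude that the quotients $W/(W\cap W^{\perp})$ are isomorphic. You have merely spelled out the intermediate step $h_*(W^{\perp})=h_*(W)^{\perp}$ that the paper leaves implicit in its appeal to the symplectic property.
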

\begin{proof} By Lemma \ref{lemma_fmn_invariants}, the effect of $h$ is to apply the induced map on homology to the coefficients of $q$ in $\widetilde{f}^{\,\,m|n}_{(\Sigma,\Omega,D)}(q,x_1,y_1,\ldots,x_g,y_g)$. But the induced map in homology is an element of $\text{Sp}(2g,\mathbb{Z})$, so that it preserves the intersection form on $\Sigma$. This implies that:
\[
h_{\#}\left(\frac{W_{(\Sigma,\Omega,D_1)}}{W_{(\Sigma,\Omega,D_1)} \cap W_{(\Sigma,\Omega,D_1)}^{\perp}}\right) =\frac{W_{(\Sigma,h(\Omega),h(D_1))}}{W_{(\Sigma,h(\Omega),h(D_1))} \cap W_{(\Sigma,h(\Omega),h(D_1))}^{\perp}}.
\]
Thus, the two polynomials must have the same symplectic rank. Note that this argument is essentially identical to the one given by Carter-Silver-Williams (see \cite{CSW}, Proposition 5.5).
\end{proof}

\begin{theorem}[Main Theorem] \label{thm_main} If  $D$ is a diagram of a non-split link $ \mathscr{L} \subset \Sigma \times [0,1]$, and $L$ is its virtual link type, then for all $m,n \ge 1$:
\[
\text{rk}_s\left(\widetilde{f}^{\,\,m|n}_{(\Sigma,\Omega,D)}\right) \le 2 \cdot \widecheck{g}_2(L).
\]
\end{theorem}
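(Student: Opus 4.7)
My plan is to reduce to a minimal-genus representative via Kuperberg's theorem. Let $D^*$ be a diagram of $L$ on a surface $\Sigma^*$ realizing $g(\Sigma^*)=\widecheck{g}_2(L)$, with symplectic basis $\Omega^*$ and a chosen screen. Kuperberg's theorem says that $D$ and $D^*$ are related by a finite sequence of Reidemeister moves, orientation-preserving diffeomorphisms of the underlying surface, and stabilizations/destabilizations. Lemmas \ref{lemma_symp_rank_reidemeister} and \ref{lemma_symp_rank_cob} already show that $\text{rk}_s(\widetilde{f}^{\,m|n}_{(\Sigma,\Omega,D)})$ is preserved by Reidemeister moves and by diffeomorphisms, so the only remaining task is to establish invariance under stabilization and to observe a trivial upper bound on the minimal surface.

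For the stabilization step, suppose $\Sigma_+ = \Sigma \#(S^1 \times S^1)$ is obtained by attaching a 1-handle disjoint from $D$, and extend $\Omega$ to $\Omega_+ = \Omega \cup \{x_{g+1},y_{g+1}\}$ with the new basis elements supported in the new handle. Choose a compatible screen $S_+$. Since $D$ is disjoint from the new handle, $\Zh^{S_+}_{\Omega_+}(D)$ is obtained from $\Zh^{S}_{\Omega}(D)$ by adjoining two disjoint unknotted components colored $x_{g+1}$ and $y_{g+1}$. By the definition of $\widetilde{Q}^{m|n}_{\Omega_+}$ in Section \ref{sec_prismatic_rt_defn}, every elementary morphism on an $\Omega$-colored strand that does not cross an $\alpha$-strand is assigned the identity, so adjoining these two unknotted components multiplies the evaluation by a unit and, crucially, does not introduce any factor $x_{g+1}^{\pm 1}$ or $y_{g+1}^{\pm 1}$ into the coefficients of $q$. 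Consequently the submodule $W_{(\Sigma_+,\Omega_+,D)}\subset H_1(\Sigma_+;\mathbb{R})$ equals the image of $W_{(\Sigma,\Omega,D)}$ under the canonical inclusion $H_1(\Sigma;\mathbb{R}) \hookrightarrow H_1(\Sigma_+;\mathbb{R})$.

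This inclusion is isometric with respect to the intersection pairing, and its image is orthogonal to the hyperbolic summand $\langle x_{g+1},y_{g+1}\rangle$. It follows that $W_{(\Sigma_+,\Omega_+,D)}\cap W_{(\Sigma_+,\Omega_+,D)}^{\perp}$ is identified with $W_{(\Sigma,\Omega,D)}\cap W_{(\Sigma,\Omega,D)}^{\perp}$, so the two symplectic ranks agree. The same argument, read backwards, handles destabilization. Chaining these three invariances along a Kuperberg sequence yields
\[
\text{rk}_s\!\left(\widetilde{f}^{\,m|n}_{(\Sigma,\Omega,D)}\right) \;=\; \text{rk}_s\!\left(\widetilde{f}^{\,m|n}_{(\Sigma^*,\Omega^*,D^*)}\right).
\]
Since $W_{(\Sigma^*,\Omega^*,D^*)}\subset H_1(\Sigma^*;\mathbb{R})$ lives in a space of dimension $2g(\Sigma^*)=2\widecheck{g}_2(L)$, its symplectic rank is at most $2\widecheck{g}_2(L)$, proving the theorem.

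The main technical obstacle is to make the stabilization step fully rigorous. One must choose $S_+$ so that the restriction of the new homotopy $\Zh$-construction to the original surface matches $\Zh^{S}_{\Omega}(D)$, possibly after base-point operations and rotational isotopies, and then check that the additional virtual crossings created by routing arcs around the attachment region contribute only identity matrices under $\widetilde{Q}^{m|n}_{\Omega_+}$. Once this bookkeeping is carried out, the remainder of the argument is structurally parallel to Carter-Silver-Williams's proof of \eqref{eqn_csw_poly_bound}, with the non-splitness of $L$ used exactly as in \cite{CSW} to guarantee that a minimal-genus representative is reached through the Kuperberg moves above.
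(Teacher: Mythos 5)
Your strategy is genuinely different from the paper's, and the difference matters. You want to prove that the symplectic rank is a full virtual-link invariant — invariant under Reidemeister moves, diffeomorphisms, \emph{and} stabilization/destabilization — and then observe the trivial dimension bound on a minimal-genus representative. The paper takes a shorter route: it never touches stabilization, because it leans on the full strength of Theorem~\ref{thm_csw} (CSW, Theorem~6.1), which says $\text{rk}_s(\widetilde{\pi}_{\mathscr{L}}) = 2\widecheck{g}_2(L)$ already for the covering group of \emph{your given} representative $\mathscr{L}\subset\Sigma\times[0,1]$, not only for a minimal-genus one. The paper then chains $W_{(\Sigma,\Omega,D)}\subset W_{\widetilde{P}}$ (via Theorem~\ref{thm_prismatic_link_group}) with the invariance Lemmas~\ref{lemma_symp_rank_reidemeister}--\ref{lemma_symp_rank_cob} to compare against the operator-group presentation realizing the minimum, all on the fixed surface $\Sigma$. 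The genus-minimization is entirely delegated to CSW; no handle is ever added or removed.

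Your route is logically sound in outline, but it makes your theorem prove something strictly stronger than the paper does (a virtual-link invariance of the symplectic rank, not merely a genus bound), and the machinery you'd need is exactly what you flag as a gap. Lemmas~\ref{lemma_symp_rank_reidemeister} and~\ref{lemma_symp_rank_cob} live entirely on a fixed $\Sigma$; nothing in Sections~\ref{sec_uqglmn_rt}--\ref{sec_gen_bounds} compares $\widetilde{f}^{\,m|n}_{(\Sigma_+,\Omega_+,D)}$ to $\widetilde{f}^{\,m|n}_{(\Sigma,\Omega,D)}$ across a $1$-handle. Your claim that adjoining two $\Omega$-colored unknots multiplies the evaluation by a unit without introducing $x_{g+1}^{\pm1},y_{g+1}^{\pm1}$ is plausible given the values of $\widetilde{Q}^{m|n}_{\Omega}$ on $\Omega$-only elementary tangles, but one still has to reconcile the two screens $S$ and $S_+$: the screen boundary changes, hence the base-point operation changes, and the routing of $\widecheck{D}$ around the new virtual regions must be checked to contribute only identities in the rotational category. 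You acknowledge this, and it is a real amount of unwritten work. So: your proposal is a valid alternative route with a substantive missing lemma; the paper's route avoids that lemma entirely by exploiting the strong form of CSW's theorem.
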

\begin{proof} By Carter-Silver-Williams \cite{CSW}, Theorem 6.1 (rewritten as Theorem \ref{thm_csw} above), $\widecheck{g}_2(L)$ is twice the symplectic rank of $\widetilde{\pi}_{\mathscr{L}}$. By definition, $\text{rk}_s(\widetilde{\pi}_{\mathscr{L}})$ is the minimum of the symplectic ranks of all operator group presentations of $\widetilde{\pi}_{\mathscr{L}}$. Let $\widetilde{P}$ be an arbitrary operator group presentation  corresponding to a choice of symplectic basis $\Omega$. For this choice, there is a corresponding $U_q(\mathfrak{gl}(m|n))$ polynomial $\widetilde{f}^{\,m|n}(q,x_1,y_1,\ldots,x_g,y_g)$. By Theorem \ref{thm_prismatic_link_group}, it follows that the operators appearing in the presentation $\widetilde{P}$ of $\widetilde{\pi}_{\mathscr{L}}$ are the same as the variables which appear in $\Zh_{\Omega}^S(D)$. $W_{(\Sigma,\Omega,D)}$ is a submodule of $W_{\widetilde{P}}$ and hence $\text{rk}_s(\widetilde{f}^{\,\,m|n}_{(\Sigma,\Omega,D)})$ is at most the symplectic rank of $\widetilde{P}$. On the other hand, Lemmas \ref{lemma_symp_rank_reidemeister} and \ref{lemma_symp_rank_cob} imply that $\text{rk}_s(\widetilde{f}^{\,\,m|n}_{(\Sigma,\Omega,D)})$ is an invariant of links in $\Sigma \times [0,1]$. Hence, it can be no more than  $\text{rk}_s(\widetilde{P})$, where $\widetilde{P}$ is a presentation realizing the minimum $\text{rk}_s(\widetilde{\pi}_{\mathscr{L}})$.   
\end{proof}

\section{The generalized Alexander polynomial and the homology\!\!\! $\Zh$-construction} \label{sec_GAP_and_homology_zh}

The aim of this section is to relate the prismatic $U_q(\mathfrak{gl}(1|1))$ Reshetikhin-Turaev polynomial, the CSW polynomial, and the generalized Alexander polynomial. First we need the following theorem, which states the homology\! $\Zh$-construction can be obtained from the homotopy\! $\Zh$-construction for an appropriately chosen surface $\Sigma$ by making the symplectic palette $\Omega$ monochrome.

\begin{figure}[htb]
\begin{tabular}{|ccc|} \hline & & \\ & 
\includegraphics[scale=.6]{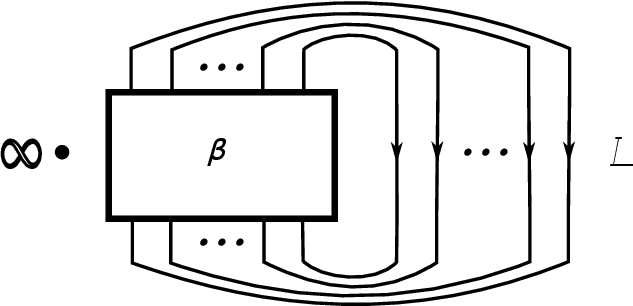} & \\ & & \\ \hline
\end{tabular}
\caption{The base point $\infty$ is always on the left.} \label{fig_pt_at_infty}
\end{figure}

\begin{theorem} \label{thm_zhop_and_prismatic} Let $L$ be a virtual link diagram. Then there is a surface $\Sigma$, a diagram $D$ on $\Sigma$ representing $L$, and a symplectic basis $\Omega=\{x_1,y_1,\ldots,x_g,y_g\}$ such that $\Zh^{\text{op}}(L)$ is semi-welded equivalent to $\Zh_{\Omega}(D)$ by setting $x_i,y_i \to \omega$ for all $i$ and joining all $\omega$-colored components into one.  
\end{theorem}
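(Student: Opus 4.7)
The plan is to exhibit a specific triple $(\Sigma, D, \Omega)$ for which the color-merge of $\Zh_\Omega(D)$ is semi-welded equivalent to $\Zh^{\text{op}}(L)$. First I would realize $L$ as a diagram $D$ on a closed orientable surface $\Sigma$ having no virtual crossings; this can be accomplished by the standard Kuperberg-type procedure of attaching a $1$-handle to the plane at each virtual crossing of $L$, so that the two strands of the virtual crossing are routed over the handle in opposite directions. Choose any symplectic basis $\Omega = \{x_1,y_1,\ldots,x_g,y_g\}$ of $H_1(\Sigma;\mathbb{Z})$ at a fixed base point $\infty$.

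Both the semi-welded link $L \cup \omega$ (from $\Zh^{\text{op}}$) and the merged link $L \cup \omega'$ (from $\Zh_\Omega(D)$ after setting $x_i,y_i \to \omega$ and joining all $\Omega$-colored components into one) augment $L$ by a single component that only over-crosses the $\alpha$-part. The crucial comparison is homological. By Proposition \ref{prop_coordinatize_zh}, the $\Omega$-colored components of $\Zh_\Omega(D)$ encode $[D] \in H_1(\Sigma;\mathbb{Z})$ via their virtual linking numbers with $\widecheck{D}$, so after the merge operation the curve $\omega'$, viewed on $\Sigma$ through the virtual-crossing projection, represents the homology class $-[D]$. On the other hand, the $\omega$-component of $\Zh^{\text{op}}(L)$, as discussed in Section \ref{sec_zh_review}, corresponds to the boundary curve of a relative Seifert surface for $D$ pushed to $\Sigma \times \{1\}$, and so also lies in the homology class $-[D]$.

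The remaining task is to reduce any two such augmentations with homologous over-components to a common diagram. The semi-welded move allows adjacent over-crossings on the added component to be reordered; this lets one bring together any pair of opposite-sign over-crossings lying on the same arc of $L$, whereupon a classical Reidemeister 2 cancels them. Iterating this and using virtual Reidemeister moves on the $\omega$-part, the semi-welded equivalence class of $L \cup \omega$ is determined by the signed algebraic crossing count of $\omega$ with each arc of $L$, which is in turn determined by the homology class $[\omega] \in H_1(\Sigma;\mathbb{Z})$. Since $\omega$ and $\omega'$ carry the same class $-[D]$, they are semi-welded equivalent. The main obstacle is rigorously executing this ``homological reduction'' across the cellular decomposition of $\Sigma$ induced by $D$: pushing $\omega$ across a single face of $\Sigma \smallsetminus D$ changes its intersections with the face's boundary arcs, and each such push must be realized by an explicit sequence of extended Reidemeister moves and semi-welded permutations. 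A careful local analysis at each face, together with a global argument that any two homologous curves differ by a finite sequence of face-pushes, completes the reduction.
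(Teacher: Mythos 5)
Your proposal correctly identifies the homological picture (both $\omega$ and $\omega'$ should encode $-[D]$) and the Kuperberg-style realization of $L$ on a surface with no virtual crossings, which parallels the paper's setup. But the argument has a genuine gap at precisely the point you flag as the ``main obstacle,'' and the issue is worse than a missing technical lemma: the intermediate claim that ``the signed algebraic crossing count of $\omega$ with each arc of $L$ $\ldots$ is in turn determined by the homology class $[\omega] \in H_1(\Sigma;\mathbb{Z})$'' is not true. Homology classes only constrain intersection numbers with closed $1$-cycles, not with individual arcs of $D$; two homologous curves can cross a given arc of $D$ with completely different signed counts (they differ by a nullhomologous curve, and a nullhomologous curve can still have nonzero signed intersection with any open arc). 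So the proposed reduction from ``same homology class'' to ``semi-welded equivalent'' does not go through as stated, and a different invariant of the augmentation than the homology class would have to be identified and controlled. Relatedly, Reidemeister~3 moves of $\omega$ past a classical crossing of $L$ already change the per-arc count, so that count is not itself a semi-welded invariant; the analysis you would need is considerably more delicate than the face-pushing sketch suggests.

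The paper sidesteps all of this by being much more concrete. It first passes to a virtual braid $\beta$ with $\widehat{\beta}=L$, attaches a $1$-handle at each virtual crossing $\chi_i$ of the braid word, and then chooses a \emph{specific} symplectic basis $\Omega$ supported near each handle (Figure~\ref{fig_choose_symp_basis}), rather than an arbitrary one. With this choice, the diagrammatic effect of $\Zh_{\Omega}$ can be computed explicitly (Figures~\ref{fig_done_with_mv_zh}--\ref{fig_pull_off_single_color_zh}), and after merging colors one lands \emph{exactly} on the ``virtual Alexander system'' of \cite{chrisman_todd_23}, whose equivalence with $\Zh^{\text{op}}(L)$ is the content of \cite{chrisman_todd_23}, Theorem~A. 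That cited theorem is doing the heavy lifting that your homological reduction was meant to do, but it is proved by a different and more hands-on method (comparing flankings of classical versus virtual crossings), not by appealing to homology. If you want to salvage your approach, you would either need to prove an analogue of that comparison theorem or make a choice of $\Omega$ as adapted to the diagram as the paper's, at which point you would essentially be reproving their argument.
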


\begin{remark} We emphasize in the above theorem statement that we are working in the virtual category, and not in the rotational category. This is allowed for our application to the generalized Alexander polynomial since $m=n=1$.
\end{remark}

\begin{proof} By Theorem \ref{thm_prismatic_braid_closure}, we may write $L=\widehat{\beta}$ for some $\beta \in VB_N$. Choose a point $\infty$ to the left of $L$, as shown in Figure \ref{fig_pt_at_infty}. Then $L$ may be viewed as a diagram on $\mathbb{S}^2=\mathbb{R}^2 \cup \{\infty\}$. To create $\Sigma$ and $D$, add $1$-handles to $\mathbb{S}^2$ at each of the virtual crossings $\chi_i$ in the braid word for $\beta$. See Figure \ref{fig_add_one_handles}. For each $\chi_i$ appearing in the word $\beta$, choose the curves $x_j,y_j$ shown in Figure \ref{fig_choose_symp_basis}, left, as its contribution to the symplectic basis. Numbering each pair from the bottom of the virtual braid  word to its top, the result is a symplectic basis $\Omega=\{x_1,y_1,\ldots,x_g,y_g\}$, where $g$ is the number of virtual crossings in $\beta$. Now, create a polygonal region by cutting along all the curves which appear in $\Omega$. Near each virtual crossing in $\beta$, the result resembles Figure \ref{fig_choose_symp_basis}, right. With these preparations, the homotopy $\Zh$-construction can now be performed. If the curlicues shown in Figure \ref{fig_choose_symp_basis} are pulled leftwards, we have the picture shown in the top left in Figure \ref{fig_done_with_mv_zh}. This configuration can be rearranged by following the arrows $\raisebox{.5pt}{\textcircled{\raisebox{-.9pt} {1}}}$ and $\raisebox{.5pt}{\textcircled{\raisebox{-.9pt} {2}}}$. Finally, identify the colors $x_i,y_i$ to be the same color $\omega$ and reconnect all of the $\omega$-colored over-crossing arcs into a single component. Because of the semi-welded move, this can be done arbitrarily. In Figure \ref{fig_done_with_mv_zh}, bottom left, observe that the over-crossing arcs have been reconnected so that some Reidemeister 2 moves are evident. Removing these arcs gives the picture shown on the left in Figure \ref{fig_pull_off_single_color_zh}. Since we are in the virtual category, the curlicues can also be removed, as shown on the right in Figure \ref{fig_pull_off_single_color_zh}. Thus, the effect of $\Zh_{\Omega}$ is to flank each virtual crossing of $\beta$ with a pair of over-crossing $\omega$-colored arcs. In \cite{chrisman_todd_23}, Definition 3.1.5, this was called a \emph{virtual Alexander system} for $L$. Then by \cite{chrisman_todd_23}, Theorem A, this is equivalent to the $\Zh^{\text{op}}$-construction.
\end{proof}

\begin{figure}[htb]
\begin{tabular}{|ccccc|} \hline & & & & \\ &
\begin{tabular}{c}
\includegraphics[scale=.6]{chi_i.eps}
\end{tabular} & \begin{tabular}{c} $\longrightarrow$ \end{tabular} & \begin{tabular}{c}
\includegraphics[scale=.65]{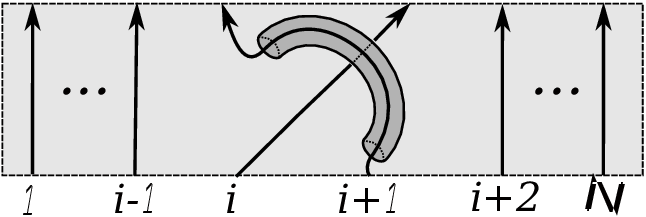} \end{tabular} & \\ & & & & \\ \hline
\end{tabular}
\caption{Adding $1$-handles at the virtual crossings in a virtual braid.}\label{fig_add_one_handles}
\end{figure}

\begin{figure}[htb]
\begin{tabular}{|ccccc|} \hline & & & &  \\
& \begin{tabular}{c} \includegraphics[scale=.58]{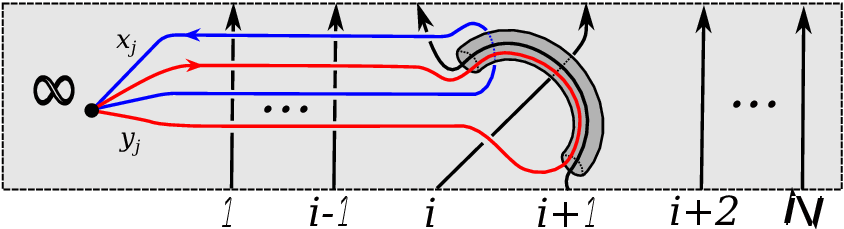} \end{tabular} & \hspace{-.5cm} \begin{tabular}{c} $\longrightarrow$ \end{tabular} & \hspace{-1cm} \begin{tabular}{c} \includegraphics[scale=.33]{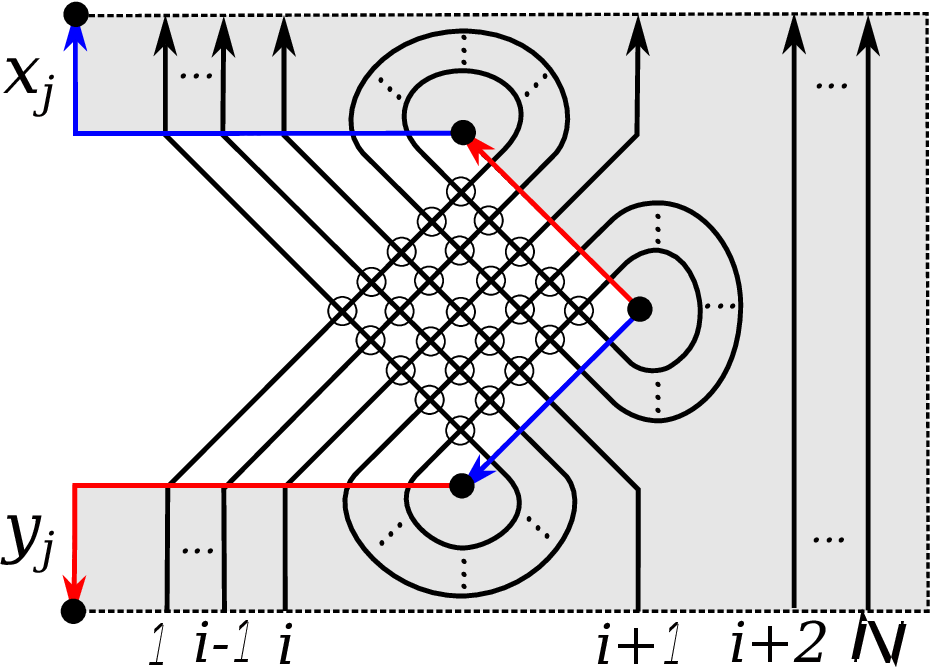} \end{tabular} &  \\ & & & & \\ \hline
\end{tabular}
\caption{(Left) A choice of symplectic basis for each $1$-handle. (Right) Cutting along the symplectic basis to obtain a virtual link diagram.} \label{fig_choose_symp_basis}
\end{figure}

\begin{figure}[htb]
\begin{tabular}{|c|} \hline  
\xymatrix{
 \begin{tabular}{c} \includegraphics[scale=.5]{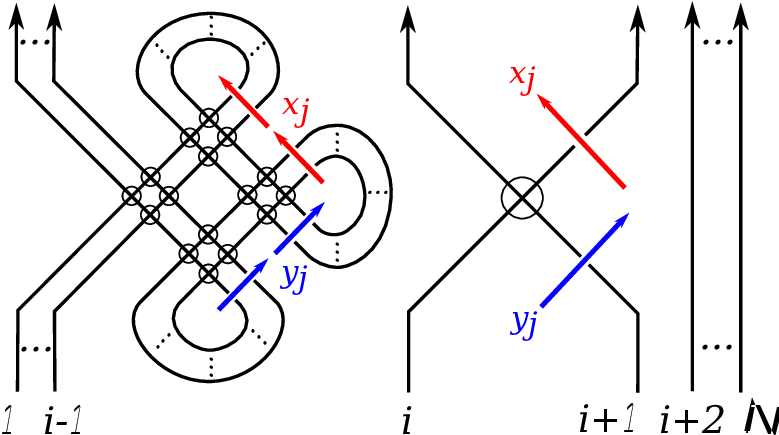} 
\end{tabular} \ar[r]^-{\raisebox{.5pt}{\textcircled{\raisebox{-.9pt} {1}}}} & \ar[d]^--{\raisebox{.5pt}{\textcircled{\raisebox{-.9pt} {2}}}} \begin{tabular}{c} \includegraphics[scale=.5]{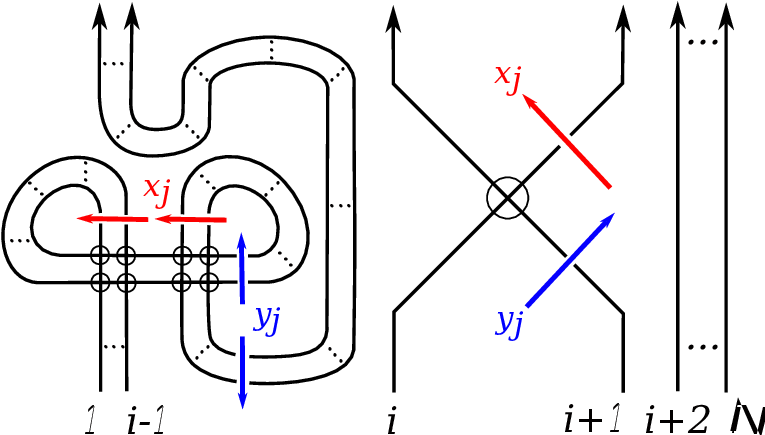}
\end{tabular}\\  \begin{tabular}{c} \includegraphics[scale=.5]{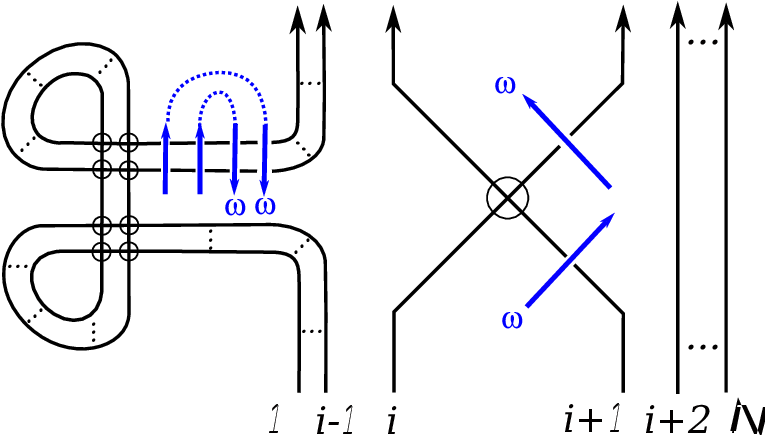} \end{tabular} & \ar[l]^--{\raisebox{.5pt}{\textcircled{\raisebox{-.9pt} {3}}}} \begin{tabular}{c} \includegraphics[scale=.5]{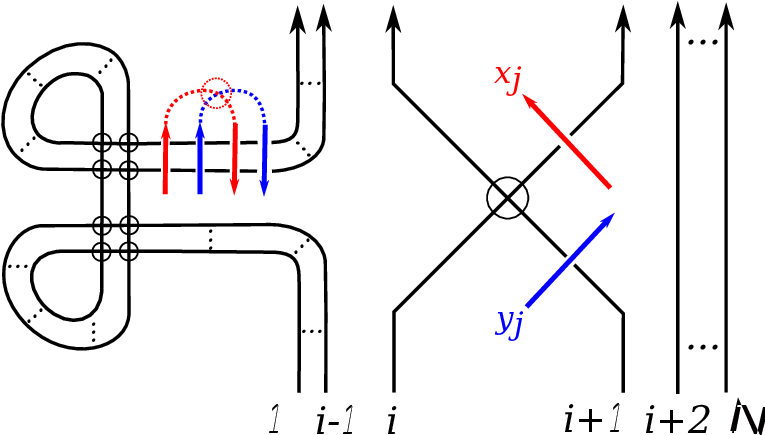} 
\end{tabular}} \\ \hline
\end{tabular}
\caption{Homotopy $\Zh$-construction for $D$ as in  Figure \ref{fig_choose_symp_basis}.} \label{fig_done_with_mv_zh}
\end{figure}

\begin{figure}[htb]
\begin{tabular}{|c|} \hline \\
\xymatrix{\begin{tabular}{c}\includegraphics[scale=.5]{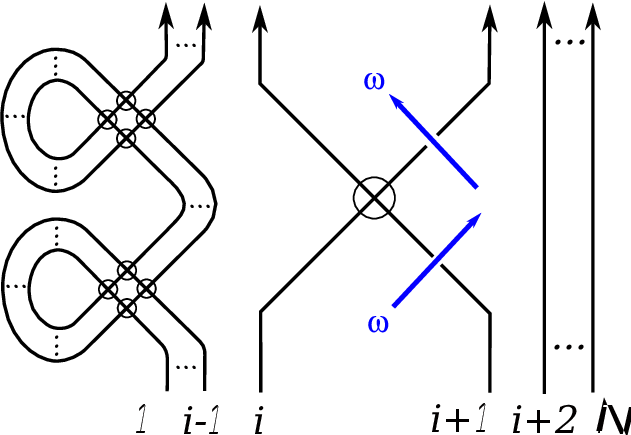} \end{tabular} \ar[r] & \begin{tabular}{c} \includegraphics[scale=.5]{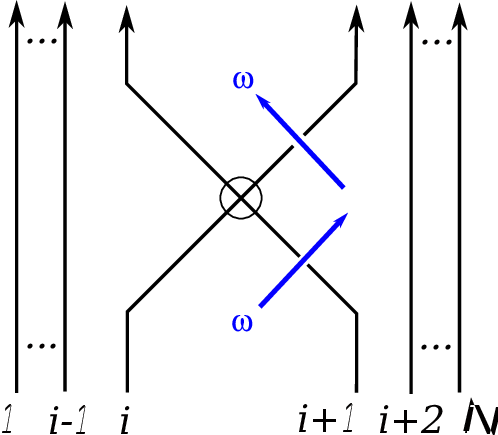} \end{tabular}}
\\ \\ \hline
\end{tabular}
\caption{Identifying the colors in $\bm{\omega}$ to a single one, $w$, gives the $\Zh$-construction.} \label{fig_pull_off_single_color_zh}
\end{figure}

In \cite{boden_chrisman_21}, it was proved that the generalized Alexander polynomial $G_L(s,t)$ of a virtual link $L$ factors through the (homology) $\Zh$-construction. Furthermore, the generalized Alexander polynomial of $L$ was shown to be equal to the (usual) multivariable Alexander polynomial of the $(n+1)$-component link $\Zh(L)=L \cup \omega$. The extra variable in the GAP corresponds to the extra component. This was used to give a quantum model for the GAP in \cite{CP}, where the GAP was also generalized to virtual tangles. Indeed, the GAP is the extended $U_q(\mathfrak{gl}(1|1))$ Reshetikhin-Turaev functor of $\Zh(L)$. By the preceding theorem, the $\Zh^{\text{op}}$-construction can be obtained from the homotopy $\Zh$-construction by setting all of the colors in $\Omega$ to a single color $\omega$. This suggests the GAP is a specialization of the prismatic $U_q(\mathfrak{gl}(1|1))$ Reshetikhin-Turaev polynomial, and hence, also the CSW polynomial.

\begin{theorem} \label{thm_gap_from_prism} Let $L$ be a virtual link diagram and let $\Sigma$, $\Omega$, and $D$ be as in Theorem \ref{thm_zhop_and_prismatic}. After a change of variables, and up to multiples of powers of $q$, the generalized Alexander polynomial $L$ can be obtained from $\widetilde{Q}^{1|1}_{\Omega}(\Zh_{\Omega}(D))$ by the substitutions $q \to q^{-1}$, $x_i,y_i \to w^{-1}$ for all $i$. In particular, the GAP is a specialization of the CSW polynomial $\Delta_D^0(t,x_1,y_1,\ldots,x_g,y_g)$.
\end{theorem}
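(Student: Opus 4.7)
The strategy is three-fold: use Theorem \ref{thm_zhop_and_prismatic} to reduce to a monochromatic $\Zh^{\text{op}}$-type picture, compare the local assignments of the prismatic and extended $U_q(\mathfrak{gl}(1|1))$ functors under the appropriate color-identification, and finally invoke Theorem \ref{thm_quantum_csw_model} to transfer the result from the quantum polynomial to the CSW polynomial itself.

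First, I would recall from \cite{boden_chrisman_21} and \cite{CP} the quantum model for the generalized Alexander polynomial: up to multiples of powers of $q$, $G_L(s,t)$ equals the value of the extended (monochromatic) $U_q(\mathfrak{gl}(1|1))$ Reshetikhin-Turaev functor on the semi-welded link $\Zh(L)$, with the GAP's second variable realized as the scalar carried by the single $\omega$-component. Replacing $\Zh(L)$ with $\Zh^{\text{op}}(L)$ merely reverses the orientation of $\omega$, swapping the two $\omega$-over-crossing assignments; under the extended functor this is the same as inverting the $\omega$-variable, which accounts precisely for the $w\mapsto w^{-1}$ substitution in the statement. Next I would compare the elementary-tangle values of $\widetilde{Q}^{1|1}_{\Omega}$ tabulated in Section \ref{sec_prismatic_rt_defn}: a classical $z$-coloured over-crossing ($z\in\Omega$) acts as the identity on the even subspace of $V$ and as multiplication by $z^{\mp 1}$ on the odd subspace, while the $\Omega$-virtual crossings and $\Omega$-cups/caps are all identity maps. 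Each elementary piece therefore depends on its $\Omega$-colour only through the scalar labelling that colour, so after the identification $x_i = y_i = w^{-1}$ and the merging of $\Omega$-components guaranteed by Theorem \ref{thm_zhop_and_prismatic}, the termwise values of $\widetilde{Q}^{1|1}_{\Omega}(\Zh_{\Omega}(D))$ agree with those of the extended functor applied to $\Zh^{\text{op}}(L)$. The $\Omega$-commutator relation collapses trivially under the color identification, and the $q\mapsto q^{-1}$ substitution is the standard adjustment between the crossing-value conventions adopted in Section \ref{sec_prismatic_rt_defn} and the original definition of the GAP.

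For the ``in particular'' clause, I would chain the specialization just established with the identification provided by Theorem \ref{thm_quantum_csw_model}, which shows that $\widetilde{Q}^{1|1}_{\Omega}(\Zh_{\Omega}(D))$ equals $\Delta_D^0(q^{-2}, x_1^{-1}, y_1^{-1}, \ldots, x_g^{-1}, y_g^{-1})$ up to units in $\mathbb{Z}[\langle q\rangle\times H_1(\Sigma)]$ and a change of basis. Composing with $(q, x_i, y_i) \mapsto (q^{-1}, w^{-1}, w^{-1})$ then exhibits $G_L(s,t)$ as an explicit specialization of $\Delta_D^0$. The main obstacle is not any single computation but the simultaneous bookkeeping of three convention changes---the orientation of $\omega$, the inversion of $q$, and the inversion of the symplectic variables---which all reflect the same underlying mismatch between the topological CSW conventions and the statistical-mechanical GAP conventions; once a consistent dictionary is fixed, the proof reduces to termwise matching of elementary tangles together with functoriality.
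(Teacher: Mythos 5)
Your proposal is correct and follows essentially the same route as the paper's proof: cite the quantum model for the GAP from \cite{CP} (their Theorem 6.3.1), use Theorem \ref{thm_zhop_and_prismatic} to identify $\Zh_{\Omega}(D)$ with $\Zh^{\text{op}}(L)$ under the monochrome specialization, explain $q\mapsto q^{-1}$ as the $R$-matrix convention swap, explain $x_i,y_i\mapsto w^{-1}$ via the $\Zh^{\text{op}}$-versus-$\Zh$ orientation reversal of the $\omega$-component, and then chain through Theorem \ref{thm_quantum_csw_model} for the CSW specialization. Your extra discussion of termwise elementary-tangle matching is harmless but not needed once Theorem \ref{thm_zhop_and_prismatic} and the \cite{CP} GAP model are invoked as black boxes, which is exactly how the paper proceeds.
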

\begin{proof} Denote the extended $U_q(\mathfrak{gl}(1|1))$ Reshetikhin-Turaev functor of a semi-welded link $J$ from \cite{CP} by $\widetilde{F}^{1|1}_J$. Then by \cite{CP}, Theorem 6.3.1, we have that:
\[
G_{L}(q^{-2}w^{-1},w) \doteq \widetilde{F}^{1|1}_{\zh(L)}(q,w).
\] 
Two changes are needed to relate $\widetilde{F}^{1|1}_{\zh(L)}$ and $\widetilde{Q}_{\Omega}^{1|1}(\Zh_{\Omega}(D))$. First, recall that $\widetilde{F}^{1|1}_{-}$ and $\widetilde{Q}^{1|1}_{\Omega}(-)$ use opposite conventions for the choice of the $R$-matrix at the positive and negative crossings. Agreement can the be forced by making the substitution $q \to q^{-1}$. Note that this also requires reordering the chosen basis for tensor products in the definition of $\widetilde{Q}^{1|1}_{\Omega}$, but this does not effect the value of the invariant. Secondly, Theorem \ref{thm_zhop_and_prismatic} implies that  setting $x_i,y_i \to \omega$ in $\Zh_{\Omega}(D)$ gives the $\Zh^{\text{op}}$-construction. The definition of $\widetilde{Q}^{1|1}_{\Omega}(-)$ in Section \ref{sec_uqglmn_rt} shows that changing the orientation of an $\Omega$-colored over-crossing arc changes its value by the substitution $w^{\pm 1} \to w^{\mp 1}$. Putting these two observations gives the substitution $x_i,y_i \to w^{-1}$ for all $i$. Then by Theorem \ref{thm_quantum_csw_model}, we have that:
%\[
%G_{L}(q^{-2}w^{-1},w) \doteq \left(\widetilde{Q}_{\Omega}^{1|1}(\Zh_{\Omega}(D))\right)(q^{-1},w^{-1},w^{-1},\ldots,w^{-1},w^{-1}).
%\]
%Substituting this into the equation from Theorem then gives the following:
\[
G_{L}(q^{-2}w^{-1},w) \doteq \left(\widetilde{Q}_{\Omega}^{1|1}(\Zh_{\Omega}(D))\right)(q^{-1},w^{-1},w^{-1},\ldots,w^{-1},w^{-1}) \doteq \Delta_D^0\left(q^2,w,w,\ldots,w,w\right). \qedhere
\]
\end{proof}

%\begin{remark} See Section \ref{sec_ex_CSW_compare} for an example where $\Sigma$ cannot be chosen realize the virtual $2$-genus.
%\end{remark}

\section{The homotopy \!$\Zh$-construction $\&$ the surface bracket} \label{sec_homotopy_and_surface}

In Section \ref{sec_surf}, we define a notion of symplectic rank for the Dye-Kauffman surface bracket. Then it is shown that the condition for minimality of a representative from \cite{dye_kauffman_surf} is equivalent to the maximality of this symplectic rank. Following this, in Section \ref{sec_surf_factors}, we prove that the surface bracket factors through the homotopy $\Zh$-construction.

%In this section, we consider skein-theoretic methods for determining the virtual $2$-genus. The prototypical such method is the surface bracket, due to Dye and Kauffman \cite{dye_kauffman_surf}. In Section \ref{sec_surf}, we define a notion of symplectic rank for the surface bracket and interpret the Dye-Kauffman condition for minimality in terms of this symplectic rank. It is then proved in Section \ref{sec_surf_factors} that the surface bracket with $\mathbb{Z}_2$ homological coefficients factors through the homotopy $\Zh$-construction. Thus, in some sense, the homotopy $\Zh$-construction and the symplectic rank unify the ``Alexander-type'' and ``Jones-type'' methods for determining $\widecheck{g}_2(L)$. It is important to remark that there are many skein-theoretic polynomials for virtual knots and link diagrams on surfaces (see e.g. Boden-Karimi-Sikora \cite{boden_karimi_sikora}, Boninger \cite{boninger}, Dye-Kauffman \cite{dye_kauffman_arrow}, Manturov \cite{manturov_surf}, Miller \cite{miller}). This short section is not intended as an exhaustive survey. Its purpose is to establish a natural connection between two seemingly disparate types of methods that seems to have previously escaped notice.  

\begin{figure}[htb]
\begin{tabular}{|ccc|}  \hline & & \\
\begin{tabular}{c} \includegraphics[scale=.6]{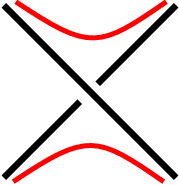} \\ \underline{$A$ smoothing:} \end{tabular} & & \begin{tabular}{c} \includegraphics[scale=.6]{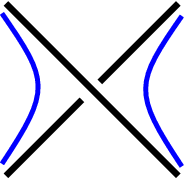} \\ \underline{$B$ smoothing:}\end{tabular}\\ & & \\ \hline
\end{tabular}
\caption{Smoothings used to create a state of a diagram.} \label{fig_smoothings}
\end{figure}

\subsection{$\mathbb{Z}_2$ symplectic rank of the surface bracket} \label{sec_surf} We briefly review the definition of the surface bracket \cite{dye_kauffman_surf}. Let $D$ be a link diagram on a surface $\Sigma$. A \emph{state} $S$ of $D$ is a choice of $A$ or $B$ smoothing at each crossing of $D$. See Figure \ref{fig_smoothings}. Denote by $\#_A(S)$ the number of $A$-smoothings used in the creation of the state and $\#_B(S)$ the number $B$-smoothings. Each state $S$ is a collection of simple closed curves on $\Sigma$. Let $\#_0(S)$ be the number of these simple closed curves which bound a disc on $\Sigma$ and let $[S]$ denote the formal sum of curves in $S$ that do not bound discs. Let $\mathscr{S}(D)$ be the set of all states of $D$. The \emph{surface bracket} $\langle D \rangle_{\Sigma}$ of $D$ is given by: 
\begin{equation} \label{defn_surf_bracket}
\langle D \rangle_{\Sigma}=\sum_{S \in \mathcal{S}(D)} A^{\#_A(S)-\#_B(S)} d^{\#_0(S)}[S], \quad d=-A^2-A^{-2}.
\end{equation}
The formal sums $[S]$ are considered equivalent up to isotopy on $\Sigma$. The surface bracket may then be viewed as an element of the Kauffman bracket skein module on $\Sigma$. Hence, $\langle D \rangle_{\Sigma}$ is an invariant of framed Reidemeister equivalence on $\Sigma$. The following useful result of Dye and Kauffman shows how the surface bracket can be used to verify that a given representative of a virtual link on a surface realizes the minimal genus. 

\begin{theorem}[Dye-Kauffman \cite{dye_kauffman_surf}, Theorem 2.4] \label{thm_dye_kauffman}  Decompose $\Sigma$ as a connect sum of tori, $\Sigma=T_1 \# T_2 \# \cdots \# T_g$. Let $D$ be a link diagram on $\Sigma$ and let $\mathscr{S}(D)=\{S_1,\ldots,S_r\}$ be the set of states of $D$ on $\Sigma$. Let $\kappa_k:\Sigma \to T_k$ be the ``collapsing map'' and $(\kappa_k)_*:H_1(\Sigma;\mathbb{Z}) \to H_1(T_k;\mathbb{Z})$ the induced map. If for each $k$, there are states $S_i$, $S_j$ which have non-zero coefficients in $\langle D \rangle_{\Sigma}$ and which contain (arbitrarily oriented) state curves $Y_i$, $Y_j$, respectively, such that $(\kappa_k)_*([Y_i])\cdot (\kappa_k)_*([Y_j]) \ne 0$, then $D$ is a minimal genus representative for the virtual link type of $D$.
\end{theorem}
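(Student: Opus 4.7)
The plan is to prove the contrapositive: if $D$ is not a minimal genus representative of its virtual link type, then for some $k \in \{1,\ldots,g\}$ the hypothesis must fail. First I would encode non-minimality geometrically. Since $D$ on $\Sigma$ is not minimal, there exists an essential simple closed curve $c \subset \Sigma$ disjoint from $D$ along which one may destabilize, producing a representative on a surface of strictly smaller genus. Iterating this yields a pairwise disjoint collection $c_1,\ldots,c_h$ of essential, non-isotopic simple closed curves on $\Sigma$ with $h = g - \widecheck{g}_2(L) > 0$, all disjoint from $D$, such that destabilizing along all of them produces a minimum genus representative. Because the $c_j$ are pairwise disjoint, their homology classes span a primitive isotropic sublattice $L_0 = \langle [c_1],\ldots,[c_h]\rangle \subset H_1(\Sigma;\mathbb{Z})$.

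The next step is to push this geometric constraint through the surface bracket. Each state curve $Y$ in any state $S \in \mathcal{S}(D)$ is obtained by smoothing crossings of $D$, hence is supported in a regular neighborhood of the arcs of $D$; in particular $Y$ is disjoint from every $c_j$. Therefore $[Y] \cdot [c_j] = 0$ for all $j$ and all $Y$ appearing with nonzero coefficient in $\langle D\rangle_\Sigma$, so $[Y] \in L_0^\perp$. Moreover $Y$ may be isotoped into the destabilized subsurface, so $[Y]$ actually lies in the symplectic subspace $W \subset H_1(\Sigma;\mathbb{Z})$ of rank $2\widecheck{g}_2(L)$ that is complementary to the hyperbolic planes generated by each $[c_j]$ and its symplectic dual.

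The final step uses symplectic invariance. I would extend $\{[c_1],\ldots,[c_h]\}$ to a symplectic basis of $H_1(\Sigma;\mathbb{Z})$, which is realized by an orientation preserving diffeomorphism of $\Sigma$ and corresponds to a new connect sum decomposition $\Sigma \cong T_1'\#\cdots\#T_g'$ in which, for $j\le h$, the summand $T_j'$ is generated by $[c_j]$ and its chosen dual. For each such $k \le h$, every state curve class projects under $(\kappa_k')_*$ into $\mathrm{span}([c_k])$, which is an isotropic line in $H_1(T_k';\mathbb{Z})$, so $(\kappa_k')_*([Y_i])\cdot(\kappa_k')_*([Y_j]) = 0$ for every pair of state curves; hence the Dye--Kauffman condition fails for those $k$ in this adapted decomposition.

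The main obstacle is transferring the failure of the condition from the adapted decomposition back to the originally given $T_1\#\cdots\#T_g$. The resolution is to observe that the Dye--Kauffman condition is invariant under orientation preserving diffeomorphisms of $\Sigma$: such a diffeomorphism sends the state-curve submodule of $H_1(\Sigma;\mathbb{Z})$ to its image under the induced $\mathrm{Sp}(2g;\mathbb{Z})$-action, which preserves the intersection form summand-wise in any symplectic direct sum decomposition. This is the same invariance principle that underlies the symplectic rank of $\Delta_L^0$ in Section \ref{sec_symp_rank}, and once it is in place the contrapositive closes and the theorem follows. The most delicate point in a fully rigorous write-up is ensuring that the iterated destabilization really produces pairwise disjoint curves whose classes extend to a symplectic basis, which requires choosing each successive destabilizing curve to be primitive and non-separating on the surface produced by the previous destabilizations.
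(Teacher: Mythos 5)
The paper itself does not prove this statement; it is imported from Dye--Kauffman \cite{dye_kauffman_surf} and cited as Theorem 2.4, so there is no ``paper's own proof'' to compare against. Assessing your argument on its merits: the overall contrapositive strategy (Kuperberg destabilization curves $c_1,\ldots,c_h$ disjoint from $D$, state curves lying in a neighborhood of $D$ and hence having zero intersection with each $[c_j]$) is the right starting point, and your step that every state curve class projects under $\kappa_k'$ into the isotropic line $\mathrm{span}([c_k])$ is correct. A small imprecision along the way: from disjointness you only get $[Y]\in L_0^\perp$, not $[Y]\in W$ as you claim (a state curve could be parallel to some $c_j$, giving $[Y]\in L_0$); fortunately your final computation in the adapted basis only uses $[Y]\in L_0^\perp$, so this does not propagate.

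The genuine gap is the ``transfer back'' step, which you yourself flag as the main obstacle and then dispatch too quickly. You have shown that the Dye--Kauffman condition fails for an \emph{adapted} connect-sum decomposition $\mathcal{T}'$ built from the destabilizing curves, but the theorem is stated for a fixed, given decomposition $\mathcal{T}=T_1\#\cdots\#T_g$, and you need the condition to fail for \emph{that} one. Your appeal to diffeomorphism invariance does not close this gap: a diffeomorphism $h$ carrying $\mathcal{T}$ to $\mathcal{T}'$ also moves the diagram $D$ (and therefore the state curve classes), so it relates the condition for $(D,\mathcal{T}')$ to the condition for $(h^{-1}(D),\mathcal{T})$, not for $(D,\mathcal{T})$. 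Moreover, a general element of $\mathrm{Sp}(2g;\mathbb{Z})$ does \emph{not} ``preserve the intersection form summand-wise''; it freely mixes the $H_1(T_k)$ summands. In fact, the purely algebraic statement you would need --- that $W_{SB}\subset L_0^\perp$ for some nonzero isotropic $L_0$ forces some projection $(\kappa_k)_*(W_{SB})$ to be isotropic in the original decomposition --- is false: e.g.\ on a genus-2 surface with $L_0=\langle x_1+x_2\rangle$, the subspace $W_{SB}=\langle x_1,\, y_1-y_2,\, x_2\rangle$ lies in $L_0^\perp$ yet has $(\kappa_1)_*(W_{SB})=H_1(T_1)$ and $(\kappa_2)_*(W_{SB})=H_1(T_2)$, both non-isotropic. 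So to finish the contrapositive you would need to invoke something beyond the bare homological constraints --- either a decomposition-independence statement that is genuinely established (the paper's $\mathbb{Z}_2$-rank reformulation in Lemma \ref{lemma_sb_sr} is the kind of thing one would want, but that is a separate result and is stated over $\mathbb{Z}_2$, not $\mathbb{Z}$), or additional input from the combinatorics of the state sum that rules out configurations like the one above. As written, the proof does not establish the theorem for the given decomposition.
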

\begin{remark} The map $\kappa_k$ for $\Sigma= T_1\#\cdots \# T_{k-1} \# T_k \# T_{k+1} \# \ldots \#T_g$ identifies $T_1\#\cdots \# T_{k-1}$ to a single point $\star_{k-1}$ in $T_k$ and $T_{k+1} \# \cdots \# T_g$ to a single point $\star_{k+1}$ with $\star_{k+1} \ne \star_{k-1}$.
\end{remark}

Henceforth, the hypothesis in Theorem \ref{thm_dye_kauffman} guaranteeing minimality will be called the \emph{Dye-Kauffman condition}. The main result of this section is that the Dye-Kauffman condition can be restated in terms of the symplectic rank of $\langle D \rangle_{\Sigma}$, appropriately defined. First observe that the surface bracket applies to unoriented links and that the condition in the theorem is obtained by applying arbitrary orientations to the state curves. This means that it only makes sense to define the symplectic rank in $H_1(\Sigma;\mathbb{Z}_2)$, rather than in $H_1(\Sigma;\mathbb{R})$ as in the case of the prismatic $U_q(\mathfrak{gl}(m|n))$ polynomials. In practice, however, the surface bracket is often computed modulo $\mathbb{Z}_2$ (see e.g. \cite{dye_kauffman_surf}, Corollary 4.2), so this change of coefficients does pose not a serious threat to its utility. Now, let $D$ be a link diagram on a surface $\Sigma$ and let $\Omega=\{x_1,y_1,\ldots,x_g,y_g\}$ be a choice of symplectic basis. For a state $S$ of $D$, we assign a variable to $[S]$ as follows. The variable is viewed as an element of the group ring $\mathbb{Z}[H_1(\Sigma;\mathbb{Z}_2)]$. For each state curve $Y$ of $S$, we have the variable corresponding to its homology class in $\mathbb{Z}_2$, written multiplicatively. Then sum over all $Y$ in $[S]$:
\[
\sum_{Y \in [S]} \left( \prod_{i=1}^g x_i^{[Y]\cdot [y_i]} y_i^{[Y] \cdot [x_i]}\right),
\]
where $-\cdot-$ represents the $\mathbb{Z}_2$ intersection form on $\Sigma$. This will be called the \emph{state variable}. Now, let $W_{SB}$ be the submodule of $H_1(\Sigma;\mathbb{Z}_2)$ generated by the state variables of states $S$ such that $[S]$ has a nonzero coefficient in $\langle D\rangle_{\Sigma}$. More precisely, if $[S]$ has a nonzero coefficient, write each of the terms in its state variable additively. Then take the submodule generated by all such vectors of state curves for all $S$ such that $[S]$ has a nonzero coefficient. The \emph{$\mathbb{Z}_2$-symplectic rank}  $\text{rk}_{s}^{\mathbb{Z}_2}(\langle D \rangle_{\Sigma})$ of $\langle D \rangle_{\Sigma}$ is the symplectic rank of $W_{SB}$. That is, $\text{rk}_{s}^{\mathbb{Z}_2}(\langle D \rangle_{\Sigma})=\text{rank}(W_{SB}/(W_{SB} \cap W_{SB}^{\perp}))$.

\begin{lemma} \label{lemma_sb_sr} The $\mathbb{Z}_2$-symplectic rank of the surface bracket  $\langle D \rangle _{\Sigma}$ is maximal  (i.e. $=2 \cdot g(\Sigma)$) if and only if the Dye-Kauffman condition over $\mathbb{Z}_2$ (see Theorem \ref{thm_dye_kauffman}) is satisfied.
\end{lemma}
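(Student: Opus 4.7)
The plan is to recast both sides of the equivalence as statements about the submodule $W_{SB}\subseteq H_1(\Sigma;\mathbb{Z}_2)$ and then reduce the equivalence to linear algebra in a non-degenerate alternating $\mathbb{Z}_2$-form. First I would note that the mod-$2$ intersection pairing on $H_1(\Sigma;\mathbb{Z}_2)$ is non-degenerate and alternating---its Gram matrix in $\Omega$ is block-diagonal with $g$ copies of $\bigl(\begin{smallmatrix}0&1\\1&0\end{smallmatrix}\bigr)$. For any $\mathbb{Z}_2$-subspace $W$, this gives $\dim_{\mathbb{Z}_2}\bigl(W/(W\cap W^\perp)\bigr)\le\dim_{\mathbb{Z}_2}W\le 2g$, and the rightmost bound is tight only when $W=H_1(\Sigma;\mathbb{Z}_2)$; in that case $W^\perp=0$ by non-degeneracy, so the first inequality is also an equality. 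Therefore $\text{rk}_s^{\mathbb{Z}_2}(\langle D\rangle_\Sigma)=2g$ if and only if $W_{SB}=H_1(\Sigma;\mathbb{Z}_2)$.

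Next I would unpack the Dye--Kauffman condition through the same lens. A monomial $\prod_i x_i^{[Y]\cdot[y_i]}y_i^{[Y]\cdot[x_i]}$ appearing in the state variable, when rewritten additively, reads off precisely the $\Omega$-coordinates of $[Y]$: in the symplectic basis $\Omega$, the coefficient of $x_i$ in $[Y]$ equals $[Y]\cdot[y_i]$ and the coefficient of $y_i$ equals $[Y]\cdot[x_i]$, so this additive form is nothing but $[Y]\in H_1(\Sigma;\mathbb{Z}_2)$. Hence $W_{SB}$ is $\mathbb{Z}_2$-spanned by $\mathcal{C}=\{[Y]\colon Y\in[S],\ [S]\text{ has nonzero coefficient in }\langle D\rangle_\Sigma\}$. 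In the two-dimensional symplectic space $H_1(T_k;\mathbb{Z}_2)\cong(\mathbb{Z}_2)^2$, two vectors pair non-trivially iff they are $\mathbb{Z}_2$-linearly independent iff they span the space, so the Dye--Kauffman condition over $\mathbb{Z}_2$ is equivalent to the statement: for every $k$, $(\kappa_k)_\ast(W_{SB})=H_1(T_k;\mathbb{Z}_2)$.

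The final step is to show $W_{SB}=H_1(\Sigma;\mathbb{Z}_2)$ if and only if $(\kappa_k)_\ast(W_{SB})=H_1(T_k;\mathbb{Z}_2)$ for every $k$. The forward implication is immediate, since any projection of the full space onto a direct summand is surjective. The reverse implication is the main obstacle I anticipate: abstractly, surjectivity of each torus-factor projection does not by itself force a subspace to fill the ambient direct sum, as a totally isotropic ``diagonal'' subspace like $\langle x_1+x_2,y_1+y_2\rangle$ in genus $2$ illustrates. To close the argument I would exploit both the alternating mod-$2$ pairing and the compatibility of the DK pairs $(Y_i^k,Y_j^k)$ with the fixed symplectic decomposition: for each $k$, these two generators of $W_{SB}$ have $k$-th projections forming a symplectic basis of $H_1(T_k;\mathbb{Z}_2)$, and a symplectic Gram--Schmidt inside $W_{SB}$---iteratively subtracting $\mathbb{Z}_2$-combinations of the DK-generators to cancel the off-block contributions in $H_1(T_\ell;\mathbb{Z}_2)$ for $\ell\ne k$---promotes the projected bases to a genuine symplectic basis of $H_1(\Sigma;\mathbb{Z}_2)$ lying in $W_{SB}$, forcing $W_{SB}=H_1(\Sigma;\mathbb{Z}_2)$ and completing the equivalence.
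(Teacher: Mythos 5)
Your reformulation is clean and correct in its first two steps: $\text{rk}_s^{\mathbb{Z}_2}(\langle D\rangle_\Sigma)=2g$ if and only if $W_{SB}=H_1(\Sigma;\mathbb{Z}_2)$ (by non-degeneracy of the mod-$2$ intersection form); the additive form of the state-curve monomial for $Y$ is literally $[Y]$, so $W_{SB}$ is the $\mathbb{Z}_2$-span of the state-curve classes; and the Dye--Kauffman condition at $k$ is exactly the statement $(\kappa_k)_*(W_{SB})=H_1(T_k;\mathbb{Z}_2)$, because any two distinct nonzero vectors in $\mathbb{Z}_2^2$ pair nontrivially under the symplectic form. The forward implication of the lemma then falls out immediately from surjectivity of projections of the full space, and this is cleaner than the paper's argument, which instead hunts for curves satisfying $\alpha_{ij}=1$, $\beta_{kj}'=1$, $\beta_{ij}\alpha_{kj}'=0$.

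The reverse implication is where the proposal has a genuine gap, and you yourself produced the witness that defeats your proposed repair. The subspace $W=\langle x_1+x_2,\,y_1+y_2\rangle$ in genus $2$ is two-dimensional, Lagrangian (hence of symplectic rank $0$), yet projects onto both $H_1(T_1;\mathbb{Z}_2)$ and $H_1(T_2;\mathbb{Z}_2)$. Running your ``symplectic Gram--Schmidt inside $W_{SB}$'' on the DK generator $x_1+x_2$ would require subtracting $x_2$ to cancel the off-block contribution, but $x_2\notin W$; there is nothing inside a two-dimensional space to promote into a four-element basis, so the claimed ``genuine symplectic basis of $H_1(\Sigma;\mathbb{Z}_2)$ lying in $W_{SB}$'' cannot be produced in general. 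What makes this worth flagging is that the paper's own proof of this direction contains the same leap: it tabulates the six ways a pair of $k$-th projections can pair nontrivially, correctly concludes that each block $\langle x_j,y_j\rangle$ is spanned, and then asserts without argument that putting all the state-curve vectors into row echelon form gives rank $2g$. That is exactly the inference your Lagrangian example blocks. Since the subsequent theorem in the section only invokes the forward implication of the lemma, the damage to the paper's results is contained; but to close the stated ``if and only if'' one would need an additional constraint, specific to the subspaces $W_{SB}$ that actually arise from state-curve systems, that rules out the Lagrangian-diagonal configuration, and neither your proposal nor the paper's proof supplies one.
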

\begin{proof} First observe that the coefficients for a single state are self-orthogonal, so that contributions to the $\mathbb{Z}_2$-symplectic rank must come from variables in different states. This can be seen by writing out all of the curves in the state in terms of the symplectic basis. Suppose that a state $S$ consists of curves $Y_1,\ldots,Y_s$. Then write:
\[
[Y_i]=\sum_{j=1}^g \alpha_{ij} [x_j]+\beta_{ij} [y_j], \quad \alpha_{ij} \in \mathbb{Z}_2.
\]
Since the state curves have no geometric intersections, it must be that for a fixed $i$ and $j$, $\alpha_{ij}=\alpha_{kj}$ and $\beta_{ij}=\beta_{kj}$ for all $k$, $1 \le k \le s$. Hence, if $M$ is the submodule of $H_1(\Sigma;\mathbb{Z}_2)$ generated by the coefficients for a single state, it follows that $M=M^{\perp}$.

Now, suppose the $\mathbb{Z}_2$-symplectic rank is maximal. Then for each $j$, there must be a state $S$ containing a curve $Y_{i}$ such that $\alpha_{ij}=1$ and a state $S'$ containing a curve $Y_{k}'$ such that $\beta_{kj}'=1$. Furthermore, the state $S'$ may be chosen so that $\beta_{ij} \alpha_{kj}'=0$, for otherwise it would be impossible to generate the submodule $\langle x_j,y_j \rangle$ needed for maximality. By the argument in the preceding paragraph, $S$ and $S'$ must be distinct states. Now we simply compute (with $\mathbb{Z}_2$ coefficients):
\[
(\kappa_j)_*([Y_i]) \cdot (\kappa_j)_*(Y_k')=\begin{pmatrix}
\alpha_{ij} & \beta_{ij}
\end{pmatrix} \begin{pmatrix}
0 & 1 \\ 1 & 0
\end{pmatrix} \begin{pmatrix}
\alpha_{kj}' \\ \beta_{kj}'
\end{pmatrix}= \alpha_{ij} \beta_{kj}'+\beta_{ij} \alpha_{kj}'=
1
\]
This completes the forward direction of the proof. Conversely, suppose that the condition of Theorem \ref{thm_dye_kauffman} is satisfied over $\mathbb{Z}_2$. Then for each $j$, there are states $S$, $S'$ containing curves $Y_i$, $Y_k'$ such that $(\kappa_j)_*([Y_i]) \cdot (\kappa_j)_*(Y_k')=1$. Now write out all the possibilities for the coefficients in homology given this condition:
\begin{align*}
\alpha_{ij}=\beta_{ij}=1, & \quad \alpha_{kj}'=0, \beta_{kj}'=1,\\
\alpha_{ij}=\beta_{ij}=1, & \quad \alpha_{kj}'=1, \beta_{kj}'=0,\\
\alpha_{kj}'=\beta_{kj}'=1, & \quad \alpha_{ij}=0, \beta_{ij}=1,\\
\alpha_{kj}'=\beta_{kj}'=1, & \quad \alpha_{ij}=1, \beta_{ij}=0,\\
\alpha_{ij}=\beta_{kj}'=1, & \quad \alpha_{kj}'=\beta_{ij}=0,\\
\alpha_{kj}'=\beta_{ij}=1, & \quad \alpha_{ij}=\beta_{kj}'=0.
\end{align*}
In each case, it follows that $\langle \alpha_{ij} [x_j]+\beta_{ij} [y_j], \alpha_{kj}' [x_j]+\beta_{kj}' [y_j] \rangle=\langle [x_j],[y_j] \rangle$. The vectors corresponding to all of the state curves for all of the states can thus be put into reduced row echelon form over $\mathbb{Z}_2$ so that the rank is $2g$. Hence, the $\mathbb{Z}_2$-symplectic rank is maximal.
\end{proof}

\begin{theorem} If the $\mathbb{Z}_2$-symplectic rank of the surface bracket $\langle D \rangle_{\Sigma}$ is maximal, then  $D$ is a minimal genus representative of the virtual link type of $D$.
\end{theorem}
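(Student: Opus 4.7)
The plan is to chain together the preceding Lemma \ref{lemma_sb_sr} with the Dye--Kauffman minimality criterion (Theorem \ref{thm_dye_kauffman}), using only the easy observation that a nonzero $\mathbb{Z}_2$-intersection lifts to a nonzero $\mathbb{Z}$-intersection.

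First I would invoke Lemma \ref{lemma_sb_sr}: since by hypothesis $\text{rk}_s^{\mathbb{Z}_2}(\langle D\rangle_\Sigma) = 2 g(\Sigma)$ is maximal, the Dye--Kauffman condition (over $\mathbb{Z}_2$) is satisfied. Concretely, for each $k$ there exist states $S_i,S_j$ whose classes $[S_i],[S_j]$ appear with nonzero coefficient in $\langle D\rangle_\Sigma$, and curves $Y_i \in [S_i]$, $Y_j \in [S_j]$ such that $(\kappa_k)_*([Y_i])\cdot (\kappa_k)_*([Y_j]) = 1$ in $H_1(T_k;\mathbb{Z}_2)$.

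Next I would promote this to the integral statement needed by Theorem \ref{thm_dye_kauffman}. Choose any orientations for $Y_i,Y_j$ and regard their classes in $H_1(T_k;\mathbb{Z})$. The mod-$2$ intersection pairing on $H_1(T_k;\mathbb{Z}_2)$ is the reduction mod $2$ of the integral intersection pairing on $H_1(T_k;\mathbb{Z})$. Hence the assumption that $(\kappa_k)_*([Y_i])\cdot (\kappa_k)_*([Y_j])$ is nonzero in $\mathbb{Z}_2$ forces the integral intersection number to be odd, in particular nonzero. The same states $S_i,S_j$ still have nonzero coefficients in $\langle D\rangle_\Sigma$ (this property is independent of coefficients), so the Dye--Kauffman condition holds over $\mathbb{Z}$ as well.

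Finally, I would apply Theorem \ref{thm_dye_kauffman} directly to conclude that $D$ realizes the minimal genus representative of its virtual link type. The only real content of the argument is the (essentially trivial) step that odd-mod-$2$ implies nonzero-in-$\mathbb{Z}$; this is what allows the $\mathbb{Z}_2$ formulation of the symplectic rank to be compatible with the integral formulation of Dye--Kauffman. I do not expect any serious obstacle, since the preceding Lemma \ref{lemma_sb_sr} has already done the bookkeeping that connects maximal symplectic rank to the existence of state curves with the required intersection behavior.
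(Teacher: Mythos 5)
Your proposal is correct and follows the same three-step argument as the paper: apply Lemma \ref{lemma_sb_sr} to obtain the Dye--Kauffman condition over $\mathbb{Z}_2$, note that a nonzero mod-$2$ intersection forces the integral intersection to be odd (hence nonzero), and then invoke Theorem \ref{thm_dye_kauffman}. There is no gap and no difference worth noting.
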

\begin{proof} By Lemma \ref{lemma_sb_sr}, the condition of  Theorem \ref{thm_dye_kauffman} with $\mathbb{Z}_2$ coefficients is satisfied. But if the condition is satisfied with coefficients over $\mathbb{Z}_2$, then they must also be satisfied over $\mathbb{Z}$. Indeed, the state curves that satisfy the condition with $\mathbb{Z}_2$ coefficients must have odd intersection numbers with $\mathbb{Z}$ coefficients. Hence, the result follows by Theorem \ref{thm_dye_kauffman}.
\end{proof}

\subsection{Factoring the surface bracket through homotopy\!\! $\Zh$} \label{sec_surf_factors} In \cite{chrisman_todd_23}, Section 4.3, it was proved that the arrow polynomial factors through the homology $\Zh$-construction. Here we generalize the argument to show that the surface bracket factors through the homotopy $\Zh$-construction. We begin by defining a bracket polynomial $\langle\langle \widecheck{L} \rangle\rangle_{\Omega}$ for $\widecheck{L}$ a complete \emph{unoriented} $\Omega$-prismatic link diagram. Such links will be considered equivalent up to unoriented extended Reidemeister moves, unoriented $\Omega$-semi-welded moves, and oriented $\Omega$-commutator moves. Let $\widecheck{D}$ denote the $\alpha$-part of $\widecheck{L}$. By an \emph{$\Omega$-prismatic state} $\widecheck{S}$ of $\widecheck{L}$, we mean a choice of $A$ or $B$-smoothing at each of the classical crossings of $\widecheck{D}$. Then $\widecheck{S}$ is an unoriented complete $\Omega$-prismatic link diagram. For each such state, there is an $\Omega$-prismatic state variable. To define it, we will use the notation of Proposition \ref{prop_coordinatize_zh}: $\widecheck{z}_i$ denotes the component of $\widecheck{L}$ colored by $z_i \in \Omega$. Let $[\widecheck{S}]_{\alpha}$ denote the set of $\alpha$-colored state curves in an $\Omega$-prismatic state $\widecheck{S}$. Then the corresponding $\Omega$-prismatic state variable is given by: 
\[ 
\sum_{\widecheck{Y} \in [\widecheck{S}]_{\alpha}} \left( \prod_{i=1}^g x_i^{\vlk(\widecheck{x_i},\widecheck{Y})} y_i^{\vlk(\widecheck{y_i},\widecheck{Y})}\right),
\]
where the exponents on the right hand side are calculated in $\mathbb{Z}_2$. This is required to make $\vlk(\widecheck{z}_i,\widecheck{Y})$ well defined for unoriented diagrams. Now, let $\#_0(\widecheck{S})$ denote the number of components in the $\alpha$-part of  $\widecheck{S}$ that are unlinked from the $\Omega$-part. This equals the number of unknotted components in the $\alpha$-part that are split from the $\Omega$-part. Let $\mathscr{S}(\widecheck{L})$ be the set of $\Omega$-prismatic states. Then define:
\[
\langle \langle \widecheck{L} \rangle \rangle_{\Omega}=\sum_{\widecheck{S} \in \mathscr{S}(\widecheck{L})} A^{\#_A(\widecheck{S})-\#_B(\widecheck{S})} d^{\#_0(\widecheck{S})} \left(\sum_{\widecheck{Y} \in [\widecheck{S}]_{\alpha}} \left( \prod_{i=1}^g x_i^{\vlk(\widecheck{x_i},\widecheck{Y})} y_i^{\vlk(\widecheck{y_i},\widecheck{Y})}\right)\right).
\]
\begin{proposition} $\langle\langle \widecheck{L} \rangle \rangle_{\Omega}$ is an invariant of framed\footnote{Recall that this means the classical Reidemeister 1 move is not allowed on the $\alpha$-part} unoriented $\Omega$-prismatic links.
\end{proposition}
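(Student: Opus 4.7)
The plan is to verify that $\langle\langle \widecheck{L}\rangle\rangle_{\Omega}$ is unchanged under each generating move for framed unoriented $\Omega$-prismatic equivalence: classical Reidemeister 2 and 3 moves on the $\alpha$-part, virtual and mixed Reidemeister moves, unoriented $\Omega$-semi-welded moves, and $\Omega$-commutator moves. For every move I will check (i) that the Kauffman-bracket-type coefficient $A^{\#_A(\widecheck{S})-\#_B(\widecheck{S})}\,d^{\#_0(\widecheck{S})}$ is preserved after summing over states, and (ii) that for each state in the natural bijection the $\Omega$-prismatic state variable is unchanged.

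For the classical R2 and R3 moves on the $\alpha$-part, the standard local skein computation underlying framed invariance of the Kauffman bracket applies verbatim, giving a grouping of states on one side that matches states on the other with the same $A^{\#_A-\#_B} d^{\#_0}$ coefficient. Because these moves happen in a small disk disjoint from the $\Omega$-part, no classical over-crossing of any $\Omega$-arc with an $\alpha$-arc is created or destroyed, so $\vlk(\widecheck{z}_i,\widecheck{Y})$ is preserved for every $z_i\in\Omega$ and every resulting $\alpha$-state-curve $\widecheck{Y}$. Virtual and mixed Reidemeister moves involve no classical $\alpha$-crossings at all, hence induce a canonical bijection on $\mathscr{S}(\widecheck{L})$; these moves preserve virtual linking numbers in the usual sense (crossings between $\Omega$- and $\alpha$-strands are pushed around but neither created nor destroyed), so state variables agree. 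The number $\#_0(\widecheck{S})$ of split unknotted $\alpha$-components is evidently preserved by all of these moves.

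For the unoriented $\Omega$-semi-welded move, the two classical over-crossings of the $\Omega$-arc with the two $\alpha$-strands are preserved, with only the position of the virtual $\alpha$-$\alpha$ crossing changed. The classical $\alpha$-crossing (if any) in the region is untouched, so the bracket coefficient is the same. For each choice of smoothing, each $\alpha$-state-curve through the region encounters the same multiset of classical over-crossings with the $\Omega$-arc on both sides of the move, so $\vlk(\widecheck{z}_i,\widecheck{Y})$ is unchanged. For the $\Omega$-commutator move, the affected under-crossing $\alpha$-arc passes below the $\Omega$-strands in the pattern $\prod_{i=1}^g[x_i,y_i]$ on one side and has no over-crossings with $\Omega$ on the other. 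Since each color $z\in\Omega$ contributes exactly two over-crossings to that local pattern, the parity of $\vlk(\widecheck{z}_i,\widecheck{Y})$ is unchanged, and because the exponents in the state variable are taken modulo $2$, the product $\prod_i x_i^{\vlk(\widecheck{x}_i,\widecheck{Y})} y_i^{\vlk(\widecheck{y}_i,\widecheck{Y})}$ is preserved.

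The main obstacle is confirming that the commutator-move cancellation is not spoiled by smoothings elsewhere in the diagram. In particular, one must check that the two local over-crossings of an $\alpha$-arc $c$ with a single $\Omega$-component $\widecheck{z}_i$ contribute to the \emph{same} $\alpha$-state-curve $\widecheck{Y}$ in every state, for otherwise the two contributions of $1$ could split across distinct summands of the state variable. This follows because the two crossings lie on a single continuous sub-arc of $c$ bounded by classical $\alpha$-crossings (or no such crossings at all), so any smoothing of those endpoints concatenates this sub-arc as a whole into one state curve. With this bookkeeping in hand, each color contributes $0 \pmod 2$, the state variables match term by term, and invariance under all generating moves is established, completing the proof.
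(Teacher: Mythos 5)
Your proof is correct and follows essentially the same route as the paper: check invariance move by move, invoke the standard Kauffman-bracket argument for classical R2/R3, observe that semi-welded moves touch neither classical $\alpha$-crossings nor virtual linking numbers, and note that the commutator move adds exactly two over-crossings per color so the $\mathbb{Z}_2$ exponents in the state variable cancel. The one place you go further is in making explicit that the two over-crossings of each $\Omega$-component with the under-crossing arc land on the \emph{same} state curve (since there are no intervening classical $\alpha$-crossings on that sub-arc); the paper leaves this implicit, so your observation is a welcome tightening of the same argument rather than a different one.
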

\begin{proof} That $\langle\langle \widecheck{L} \rangle \rangle_{\Omega}$ is invariant under the extended Reidemeister moves, except Reidemeister 1, follows as in the case of the bracket polynomial for unoriented virtual links. See also \cite{chrisman_todd_23}, Proposition 4.2.2. An $\Omega$-semi-welded move does not involve any classical crossings in the $\alpha$-part and hence cannot effect the polynomial weight in the variable $A$ for any state. The prismatic state variable is also unchanged by such a move as the virtual linking number between the the $\Omega$-over-crossing arc and the $\alpha$-colored arcs are the same on both sides of the move. Lastly, note that an $\Omega$-commutator move has no effect on $\langle \langle \widecheck{L} \rangle \rangle_{\Omega}$. Indeed, on the left side of the move in Figure \ref{fig_commutator_move}, each component of the $\Omega$-part has 2 intersections with the $\alpha$-part. Hence, the $\Omega$-colored over-crossing arcs contribute trivially to the prismatic state variables on both sides of the move.
\end{proof}

Now we can prove the main result of this section, which essentially says that the surface bracket with $\mathbb{Z}_2$ homological coefficients can be written as a composition of $\langle\langle \cdot \rangle \rangle_{\Omega}$ and $\Zh_{\Omega}(D)$.

\begin{theorem} Let $D$ be a link diagram on a surface $\Sigma$ and let $\Omega=\{x_1,y_1,\ldots,x_g,y_g\}$ be a choice of symplectic basis. For every state $[S]$ of $D$, replace $[S]$ with its state variable in $\langle D \rangle_{\Sigma}$. Then: 
$$\langle D \rangle_{\Sigma}=\langle \langle \Zh_{\Omega}(D) \rangle\rangle_{\Omega}.$$
\end{theorem}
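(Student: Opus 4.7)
The plan is to establish a term-by-term bijection between the state sums defining $\langle D\rangle_\Sigma$ and $\langle\langle\Zh_\Omega(D)\rangle\rangle_\Omega$. Since the $\Omega$-part of $\Zh_\Omega(D)$ contributes only classical $\Omega$-over-$\alpha$ crossings and virtual crossings, the classical crossings of the $\alpha$-part of $\Zh_\Omega(D)$ are in canonical bijection with the crossings of $D$ on $\Sigma$. Smoothings therefore yield a bijection between states $S\in\mathscr S(D)$ and $\Omega$-prismatic states $\widecheck S\in\mathscr S(\Zh_\Omega(D))$ that preserves $\#_A$ and $\#_B$. Under this bijection, each state curve $Y\subset\Sigma$ of $S$ corresponds to its flattened image $\widecheck Y$ in $[\widecheck S]_\alpha$, while $\Omega$-over-crossings on $\widecheck D$ pass unchanged through the smoothing.

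It then suffices to prove, for each matched pair $(S,\widecheck S)$, that $\#_0(S)=\#_0(\widecheck S)$ and that, for each non-disc-bounding $Y$,
\[
\prod_{i=1}^g x_i^{[Y]\cdot[y_i]}y_i^{[Y]\cdot[x_i]} \;=\; \prod_{i=1}^g x_i^{\vlk(\widecheck x_i,\widecheck Y)}y_i^{\vlk(\widecheck y_i,\widecheck Y)}
\]
in $\mathbb{Z}[H_1(\Sigma;\mathbb{Z}_2)]$. For the first identity I will invoke the classical fact that a simple closed curve $Y$ on an orientable surface bounds a disc if and only if $[Y]=0$ in $H_1(\Sigma;\mathbb{Z})$, together with Proposition~\ref{prop_coordinatize_zh} applied to $Y$ (viewed as a trivial link on $\Sigma$), which gives
\[
[Y] = -\sum_{i=1}^g\bigl(\vlk(\widecheck x_i,\widecheck Y)[x_i]+\vlk(\widecheck y_i,\widecheck Y)[y_i]\bigr).
\]
Thus $[Y]=0$ iff every $\vlk(\widecheck z_i,\widecheck Y)$ vanishes, which is precisely the condition that $\widecheck Y$ is unknotted and split from the $\Omega$-part of $\Zh_\Omega(D)$. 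This matches the disc-bounding condition with the prismatic ``unlinked'' condition and yields $\#_0(S)=\#_0(\widecheck S)$.

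For the state-variable identity, pairing the expression above for $[Y]$ against $[x_j]$ and $[y_j]$ and using $[x_i]\cdot[y_j]=\delta_{ij}$, $[x_i]\cdot[x_j]=[y_i]\cdot[y_j]=0$ gives $[Y]\cdot[y_j]=-\vlk(\widecheck x_j,\widecheck Y)$ and $[Y]\cdot[x_j]=\vlk(\widecheck y_j,\widecheck Y)$. Reducing modulo $2$ erases the sign, so the two monomials coincide in $\mathbb{Z}[H_1(\Sigma;\mathbb{Z}_2)]$. Summing the matched contributions over all states then produces the claimed identity $\langle D\rangle_\Sigma=\langle\langle\Zh_\Omega(D)\rangle\rangle_\Omega$.

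The main obstacle is a careful bookkeeping step: Proposition~\ref{prop_coordinatize_zh} was stated for a link diagram $D$, so I must check it applies verbatim to an arbitrary state curve $Y$, which is a simple closed curve on $\Sigma$ that need not be simple once flattened. I also need to verify that $\vlk(\widecheck z_i,\widecheck Y)$ computed inside the prismatic state $\widecheck S$ really records the portion of the $\Omega$-over-crossings that lies on $\widecheck Y$, and that orientations of $Y$ and of the $\Omega$-strands near its arcs combine into the correct $\mathbb{Z}_2$ counts. Both reduce to the locality of the Kauffman smoothings and of the homotopy $\Zh$-construction, and the fact that the exponents live in $\mathbb{Z}_2$ is precisely what renders the orientation choices for the unoriented state curves harmless.
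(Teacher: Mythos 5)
Your overall strategy matches the paper's: set up the state-by-state and curve-by-curve bijections induced by the $\Zh_\Omega$-construction and then convert intersection numbers to virtual linking numbers via Proposition~\ref{prop_coordinatize_zh}, reduced modulo~$2$. That is exactly the route the paper takes.

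There is, however, a genuine error in your handling of $\#_0$. You write that ``a simple closed curve $Y$ on an orientable surface bounds a disc if and only if $[Y]=0$ in $H_1(\Sigma;\mathbb{Z})$.'' This is false once $g(\Sigma)\ge 2$: a separating simple closed curve (for instance, the waist of a genus-$2$ surface) is null-homologous but bounds no disc. Such curves certainly arise as state curves. Your chain of equivalences (disc-bounding $\Leftrightarrow$ $[Y]=0$ $\Leftrightarrow$ all $\vlk$ vanish $\Leftrightarrow$ unknotted and split) breaks at the first link, and as a result the claimed identity $\#_0(S)=\#_0(\widecheck{S})$ is not established. The third link is also asserted without justification: vanishing of all virtual linking numbers is necessary but not, in general, sufficient for a state curve to be split from the $\Omega$-part. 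The correct route is geometric rather than homological: a state curve $Y$ bounds a disc in $\Sigma$ exactly when that disc is disjoint from the symplectic curves (after the free isotopy one is allowed on $\Sigma$), which translates under flattening to $\widecheck{Y}$ being unknotted and split from the $\Omega$-part of $\Zh_\Omega(D)$. That is the correspondence the paper invokes, albeit tersely, and it gives $\#_0(S)=\#_0(\widecheck S)$ without ever detouring through homological triviality. Your use of Proposition~\ref{prop_coordinatize_zh} for the state variable identity (with the sign absorbed by the $\mathbb{Z}_2$ reduction) is fine.
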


\begin{proof} There is a one-to-one correspondence between the states $S$ of $D$ on $\Sigma$ and the $\Omega$-prismatic states $\widecheck{S}$ of $\Zh_{\Omega}(D)$. Furthermore, for each state $S$, there is a one-to-one correspondence between $[S]$ and $[\widecheck{S}]_{\alpha}$. 
The result then follows by applying Proposition \ref{prop_coordinatize_zh} with $\mathbb{Z}_2$ coefficients, since:
\[
\sum_{Y \in [S]} \left( \prod_{i=1}^g x_i^{[Y]\cdot [y_i]} y_i^{[Y] \cdot [x_i]}\right)=\sum_{\widecheck{Y} \in [\widecheck{S}]_{\alpha}} \left( \prod_{i=1}^g x_i^{\vlk(\widecheck{x_i},\widecheck{Y})} y_i^{\vlk(\widecheck{y_i},\widecheck{Y})}\right). \qedhere
\]
\end{proof}

\section{Examples $\&$ Applications} \label{sec_applications}
In this section, we compare the virtual $2$-genus bounds from the prismatic $U_q(\mathfrak{gl}(m|n))$ polynomials to those of other well-known invariants. The two main geometric applications (non-additivity of the virtual 3-genus and necessity of stabilization to realize the GAP) are also discussed. All calculations below were done in \emph{Mathematica} \cite{mathematica}. A supplementary notebook is available here:
\centerline{
\url{https://github.com/micah-chrisman/virtual-knots/blob/main/prismatic_zh_final_switch.nb}}
In figures below, ``$\circlearrowleft$'' denotes a tangle that should be rotated by $\pi/2$ to calculate the invariants. 
\subsection{Running example: the right virtual trefoil} \label{sec_ex_simple} For completion, we give the value of some prismatic $U_q(\mathfrak{gl}(m|n)))$ polynomials for the right-handed virtual trefoil. Let $D$ be the diagram on the torus $\Sigma$ as shown in Figure \ref{fig_mvzh_example} and $\Omega=\{x_1,y_1\}$ the symplectic basis. A tangle decomposition of $\Zh_{\Omega}(D)$ is shown in Figure \ref{fig_virt_tref_mvzh_tangle}. Some prismatic polynomials are:
\begin{align*}
\widetilde{f}^{\,\,1|1}_{(\Sigma,\Omega,D)}(q,x_1,y_1) &= -\frac{q^2 x_1}{y_1}-\frac{x_1}{q^2}+\frac{q^2}{y_1}+\frac{1}{q^2}+x_1-\frac{1}{y_1},\\
\widetilde{f}^{\,\,2|1}_{(\Sigma,\Omega,D)}(q,x_1,y_1) &= \frac{q^6}{y_1}-q^5-\frac{q^4 x_1}{y_1}+q^3+q^2 x_1-\frac{x_1}{q^2}-\frac{q^2}{y_1}-q+\frac{1}{q}+2, \\
\widetilde{f}^{\,\,3|1}_{(\Sigma,\Omega,D)}(q,x_1,y_1) &= \frac{q^{10}}{y_1}-q^9-q^7-\frac{q^6 x_1}{y_1}+2 q^5+q^4 x_1-\frac{q^4}{y_1}-2 q^3-\frac{x_1}{q^2}+3 q^2+q+\frac{1}{q},\\
\widetilde{f}^{\,\,2|2}_{(\Sigma,\Omega,D)}(q,x_1,y_1) &= -\frac{x_1}{q^4}+\frac{q^4}{y_1}-\frac{q^3 x_1}{y_1}+\frac{x_1}{q^3 y_1}-q^3+\frac{1}{q^3}-\frac{2 q^2 x_1}{y_1}+q^2 x_1-\frac{x_1}{q^2}+\frac{q^2}{y_1}-\frac{1}{q^2 y_1}+\frac{2}{q^2}\\ &+\frac{q x_1}{y_1}-\frac{x_1}{q y_1}+q-\frac{1}{q}+x_1-\frac{1}{y_1}.
\end{align*}
Note that in each case, the submodule generated by the coefficients of powers of $q$ is all of $H_1(\Sigma;\mathbb{R})$. Each has a symplectic rank of $2$, and as expected, the virtual $2$-genus of the virtual trefoil is 1. 

\begin{figure}[htb]
\begin{tabular}{|ccc|} \hline & & \\ & \includegraphics[scale=.6]{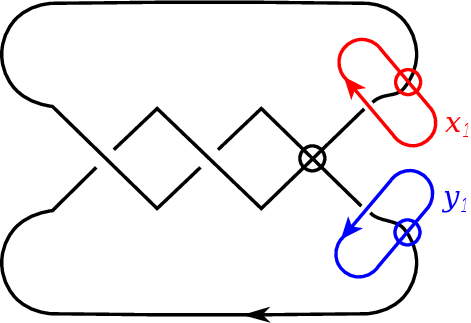} & \\ & & \\ \hline
\end{tabular}
\caption{Tangle decomposition ($\circlearrowleft$) of the right virtual trefoil from Figure \ref{fig_mvzh_example}.} \label{fig_virt_tref_mvzh_tangle}
\end{figure}

Next we verify the relationship between the prismatic $U_q(\mathfrak{gl}(1|1))$ polynomial, the CSW polynomial, and the GAP. From Section \ref{sec_csw_example}, we have that the CSW polynomial is given by:
\[
\Delta_K^0(q^{-2},x_1^{-1},y_1^{-1}) \doteq -\frac{1}{q^4 y_1}+\frac{1}{q^4}-\frac{x_1}{q^2}+\frac{1}{q^2 y_1}-\frac{x_1}{y_1}+x_1.
\]
The prismatic link $\Zh_{\Omega}(D)$ in Figure \ref{fig_virt_tref_mvzh_tangle} can be represented as the closure of a prismatic braid $\beta \in VB_{2,\Omega}$, where $\beta=\lambda_{2,y_1}^{-1}\lambda_{1,x_1} (\chi_1 \otimes 1_2) (\sigma_1^{-1} \otimes 1_2)^2$. Then the substitution $t \to q^{-2}$ , the change of basis, and the determinant formula (Theorem \ref{thm_CSW_from_braid}) give the correct value after scaling by $q^{-4}$:
\[
q^{-4} \cdot \det\left(\rho_{N,\Omega}(\lambda_{2,y_1}^{-1})\rho_{N,\Omega}(\lambda_{1,x_1})\rho_{N,\Omega}(\chi \otimes 1_2)(\rho_{N,\Omega}(\sigma_1^{-1} \otimes 1_2))^2-I_2\right)=-\frac{1}{q^4 y_1}+\frac{1}{q^4}-\frac{x_1}{q^2}+\frac{1}{q^2 y_1}-\frac{x_1}{y_1}+x_1.
\]
Now, apply the determinant-trace formula \emph{and the change of basis} for $\bigwedge\!^* U \to V^{\otimes 2}$. After multiplying this by $q^{-4}$, we get exact agreement with the prismatic $U_q(\mathfrak{gl}(1|1))$ polynomial given above:
\begin{align*}
& q^{-4} \cdot \text{tr}\left(\begin{pmatrix}
 1 & 0 & 0 & 0 \\
 0 & \frac{1}{y_1} & 0 & 0 \\
 0 & 0 & 1 & 0 \\
 0 & 0 & 0 & \frac{1}{y_1} \\
\end{pmatrix} \begin{pmatrix}
 1 & 0 & 0 & 0 \\
 0 & 1 & 0 & 0 \\
 0 & 0 & x_1 & 0 \\
 0 & 0 & 0 & x_1 \\
\end{pmatrix} \begin{pmatrix}
    1 & 0 & 0 & 0 \\
    0 & 0 & q & 0 \\
    0 & \tfrac{1}{q} & 0 & 0 \\
    0 & 0 & 0 & -1
\end{pmatrix} \begin{pmatrix}
    1 & 0 & 0 & 0 \\
    0 & 0 & q & 0 \\
    0 & q & 1-q^2 & 0 \\
    0 & 0 & 0 & -q^2
\end{pmatrix}^2 \cdot \mu^{\otimes 2} \right) \\
=&-\frac{q^2 x_1}{y_1}-\frac{x_1}{q^2}+\frac{q^2}{y_1}+\frac{1}{q^2}+x_1-\frac{1}{y_1}
\end{align*}
Lastly, consider the GAP. The GAP of the right virtual trefoil can be obtained from that of the left virtual trefoil by the substitution $s \to s^{-1}$ and $t \to t^{-1}$. From Green's table \cite{green}, we then have:
\[
G_K(s,t)=-s^2 t^2+s^2 t+s t^2-s-t+1 \implies G_K(q^{-2} w^{-1},w)=\frac{1}{q^4 w}-\frac{1}{q^4}+\frac{w}{q^2}-\frac{1}{q^2 w}-w+1
\]
The conventions for the positive and negative crossings in our paper agree with those from \cite{green}, which are the same as those in Kauffman-Radford \cite{kauffman_radford_00}. Hence, to obtain the GAP from the prismatic $U_q(\mathfrak{gl}(1|1))$ polynomial, the substitution is $x_1,y_1 \to w^{-1}$. Compare, also, with the CSW polynomial:
\begin{align*}
(-1) \cdot \Delta^0_K(q^{-2},w^{-1},w^{-1}) &= \frac{1}{q^4 w}-\frac{1}{q^4}+\frac{w}{q^2}-\frac{1}{q^2 w}-w+1,\\
(-q^{-2}) \cdot \widetilde{f}_{(\Sigma,\Omega,D)}^{\,\,1|1}(q,w^{-1},w^{-1}) &=\frac{1}{q^4 w}-\frac{1}{q^4}+\frac{w}{q^2}-\frac{1}{q^2 w}-w+1.
\end{align*}

Note that Theorem \ref{thm_zhop_and_prismatic} only guarantees the existence of a surface $\Sigma$ and symplectic basis $\Omega$ for which the substitutions $x_i,y_i \to w^{-1}$, $q \to q^{-1}$ in $\widetilde{Q}^{1|1}_{\Omega}(K)$ is the GAP. This particular pair $\Sigma,\Omega$ apply for the right virtual trefoil because the $\omega$ component represents exactly the homology class $-[D] \in H_1(\Sigma)$, as can be seen in Figure \ref{fig_rel_seif}. Otherwise, the substitution need not equal the GAP.

\subsection{Generalized $r$-Kishino knots}\label{sec_kishino} The generalized fly knot is the $1$-$1$ virtual tangle shown in Figure \ref{fig_gen_n_kishino}, left. The two classical crossings can be positively or negatively signed. For $r \ge 1$, a \emph{generalized $r$-Kishino knot} $K_r$ is the connected sum of $r+1$ generalized fly knots. See Figure \ref{fig_gen_n_kishino}, right, where each of the $2r+2$ classical crossings may be signed positive or negative. In \cite{adams_tg}, Adams et al. proved that the virtual $2$-genus of $K_r$ is $r+1$. In brief, they show that $K_r$ can always be realized as a tg-hyperbolic knot on a surface of genus $r+1$. Since a hyberbolic knot can have no essential annulus in its complement, no destabilizing annulus exists. Hence, the virtual $2$-genus is realized. For further examples of tg-hyperbolic composite virtual knots see Adams-Simons \cite{adams_simons}. The virtual $2$-genus for the $r=1$ case, with various choices of the classical crossing signs, has been well studied. See, for example, Dye-Kauffman \cite{dye_kauffman_surf}, Carter-Silver-Williams \cite{CSW}, and Miller \cite{miller}.   

\begin{figure}[htb]
\begin{tabular}{|cc|} \hline &  \\\begin{tabular}{c} \includegraphics[scale=.4]{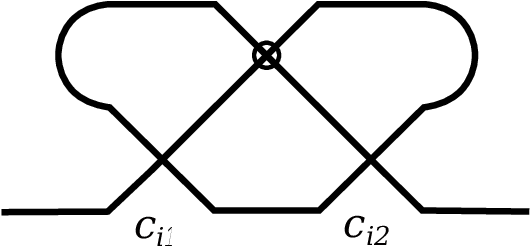} \\ \underline{Generalized Fly:} \end{tabular} & \begin{tabular}{c} \includegraphics[scale=.4]{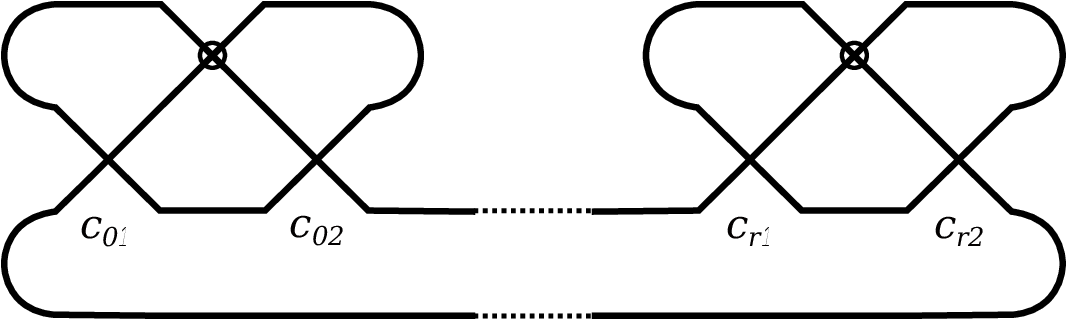} \\ \underline{Generalized $r$-Kishino:} \end{tabular} \\ &  \\ \hline
\end{tabular}
\caption{The crossings in a generalized fly or $r$-Kishino knot may have sign $c_{ij}= \pm 1$.} \label{fig_gen_n_kishino}
\end{figure}

Here we will prove the virtual $2$-genus of $K_r$ is $r+1$ using the prismatic $U_q(\mathfrak{gl}(1|1))$ polynomial. Label the classical crossings as from left to right as $(0,1)$, $(0,2)$, $\ldots$, $(r,1)$, $(r,2)$. Let $c_{ij}=\pm 1$ denote the choice of the sign at the crossing $(i,j)$. Figure \ref{fig_gen_n_kishino_screen} shows  $K_r$ can be realized as a knot diagram on a surface $\Sigma$ of genus $r+1$. Performing the homotopy $\Zh$-construction with these choices gives the virtual link diagram shown in the figure. This can be decomposed as $\curvearrowright \circ ((T_{r} \circ \cdots \circ T_0) \otimes \downarrow ) \circ \raisebox{\depth}{\rotatebox{180}{$\curvearrowright$}}$, where $T_i$ is a generalized fly with some $\Omega$-colored components. The composition $T_{r} \circ \cdots \circ T_0$ is a $1$-$1$ virtual tangle diagram, so that $\widetilde{Q}_{\Omega}^{1|1}(T_{r} \circ \cdots \circ T_0)$ is a $2 \times 2$ diagonal matrix $\text{diag}(\alpha_1,\alpha_2)$. Then:
\begin{equation} \label{eqn_gen_r_kishino}
\widetilde{Q}_{\Omega}^{1|1}(\curvearrowright \circ ((T_{r} \circ \cdots \circ T_0) \otimes \downarrow ) \circ \raisebox{\depth}{\rotatebox{180}{$\curvearrowright$}})= \begin{pmatrix} q & 0 & 0 & -q \end{pmatrix} \begin{pmatrix}
\alpha_1 & 0 & 0 & 0 \\ 0 & \alpha_1 & 0 & 0 \\ 0 & 0 & \alpha_2 & 0 \\ 0 & 0 & 0 & \alpha_2 
\end{pmatrix} \begin{pmatrix}
1 \\ 0 \\ 0 \\ 1
\end{pmatrix}=q(\alpha_1-\alpha_2).
\end{equation}
To compute the symplectic rank, it remains to determine the contribution of each $T_i$ to the terms $\alpha_1,\alpha_2$. The contribution of $T_i$ for each of the four possible crossings signs is given below.  
\[
\begin{array}{cc}
\underline{\begin{array}{c} c_{i1}=+1 \\ c_{i2}=-1\end{array} :}\,\, \left(
 \begin{array}{cc}
 -\frac{q^3}{x_i}+\frac{q}{x_i}+q & 0 \\
 0 & -\frac{q^3}{x_i}+\frac{q y_i}{x_i}+\frac{q}{x_i} \\
\end{array}
\right) & \underline{\begin{array}{c} c_{i1}=+1 \\ c_{i2}=+1\end{array} :}\,\,\left(
\begin{array}{cc}
 \frac{1}{q} & 0 \\
 0 & \frac{q^3 y_i}{x_i}-\frac{q^3}{x_i}+\frac{q}{x_i}-q y_i+\frac{y_i}{q} \\
\end{array}
\right) \\ & \\
\underline{\begin{array}{c} c_{i1}=-1 \\ c_{i2}=+1\end{array} :}\,\,
\left(
\begin{array}{cc}
 -\frac{q^3}{x_i}+q^3+\frac{q}{x_i}-q y_i+\frac{y_i}{q} & 0 \\
 0 & \frac{y_i}{q x_i} \\
\end{array}
\right) & \underline{\begin{array}{c} c_{i1}=-1 \\ c_{i2}=+1\end{array} :}\,\,\left(
\begin{array}{cc}
 -q y_i+\frac{y_i}{q}+q & 0 \\
 0 & \frac{q y_i}{x_i}-q y_i+\frac{y_i}{q} \\
\end{array}
\right)
\end{array}
\]
Then $\alpha_1$, $\alpha_2$ are products of polynomials taken from these four matrices.  Suppose, for example, suppose that for $i \ne j$, $T_i$ and  $T_j$ are both of the first type of matrix, so that $c_{i1}=c_{j1}=+1$, $c_{i2}=c_{j2}=-1$. Then Equation \ref{eqn_gen_r_kishino} must contain terms of the form $q^k/x_i$, $q^k/x_j$, $q^k y_j/(x_i x_j)$, and $q^k y_i/(x_j x_i)$. Writing these coefficients additively, the submodule they generate is given by $\langle -x_i,-x_j,y_j-x_j-x_i,y_i-x_i-x_j \rangle=\langle x_i,y_i,x_j,y_j \rangle$. Likewise for the other 15 possible choices of $T_i$ and $T_j$, the submodule generated can be easily seen to be $\langle x_i,y_i,x_j,y_j \rangle$. The total contribution of the nonzero coefficients of $q^k$ in the prismatic $U_q(\mathfrak{gl}(1|1))$ polynomial is therefore all of $H_1(\Sigma;\mathbb{R})$. Hence the symplectic rank is $2(r+1)$ and the virtual $2$-genus is $r+1$. 
\begin{figure}[htb]
\begin{tabular}{|ccc|} \hline & & \\ & \includegraphics[scale=.65]{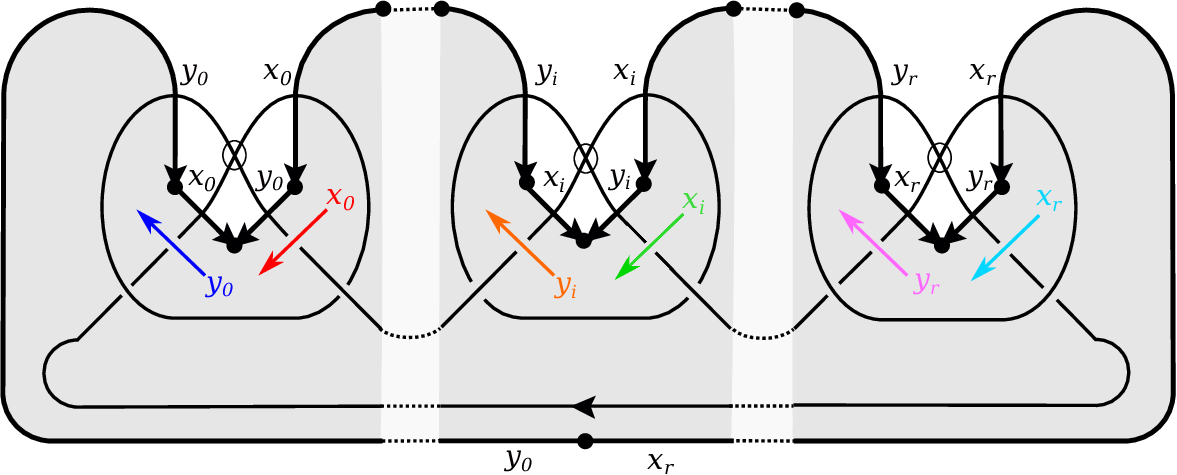} & \\ & & \\ \hline
\end{tabular}
\caption{Homotopy $\Zh$-construction for a generalized $r$-Kishino knot.} \label{fig_gen_n_kishino_screen}
\end{figure}

\subsection{Satellite virtual knots} \label{sec_ex_CSW_compare} In \cite{CSW}, Example 7.2, the CSW polynomial of a satellite knot $K$ in the thickened torus was calculated to be $\Delta_K^0(t,x_1,y_1)=(t-1)(1-x_1 y_1)^2$. The knot, which is a Whitehead double of the right-handed virtual trefoil on the torus, is shown in Figure \ref{fig_satellite_tangle}. The symplectic rank then is $0$, so that the virtual 2-genus is not detected directly. The CSW polynomial can nonetheless used to determine the virtual 2-genus, since the virtual 2-genus of a satellite knot is equal to that of its companion (see Silver-Williams \cite{silver_williams_sat}). In this case, the companion has virtual $2$-genus 1 (see Section \ref{sec_ex_simple}). The prismatic $U_q(\mathfrak{gl}(2|1))$ invariant detects the virtual $2$-genus of this satellite knot \emph{directly}. Note also that $K$ is not tg-hyperbolic, so that hyperbolicity cannot be used to directly determine the virtual $2$-genus. The $U_q(\mathfrak{gl}(1|1))$ and $U_q(\mathfrak{gl}(2|1))$ polynomials are: 
\begin{align*}
\widetilde{f}^{\,\,1|1}_{(\Sigma,\Omega,D)}(q,x_1,y_1) &= q x_1 y_1+\frac{q}{x_1 y_1}-\frac{1}{q x_1 y_1}-\frac{x_1 y_1}{q}-2 q+\frac{2}{q} \\
\widetilde{f}^{\,\,2|1}_{(\Sigma,\Omega,D)}(q,x_1,y_1)&=\frac{q^{10} y_1}{x_1}-2 q^{10}+\frac{q^9}{x_1}+q^9 y_1-\frac{4 q^8 y_1}{x_1}+6 q^8-q^7 x_1 y_1-\frac{q^7}{x_1 y_1}+q^7 x_1-\frac{3 q^7}{x_1}-3 q^7 y_1\\
&+\frac{q^7}{y_1}+\frac{7 q^6 y_1}{x_1}+\frac{q^6 x_1}{y_1}+\frac{x_1}{q^6 y_1}+\frac{q^6}{x_1}+q^6 y_1-8 q^6+2 q^5 x_1 y_1-\frac{2 q^5 y_1}{x_1}+\frac{2 q^5}{x_1 y_1}+\frac{2 x_1}{q^5 y_1}\\&-3 q^5 x_1+\frac{3 q^5}{x_1}-\frac{2 x_1}{q^5}+3 q^5 y_1-\frac{3 q^5}{y_1}-\frac{2}{q^5 y_1}-\frac{8 q^4 y_1}{x_1}-\frac{4 q^4 x_1}{y_1}-\frac{4 x_1}{q^4 y_1}+q^4 x_1-\frac{2 q^4}{x_1}\\&-\frac{x_1}{q^4}-2 q^4 y_1+\frac{q^4}{y_1}-\frac{1}{q^4 y_1}+10 q^4+\frac{8}{q^4}-2 q^3 x_1 y_1+\frac{6 q^3 y_1}{x_1}-\frac{2 q^3}{x_1 y_1}-\frac{6 x_1}{q^3 y_1}+3 q^3 x_1\\&-\frac{q^3}{x_1}+\frac{4 x_1}{q^3}-\frac{2}{q^3 x_1}-q^3 y_1+\frac{3 q^3}{y_1}-\frac{2 y_1}{q^3}+\frac{4}{q^3 y_1}+2 q^2 x_1 y_1+\frac{7 q^2 y_1}{x_1}+\frac{7 q^2 x_1}{y_1}+\frac{2 q^2}{x_1 y_1}\\&+\frac{y_1}{q^2 x_1}+\frac{7 x_1}{q^2 y_1}-2 q^2 x_1+\frac{2 x_1}{q^2}-\frac{1}{q^2 x_1}-\frac{2 q^2}{y_1}-\frac{y_1}{q^2}+\frac{2}{q^2 y_1}-12 q^2-\frac{14}{q^2}+2 q x_1 y_1\\&-\frac{6 q y_1}{x_1}-\frac{2 q x_1}{y_1}+\frac{2 q}{x_1 y_1}+\frac{2 y_1}{q x_1}+\frac{6 x_1}{q y_1}-\frac{1}{q x_1 y_1}-\frac{x_1 y_1}{q}-q x_1-\frac{2 q}{x_1}+\frac{4}{q x_1}-\frac{2 x_1}{q}\\&-2 q y_1-\frac{q}{y_1}-\frac{2}{q y_1}+\frac{4 y_1}{q}-2 x_1 y_1-\frac{4 y_1}{x_1}-\frac{8 x_1}{y_1}-\frac{2}{x_1 y_1}+\frac{2}{x_1}+2 y_1+13
\end{align*}
Note that symplectic rank for the $U_q(\mathfrak{gl}(1|1))$ case is 0, which is the expected result by Theorem \ref{thm_quantum_csw_model}. The symplectic rank of the $U_q(\mathfrak{gl}(2|1))$ is $2$, giving the value of $1$ for the virtual $2$-genus.

\begin{figure}[htb]
\begin{tabular}{|ccc|} \hline & & \\ \begin{tabular}{c} \includegraphics[scale=.47]{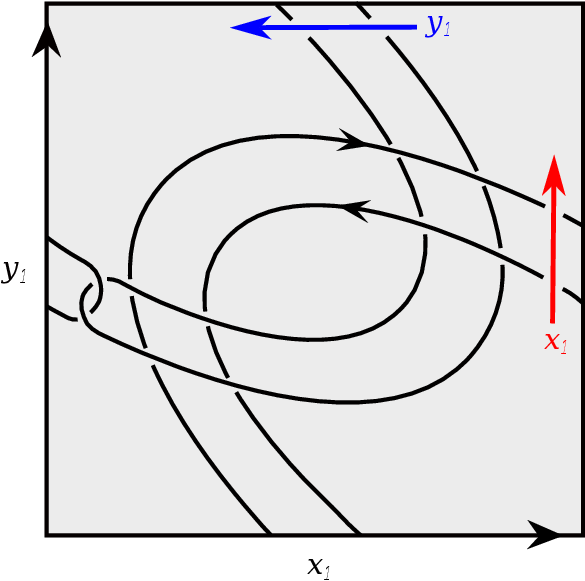} \end{tabular} & \begin{tabular}{c} \includegraphics[scale=.6]{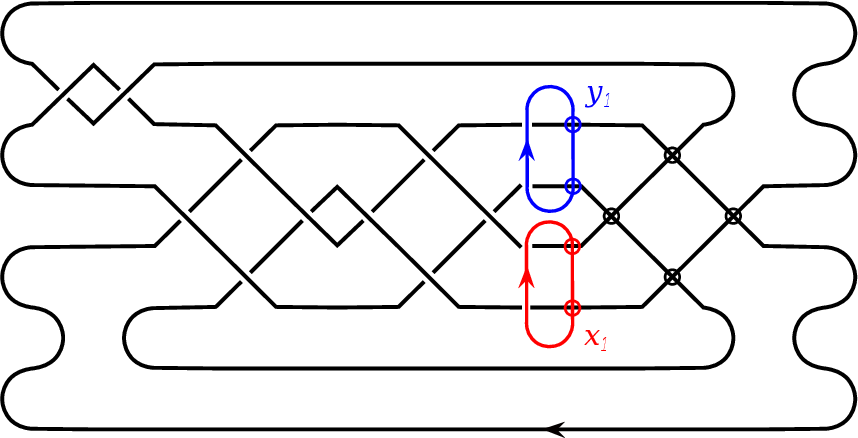} \end{tabular} & \\ & & \\ \hline
\end{tabular}
\caption{A satellite knot and its homotopy $\Zh$-construction ($\circlearrowleft$).} \label{fig_satellite_tangle}
\end{figure} 

Up to the indeterminancies for the prismatic $U_q(\mathfrak{gl}(1|1))$ polynomial and the CSW polynomial, $\widetilde{f}^{\,1|1}_{(\Sigma,\Omega,D)}$ and $\Delta_D^0$ can be seen to agree after the substitution $t \to q^{-2}$, $x_1,y_1 \to x_1^{-1},y_1^{-1}$ Indeed:
\[
(-q x_1 y_1) \Delta_K^0(q^{-2},x_1^{-1},y_1^{-1})=\frac{(q-1) (q+1) (x_1 y_1-1)^2}{q x_1 y_1}=\widetilde{f}^{\,1|1}_{(\Sigma,\Omega,D)}(q,x_1,y_1)
\]
%Note also that this knot is almost classical. As a Whitehead double, it bounds an evident genus 1 Seifert surface. Consequently, the generalized Alexander polynomial of this knot is zero. Note that the substitution $x_1,y_1 \to w^{-1}$ clearly does not give $0$. However, one does get $0$ after the symplectic change of variables $x_1 \to y_1$, $y_1 \to x_1^{-1}$ and specializing of all homological variables to $w^{-1}$. 

\subsection{Virtual 2-genus and the GAP} \label{sec_v2g_GAP} Here we show that the GAP is not always realizable as a specialization of the CSW polynomial on a surface of minimal genus. The virtual knot $K=3.5$ from Green's table \cite{green} has virtual $2$-genus 1. The GAP is given by:
\begin{align}
G_K(s,t)&=-\frac{1}{s^3 t^3}+\frac{1}{s^3 t}+\frac{1}{s^2 t^2}-\frac{1}{s^2}+\frac{1}{s t^3}-\frac{1}{s t}-\frac{1}{t^2}+1 \\
\label{eqn_no_realize_factor} \implies G_K(q^{-2}w^{-1},w) &= \frac{(q-1) (q+1) q^3 (w-1) (w+1) \left(q^2 w-1\right) \left(q^2 w+1\right)}{w^2}
\end{align}
It is known that $\widecheck{g}_2(K)=1$. A diagram $D$ on the torus is shown in Figure \ref{fig_no_realize}, left. The homotopy $\Zh$-construction is shown in Figure \ref{fig_no_realize}, right. Note that $K$ can be represented as the closure of the braid beta $\beta$ shown in Figure \ref{fig_no_realize_braid}. Since $\beta$ has 4 virtual crossings, Theorem \ref{thm_gap_from_prism} implies $G_K(s,t)$ can be realized as a specialization of the CSW polynomial on a surface of genus $4$. Now we show this cannot be done on the torus. In other words, one must stabilize the genus to realize the GAP.

\begin{figure}[htb]
\begin{tabular}{|ccc|} \hline  \begin{tabular}{c}\\ \includegraphics[scale=.4]{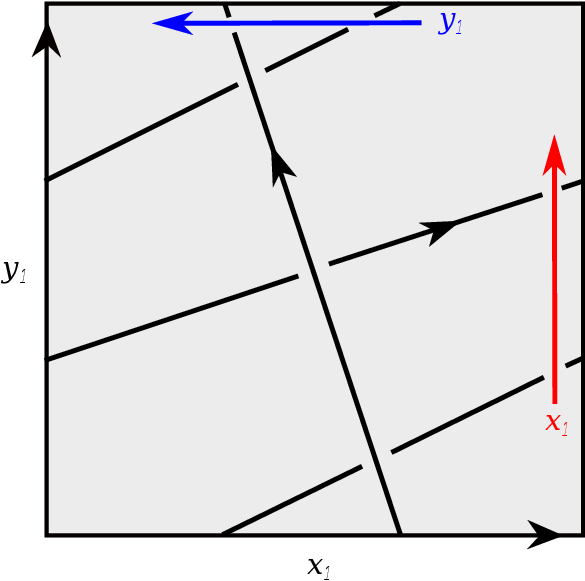} \end{tabular} & \begin{tabular}{c} \includegraphics[scale=.6]{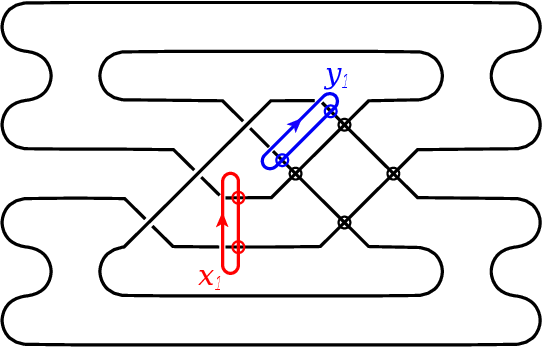} \end{tabular} & \\ \hline
\end{tabular}
\caption{Homotopy $\Zh$-construction (right, $\circlearrowleft$) for the virtual knot 3.5 (left).} \label{fig_no_realize}
\end{figure}

It must be proved that $\widetilde{f}^{\,\,1|1}_{(\Sigma,\Omega,D)}(q,x_1,y_1)$ is not equivalent to a polynomial which specializes to $G_K(q^{-2} w^{-1},w)$. The prismatic $U_q(\mathfrak{gl}(1|1))$ polynomial is given by:
\[
\widetilde{f}^{\,\,1|1}_{(\Sigma,\Omega,D)}(q,x_1,y_1)=-\frac{x_1^2 y_1^2}{q^3}+\frac{x_1^2 y_1}{q^3}-q^3 y_1+q^3-\frac{x_1^2 y_1}{q}-q x_1 y_1+\frac{x_1 y_1}{q}+q y_1
\] 
Now suppose that $\begin{pmatrix} a & b \\ c & d \end{pmatrix} \in \text{Sp}(2,\mathbb{Z})$ represents a symplectic change of basis, so that $ad-bc=1$. Then $x_1 \to a x_1+c y_1$ and $y_1 \to b x_1 +d y_1$. During the specialization, the $x_1$ coordinate becomes $(a+c) w^{-1}$ and the $y_1$ coordinate becomes $(b+d) w^{-1}$. Write $z=a+c$ and $p=b+d$. Then:
\begin{align} \label{eqn_no_realize_prismatic}
\widetilde{f}^{\,\,1|1}_{(\Sigma,\Omega,D)}(1,z w^{-1},p w^{-1})&=-\frac{\left(p z-w^2\right) \left(p z+w^2\right)}{w^4}.
\end{align}
By Equation \ref{eqn_no_realize_factor}, it follows that evaluating $q$ to $1$ gives $0$. In Equation (\ref{eqn_no_realize_prismatic}), we must have that $p$ and $z$ are integers. But no such value of $p$ and $z$ can make  (\ref{eqn_no_realize_prismatic}) vanish. Hence, the GAP of 3.5 cannot be realized as a specialization of the CSW polynomial on a  minimal genus surface.
\begin{figure}[htb]
\begin{tabular}{|ccc|} \hline & & \\ & \includegraphics[scale=.5]{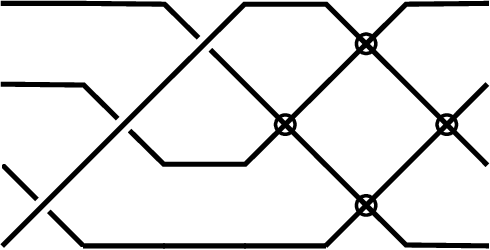} & \\ & & \\ \hline
\end{tabular}
\caption{($\circlearrowleft$) A virtual braid whose closure is 3.5.} \label{fig_no_realize_braid}
\end{figure}

\subsection{Checkerboard-colorable knots} \label{sec_check} In the case of a checkerboard-colorable virtual knot, the Gordon-Litherland pairing gives a simple determinant test for checking the minimality of a diagram. Here we will show that the prismatic $U_q(\mathfrak{gl}(m|n))$ polynomials can detect the virtual 2-genus for checkerboard-colorable knots even when the determinant test fails. First we recall some basic definitions. For more details, the reader is referred to \cite{BCK}. 

A virtual knot is said to be \emph{checkerboard-colorable} if it has a $\mathbb{Z}_2$-homologically trivial representative $K$ in a thickened surface $\Sigma \times [0,1]$, i.e. $[K]=0 \in H_1(\Sigma \times [0,1]; \mathbb{Z}_2)$. A minimal genus representative of a checkerboard-colorable knot has a checkerboard-coloring in the usual sense. The regions of the diagram on the surface are discs and each disc may be colored either black or white so that each edge borders both a black region and a white region. Every checkerboard colorable representative bounds a checkerboard spanning surface $F$. For a fixed checkerboard coloring of a knot diagram on a surface $\Sigma$, there are two checkerboard surfaces $F_1,F_2$. Unlike in the classical case, these two spanning surfaces are not $S^*$-equivalent when the genus of $\Sigma$ is greater than $0$ (see \cite{BCK}). Each spanning surface has a corresponding Gordon-Litherland pairing $\mathscr{G}_{F_1},\mathscr{G}_{F_2}$. In \cite{BCK}, Theorem 3.2, it was proved that if $\det(\mathscr{G}_{F_1}) \cdot \det(\mathscr{G}_{F_2}) \ne 0$, then $K$ realizes the virtual $2$-genus.   

\begin{figure}[htb]
\begin{tabular}{|ccc|} \hline & & \\ & \includegraphics[scale=.5]{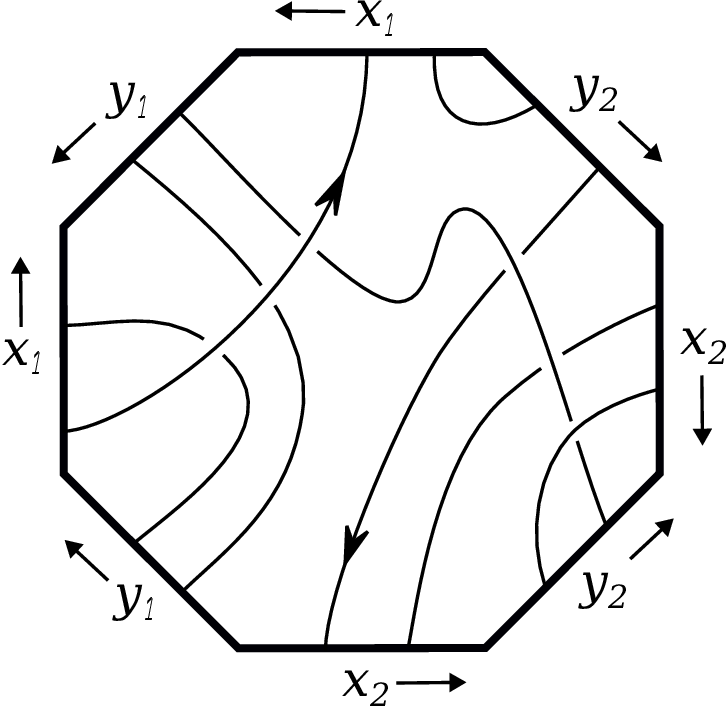} & \\ & & \\ \hline
\end{tabular}
\caption{The virtual knot 6.70394 on a genus $2$ surface.} \label{fig_checkerboard_example}
\end{figure}
    
The value of determinant test is that it can be computed very quickly. The disadvantage is that it does not apply to many virtual knots. Of the 92800 virtual knots up to six classical crossings, only 1674 are checkerboard colorable ($ \approx 1.8\%$). This number includes, for example, all of the alternating virtual knots. When the test applies, however, it is very effective. Of the 1674 checkerboard-colorable knots, the determinant test fails on just 244 of them ($\approx 15 \%$). One such example is 6.70394 from Green's table \cite{green}, which has one zero determinant and one non-zero determinant (see \cite{BCK} and its associated tables). A representative $D$ on a genus 2 surface is shown in Figure \ref{fig_checkerboard_example}. Figure \ref{fig_6pt70394} shows a virtual tangle decomposition of its homotopy $\Zh$-construction using the screen in Figure \ref{fig_screen}. The prismatic $U_q(\mathfrak{gl}(1|1))$ and $U_q(\mathfrak{gl}(2|1))$ polynomials are:
\begin{align*}
\widetilde{f}^{\,\,1|1}_{(\Sigma,\Omega,D)}(q,x_1,y_1,x_2,y_2) &= \frac{x_2}{q^9 x_1^2 y_1^2 y_2^2}-\frac{x_2}{q^9 x_1^2 y_1 y_2^2}-\frac{x_2^2}{q^7 x_1^2 y_1^2 y_2^2}+\frac{x_2^2}{q^7 x_1^2 y_1 y_2^2}-\frac{x_2}{q^7 x_1^2 y_1^2 y_2^2}+\frac{x_2}{q^7 x_1^2 y_1^2 y_2}\\& +\frac{2 x_2}{q^7 x_1^2 y_1 y_2^2}-\frac{x_2}{q^7 x_1^2 y_1 y_2}-\frac{x_2}{q^7 x_1 y_1 y_2^2}-\frac{x_2^2}{q^5 x_1^2 y_1 y_2^2}-\frac{x_2}{q^5 x_1^2 y_1^2 y_2}-\frac{x_2}{q^5 x_1^2 y_1 y_2^2}\\&+\frac{2 x_2}{q^5 x_1^2 y_1 y_2}+\frac{x_2^2}{q^5 x_1 y_1 y_2^2}+\frac{2 x_2}{q^5 x_1 y_1 y_2^2}-\frac{x_2}{q^5 x_1 y_1 y_2}-\frac{x_2}{q^5 y_1 y_2^2}-\frac{x_2}{q^3 x_1^2 y_1 y_2}\\&-\frac{x_2^2}{q^3 x_1 y_1 y_2^2}-\frac{x_2}{q^3 x_1 y_1 y_2^2}+\frac{2 x_2}{q^3 x_1 y_1 y_2}+\frac{x_2^2}{q^3 y_1 y_2^2}+\frac{2 x_2}{q^3 y_1 y_2^2}-\frac{x_2}{q^3 y_1 y_2}\\&-\frac{x_2}{q^3 y_2^2}-\frac{x_2}{q x_1 y_1 y_2}-\frac{x_2^2}{q y_1 y_2^2}-\frac{x_2}{q y_1 y_2^2}-\frac{q x_2}{y_1 y_2}+\frac{2 x_2}{q y_1 y_2}+\frac{x_2}{q y_2^2}+q\\
\widetilde{f}^{\,\,2|1}_{(\Sigma,\Omega,D)}(q,x_1,y_1,x_2,y_2) &=
 \frac{x_2}{q^{16} x_1^2 y_1^2 y_2^2}-\frac{x_2}{q^{16} x_1^2 y_1 y_2^2}+\frac{x_2}{q^{16} y_2^2}-\frac{x_2^2}{q^{14} x_1^2 y_1^2 y_2^2}+\frac{x_2^2}{q^{14} x_1^2 y_1 y_2^2}+\frac{x_2}{q^{14} x_1^2 y_1^2 y_2}\\&-\frac{x_2}{q^{14} x_1^2 y_1 y_2}-\frac{2 x_2}{q^{14} x_1 y_1 y_2^2}-\frac{x_2}{q^{14} y_1 y_2^2}-\frac{x_2}{q^{14} y_2^2}-\frac{x_2}{q^{12} x_1^2 y_1^2 y_2^2}-\frac{x_2}{q^{12} x_1^2 y_1^2 y_2}\\&+\frac{2 x_2}{q^{12} x_1^2 y_1 y_2^2}+\frac{x_2}{q^{12} x_1^2 y_1 y_2}+\frac{2 x_2^2}{q^{12} x_1 y_1 y_2^2}+\frac{2 x_2}{q^{12} x_1 y_1 y_2^2}-\frac{2 x_2}{q^{12} x_1 y_1 y_2}+\frac{x_2^2}{q^{12} y_1 y_2^2}\\&-\frac{x_2}{q^{12} y_1 y_2}-\frac{x_2^2}{q^{10} x_1^2 y_1 y_2^2}+\frac{x_2}{q^{10} x_1^2 y_1^2 y_2}-\frac{2 x_2^2}{q^{10} x_1 y_1 y_2^2}+\frac{2 x_2}{q^{10} x_1 y_1 y_2^2}+\frac{4 x_2}{q^{10} x_1 y_1 y_2}\\&+\frac{2 x_2}{q^{10} y_1 y_2^2}+\frac{x_2}{q^{10} y_1 y_2}-\frac{3 x_2}{q^{10} y_2^2}-\frac{x_2}{q^8 x_1^2 y_1^2 y_2}-\frac{x_2}{q^8 x_1^2 y_1 y_2^2}-\frac{2 x_2}{q^8 x_1 y_1 y_2^2}-\frac{4 x_2}{q^8 x_1 y_1 y_2}\\&-\frac{x_2^2}{q^8 y_1 y_2^2}+\frac{3 x_2}{q^8 y_2^2}+\frac{x_2}{q^6 x_1^2 y_1 y_2}+\frac{4 x_2}{q^6 x_1 y_1 y_2}-\frac{x_2}{q^6 y_1 y_2^2}-\frac{1}{q^6}-\frac{x_2}{q^4 x_1^2 y_1 y_2}\\&-\frac{2 x_2}{q^4 x_1 y_1 y_2}+\frac{x_2}{q^4 y_1 y_2}-\frac{x_2}{q^2 y_1 y_2}+\frac{2}{q^2}+1
\end{align*}

In both cases, the symplectic rank is 4, so that the virtual 2-genus of 2 is detected. In particular, the CSW polynomial detects $
\widecheck{g}_2(K)$, even though the Gordon-Litherland determinants do not. 

\begin{figure}[htb]
\begin{tabular}{|ccc|} \hline & & \\ & \includegraphics[scale=.74]{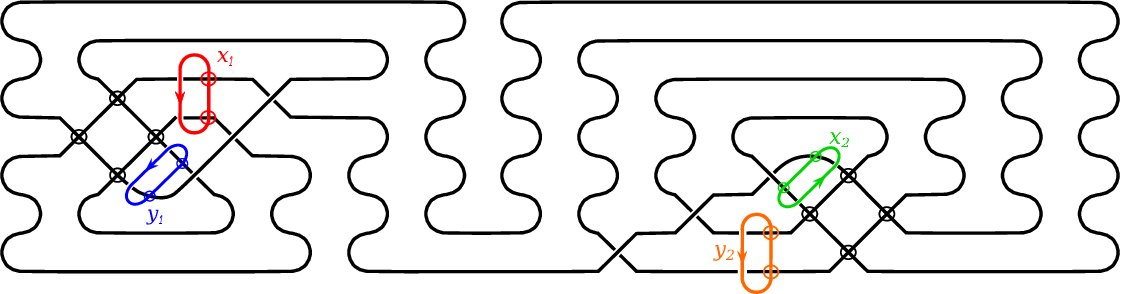} & \\ & & \\ \hline
\end{tabular}
\caption{Homotopy\!\! $\Zh$-construction ($\circlearrowleft$) of 6.70394 from Figure \ref{fig_checkerboard_example}.} \label{fig_6pt70394}
\end{figure}    
    
\subsection{Almost classical knots} Recall that a virtual knot is said to be \emph{almost classical} (AC) if it has a homologically trivial representative $K$ in a thickened surface $\Sigma \times [0,1]$. That is, $[K]=0 \in H_1(\Sigma \times [0,1];\mathbb{Z})$. Related to the surface bracket is the arrow polynomial of Dye-Kauffman \cite{dye_kauffman_arrow}, which is equivalent to the Miyazawa polynomial \cite{miyazawa}. It is also generalization of the Jones polynomial. It contains additional variables which can be used to give a lower bound on the virtual $2$-genus (see Dye-Kauffman \cite{dye_kauffman_arrow}, Theorem 4.5). However, the arrow polynomial of an almost classical knot is equal to its Jones polynomial. For a proof of this fact using the $\Zh$-construction, see \cite{chrisman_todd_23}. As a result, the arrow polynomial can never detect the minimal genus of a (non-classical) almost classical knot.  The satellite knot in Section \ref{sec_ex_CSW_compare} is almost classical as it bounds an evident genus 1 Seifert surface. Hence, we already have an example where the prismatic $U_q(\mathfrak{gl}(2|1))$ polynomial detects its virtual $2$-genus, whereas the arrow polynomial does not. Other examples can be found in Green's virtual knot table \cite{green}. For instance, consider the almost classical knot 4.99. A prismatic $\Zh$-construction is shown in Figure \ref{fig_almost_classical_tangle}. The prismatic $U_q(\mathfrak{gl}(1|1))$ and $U_q(\mathfrak{gl}(2|1))$ polynomials are: 
\begin{align*}
\widetilde{f}^{\,\,1|1}_{(\Sigma,\Omega,D)}(q,x_1,y_1) &=
 q^3 x_1-q^3 y_1-2 q x_1+\frac{x_1}{q}+2 q y_1-\frac{y_1}{q}, \\
\widetilde{f}^{\,\,2|1}_{(\Sigma,\Omega,D)}(q,x_1,y_1) &=q^4 x_1-3 q^4 y_1+\frac{2 x_1}{q^2}+6 q^2 y_1-3 x_1-3 y_1+1
\end{align*}

In both cases, the symplectic rank is easily seen to be 2, so that the virtual 2-genus is 1. It is interesting to note that $4.99$ is also a slice virtual knot, so that its generalized Alexander polynomial, various index polynomials, and graded genus are all trivial. For slice almost classical knots, it is therefore generally difficult to determine the virtual $2$-genus. A more interesting illustration of this will be given in the next subsection. 

\begin{figure}[htb]
\begin{tabular}{|ccccc|} \hline & & & & \\ & \begin{tabular}{c} \includegraphics[scale=.46]{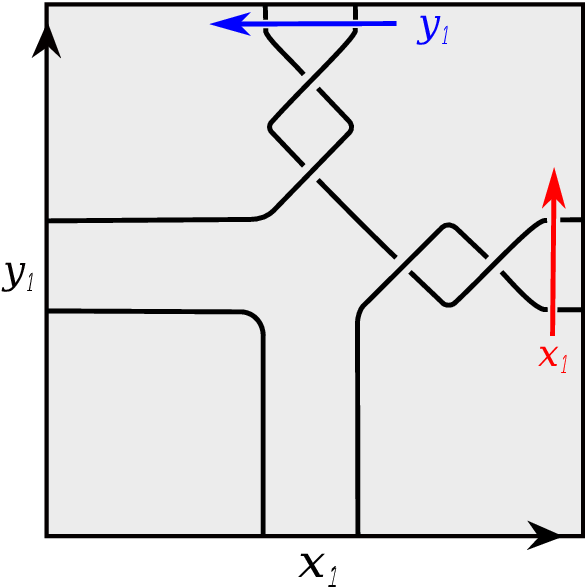} \end{tabular} &  & \begin{tabular}{c} \includegraphics[scale=.56]{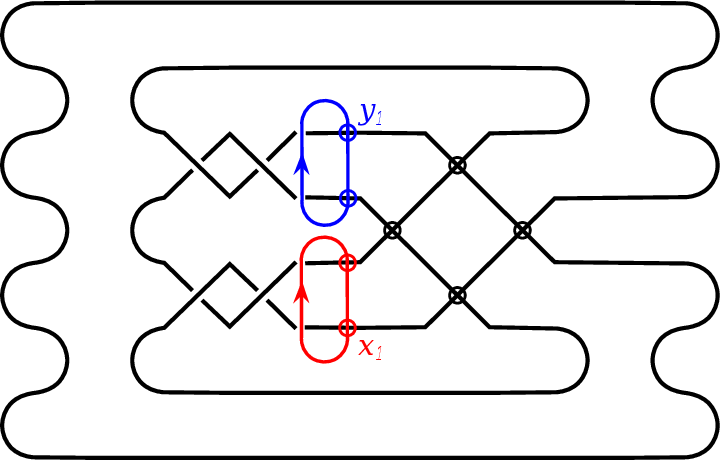} \end{tabular} &  \\ & & & & \\ \hline
\end{tabular}
\caption{Homotopy $\Zh$-construction ($\circlearrowleft$) for the almost classical knot 4.99.} \label{fig_almost_classical_tangle}
\end{figure}

\subsection{The virtual $3$-genus} \label{sec_v3g} The \emph{$3$-genus} $g_3(K)$ of a knot $K$ in $\mathbb{S}^3$ is the smallest genus of any Seifert surface bounded by the knot. The $3$-genus of the unknot is $0$, and the unknot is the only knot having 3-genus $0$. If $K$ is an almost classical knot, it has a homologically trivial representative in some thickened surface. Any homologically trivial representative bounds a Seifert surface in this thickened surface. The \emph{virtual $3$-genus} $\widecheck{g}_3(K)$ of an almost classical knot $K$ is the smallest genus of all its Seifert surfaces, with the minimum taken over all its homologically trivial representatives. The virtual 2-genus and virtual 3-genus of an almost classical knot can be simultaneously realized: a minimal genus knot bounds a Seifert surface of minimal genus (see \cite{BGHNW_17}, Corollary 6.5). 

As is well-known, the classical $3$-genus is additive under the connected sum operation: $g_3(K_1\#K_2)=g_3(K_1)+g_3(K_2)$. This fact is important; it is used, for example, in the prime decomposition theorem for classical knots. There is also a connect sum operation for virtual knots, but the virtual knot type of the sum depends on the choice of arcs involved (see Kauffman-Manturov \cite{kauffman_manturov}). For instance, any of the 2-Kishino knots from Section \ref{sec_kishino} is a non-trivial connected sum of trivial knots. It is natural to ask if the virtual $3$-genus is additive. That is, if $K_1$, $K_2$, and $K_1\#K_2$ are all almost classical is $\widecheck{g}_3(K_1\#K_2)=\widecheck{g}_3(K_1)+\widecheck{g}_3(K_2)$? We will show this is false using the prismatic $U_q(\mathfrak{gl}(2|1))$ polynomial. 

\begin{figure}[htb]
\begin{tabular}{|cc|} \hline & \\ 
\begin{tabular}{c}
\includegraphics[scale=.5]{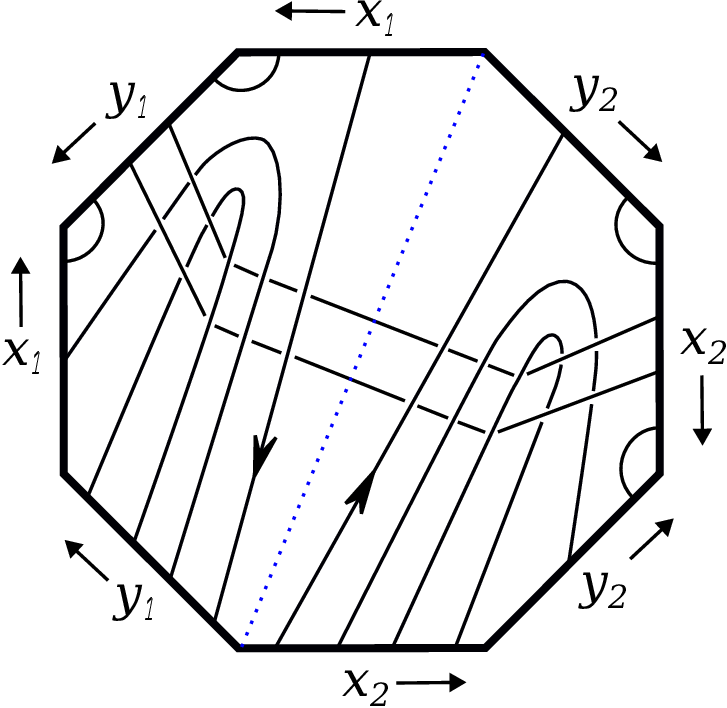} \end{tabular} & \begin{tabular}{c}
\includegraphics[scale=.5]{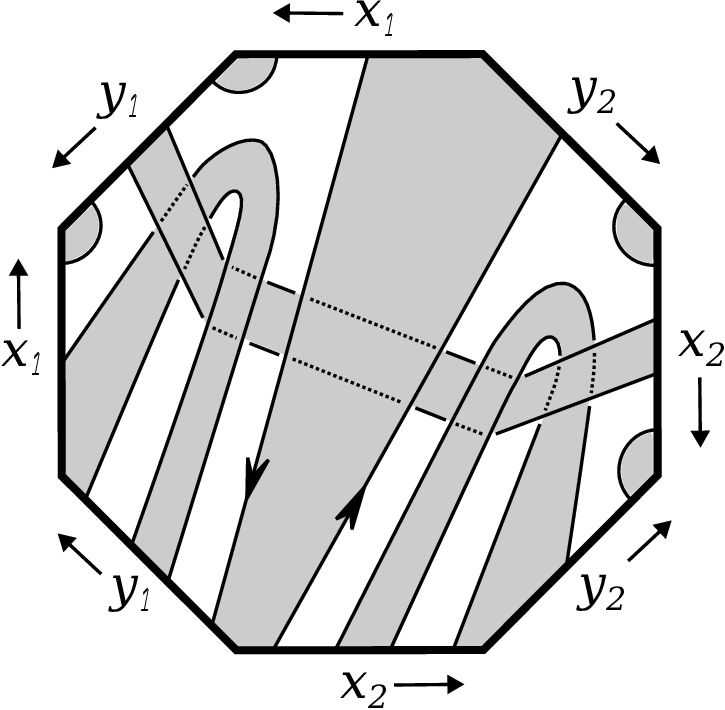}  \end{tabular} \\ & \\ \hline \end{tabular}
\caption{A virtual 3-genus 1 virtual knot that is a connected sum of two unknots.} \label{fig_the_knot} 
\end{figure}

Consider the knot diagram $D$ on a genus 2 surface $\Sigma$ shown on the left in Figure \ref{fig_the_knot}. The dashed blue line indicates two connected sums: a connected sum of two unknots and a connected sum of two tori. This knot is almost classical. A genus one Seifert surface is shaded in grey on the right-hand side of Figure \ref{fig_the_knot}. As an AC knot, it has a well-defined Alexander polynomial. But being a connected sum of trivial knots, its Alexander polynomial is necessarily $1$. Hence, the Alexander polynomial lower bound on the virtual $3$-genus (see \cite{BGHNW_17}, Theorem  7.9) is 0. 

Fortunately, to prove that $\widecheck{g}_3(D)=1$, it suffices to show that $\widecheck{g}_2(D)>0$. Indeed, the only virtual knot having virtual $3$-genus $0$ is the unknot. We will show that the virtual $2$-genus is exactly 2.  Observe that $D$, as connect sum of two trivial knots, is also a slice virtual knot. Most easily computable invariants are automatically trivial on slice almost classical knots that are the sum of unknots. The graded genus, the various index polynomials, the generalized Alexander polynomial, the Jones polynomial, and the arrow polynomial are all trivial. Since $D$ is checkerboard colorable, the determinant test applies (see Section \ref{sec_check}). However, using the program to compute Goeritz matrices from \cite{BCK}, we obtain the following.
\[
\left(
\begin{array}{cccccccc}
 2 & 0 & 0 & 0 & -1 & -1 & 0 & 0 \\
 0 & 0 & 0 & 0 & 0 & 0 & 1 & 0 \\
 0 & 0 & 0 & 0 & 0 & 0 & 1 & 0 \\
 0 & 0 & 0 & 0 & 0 & 0 & 0 & 0 \\
 -1 & 0 & 0 & 0 & 0 & 0 & 0 & 1 \\
 -1 & 0 & 0 & 0 & 0 & 0 & 0 & 1 \\
 0 & 1 & 1 & 0 & 0 & 0 & -2 & 0 \\
 0 & 0 & 0 & 0 & 1 & 1 & 0 & -2 \\
\end{array}
\right),\quad \left(
\begin{array}{cccccccc}
 0 & -2 & 1 & 1 & 0 & 0 & 0 & 0 \\
 -2 & 0 & 1 & 0 & 0 & 0 & 0 & 0 \\
 1 & 1 & 0 & -1 & 0 & 0 & 0 & 0 \\
 1 & 0 & -1 & 0 & 0 & 0 & 0 & 0 \\
 0 & 0 & 0 & 0 & 0 & -2 & 1 & 1 \\
 0 & 0 & 0 & 0 & -2 & 0 & 1 & 0 \\
 0 & 0 & 0 & 0 & 1 & 1 & 0 & -1 \\
 0 & 0 & 0 & 0 & 1 & 0 & -1 & 0 \\
\end{array}
\right)
\]
The first matrix is singular, and hence, the determinant test fails. The high classical crossing number of 20 makes it difficult to compute the surface bracket. Likewise, testing for tg-hyperbolicity is impractical for high crossing number knots on surfaces of genus greater one. The CSW polynomial can also be computed directly from the depicted diagram, but the value turns out to be $\Delta^0_K(t,x_i,y_i)=0$. This value was computed by hand by the authors using the definition of the CSW polynomial , but it also follows from calculation of the prismatic $U_q(\mathfrak{gl}(1|1))$ polynomial below.

\begin{figure}[htb]
\begin{tabular}{|cccc|} \hline & & & \\  \multicolumn{4}{|c|}{\begin{tabular}{c} \includegraphics[scale=.8]{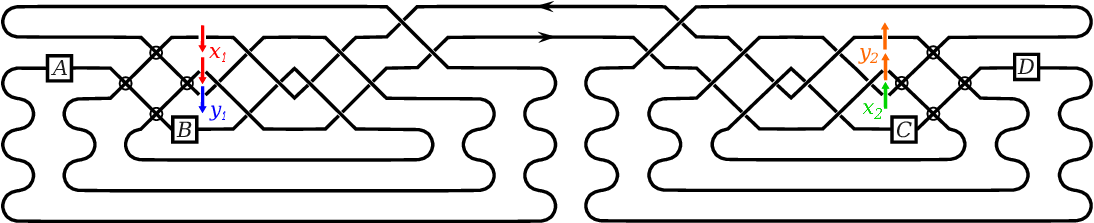} \end{tabular}} \\ & & & \\ \hline & & & \\ \multicolumn{4}{|c|}{\begin{tabular}{cccc} \begin{tabular}{c} \includegraphics[scale=.45]{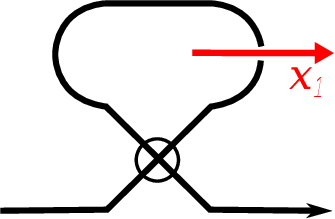}\\ $\underline{A}:$ \end{tabular} & \begin{tabular}{c} \includegraphics[scale=.45]{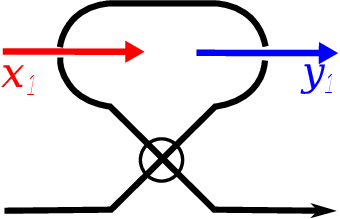} \\ $\underline{B}:$ \end{tabular} & \begin{tabular}{c} \includegraphics[scale=.45]{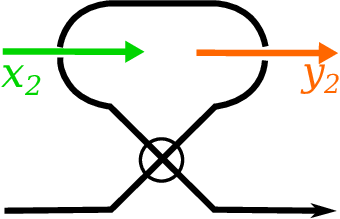} \\ $\underline{C}:$\end{tabular} & \begin{tabular}{c} \includegraphics[scale=.45]{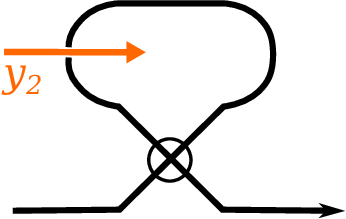}\\ $\underline{D}:$\end{tabular} \end{tabular}} \\ & & & \\ \hline
\end{tabular}
\caption{Homotopy $\Zh$-construction ($\circlearrowleft$) for the knot in Figure \ref{fig_the_knot}. To speed up the calculation, the functor is first computed on tangles A,B,C,D. These are inserted as needed into the final calculation. See accompanying Mathematic notebook.} \label{fig_ks_knot_rot}
\end{figure}    

Thus we are left with the prismatic $U_q(\mathfrak{gl}(m|n))$ polynomials with $(m,n)\ne (1,1)$. A homotopy $\Zh$-construction for $D$ is given in Figure \ref{fig_ks_knot_rot}. To reduce the width of the virtual tangle diagram, the value of the virtual curls $A,B,C,D$ that appear in the figure were precomputed and then inserted when required into the calculation of the Reshetikhin-Turaev functor. The values for $U_q(\mathfrak{gl}(1|1))$ and  $U_q(\mathfrak{gl}(2|1))$ are given below. As the value for $U_q(\mathfrak{gl}(2|1))$ has 698 terms when expanded, only a few terms have been included. All terms can be found in the accompanying \emph{Mathematica} notebook.  
\begin{align*}
\widetilde{f}^{\,\,1|1}_{(\Sigma,\Omega,D)}(q,x_1,y_1,x_2,y_2) &= 0 \\
\widetilde{f}^{\,\,2|1}_{(\Sigma,\Omega,D)}(q,x_1,y_1,x_2,y_2)  &=\frac{2}{q^{16}}-\frac{6}{q^{14}}+\frac{8}{q^{12}}-\frac{14}{q^{10}}+\frac{22}{q^8}-\frac{22}{q^6}+\frac{25}{q^4}+25 q^2-\frac{15}{q^2}-14+ \ldots \\
& \ldots -\frac{x_2}{q^{13}}+\frac{4 x_2}{q^{11}}-\frac{8 x_2}{q^9}+\frac{13 x_2}{q^7}-28 q^5 x_2-\frac{19 x_2}{q^5}+2 q^3 x_2+\frac{24 x_2}{q^3}+\ldots \\
&\ldots -\frac{x_2}{q^{14} y_2}+\frac{6 x_2}{q^{12} y_2}-\frac{x_2}{q^{11} y_2}-\frac{17 x_2}{q^{10} y_2}+\frac{5 x_2}{q^9 y_2}+\frac{34 x_2}{q^8 y_2}-\frac{12 x_2}{q^7 y_2}-\frac{59 x_2}{q^6 y_2}+\ldots \\
&\ldots \frac{x_2}{q^{10} x_1}+\frac{15 q^8 x_2}{x_1}-\frac{5 x_2}{q^8 x_1}-\frac{20 q^6 x_2}{x_1}+\frac{11 x_2}{q^6 x_1}+\frac{14 q^4 x_2}{x_1}-\frac{15 x_2}{q^4 x_1}-\frac{q^2 x_2}{x_1}+\ldots \\
& \ldots+\frac{29 x_2 y_1}{q^{10} y_2 x_1}+\frac{49 x_2 y_1}{q^9 y_2 x_1}-\frac{68 x_2 y_1}{q^8 y_2 x_1}-\frac{91 x_2 y_1}{q^7 y_2 x_1}+\frac{126 x_2 y_1}{q^6 y_2 x_1}-\frac{255 x_2 y_1}{y_2 x_1}+\ldots
\end{align*}

From the included terms for the prismatic $U_q(\mathfrak{gl}(2|1))$ polynomial, it follows that the symplectic rank is $4$ and the virtual 2-genus is $2$. Thus $D$ is non-trivial and the virtual $3$-genus is not additive.

\bibliographystyle{amsplain}

\bibliography{0_super_bib}

\end{document}